\newcommand{\nicecolor}{Navy}
\setlist[1]{wide}
\setlist[2]{leftmargin=15mm}
\setlist[enumerate]{label=\rm{(\arabic*)}}
\setlist[enumerate,2]{label=\rm({\it\roman*}), }
\setlist[itemize]{label=\raisebox{0.25ex}{\tiny$\bullet$}}
\definecolor{grisclair}{rgb}{0.9,0.9,0.9}
\def\vertsubseteq{\hbox{\vrule\hskip0.2ex\hbox{$\cap$}}}
\tikzset{>=stealth}
\tikzset{link/.style={column sep=1.8cm,row sep=0.16cm}}
\tikzset{map/.style={row sep=0em, column sep=0em}}
\DeclareFontFamily{U}{mathb}{\hyphenchar\font45}
\DeclareFontShape{U}{mathb}{m}{n}{
      <5> <6> <7> <8> <9> <10> gen * mathb
      <10.95> mathb10 <12> <14.4> <17.28> <20.74> <24.88> mathb12
      }{}
\DeclareSymbolFont{mathb}{U}{mathb}{m}{n}
\DeclareMathSymbol{\bigast}{1}{mathb}{"06}
\DeclareFontFamily{U}{mathx}{\hyphenchar\font45}
\DeclareFontShape{U}{mathx}{m}{n}{<-> mathx10}{}
\DeclareSymbolFont{mathx}{U}{mathx}{m}{n}
\DeclareMathAccent{\widebar}{0}{mathx}{"73}
\renewcommand{\to}{ \, \tikz[baseline=-.6ex] \draw[->,line width=.5] (0,0) -- +(.5,0); \, }
\renewcommand{\rightarrow}{ \, \tikz[baseline=-.6ex] \draw[->,line width=.5] (0,0) -- +(.5,0); \, }
\newcommand{\longto}{ \, \tikz[baseline=-.6ex] \draw[->,line width=.5] (0,0) -- +(.9,0); \, }
\newcommand{\rat}{ \, \tikz[baseline=-.6ex] \draw[->,densely dashed,line width=.5] (0,0) -- +(.5,0); \, }
\newcommand{\longrat}{ \, \tikz[baseline=-.6ex] \draw[->,densely dashed,line width=.5] (0,0) -- +(.9,0); \, }
\newcommand{\ps}{ \, \tikz[baseline=-.6ex] \draw[->,dotted,line width=.6] (0,0) -- +(.5,0); \,}
\newcommand{\dottedline}{ \, \tikz[baseline=-.6ex] \draw[dotted,line width=.6] (0,0) -- +(.5,0); \,}
\newcommand{\mapsrat}{ \, \tikz[baseline=-.6ex] \draw[|->,densely dashed,line width=.5] (0,0) -- +(.5,0); \, }
\renewcommand{\mapsto}{ \, \tikz[baseline=-.6ex] \draw[|->,line width=.5] (0,0) -- +(.5,0); \, }
\newcommand\iso{\stackrel{\sim}{\to}}
\newcommand{\tto}{ \, \tikz[baseline=-.6ex] \draw[->>,line width=.5] (0,0) -- +(.5,0); \, }
\newcommand{\hookto}{ \, \tikz[baseline=-.6ex] \draw[right hook->,line width=.5] (0,0) -- +(.5,0); \, }
\renewcommand{\hookrightarrow}{\hookto}
\newcommand{\Ra}{2.6cm}
\newcommand{\Rb}{1.70cm}
\newcommand{\Rc}{0.85cm}
\newcommand{\verteq}{\rotatebox{90}{$\,=$}}
\newcommand{\I}{\textup{I}\xspace}
\newcommand{\II}{\textup{II}\xspace}
\newcommand{\III}{\textup{III}\xspace}
\newcommand{\IV}{\textup{IV}\xspace}
\newcommand{\A}{\mathbb{A}}
\newcommand{\F}{\mathbb{F}}
\newcommand{\p}{\mathbb{P}}
\newcommand{\Z}{\mathbf{Z}}
\newcommand{\N}{\mathbf{N}}
\renewcommand{\k}{\mathbf{k}}
\newcommand{\C}{\mathbf{C}}
\newcommand{\R}{\mathbf{R}}
\newcommand{\Q}{\mathbf{Q}}
\newcommand{\Al}{\mathcal A} 
\newcommand{\Bl}{\mathcal B} 
\newcommand{\Cl}{\mathcal C}
\newcommand{\Fl}{\mathcal F}
\newcommand{\Gl}{\mathcal G} 
\newcommand{\Hl}{\mathcal H} 
\newcommand{\Il}{\mathcal I} 
\newcommand{\Ll}{\mathcal L} 
\newcommand{\Ol}{\mathcal O}
\newcommand{\Pl}{\mathcal P}
\newcommand{\Ql}{\mathcal Q}
\newcommand{\Sl}{\mathcal S}
\DeclareMathOperator{\PGL}{PGL}
\DeclareMathOperator{\PSL}{PSL}
\DeclareMathOperator{\GL}{GL}
\DeclareMathOperator{\SL}{SL}
\renewcommand{\setminus}{\smallsetminus}
\newcommand{\eps}{{\varepsilon}}
\renewcommand{\epsilon}{\varepsilon}
\renewcommand{\phi}{\varphi}
\renewcommand{\le}{\leqslant}
\renewcommand{\ge}{\geqslant}
\renewcommand{\leq}{\leqslant}
\renewcommand{\geq}{\geqslant}
\newcommand{\id}{\textup{id}}
\newcommand{\pt}{\textup{pt}}
\newcommand{\pr}{\textup{pr}}
\newcommand{\Mat}{\textup{Mat}}
\newcommand{\Sym}{\textup{Sym}}
\newcommand{\NE}{\mathit{NE}}
\newcommand{\BVA}{\textup{BVA}}
\newcommand{\CB}{\textup{\sffamily{CB}}}
\newcommand{\MC}{\textup{\sffamily{M}}}
\renewcommand{\bigoplus}{\mathop{\oplus}}
\newcommand{\semicolon}{\, ; \,}
\DeclareMathOperator{\Ker}{Ker}
\DeclareMathOperator{\Aut}{Aut}
\DeclareMathOperator{\Bir}{Bir}
\DeclareMathOperator{\Tame}{Tame}
\DeclareMathOperator{\Dil}{Dil}
\DeclareMathOperator{\Div}{Div}
\DeclareMathOperator{\Pic}{Pic}
\DeclareMathOperator{\Proj}{Proj}
\DeclareMathOperator{\Nef}{Nef}
\DeclareMathOperator{\Ample}{Ample}
\DeclareMathOperator{\BigD}{Big}
\DeclareMathOperator{\Eff}{Eff}
\DeclareMathOperator{\Mov}{Mov}
\DeclareMathOperator{\BirMori}{BirMori}
\DeclareMathOperator{\Ex}{Ex}
\DeclareMathOperator{\codim}{codim}
\DeclareMathOperator{\Cox}{Cox}
\DeclareMathOperator{\gon}{gon}
\DeclareMathOperator{\Cr}{Cr} 
\DeclareMathOperator{\diag}{diag}
\DeclareMathOperator{\covgon}{cov.gon}
\DeclareMathOperator{\conngon}{conn.gon}
\DeclareMathOperator{\IntMov}{IntMov}
\let\div\relax
\DeclareMathOperator{\div}{div}
\newcommand{\tr}[1]{\vphantom{#1}^{t}\!#1}
\newcommand{\interior}[1]{\smash{\mathring{#1}}}
\newcommand{\parent}[1]{\textup{(}#1\textup{)}}
\theoremstyle{plain}
\newtheorem{theorem}{Theorem}[section]
\newtheorem{corollary}[theorem]{Corollary}
\newtheorem{proposition}[theorem]{Proposition}
\newtheorem{lemma}[theorem]{Lemma}
\newtheorem{maintheorem}{Theorem}
\theoremstyle{definition}
\newtheorem{definition}[theorem]{Definition}
\newtheorem{example}[theorem]{Example}
\newtheorem{remark}[theorem]{Remark}
\newtheorem{notation}[theorem]{Notation}
\newtheorem{setup}[theorem]{Set-Up}
\title{Quotients of higher dimensional Cremona groups}
\author{J\'er\'emy Blanc, St\'ephane Lamy \& Susanna Zimmermann}
\thanks{The first author acknowledges support by the Swiss National Science Foundation Grant ``Birational transformations of threefolds'' 200020\_178807. 
The second author was partially supported by the UMI-CRM 3457 of the CNRS in Montr\'eal, and by the Labex CIMI. 
The third author was supported by Projet PEPS 2018 "JC/JC" and is supported by the ANR Project FIBALGA ANR-18-CE40-0003-01.}
\address{Departement Mathematik und Informatik,  Universit\"at Basel, Spiegelgasse 1, 4051 Basel, Switzerland}
\email{jeremy.blanc@unibas.ch}
\address{Institut de Math\'ematiques de Toulouse UMR 5219, Universit\'e de Toulouse, UPS F-31062 Toulouse Cedex 9, France}
\email{slamy@math.univ-toulouse.fr}
\address{Univ Angers, CNRS, LAREMA, SFR MATHSTIC, F-49000 Angers, France}
\email{susanna.zimmermann@univ-angers.fr}
\date{\today}
\keywords{Cremona groups; normal subgroups; conic bundles; Sarkisov links; BAB conjecture}
\subjclass[2010]{14E07, 14E30, 20F05; 20L05, 14J45, 14E05}
\begin{document}

\begin{abstract}
We study large groups of birational transformations $\Bir(X)$, where $X$ is a variety of dimension at least $3$, defined over $\C$ or a subfield of $\C$.
Two prominent cases are when $X$ is the projective space $\p^n$, in which case $\Bir(X)$ is the Cremona group of rank~$n$, or when $X \subset \p^{n+1}$ is a smooth cubic hypersurface.
In both cases, and more generally when $X$ is birational to a conic bundle, we produce infinitely many distinct group homomorphisms from $\Bir(X)$ to $\Z/2$, showing in particular that the group $\Bir(X)$ is not perfect and thus not simple.
As a consequence we also obtain that the Cremona group of rank~$n \ge 3$ is not generated by linear and Jonqui\`eres elements.
\end{abstract}

\maketitle

\begin{small}
\tableofcontents
\end{small}

\section{Introduction}

\subsection{Higher rank Cremona groups}

The \emph{Cremona group of rank~$n$}, denoted by $\Bir_\k(\p^n)$, or simply $\Bir(\p^n)$ when the ground field $\k$ is implicit, is the group of birational transformations of the projective space.

The classical case is $n=2$, where the group is already quite complicated but is now well described, at least when $\k$ is algebraically closed. 
In this case the Noether-Castelnuovo Theorem \cite{Cas, Carraminana} asserts that $\Bir(\p^2)$ is generated by $\Aut(\p^2)$ and a single standard quadratic transformation. 
This fundamental result, together with the strong factorisation of birational maps between surfaces helps to have a good understanding of the group.

The dimension $n\ge 3$ is more difficult, as we do not have any analogue of the Noether-Castelnuovo Theorem (see \S\ref{sec:Generators} for more details) and also no strong factorisation. 
Here is an extract from the article ``Cremona group'' in the Encyclopedia of Mathematics, written by V.~Iskovskikh in 1982 (and translated in 1987) (who uses the notation $\Cr(\p^n_\k)$ for the Cremona group):

\begin{quote}
{\it 
One of the most difficult problems in birational geometry is that of describing the structure of the group $\Cr(\p^3_\k)$, which is no longer generated by the quadratic transformations. Almost all literature on Cremona transformations of three-dimensional space is devoted to concrete examples of such transformations. 
Finally, practically nothing is known about the structure of the Cremona group for spaces of dimension higher than $3$.} \hfill\cite{Encyclo}
\end{quote}

More than thirty years later, there are still very few results about the group structure of $\Bir(\p^n)$ for $n\ge 3$, even if there were exciting recent developments using a wide range of techniques.
After the pioneering work \cite{Demazure} on the algebraic subgroups of rank~$n$ in $\Bir(\p^n)$, we should mention the description of their lattices via $p$-adic methods \cite{CantatXie}, the study of the Jordan property \cite{ProShra}, and the fact that Cremona groups of distinct ranks are non-isomorphic \cite{CantatMorphisms}.

For $n=3$, there is also a classification of the connected algebraic subgroups \cite{Ume,BFT}, and partial classification of  finite subgroups \cite{Pro2011,Pro2012,Pro2014}.
There are also numerous articles devoted to the study of particular classes of examples of elements in $\Bir(\p^n)$, especially for $n$ small (we do not attempt to start a list here, as it would always be very far from exhaustive).

The question of the non-simplicity of Cremona groups of higher rank was up to now left open. 
Using modern tools such as the Minimal model programme and factorisation into Sarkisov links, we will be able in this text to give new insight on the structure of the Cremona groups $\Bir(\p^n)$ and of its quotients.

\subsection{Normal subgroups}

The question of the non-simplicity of $\Bir(\p^n)$ for each $n\ge 2$ was also mentioned in the article of V.~Iskovskikh in the Encyclopedia:
\begin{quote}
{\it It is not known to date $(1987)$ whether the Cremona group is simple.\hfill\cite{Encyclo}}\end{quote}
The question was in fact asked much earlier,
and is explicitly mentioned in a book by F.~Enriques in 1895:
\begin{quote}
{\it Tuttavia altre questioni d'indole gruppale relative al gruppo Cremona nel piano $($ed a pi\`u forte ragione in $S_n$ $n>2)$ rimangono ancora insolute; ad esempio l'importante questione se il gruppo Cremona contenga alcun sottogruppo invariante $($questione alla quale sembra probabile si debba rispondere negativamente$)$. \\ 
\cite[p.~116]{Enriques}\footnote{``However, other group-theoretic questions related to the Cremona group of the plane (and, even more so, of $\p^n$, $n>2$) remain unsolved; for example, the important question of whether the Cremona group contains any normal subgroup (a question which seems likely to be answered negatively).''}}
\end{quote}

The feeling expressed by F.~Enriques that the Cremona group should be simple was perhaps supported by the analogy with biregular automorphism groups of projective varieties, such as $\Aut(\p^n) = \PGL_{n+1}(\k)$.
In fact in the trivial case of dimension $n = 1$, we have $\Bir(\p^1)=\Aut(\p^1)=\PGL_2(\k)$, which is indeed a simple group when the ground field $\k$ is algebraically closed.
Another evidence in favour of the simplicity of the Cremona groups is that one can endow $\Bir_\C(\p^n)$ with two topologies: the Zariski or the Euclidean one (see \cite{Blanc2010,BF13}), and that in both cases all closed normal subgroups are either trivial or the whole group, as proven in  \cite{Blanc2010} for $n=2$ and generalised in \cite{BZ} to any dimension.

The non-simplicity of $\Bir(\p^2)$ as an abstract group was proven, over an algebraically closed field, by S.~Cantat and the second author \cite{CantatLamy}.
The idea of proof was to apply small cancellation theory to an action of $\Bir(\p^2)$ on a hyperbolic space.
A first instance of roughly the same idea was \cite{Danilov}, in the context of plane polynomial automorphisms (see also \cite{FurterLamy}).
The modern small cancellation machinery as developed in \cite{DGO} allowed A. Lonjou to prove the non simplicity of $\Bir(\p^2)$ over an arbitrary field, and the fact that every countable group is a subgroup of a quotient of $\Bir(\p^2)$ \cite{Lonjou}. 

Another source of normal subgroups for $\Bir(\p^2)$, of a very different nature, was discovered by the third author, when the ground field is $\R$ \cite{Zimmermann}.
In contrast with the case of an algebraically closed field where the Cremona group of rank 2 is a perfect group, she proved that the abelianisation of $\Bir_\R(\p^2)$ is an uncountable direct sum of $\Z/2$.
Here the main idea is to use an explicit presentation by generators and relations.
In fact a presentation of $\Bir(\p^2)$ over an arbitrary perfect field is available since \cite{IKT}, but because they insist in staying inside the group $\Bir(\p^2)$, they obtain very long lists. 
In contrast, if one accepts to consider birational maps between non-isomorphic varieties, the Sarkisov programme provides more tractable lists of generators.
Using this idea together with results of A.-S. Kaloghiros \cite{Kaloghiros}, the existence of abelian quotients for $\Bir(\p^2)$ was extended to the case of many non-closed perfect fields by the second and third authors \cite{LZ17}.

The present paper is a further extension in this direction, this time in arbitrary dimension, and over any ground field $\k$ which is a subfield  of $\C$.
Our first result is the following: 

\begin{maintheorem}\label{TheoremBirPnnotsimple}
For each subfield $\k\subseteq\C$ and each $n\ge 3$, there is a group homomorphism
\[
\Bir_\k(\p^n)\tto \bigoplus_{I} \Z/2
 \]
where the indexing set $I$ has the same cardinality as $\k$,
and such that the restriction to the subgroup of birational dilatations given locally by
\[\left\{(x_1,\dots,x_n)\mapsrat \left(x_1\alpha(x_2,\cdots,x_n),x_2,\dots,x_n\right)\mid \alpha\in\k(x_2,\dots,x_{n})^*\right\} \]
is surjective.
In particular, the Cremona group $\Bir_\k(\p^n)$ is not perfect and thus not simple.
\end{maintheorem}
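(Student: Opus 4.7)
My plan is to exploit the Sarkisov programme to write a presentation of $\Bir_\k(\p^n)$ in which both the generators and the relations have a transparent geometric meaning. Every birational map between Mori fibre spaces admits a Sarkisov decomposition into links of types I, II, III and IV, and by Hacon--McKernan and Kaloghiros any two such decompositions of the same map differ by insertion of \emph{elementary relations} coming from rank-$2$ fibrations. I therefore view $\Bir_\k(\p^n)$ as generated by the groupoid of Sarkisov links between Mori fibre spaces birational to $\p^n$, modulo trivial relations, conjugations by isomorphisms and elementary relations, and build the desired surjection onto $\bigoplus_I\Z/2$ by a link-by-link assignment.

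The values I assign are controlled by conic bundles. Given a conic bundle $\pi\colon X\to B$ birational to $\p^n$ (so $\dim B=n-1$), the discriminant locus $\Delta\subset B$ is a Weil divisor whose irreducible components $\Gamma$ each carry a canonical \'etale double cover $\widetilde\Gamma\to\Gamma$ parametrising the two components of the degenerate conic fibres. When $\Gamma$ has sufficiently large birational invariants (e.g.\ large covering gonality), the associated Bertini-type involution is a Sarkisov link of type II whose cover $\widetilde\Gamma\to\Gamma$ cannot be produced by any other link. I take $I$ to be the set of equivalence classes of such covers modulo the equivalences induced by links of types I, III and IV; the count $|I|=|\k|$ follows because $\p^{n-1}$ already carries that many non-isomorphic such covers over $\k$.

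The technical core of the proof is to check that the assignment vanishes on every elementary relation. An elementary relation is encoded by a rank-$2$ fibration $Y\to C$ with $\dim C=n-2$; its general fibre is a $2$-dimensional Mori dream space over the function field of $C$, and the sequence of Sarkisov links appearing in the relation is read off from relative $2$-dimensional Mori theory. Using Birkar's BAB theorem to bound the singular Fano contractions involved, one enumerates the finitely many shapes of such relations and shows that any ``sufficiently complicated'' conic-bundle link appears an even number of times in each of them. Calibrating the notion of ``sufficiently complicated'' so that there remain $|\k|$ independent classes while the bounded singular Fano families cause the mod-$2$ cancellation is the main obstacle, and is the place where the surface-case analysis of \cite{Zimmermann,LZ} has to be genuinely extended to arbitrary dimension.

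Finally, I compute the image on the dilatations. The projection onto the last $n-1$ coordinates $p\colon\p^n\dashrightarrow\p^{n-1}$ is birational to a $\p^1$-bundle, and each dilatation $\phi_\alpha\colon(x_1,\dots,x_n)\mapsrat(x_1\alpha(x_2,\dots,x_n),x_2,\dots,x_n)$ preserves $p$. Writing $\alpha=\prod_i f_i^{e_i}$ with $f_i\in\k[x_2,\dots,x_n]$ irreducible and $e_i\in\Z$, an explicit Sarkisov factorisation of $\phi_\alpha$ through type II links above the divisors $\{f_i=0\}$ shows that its image in $\bigoplus_I\Z/2$ is the sum of the basis vectors indexed by those $f_i$ with odd $e_i$ and sufficiently complicated vanishing locus. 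Letting $\alpha$ range over a parametrised family of irreducible polynomials with distinct vanishing loci yields the claimed surjectivity, and in particular forces $\Bir_\k(\p^n)$ to be neither perfect nor simple.
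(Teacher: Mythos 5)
Your scaffolding (Sarkisov generation, Kaloghiros-type relations, an assignment supported on ``complicated'' type \II links of conic bundles, BAB to control relations, dilatations over $\p^{n-1}$ to compute the image) is the same as the paper's, but two points are genuine gaps rather than details. First, the heart of the matter --- that the link-by-link assignment vanishes on \emph{every} elementary relation --- is only asserted; you yourself flag the calibration of ``sufficiently complicated'' as ``the main obstacle'' and leave it open. In the paper this is precisely the content of Propositions~\ref{pro:boundOnGenus} and \ref{pro:X4X3X2X1=id} together with the proof of Theorem~\ref{TheoremBirMori}, and without it there is no homomorphism at all. Moreover your description of the relations is off: elementary relations are dominated by rank~$3$ fibrations (a rank~$2$ fibration is a single Sarkisov link, Lemma~\ref{lem:rank 2 and link}), and the decisive case split is between base dimension $\le n-2$, where BAB bounds the covering gonality of any type \II conic-bundle link occurring in the relation, and base dimension $n-1$, where BAB plays no role and one must show instead that the relation has length $4$ with the two type \II links equivalent. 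Your ``relative $2$-dimensional Mori theory over a base of dimension $n-2$'' misses the second, crucial case.

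Second, your indexing set is misdefined. The hypersurface attached to a type \II link of conic bundles is the image $\Gamma\subset B$ of the blown-up codimension-$2$ centre, and by Lemma~\ref{lem:SarkiIIConic} it is \emph{not} contained in the discriminant: the fibre over a general point of $\Gamma$ is a smooth $\p^1$, and each of the two type \II divisors maps birationally to $\Gamma$, so there is no nontrivial double cover to record. Basing $I$ on discriminant components and their \'etale double covers is inconsistent with your own computation in the last paragraph, and it would in fact kill the statement: the model relevant for the dilatations is the trivial conic bundle $\p^1\times\p^{n-1}/\p^{n-1}$, whose discriminant is empty and remains empty along the elementary transformations factoring a dilatation, so any discriminant-based invariant vanishes identically on the dilatation subgroup and surjectivity fails. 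The invariant that works is the equivalence class of the marked conic bundle $(X/B,\Gamma)$ with $\covgon(\Gamma)$ above the threshold (general hypersurfaces of large degree in $\p^{n-1}$, via Theorem~\ref{thm:gonality}), the image of a dilatation being read off from the parity of $\div(\alpha)$ along each such $\Gamma$ (Lemma~\ref{lem:ImagePGL2}); the count $\lvert I\rvert=\lvert\k\rvert$ then comes from classifying these hypersurfaces up to birational equivalence of markings, which in high degree reduces to the $\PGL_n(\k)$-action.
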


We give below a few immediate comments, and a quick preview of the rest of the introduction where we will present several statements that generalise or complement Theorem \ref{TheoremBirPnnotsimple} in different directions. 
 
First we emphasise that this result contrasts with the situation in dimension 2 (over $\C$).
Indeed, as $\Bir_\C(\p^2)$ is generated by the simple group $\Aut(\p^2)=\PGL_3(\C)$ and one quadratic map birationally conjugated to a linear map, every non-trivial quotient of $\Bir_\C(\p^2)$ is non-abelian and uncountable. 

Another intriguing point at first sight is the indexing set $I$.
We shall be more precise later, but the reader should think of $I$ as a kind of moduli space of some irreducible varieties of dimension $n-2$.
Indeed to construct the group homomorphism we will see $\p^n$ as being birational to a $\p^1$-bundle over $\p^{n-1}$, and each factor $\Z/2$ is related to the choice of a general hypersurface in $\p^{n-1}$ of sufficiently high degree, up to some equivalence. 
Observe that in dimension $n = 2$ an irreducible hypersurface of $\p^{n-1}$ is just a point, and so cannot be of high degree, at least over~$\C$: this explains why the homomorphism of Theorem \ref{TheoremBirPnnotsimple} becomes trivial in the case of $\Bir_\C(\p^2)$.

The next natural question is to understand the kernel of the group homomorphism.
As will soon become clear, it turns out that $\Aut(\p^n)=\PGL_{n+1}(\k)$ is contained in the kernel.  
This implies that the normal subgroup generated by $\Aut(\p^n)$ and any finite subset of elements in $\Bir_\k(\p^n)$ is proper. 
Theorem \ref{TheoremTame} below will be a stronger version of this fact. 
We also point out that because of the already mentioned result from \cite{BZ}, the kernel of all our group homomorphisms is dense in $\Bir(\p^n)$, so the group homomorphisms $\Bir(\p^n)\to \Z/2$ that we construct are not continuous (when putting the non-trivial topology on $\Z/2$).

One can also ask about the possibility to get a homomorphism to a free product of $\Z/2$, instead of a direct sum. 
We will see that is is indeed possible, and  is related to the existence of many conic bundle models for $\p^n$ which are not pairwise square birational. See Theorems \ref{TheoremBirMori} and \ref{TheoremManyCB} below.

Finally, one can ask about replacing $\p^n$ by a nonrational variety.
In this direction, we will prove the following result about the group $\Bir(X)$ of birational transformations of a conic bundle $X/B$.

\begin{maintheorem} \label{TheoremBirXBnotsimple}
Let $B\subseteq \p^m$ be a smooth projective complex variety with $\dim B\ge2$, $P\to \p^m$ a decomposable $\p^2$-bundle $($projectivisation of a decomposable rank $3$ vector bundle$)$ and $X\subset P$ a smooth closed subvariety such that the projection to $\p^m$ gives a conic bundle $\eta\colon X\to B$.
Then there exists a group homomorphism
\[\Bir(X)\tto \bigoplus_\Z \Z/2,\]
the restriction of which to $\Bir(X/B)=\{\phi\in \Bir(X)\mid \eta\circ\phi=\eta\}$ is surjective.

Moreover, if there exists a subfield $\k\subseteq \C$ over which $X,B$ and $\eta$ are defined, the image of elements of $\Bir(X/B)$ defined over $\k$ is also infinite.
\end{maintheorem}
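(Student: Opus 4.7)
The plan is to apply the Sarkisov-theoretic framework the paper develops: since $\eta\colon X\to B$ is a conic bundle, $X$ carries a Mori fibre space structure, and any $\phi\in\Bir(X)$ factors into Sarkisov links between Mori fibre spaces birational to $X$. A Kaloghiros-style presentation expresses $\Bir(X)$ as generated by such links modulo elementary (triangle and square) Sarkisov relations, so it suffices to define an additive $\bigoplus_\Z \Z/2$-valued invariant on links that vanishes on the relations.

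To do so, I would choose a countable family $\{D_i\}_{i\in\Z}$ of irreducible hypersurfaces of $B$ of sufficiently high degree, pairwise inequivalent under the equivalence generated by square birational isomorphisms; the hypothesis $\dim B\ge 2$ ensures that infinitely many such $D_i$ exist, and since $B\subset\p^m$ the $D_i$ can be chosen defined over any prescribed subfield $\k\subseteq\C$ over which $B$ is defined. For each $i$, define $\nu_i$ to count, modulo $2$, the number of Sarkisov links of type II appearing in a given factorisation whose discriminant-jumping locus, transported back to $B$ via the square birational equivalence relating the link's base to $B$, is equivalent to $D_i$. The collection $\bigoplus_i\nu_i$ is the candidate homomorphism $\Bir(X)\to\bigoplus_\Z\Z/2$. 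The main difficulty, as in the earlier treatment of $\Bir(\p^n)$, is to verify that $\bigoplus_i\nu_i$ vanishes on the elementary Sarkisov relations: this is done by running through the classification of such relations and observing that in each one the type II links pair off with matching discriminant loci, so that their contributions cancel in each $\Z/2$-factor. The decomposable $\p^2$-bundle hypothesis on $P$ is used to make this classification and the tracking of discriminant loci explicit enough to carry out the case analysis.

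Finally, to witness surjectivity on $\Bir(X/B)$, I would exhibit for each marker $D_i$ a fibrewise birational involution $\iota_i\in\Bir(X/B)$ of the form of an elementary transformation of the conic bundle along a pair of multisections meeting exactly over $D_i$, or equivalently the involution on the generic smooth conic $X_\eta$ over $K=\C(B)$ associated with $D_i$. By construction $\nu_j(\iota_i)=\delta_{ij}$, so the $\iota_i$ surject onto $\bigoplus_\Z\Z/2$, giving the first part of the theorem. Since the $D_i$ may be chosen defined over $\k$, so can the $\iota_i$, which yields the additional statement that the image of $\k$-rational elements of $\Bir(X/B)$ is infinite.
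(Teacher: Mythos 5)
Your high-level strategy (a parity count of type \II conic bundle links per marking, vanishing on elementary relations, realized by fibrewise involutions) is indeed the paper's, but two steps of your argument have genuine gaps. First, the vanishing on elementary relations cannot be established by ``running through the classification'' of relations and pairing off type \II links: no such classification exists in dimension $\ge 3$, elementary relations are not only triangles and squares (they come from rank~$3$ fibrations and can have any length, cf.\ Examples~\ref{ex:P3_2lines} and \ref{ex:AZ}), and type \II conic bundle links do \emph{not} always pair off --- Example~\ref{ex:X3X2X1=1} is an elementary relation containing a single such link. The pairing only holds for links whose marking has covering gonality above a threshold, and proving this requires two nontrivial inputs that your sketch omits: Proposition~\ref{pro:boundOnGenus}, which uses the BAB conjecture (Birkar) to show that relations over a base of dimension $\le n-2$ cannot involve a type \II conic bundle link of large covering gonality, and Proposition~\ref{pro:X4X3X2X1=id}, which shows relations over an $(n-1)$-dimensional base containing such a link have length $4$ with the two type \II links equivalent as marked conic bundles. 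Relatedly, ``sufficiently high degree'' of the markers $D_i$ is not the right condition --- the homomorphism of Theorem~\ref{TheoremBirMori} kills every link whose marking has small \emph{covering gonality}, and high degree in $\p^m$ does not by itself give high covering gonality; moreover the decomposability of $P$ plays no role in the relation analysis (Theorem~\ref{TheoremBirMori} holds for any conic bundle), contrary to what you assert.

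Second, your surjectivity argument has the logic reversed. You fix arbitrary hypersurfaces $D_i$ and claim ``by construction'' an involution $\iota_i \in \Bir(X/B)$ with $\nu_j(\iota_i)=\delta_{ij}$; but when $X/B$ has no rational section (the generic fibre is a conic with no $\C(B)$-point), there is no evident involution whose marking is a \emph{prescribed} $D_i$, and this is exactly the delicate case Theorem~\ref{TheoremBirXBnotsimple} must cover. The paper proceeds in the opposite direction: using the decomposable structure of $P$ it writes down explicit rational sections $s\colon B \rat P$ not lying on $X$, takes the fibrewise involution given by projection from $s$ (Lemma~\ref{lem:CBInvolution}, whose local computation identifies the parity along $\Gamma$ with the multiplicity of $F(s)$ along $\Gamma$), and only \emph{then} obtains the marking as $\Gamma=\{F(s)=0\}$, proving via the $\BVA_p$ criterion and \cite[Theorem 1.10]{BDELU} that $\covgon(\Gamma)\to\infty$ as the degree of the section grows (Proposition~\ref{pro:largeimageAnyCB}). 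This yields infinitely many involutions with distinct nonzero images --- enough for the theorem --- without ever needing to hit a prescribed marker, which your proposal does not justify.
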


Theorem~\ref{TheoremBirXBnotsimple} applies to any product $X = \p^1 \times B$, to smooth cubic hypersurfaces $X  \subseteq \p^{n+1}$  (see Section~\ref{Sec:CubicVar} and in particular Corollary~\ref{cor:SmoothCubicBirXZ2} and Proposition~\ref{Prop:Cubicfreeproduct}), and to many other varieties of dimension $n \ge 3$ which are very far from being rational (see for instance \cite[Theorem 3]{KollarCB} and \cite[Theorem 1.1 and Corollary 1.2]{AhmOka}).
Of course it also includes the case of $X = \p^1 \times \p^{n-1}$ which is birational to $\p^n$, but observe that Theorem~\ref{TheoremBirPnnotsimple} is slightly stronger in this case, since there the set indexing the direct sum has the same cardinality as the ground field, and also because we can give an explicit subgroup, easy to describe, whose image is surjective.  

\subsection{Generators}\label{sec:Generators} 

As already mentioned, the Noether-Castelnuovo theorem provides simple generators of $\Bir(\p^2)$ when $\k$ is algebraically closed. 
Using Sarkisov links, there are also explicit (long) lists of generators of $\Bir(\p^2)$ for each field $\k$ of characteristic zero or more generally for each perfect field $\k$ \cite{Isk1991,Isk1996}. 
In dimension $n\ge 3$, we do not have a complete list of all Sarkisov links and thus are far from having an explicit list of generators for $\Bir(\p^n)$. 
The lack of an analogue to the Noether-Castelnuovo Theorem for $\Bir(\p^n)$ and the question of finding good generators was already cited in the article of the Encyclopedia above, in \cite[Question 1.6]{HMcK}, and also in the book of Enriques:

\begin{quote}
{\it Questo teorema non \`e estendibile senz'altro allo $S_n$ dove $n>2$; resta quindi insoluta la questione capitale di assegnare le pi\`u sempilici trasformazioni generatrici dell'intiero gruppo Cremona in $S_n$ per $n>2$.} \hfill\cite[p. 115]{Enriques}\footnote{``This theorem can not be easily extended to $\p^n$  where $n>2$; therefore, the main question of finding the most simple generating transformations of the entire Cremona group of $\p^n$ for $n>2$ remains open.''}
\end{quote}

A classical result, due to H. Hudson and I. Pan \cite{Hudson,Pan}, says that $\Bir(\p^n)$, for $n\ge 3$, is not generated by $\Aut(\p^n)$ and finitely many elements, or more generally by any set of elements of $\Bir(\p^n)$ of bounded degree. 
The reason is that one needs at least, for each irreducible variety $\Gamma$ of dimension $n-2$, one birational map that contracts a hypersurface birational to $\p^1\times \Gamma$. 
These contractions can be realised in $\Bir(\p^n)$ by \emph{Jonqui\`eres elements}, i.e.~elements that preserve a family of lines through a given point, which form a subgroup 
\[
\PGL_2(\k(x_2, \dots, x_n)) \rtimes \Bir(\p^{n-1}) \subseteq \Bir(\p^n).
\]

Hence, it is natural to ask whether the group $\Bir(\p^n)$ is generated by $\Aut(\p^n)$ and by Jonqui\`eres elements (a question for instance asked in \cite{PanSimis}).

We answer this question by the negative, in the following stronger form:

\begin{maintheorem}\label{TheoremTame} 
Let $\k$ be a subfield of $\C$, and $n \ge 3$. 
Let $S$ be a set of elements in the Cremona group $\Bir_\k(\p^n)$ that has cardinality smaller than the one of $\k$ $($for example $S$ finite, or $S$ countable if $\k$ is uncountable$)$, and let $G \subseteq \Bir_\k(\p^n)$ be the subgroup generated by $\Aut_\k(\p^n)$, by all Jonqui\`eres elements and by $S$.

Then, $G$ is contained in the kernel of a surjective group homomorphism 
\[\Bir_\k(\p^n) \tto \Z/2.\]
In particular $G$ is a proper subgroup of  $\Bir_\k(\p^n)$, and the same is true for the normal subgroup generated by $G$.
\end{maintheorem}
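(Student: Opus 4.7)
The aim is to produce a surjective homomorphism $\psi\colon\Bir_\k(\p^n)\twoheadrightarrow\Z/2$ whose kernel contains $G$. First, note that the homomorphism $\phi$ of Theorem~\ref{TheoremBirPnnotsimple} cannot be used directly: the dilatation subgroup $\{(x_1,\ldots,x_n)\mapsrat(x_1\alpha,x_2,\ldots,x_n) \mid \alpha\in\k(x_2,\ldots,x_n)^*\}$ is contained in the Jonqui\`eres subgroup, and Theorem~\ref{TheoremBirPnnotsimple} asserts that its $\phi$-image is already all of $\bigoplus_I\Z/2$. Consequently no composition $\pi\circ\phi\colon\Bir_\k(\p^n)\to\Z/2$ vanishes on Jonqui\`eres, so a genuinely new homomorphism is needed.

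The plan is to re-run the Sarkisov-link construction underlying Theorem~\ref{TheoremBirPnnotsimple}, but starting from a \emph{different} conic bundle model of $\p^n$. Concretely, choose a smooth projective variety $X$ birational to $\p^n$ equipped with a non-trivial conic bundle $\eta\colon X\to B$, chosen ``transverse'' to the trivial $\p^1$-bundle structure that Jonqui\`eres preserves. The same machinery should yield a surjective homomorphism
\[
\Psi\colon\Bir_\k(\p^n)=\Bir_\k(X)\twoheadrightarrow\bigoplus_J\Z/2
\]
with $|J|=|\k|$, and the key point to verify is that both $\Aut_\k(\p^n)$ and the entire Jonqui\`eres subgroup lie in $\ker\Psi$. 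The vanishing on $\Aut_\k(\p^n)$ parallels the situation of Theorem~\ref{TheoremBirPnnotsimple}; the vanishing on Jonqui\`eres, which is the main new content, should follow because Jonqui\`eres elements preserve a fibration different from $\eta$, so their Sarkisov decomposition relative to $\eta$ uses each distinguished link an even number of times.

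The set $S$ is then absorbed by a cardinality argument. For each $s\in S$ the image $\Psi(s)\in\bigoplus_J\Z/2$ has finite support, so $T:=\bigcup_{s\in S}\operatorname{supp}(\Psi(s))$ has cardinality at most $\max(|S|,\aleph_0)<|\k|=|J|$. Picking any $j_0\in J\setminus T$, the composition $\psi:=\pi_{j_0}\circ\Psi$ is still surjective, vanishes on $\Aut_\k(\p^n)$, on every Jonqui\`eres element, and on every $s\in S$, hence on the subgroup $G$ they generate. Since $\ker\psi$ is normal, the normal closure of $G$ is also contained in $\ker\psi$, and is therefore a proper subgroup.

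The hard part will be the transverse choice of $(X,B,\eta)$ together with the verification that every Jonqui\`eres element lies in $\ker\Psi$: extracting the needed parity cancellation---that each Jonqui\`eres element contributes trivially to every $\Z/2$ factor---will likely require an explicit analysis of Sarkisov diagrams for the non-standard conic bundle, exploiting that Jonqui\`eres elements are relative to a Mori fiber space structure distinct from $\eta$.
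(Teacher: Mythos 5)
Your overall strategy (replace the trivial $\p^1$-bundle model by other conic bundle structures on $\p^n$, show $\Aut$ and Jonqui\`eres die, then absorb $S$ by a support-counting argument) is in the spirit of the paper, and your final cardinality step is essentially the paper's. But the two claims you defer are exactly the content of the theorem, and the mechanism you propose for the crucial one is wrong. The assertion that every Jonqui\`eres element lies in $\ker\Psi$ because ``its Sarkisov decomposition relative to $\eta$ uses each distinguished link an even number of times'' has no justification and is not how the vanishing actually occurs. In the paper one splits the Jonqui\`eres group as $\Bir(\p^1\times\p^{n-1}/\p^{n-1})\rtimes J'$ with $J'\simeq\Bir(\p^{n-1})$. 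The fibrewise factor does produce odd contributions (this is precisely what makes Theorem~\ref{TheoremBirPnnotsimple} work), but by Lemma~\ref{lem:SarkisovOverBase} all the links it uses are links of conic bundles equivalent to the trivial one, so inside the free product of Theorem~\ref{TheoremBirMori} its image is confined to the single factor indexed by the trivial conic bundle class, which one then projects away; this uses in an essential way that the class you keep is \emph{non-equivalent} to the trivial class, i.e.\ the free-product (not parity) structure. The factor $J'$ is killed for a different reason: its elements decompose into links obtained from links in dimension $n-1$ by taking the product with $\p^1$, whose markings $\Gamma\times\p^1$ are uniruled, hence of covering gonality $1$, below the threshold of Theorem~\ref{TheoremBirMori}. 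Neither of these arguments appears in your sketch, and without them the kernel claim is an unproved hope.

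The second gap is the surjection $\Psi\colon\Bir_\k(\p^n)\tto\bigoplus_J\Z/2$ with $\lvert J\rvert=\lvert\k\rvert$ coming from a \emph{single} non-trivial conic bundle model. Nothing in the paper (or in your plan) establishes this: for a fixed non-trivial decomposable conic bundle, Proposition~\ref{pro:largeimageAnyCB} only gives a countably infinite image, which is insufficient for your cardinality argument when $\k$ is uncountable and $S$ is countably infinite (then both $\dim_{\F_2}$ of the image and $\lvert S\rvert$ could be $\aleph_0$). The paper circumvents this by constructing $\lvert\k\rvert$ pairwise non-equivalent conic bundle structures on $\p^n$ (conic bundles over $\p^2$ with elliptic discriminant curves distinguished via isogeny classes, Proposition~\ref{pro:EnoughCB}, leading to Theorem~\ref{TheoremManyCB}), keeping one $\Z/2$ per class and discarding the class of the trivial bundle; the unused factor is then found by a support count as in your last paragraph. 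To repair your plan you would either need to prove a Theorem-\ref{TheoremBirPnnotsimple}-type statement (image of cardinality $\lvert\k\rvert$) for one non-trivial conic bundle, or replace your single ``transverse'' model by such a family of mutually non-equivalent models.
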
 

It is interesting to make a parallel between this statement and the classical Tame Problem in the context of the affine Cremona group $\Aut(\A^n)$, or group of polynomial automorphisms.
This is one of the ``challenging problems'' on the affine spaces, described by H. Kraft in the Bourbaki seminar~\cite{KraftBourbaki}.
Recall that the tame subgroup $\Tame(\A^n) \subseteq \Aut(\A^n)$ is defined as the subgroup generated by affine automorphisms and by the subgroup of elementary automorphisms of the form $(x_1, \dots, x_n) \mapsto (ax_1 + P(x_2, \dots, x_n), x_2, \dots, x_n)$. 
This elementary subgroup is an analogue of the $\PGL_2(\k(x_{2}, \dots, x_n))$ factor in the Jonqui\`eres group, and of course the affine group is $\PGL_{n+1}(\k) \cap \Aut(\A^n)$.
The Tame Problem asks whether the inclusion $\Tame(\A^n) \subseteq \Aut(\A^n)$ is strict in dimension $n \ge 3$. 
It was solved in dimension $3$ over a field of characteristic zero in \cite{ShestakovUmirbaev}, and remains an open problem otherwise.

On the one hand, one could say that our Theorem~\ref{TheoremTame} is much stronger, since we consider the \emph{normal} subgroup generated by these elements, and we allow some extra generators.
It is not known (even if not very likely) whether one can generate $\Aut(\A^3)$ with linear automorphisms, elementary automorphisms and one single automorphism, and not even whether the normal subgroup generated by these is the whole group $\Aut(\A^3)$ (this last statement, even without the extra automorphism, seems more plausible). 

On the other hand, even in dimension 3 we should stress that Theorem~\ref{TheoremTame} does not recover a solution to the Tame Problem. 
Indeed, it seems plausible that the whole group $\Aut(\A^n)$ lies in the kernel of the group homomorphism to $\Z/2$ of Theorem~\ref{TheoremTame}. 
In fact, every element of $\Bir(\p^n)$ that admits a decomposition into Sarkisov links that contract only rational varieties (or more generally varieties birational to $\p^2 \times B$ for some variety $B$ of dimension $n-3$) is in the kernel of all our group homomorphisms (all are given by the construction of Theorem~\ref{TheoremBirMori} below), and it seems natural to expect that elements of $\Aut(\A^n)$ are of this type, but we leave this as an open question.
In fact we are not aware of any element of $\Aut(\A^3)$ which has been proved to lie outside the group generated, in $\Bir(\p^3)$, by linear and Jonqui\`eres maps: see \cite[Proposition 6.8]{BlancHeden} for the case of the Nagata automorphism, which can be generalised to any other automorphism given by a $\mathbb{G}_a$ action, as all algebraic subgroups of $\Bir(\p^3)$ isomorphic to $\mathbb{G}_a$ are conjugate \cite{BFT2}. 

\subsection{Overwiew of the strategy}

To give an idea of the way we construct group homomorphims from birational groups to $\Z/2$, first consider as a toy model the signature homomorphism on the symmetric group $S_n$.
One possible proof of the existence of the signature goes as follows.
A presentation by generators and relations of $S_n$ is
\[
S_n = \bigl\langle \tau_i = (i \, i+1) \mid \tau_i^2 = 1, (\tau_i \tau_{i+1})^3 = 1, (\tau_i \tau_j)^2 = 1 \bigr\rangle
\]
where the relations are for all $i = 1, \dots , n-1$ and all $n \ge j \ge i + 2$.
Then by sending each $\tau_i$ to the generator of $\Z/2$, one gets a group homomorphism because each relation has even length and so is sent to the trivial element.

Now we would like to apply the same strategy for a group $\Bir(Z)$ of birational transformations: use a presentation by generators and relations, send some of the generators to the generator of $\Z/2$, and check that all relations are sent to the trivial element.
The trick is that we do not apply this strategy directly to $\Bir(Z)$, but to a larger groupoid containing $\Bir(Z)$, where we are able to produce a nice presentation (as a groupoid) by generators and relations.  

To define this groupoid, first recall that by the Minimal model programme, every variety $Z$ which is covered by rational curves is birational to a Mori fibre space, and every birational map between two Mori fibre spaces is a composition of simple birational maps, called \emph{Sarkisov links} (see Definition~\ref{def:sarkisovLink}).
We are also able to give a description of the relations between Sarkisov links, in terms of \emph{elementary relations} (see Definition~\ref{def:elementary relation} and Theorem~\ref{thm:sarkisov}).
We associate to $Z$ the groupoid $\BirMori(Z)$ of all birational maps between Mori fibre spaces birational to $Z$.
The main idea is that even if we are primarily interested in describing homomorphisms from the group $\Bir(Z)$ to $\Z/2$, it turns out to be easier to first define such a homomorphism on the larger groupoid $\BirMori(Z)$, and then restrict to $\Bir(Z)$. 

\subsection{Construction of the groupoid homomorphism}

Now we describe Theorem~\ref{TheoremBirMori}, our main technical result, which is the base for all other theorems in this paper.

We concentrate on some special Sarkisov links, called \emph{Sarkisov links of conic bundles of type \II} (see Definitions~\ref{def:sarkisovLink} and~\ref{def:4TypesOfLinks}).
To each such link, we associate a marked conic bundle, which is a pair $(X/B,\Gamma)$, where $X/B$ is a conic bundle (a terminal Mori fibre space with $\dim B=\dim X-1$) and $\Gamma\subset B$ is an irreducible hypersurface (see Definition~\ref{def:markedConic} and Lemma~\ref{lem:SarkiIIConic}). 
We also define a natural equivalence relation between marked conic bundles (Definition~\ref{def:markedConic}).

For each variety $Z$, we denote by $\CB(Z)$ the set of equivalence classes of conic bundles $X/B$ with $X$ birational to $Z$, and for each class of conic bundles $C\in \CB(Z)$ we denote by $\MC(C)$ the set of equivalence classes of marked conic bundles $(X/B,\Gamma)$, where $C$ is the class of $X/B$.

The Sarkisov programme is established in every dimension \cite{HMcK} and relations among them are described in \cite{Kaloghiros}. 
Inspired by the latter, we define {\em rank~$r$ fibrations} $X/B$ (see Definition~\ref{def:rankFibration}); rank~$1$ fibrations are terminal Mori fibre spaces and rank~$2$ fibrations dominate Sarkisov links (see Lemma~\ref{lem:rank 2 and link}).
We prove that the relations among Sarkisov links are generated by {\em elementary relations} (Definition~\ref{def:elementary relation}), which we define as relations dominated by rank~$3$ fibrations (see Theorem~\ref{thm:sarkisov}).

We associate to each such Sarkisov link $\chi$ an integer $\covgon(\chi)$ that measures the degree of irrationality of the base locus of $\chi$ (see \S\ref{sec:gonality}). 
The BAB conjecture, proven in \cite{BirkarA, BirkarS}, tells us that the set of weak Fano terminal varieties of dimension $n$ form a bounded family and the degree of their images by a (universal) multiple of the anticanonical system is bounded by a (universal) integer $d$ (see Proposition~\ref{pro:BAB}). 
As a consequence, we show that any Sarkisov link $\chi$ of conic bundles of type \II appearing in an elementary relation over a base of small dimension satisfies $\covgon(\chi)\leq d$ (see Proposition~\ref{pro:boundOnGenus}). 
This and the description of the elementary relations over a base of maximal dimension and including a Sarkisov link of conic bundles of type \II (Proposition~\ref{pro:X4X3X2X1=id}) allows us to prove the following statement in \S\ref{sec:proofBirMori}. 
(Here we use the notation $\bigast$ for a free product of groups).

\begin{maintheorem}  \label{TheoremBirMori}
Let $n\ge3$. 
There is an integer $d > 1$ depending only on $n$, such that for every conic bundle $X/B$, where $X$ is a terminal variety of dimension $n$, we have a groupoid homomorphism 
\[\BirMori(X) \to \bigast_{C\in \CB(X)} \left(\bigoplus_{\MC(C)}\Z/2\right)\]
that sends each Sarkisov link of conic bundles $\chi$ of type \II with $\covgon(\chi) > \max\{ d, 8\conngon(X) \}$ onto the generator indexed by its associated marked conic bundle, and all other Sarkisov links and all automorphisms of Mori fibre spaces birational to $X$ onto zero. 

Moreover it restricts to group homomorphisms
\begin{align*}
\Bir(X) \to \bigast_{C\in \CB(X)} \left(\bigoplus_{\MC(C)}\Z/2\right)
,&& \Bir(X/B)\to \bigoplus_{\MC(X/B)} \Z/2.
\end{align*}
\end{maintheorem}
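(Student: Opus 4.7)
The plan is to construct the groupoid homomorphism by presenting $\BirMori(X)$ by generators and relations. By the Sarkisov programme in arbitrary dimension~\cite{HMcK}, the groupoid $\BirMori(X)$ is generated by Sarkisov links together with isomorphisms of Mori fibre spaces, and by Theorem~\ref{thm:sarkisov} every relation among these generators is a consequence of the \emph{elementary relations}, namely those dominated by rank~$3$ fibrations. It is therefore enough to define the map on generators as in the statement and to check that each elementary relation is sent to the identity of the target group.

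To spell this out: a Sarkisov link $\chi$ of conic bundles of type~\II\ with $\covgon(\chi) > \max\{d,8\conngon(X)\}$ is sent to the generator of $\Z/2$ indexed by its associated marked conic bundle class $[(X'/B',\Gamma)] \in \MC(C)$, where $C \in \CB(X)$ is the class of $X'/B'$; every other Sarkisov link and every isomorphism of Mori fibre spaces is sent to the identity. Since the target is a free product of $\F_2$-vector spaces, a word evaluates to the identity precisely when, inside each factor $C$, the contributions to $\bigoplus_{\MC(C)}\Z/2$ cancel and no nontrivial generators spread across several factors.

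An elementary relation is dominated by a rank~$3$ fibration $T \to S$ with $0 \le \dim S \le n-2$. If $\dim S \ge 1$, Proposition~\ref{pro:boundOnGenus} --- which rests on the BAB theorem of~\cite{BirkarA,BirkarS} as recorded in Proposition~\ref{pro:BAB} --- provides an integer $d = d(n)$ such that every Sarkisov link of conic bundles of type~\II\ occurring in the relation satisfies $\covgon(\chi) \le d$; every generator of the relation is therefore sent to the identity. The remaining case $\dim S = 0$ is handled by Proposition~\ref{pro:X4X3X2X1=id}, which describes every elementary relation containing a Sarkisov link of conic bundles of type~\II\ with $\covgon(\chi) > \max\{d,8\conngon(X)\}$: it takes the form $\chi_4 \chi_3 \chi_2 \chi_1 = \id$, where $\chi_1$ and $\chi_3$ are the only two Sarkisov links of conic bundles of type~\II\ appearing and their associated marked conic bundles agree in some $\MC(C)$. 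Thus the nontrivial images lie in a single free-product factor and their sum is $g + g = 0$.

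Restricting the resulting groupoid homomorphism to endomorphisms of $X$ gives the group homomorphism $\Bir(X) \to \bigast_{C} \bigoplus_{\MC(C)}\Z/2$. For the further restriction to $\Bir(X/B)$, every $\phi \in \Bir(X/B)$ admits a Sarkisov factorisation made of links over $B$, that is, of Sarkisov links of conic bundles of type~\II\ whose sources are equivalent to $X/B$; the image of $\phi$ therefore lies in the single summand $\bigoplus_{\MC(X/B)}\Z/2$. The main obstacle throughout is the verification at $\dim S = 0$: Proposition~\ref{pro:X4X3X2X1=id} has to force the two high-$\covgon$ Sarkisov links of conic bundles of type~\II\ in a length-four relation to share an equivalent marked conic bundle, and it is precisely the threshold $8\conngon(X)$ in the definition of ``high $\covgon$'' that rules out the degenerate configurations where such a pairing could fail.
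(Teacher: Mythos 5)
Your overall strategy is exactly the paper's: generate $\BirMori(X)$ by Sarkisov links and automorphisms, use Theorem~\ref{thm:sarkisov} to reduce to elementary relations, kill the relations over small bases by Proposition~\ref{pro:boundOnGenus} (BAB), and pair up the high-gonality links via Proposition~\ref{pro:X4X3X2X1=id}. But your case analysis on the base of the rank~$3$ fibration is wrong in a way that leaves the essential case untreated. A rank~$3$ fibration $T/S$ with $\dim T=n$ can have $\dim S$ anywhere from $0$ to $n-1$, not $\le n-2$ as you assert. Proposition~\ref{pro:boundOnGenus} applies precisely when $\dim S\le n-2$ (including $\dim S=0$), and in that range it shows that \emph{no} link of conic bundles of type \II with $\covgon>\max\{d,8\conngon(X)\}$ can occur, so those relations are trivially killed. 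Proposition~\ref{pro:X4X3X2X1=id} requires $\dim S=\dim T-1=n-1$; invoking it for $\dim S=0$ is outside its hypotheses (for $n\ge3$ one cannot have $\dim T=1$). Consequently the only case where a high-gonality link can actually appear in an elementary relation, namely $\dim S=n-1$, is never addressed in your argument, and this is exactly the case where the cancellation $\chi_1\sim\chi_3$ must be proved. In the paper one argues: if some $\chi_i$ in the relation has $\covgon(\chi_i)>\max\{d,8\conngon(X)\}$, then Proposition~\ref{pro:boundOnGenus} forces $\dim S=n-1$, and then Proposition~\ref{pro:X4X3X2X1=id} gives $t=4$ with $\chi_3$ equivalent to $\chi_1$; one then applies the same proposition to the cyclic shift $\chi_1\circ\chi_4\circ\chi_3\circ\chi_2=\id$ to handle $\chi_2,\chi_4$, which may themselves be type \II conic bundle links (they are then equivalent to each other), contrary to your claim that $\chi_1,\chi_3$ are the only type \II links in the relation.

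Two secondary points. First, the threshold $8\conngon(X)$ does not play the role you ascribe to it: Proposition~\ref{pro:X4X3X2X1=id} only needs $\covgon(\chi_1)>1$, while $8\conngon$ enters through the bound $\covgon(\chi)\le\max\{d_n,8\conngon(T)\}$ of Proposition~\ref{pro:boundOnGenus} (it comes from the surface-fibre case in its proof), so it is what makes the small-base case harmless, not what rules out degenerate pairings over a maximal-dimensional base. Second, for the restriction to $\Bir(X/B)$ the factorisation of Lemma~\ref{lem:SarkisovOverBase} consists of links over $B$ of types \I, \II and \III (the intermediate bases $B_i$ are birational to $B$ but need not equal it); only the type \II ones contribute, and their marked conic bundles are equivalent to ones on $X/B$, which is what places the image in $\bigoplus_{\MC(X/B)}\Z/2$.
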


In order to deduce Theorem~\ref{TheoremBirPnnotsimple}, we study the image of the group homomorphisms from $\Bir(X)$ and $\Bir(X/B)$ provided by Theorem~\ref{TheoremBirMori}, for some conic bundle $X/B$.
In particular, we must check that these restrictions are not the trivial morphism.
We give a criterion to compute the image in \S\ref{sec:criterion}.
We apply this criterion to show that the image is very large if the generic fibre of $X/B$ is $\p^1$ (or equivalently if $X/B$ has a rational section, or is equivalent to $( \p^1\times B)/B$). 
This is done in \S\ref{sec:trivialBundle} and allows us to prove Theorem~\ref{TheoremBirPnnotsimple}. 
Then in \S\ref{sec:nontrivialBundle} we study the more delicate case where the generic fibre $X/B$ is not $\p^1$ (or equivalently if $X/B$ has no rational section), and show that for each conic bundle $X/B$, the image of $\Bir(X/B)$ by the group homomorphism of Theorem~\ref{TheoremBirMori} contains an infinite direct sum of $\Z/2$ (Proposition~\ref{pro:largeimageAnyCB}). This allows to prove  Theorem~\ref{TheoremBirXBnotsimple}. 

Finally, let us mention that
\cite{Zimmermann,LZ17,Schneider} construct homomorphisms from plane Cremona groups over certain non algebraically closed perfect fields, which we can see as two-dimensional special cases of the homomorphisms from Theorem~\ref{TheoremBirMori}.
The homomorphism in \cite{LZ17,Schneider} is in fact constructed with the same strategy as the one employed here, replacing the covering gonality with the size of Galois orbits, while \cite{Zimmermann} works with generators and relations inside $\Bir_\R(\p^2)$.

\subsection{Non-equivalent conic bundle structures}
 
Coming back to the case of $\p^n$, we study the free product structure appearing in Theorem \ref{TheoremBirMori}.
We want to prove that the indexing set $\CB(\p^n)$ is large. 
This is equivalent to the question of existence of many non-equivalent conic bundle structures on $\p^n$:
Indeed it follows from our description of relations (Proposition~\ref{pro:X4X3X2X1=id}) that two Sarkisov links of sufficiently high covering gonality on non-equivalent conic bundles cannot be part of a same elementary relation, as reflected also in Theorem~\ref{TheoremBirMori}.
Using conic bundles over $\p^2$ with discriminant an elliptic curve, we manage to produce such examples, and we get the following.

\begin{maintheorem}  \label{TheoremManyCB}
Let $n\ge3$ and let $\k\subseteq \C$ be a subfield. There is a surjective group homomorphism 
\[\Bir_\k(\p^n) \tto \bigast_{J} \Z/2,\]
where the indexing set $J$ has the same cardinality as $\k$.
In particular, every group generated by a set of involutions with cardinality smaller or equal than $\lvert \k\rvert$ is a quotient of $\Bir_\k(\p^n)$. 
Moreover, the group homomorphism that we construct admits a section, so $\Bir_\k(\p^n)$ is a semi-direct product with one factor being a free product.  
\end{maintheorem}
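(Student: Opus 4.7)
The plan is to specialise Theorem~\ref{TheoremBirMori} to $X = \p_\k^n$ (equipped with any fixed conic bundle model, for instance $\p^1 \times \p^{n-1} \to \p^{n-1}$ after a birational identification), and then to supply two independent ingredients: a subset $J \subseteq \CB(\p^n)$ of cardinality $|\k|$, and, for each $C \in J$, an explicit involution in $\Bir_\k(\p^n)$ whose image in the $C$-factor is the generator.

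Composing the homomorphism of Theorem~\ref{TheoremBirMori} with the collapse $\bigoplus_{\MC(C)} \Z/2 \tto \Z/2$ sending every basis vector to $1$, and with the projection $\bigast_{C \in \CB(\p^n)} \Z/2 \tto \bigast_{C \in J} \Z/2$ killing the factors outside $J$, produces a group homomorphism
\[
\bar\Phi\colon \Bir_\k(\p^n) \longto \bigast_{C \in J} \Z/2.
\]
Once involutions $\iota_C \in \Bir_\k(\p^n)$ are produced with $\bar\Phi(\iota_C)$ equal to the generator of the $C$-factor, two things follow at once: the map $\bar\Phi$ is surjective, and by the universal property of the free product the assignment $\iota_C \mapsto $ generator defines a section $\bigast_J \Z/2 \to \Bir_\k(\p^n)$, exhibiting $\Bir_\k(\p^n)$ as a semi-direct product. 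The statement about quotients is then formal: any group generated by at most $|\k|$ involutions is a quotient of $\bigast_J \Z/2$, hence of $\Bir_\k(\p^n)$ via the section.

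To construct $J$ we follow the hint before the statement and use conic bundles with elliptic discriminant. For each elliptic curve $E/\k$ embedded as a smooth plane cubic, fix a smooth rational conic bundle $X_E \to \p^2$ with discriminant $E$ and a prescribed \'etale double cover $\tilde E \to E$; such models exist through classical constructions (e.g.\ a $(2,2)$-divisor in a suitable $\p^1$-bundle over $\p^2$, after a Sarkisov unlinking ensuring rationality of $X_E$). For $n \geq 4$, replace $X_E$ by $X_E \times \p^{n-3} \to \p^2 \times \p^{n-3}$, which remains a conic bundle birational to $\p^n$. The Prym datum $(E, \tilde E \to E)$ is a birational invariant of the square-equivalence class of $X_E$, so distinct $j$-invariants yield distinct elements of $\CB(\p^n)$. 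Since there are $|\k|$ many $j$-invariants in $\k$, this provides $J$ of the required size.

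For each $C_E \in J$, pick an irreducible hypersurface $\Gamma$ in the base of degree large enough that the Sarkisov link $\chi_{C_E}$ of type~\II induced by the Bertini-type involution exchanging the two rulings of the conic bundle above a general point of $\Gamma$ satisfies $\covgon(\chi_{C_E}) > \max\{d, 8\conngon(\p^n)\}$. By Theorem~\ref{TheoremBirMori} this link is sent to the $(C_E, \Gamma)$ generator of the target groupoid, so after collapsing and projecting its image under $\bar\Phi$ is exactly the generator of the $C_E$-factor; as $\chi_{C_E}$ is an involution by construction, we set $\iota_{C_E} := \chi_{C_E}$. The main obstacle is the construction of the family $(C_E)_E$: one must check both the rationality of each $X_E$ (so that $C_E \in \CB(\p^n)$) and, most importantly, that the Prym datum $(E, \tilde E \to E)$ separates square-equivalence classes in $\CB(\p^n)$, an invariance statement for the specific equivalence relation used in Definition~\ref{def:markedConic}. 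Once these points are settled, the theorem follows essentially formally from Theorem~\ref{TheoremBirMori}.
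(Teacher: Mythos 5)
Your overall skeleton (Theorem~\ref{TheoremBirMori}, then a family of pairwise non-equivalent conic bundle structures on $\p^n$ indexed by a set of cardinality $\lvert\k\rvert$, then one involution per class with nontrivial image, then the universal property of the free product for surjectivity and the section) is exactly the paper's strategy, and the formal part at the end is fine. The gaps are in the two inputs you postpone. First, your separation of classes in $\CB(\p^n)$ rests on the assertion that the Prym datum $(E,\tilde E\to E)$ is an invariant of the equivalence class of $X_E\times\p^{n-3}$ over $\p^2\times\p^{n-3}$. No such statement is available: the equivalence of Definition~\ref{def:conicBundle} allows arbitrary birational maps of the base $\p^2\times\p^{n-3}$, which need not respect the projection to $\p^2$, and intermediate-Jacobian/Prym arguments are specific to threefolds. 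This is precisely what the paper replaces by Proposition~\ref{pro:Isogenies}, whose proof requires the local analysis of discriminants under birational maps of flat conic fibrations (Lemma~\ref{lem:FlatConicFibDisc}); and even granting that proposition, the invariant one extracts is only the existence of surjective morphisms $\Delta_1\tto\Delta_2$ and $\Delta_2\tto\Delta_1$, so \emph{distinct $j$-invariants do not suffice}: isogenous elliptic curves with different $j$-invariants would not be separated. One needs $\lvert\k\rvert$ pairwise non-isogenous curves arising from conic bundles rational over $\k$, which is the content of Lemma~\ref{lem:Isogenies} (bad reduction at chosen primes, plus a countability argument via modular polynomials when $\k$ is uncountable) combined with the explicit models of Lemma~\ref{lem:Xxi}, which are $\p^1$-bundles via the first projection and hence rational over $\k$ --- your ``$(2,2)$-divisor after Sarkisov unlinking'' neither guarantees rationality over $\k$ (needed so that the conjugation $\psi_i$ and the involutions lie in $\Bir_\k(\p^n)$) nor produces the prescribed discriminant with control over the field of definition.

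Second, the involutions. A Sarkisov link of conic bundles of type \II is a map between two Mori fibre spaces and is not in general an element of $\Bir(X_E/B)$, and ``the Bertini-type involution above a general point of a hypersurface $\Gamma$ of large degree'' does not come with any argument that an element of $\Bir_\k(X_E/B)$ exists whose parity (Definition~\ref{def:ParityPhiGamma}) is odd exactly along an irreducible $\Gamma$ with $\covgon(\Gamma)$ exceeding the threshold $\max\{d,8\conngon(\p^n)\}$; large degree of $\Gamma$ alone does not give this, and when the conic bundle has no rational section the elements of $\Bir(X_E/B)$ are not easy to write down. The paper supplies this via Lemma~\ref{lem:CBInvolution} (fibrewise involutions induced by projection from a rational section of the ambient $\p^2$-bundle, with parity computed from the multiplicity of $F(s)$ along $\Gamma$) together with Proposition~\ref{pro:largeimageAnyCB}, where $\Gamma$ is realised as the image of a complete intersection $\hat\Gamma\subset X_E$ and its covering gonality is bounded below through the $\BVA_p$ property of $K_{\hat\Gamma}$ (Theorem~\ref{thm:BVAK}); the $\k$-rationality of the defining polynomials is also checked there so that the involutions are defined over $\k$. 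Until you supply arguments for these two inputs --- the separation statement in the stable range with non-isogenous (not merely non-isomorphic) discriminants, and the explicit $\k$-involutions with high-gonality markings --- the proof is incomplete, although once they are granted your deduction of surjectivity, of the section, and of the statement on groups generated by involutions is correct and matches the paper.
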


A first consequence is Theorem \ref{TheoremTame}. 
Other complements are given in Section~\ref{sec:Complements}.

First we get the SQ-universality of $\Bir_\k(\p^n)$, meaning that any countable group is a subgroup of a quotient of $\Bir_\k(\p^n)$.
But in fact, many natural subgroups are quotients of $\Bir_\k(\p^n)$, with no need to passing to a subgroup: this includes dihedral and symmetric groups, linear groups, and the Cremona group of rank 2 (see \S\ref{sec:SQ}). 

Another consequence of our results is that the group $\Bir_\k(\p^n)$ is not hopfian if it is generated by involutions, for each subfield $\k\subseteq \C$ and each $n\ge 3$ (Corollary~\ref{Cor:Hopfian}). This is another difference with the dimension $2$, as $\Bir_\C(\p^2)$ is hopfian and generated by involutions (see \S\ref{sec:Hopfian}).

All our results hold over any field abstractly isomorphic to a subfield of $\C$ (\S\ref{sec:field}). 
This is the case of most field of characteristic zero that are encountered in algebraic geometry: 
for instance, any field of rational functions of any algebraic variety defined over a subfield of $\C$. 

Another feature of the Cremona groups in higher dimension is that the group $\Bir_\C(\p^n)$ is a free product of uncountably many distinct subgroups, amalgamated over the intersection of the subgroups, which is the same for any two subgroups. This strong version of an amalgamated product (Theorem~\ref{thm:AmalgamatedProduct}) is again very different from $\Bir_\C(\p^2)$ (which is not a non-trivial amalgam, as already explained) and generalises to other varieties as soon as they have two non-equivalent conic bundle structures. 
Again this result can be generalised to subfields of $\C$.

Theorem~\ref{thm:AmalgamatedProduct} implies that $\Bir(\p^n)$ acts non-trivially on a tree. More generally, for each conic bundle $X/B$, we provide a natural action of $\Bir(X)$ on a graph constructed from rank~$r$ fibrations birational to $X$ (see \S\ref{sec:graph}).

\subsection*{Aknowledgements}

We thank 
Hamid Ahmadinezhad, Marcello Bernardara, Cau\-cher Birkar, Christian B\"ohnig, Hans-Christian Graf von Bothmer, Serge Cantat, Ivan Chel\-tsov, Tom Ducat, Andrea Fanelli, Enrica Floris, Jean-Philippe Furter, Marat Gizatullin, Philipp Habegger, Yang He, Anne-Sophie Kaloghiros, Vladimir Lazi\'c, Zsolt Patakfalvi, Yuri Prokhorov, Miles Reid and Christian Urech
for interesting discussions related to this project.

\section{Preliminaries}
\label{sec:preliminaries}
Unless explicitly stated otherwise, all ambient varieties are assumed to be projective, irreducible, reduced and defined over the field $\C$ of complex numbers.

This restriction on the ground field comes from the fact that this is the setting of many references that we use, such as \cite{BCHM,HMcK,Kaloghiros,KKL}.
It seems to be folklore that all the results in these papers are also valid over any algebraically closed field of characteristic zero, but we let the reader take full responsibility if he is willing to deduce that our results automatically hold over such a field.
However, in Sections \ref{sec:surjectivity} and \ref{sec:freeProduct}, see also \S\ref{sec:field}, we will show how to work over fields that can be embedded in $\C$.

General references for this section are \cite{KM, LazarsfeldI, BCHM}.

\subsection{Divisors and curves}
\label{sec:divisors}

Let $X$ be a normal variety, $\Div(X)$ the group of Cartier divisors, and $\Pic(X) = \Div(X)/\sim$ the Picard group of divisors modulo linear equivalence. 
The \emph{N\'eron-Severi} space $N^1(X) = \Div(X) \otimes \R / \equiv$ is the space of $\R$-divisors modulo numerical equivalence.
This is a finite-dimensional vector space whose dimension $\rho(X)$ is called the \emph{Picard rank} of $X$. 
We denote $N_1(X)$ the dual space of 1-cycles with real coefficients modulo numerical equivalence. 
We have a perfect pairing $N^1(X) \times N_1(X) \to \R$ induced by intersection.
If we need to work with coefficients in $\Q$ we will use notation such as $N^1(X)_\Q := \Div(X) \otimes \Q / \equiv$ or $\Pic(X)_\Q := \Pic(X) \otimes \Q$. 
We say that a Weil divisor $D$ on $X$ is \emph{$\Q$-Cartier} if $mD$ is Cartier for some integer $m >0$.
The variety $X$ is \emph{$\Q$-factorial} if all Weil divisors on $X$ are $\Q$-Cartier.
An element in $\Div(X) \otimes \Q$ is called a $\Q$-divisor.

First we recall a few classical geometric notions attached to a $\Q$-divisor $D$.
Let $m$ be a sufficiently large and divisible integer. 
$D$ is \emph{movable} if the base locus of the linear system $\lvert mD \rvert$ has codimension at least 2.
$D$ is \emph{big} if the map associated with $\lvert mD \rvert$ is birational.
Similarly, $D$ is \emph{semiample} if $\lvert mD \rvert$ is base point free, and $D$ is \emph{ample} if furthermore the associated map is an embedding.
Finally, $D$ is \emph{nef} if for any curve $C$ we have $D \cdot C \ge 0$.

Now we recall how the numerical counterparts of these notions define cones in $N^1(X)$.
The effective cone $\Eff(X) \subseteq N^1(X)$ is the cone generated by effective divisors on $X$.
Its closure $\widebar{\Eff}(X)$ is the cone of \emph{pseudo-effective} classes.
Similarly we denote $\NE(X) \subseteq N_1(X)$ the cone of effective 1-cycles, and $\widebar{\NE}(X)$ its closure. 
By Kleiman's criterion, a divisor $D$ is ample if and only if $D \cdot C >0$ for any 1-cycle $C \in \widebar{\NE}(X)$. 
It follows that the cone $\Ample(X)$ of ample classes is the interior of the closed cone $\Nef(X) \subseteq N^1(X)$ of nef classes.
Similarly, the interior of the pseudo-effective cone $\widebar{\Eff}(X)$ is the big cone $\BigD(X)$:
Indeed a class $D$ is big if and only if $D \equiv A + E$ with $A$ ample and $E$ effective.
A class is semiample if it is the pull-back of an ample class by a morphism. 
Finally the \emph{movable} cone $\widebar{\Mov}(X)$ is the closure of the cone spanned by movable divisors, and we will denote by $\IntMov(X)$ its interior.

One should keep in mind the following inclusions between all these cones:
\[\begin{tikzcd}[column sep=small, row sep=small]
\Ample(X) \ar[r,phantom,"\subseteq"] &
\text{Semiample}(X) \ar[r,phantom,"\subseteq"] & 
\Nef(X) \ar[r,phantom,"\subseteq"] \ar[d,phantom,"\verteq"] &
\widebar{\Mov}(X) \ar[r,phantom,"\subseteq"] & 
\widebar{\Eff}(X) \ar[d,phantom,"\verteq"]\\
&& \widebar{\Ample}(X) && \widebar{\BigD}(X)
\end{tikzcd}\]
We say that a 1-cycle $C \in \widebar{\NE}(X)$ is \emph{extremal} if any equality $C = C_1 + C_2$ inside $\widebar{\NE}(X)$ implies that $C, C_1, C_2$ are proportional.

\subsection{Maps} 

Let $\pi\colon X \to Y$ be a surjective morphism between normal varieties.
We shall also denote $X/Y$ such a situation.
The \emph{relative Picard group} is the quotient  $\Pic(X/Y):=\Pic(X)/\pi^*\Pic(Y)$.

We say that a curve $C \subseteq X$ is \emph{contracted} by $\pi$ if $\pi(C)$ is a point.
The subsets $\NE(X/Y) \subseteq N_1(X/Y) \subseteq N_1(X) $ are respectively the cone and the subspace generated by curves contracted by $\pi$.
The relative N\'eron-Severi space $N^1(X/Y)$ is the quotient of $N^1(X)$ by the orthogonal of $N_1(X/Y)$.
The dimension $\rho(X/Y)$ of $N^1(X/Y)$, or equivalently $N_1(X/Y)$, is the \emph{relative Picard rank} of $\pi$.
If $\pi$ has connected fibres, then $\rho(X/Y) = 0$ if and only if $\pi$ is an isomorphism, because a bijective morphism between normal varieties is an isomorphism.

We denote by $\Eff(X/Y)$, $\Nef(X/Y)$, $\Ample(X/Y)$, $\BigD(X/Y)$, $\widebar{\Mov}(X/Y)$ the images of the corresponding cones of $N^1(X)$ in the quotient $N^1(X/Y)$.
If $D \in N^1(X)$ is a class that projects to an element in $\Nef(X/Y)$, we says that $D$ is $\pi$-nef.
Equivalently, $D$ is $\pi$-nef if $D \cdot C \ge 0$ for any $C \in \NE(X/Y)$.
Similarly, we define the notion of $\pi$-ample, $\pi$-big, $\pi$-effective.
In particular a class $D$ is $\pi$-ample if $D \cdot C > 0$ for any $C \in \widebar{\NE}(X/Y)$.

Geometrically, a $\Q$-divisor $D$ is $\pi$-ample if the restriction of $D$ to each fibre is ample, and $D$ is \emph{$\pi$-big} if the restriction of $D$ to the generic fibre is big.
We have the following characterisation for this last notion:

\begin{lemma}[{\cite[Lemma 3.23]{KM}}]
\label{lem:piBig}
Let $\pi\colon X \to Y$ be a surjective morphism between normal varieties.
A $\Q$-divisor $D$ on $X$ is $\pi$-big if and only if we can write $D$ as a sum
\[
D = \pi\text{-ample} + \text{effective}.
\]
\end{lemma}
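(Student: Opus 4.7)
The plan is to treat the two implications separately, the forward direction being immediate and the reverse requiring a relative version of Kodaira's lemma. For ($\Leftarrow$), restrict to the generic fibre $F = X_\eta$, which is projective over $\k(Y)$: if $D \equiv A + E$ with $A$ a $\pi$-ample $\Q$-divisor and $E$ effective, then $A|_F$ is ample and $E|_F$ is effective. The classical fact that ample plus effective is big on a projective variety---via the injection $H^0(mA|_F) \hookrightarrow H^0(m(A|_F+E|_F))$ induced by multiplication with the section defining $mE|_F$, combined with $h^0(mA|_F)$ growing like $m^{\dim F}$---shows that $D|_F$ is big, i.e.\ $D$ is $\pi$-big.

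For the reverse direction, fix a $\pi$-ample divisor $H$ on $X$ and apply the absolute Kodaira lemma on $F$: since $D|_F$ is big and $H|_F$ is ample, there exist $m > 0$ and an effective divisor $E_F$ on $F$ with $mD|_F \sim H|_F + E_F$. Let $\bar E$ be the closure on $X$ of the horizontal components of $E_F$, so that $\bar E|_F = E_F$. Then $(mD - H - \bar E)|_F \sim 0$ produces $f \in \k(X)^* = \k(F)^*$ whose divisor on $F$ realises this equivalence, and setting $V := mD - H - \bar E - \div_X(f)$ gives a divisor that vanishes on $F$, hence is vertical (supported in fibres over a divisor of $Y$), and satisfies $mD \sim H + \bar E + V$.

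The main obstacle is that $V$ need not be effective. Write $V = V^+ - V^-$ with $V^{\pm}$ effective and of disjoint support; then $V^-$ is supported on finitely many fibres $\pi^{-1}(P_i)$ over prime divisors $P_i$ of $Y$, and a sufficiently large multiple $B = k \sum P_i$ satisfies $\pi^*B \geq V^-$ as divisors on $X$. The projection formula gives $\pi^*B \cdot C = B \cdot \pi_*C = 0$ for every $\pi$-contracted curve $C$, so $\pi^*B$ is $\pi$-numerically trivial and the class of $H - \pi^*B$ in $N^1(X/Y)$ remains $\pi$-ample. Rewriting
\[
mD \sim (H - \pi^*B) + \bigl(\bar E + V^+ + (\pi^*B - V^-)\bigr)
\]
and dividing by $m$ then exhibits $D$ as a $\pi$-ample $\Q$-divisor plus an effective $\Q$-divisor, completing the proof.
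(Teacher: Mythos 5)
The paper does not prove this lemma at all: it is quoted directly from \cite[Lemma 3.23]{KM}, so there is no internal proof to compare against. Your overall strategy is the standard one (easy direction by restricting to the generic fibre; hard direction by the relative Kodaira argument, splitting off the horizontal closure of an effective divisor on the generic fibre and correcting by a vertical divisor), and most steps are fine, but the handling of the negative vertical part $V^-$ contains a genuine gap as written.

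You claim that $V^-$ is ``supported on finitely many fibres $\pi^{-1}(P_i)$ over prime divisors $P_i$ of $Y$''. This is false in general: a vertical prime divisor of $X$ maps onto a proper closed subset of $Y$ which may have codimension $\geq 2$ (for instance an exceptional divisor created by a blow-up of $X$ over a small centre of $Y$), so the images of the components of $V^-$ need not be divisors at all, and no choice of the $P_i$ as these images makes $\pi^*(k\sum P_i)\geq V^-$ meaningful. Moreover, even when such an image is a divisor, $Y$ is only assumed normal, so a prime divisor of $Y$ need not be $\Q$-Cartier and the pull-back $\pi^*B$ you use (both for the inequality $\pi^*B\geq V^-$ and for the projection formula) need not be defined. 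Both defects are repaired by the same modification: since the ambient varieties are projective, choose an effective ample (in particular Cartier) divisor $B_0$ on $Y$ whose support contains the images of all components of $V^-$; then every component of $V^-$ occurs with positive multiplicity in $\pi^*B_0$, so $k\pi^*B_0\geq V^-$ for $k\gg 0$, the projection formula applies, and $H-k\pi^*B_0$ remains $\pi$-ample by the numerical characterisation of relative ampleness recalled in Section 2 of the paper. With this correction your argument goes through; note also that it produces $D$ $\Q$-linearly equivalent to a sum ($\pi$-ample)$+$(effective) rather than literally equal to one, which is harmless since $\pi$-ampleness depends only on the class and this is the form in which the lemma is used.
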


When the morphism $\pi\colon X \to Y$ is birational, the \emph{exceptional locus} $\Ex(\pi)$ is the set covered by all contracted curves.
Assume moreover that $\rho(X/Y) = 1$, and that $X$ is $\Q$-factorial.
Then we are in one of the following situations \cite[Prop 2.5]{KM}: either $\Ex(\pi)$ is a prime divisor, and we say that $\pi$ is a \emph{divisorial contraction}, or $\Ex(\pi)$  has codimension at least 2 in $X$, and we say that $\pi$ is a \emph{small contraction}. 
In this case, $Y$ is not $\Q$-factorial.

Given three normal varieties $X,Y,W$ together with surjective morphisms $X/W$, $Y/W$, we say that $\phi\colon X\rat Y$ is a \emph{rational map over $W$} if we have a commutative diagram
\[
\begin{tikzcd}[link]
X \ar[rd] \ar[rr,dashed,"\phi"] && Y\ar[ld] \\
& W
\end{tikzcd}
\]

Now let $\phi\colon X \rat Y$ be a birational map.
Any Weil divisor $D$ on $X$ is sent to a well-defined cycle $\phi(D)$ on $Y$, and by removing all components of codimension $\ge 2$ we obtain a well-defined divisor $\phi_* D$: one says that $\phi$ induces a map in codimension~1.
If $\codim \phi(D) \ge 2$ (and so $\phi_* D = 0$), we say that $\phi$ contracts the divisor $D$.
A \emph{birational contraction} is a birational map such that the inverse does not contract any divisor, or equivalently a birational map which is surjective in codimension~$1$.
A \emph{pseudo-isomorphism} is a birational map which is an isomorphism in codimension~$1$.
Birational morphisms and pseudo-isomorphisms (and compositions of those) are examples of birational contractions.

We use a dashed arrow $\rat$ to denote a rational (or birational) map, a plain arrow $\to$ for a morphism, and a dotted arrow $\ps$, or simply a dotted line $\dottedline$, to indicate a pseudo-isomorphism.

We denote by $\Bir(X)$ the group of birational transformations of $X$. 
Given a surjective morphism $\eta\colon X \to B$, we denote by $\Bir(X/B)$ the subgroup of $\Bir(X)$ consisting of all birational transformations over $B$, i.e.
\[
\Bir(X/B) := \{\phi\in \Bir(X)\mid \eta \circ \phi=\eta\} \subseteq \Bir(X).
\]

\subsection{Mori dream spaces and Cox sheaves} 
\label{sec:moriDream}

We shall use a relative version of the usual definition of Mori dream space (compare with \cite[Definition 2.2]{KKL}). Before giving the definition we recall the following notions.

Let $\pi\colon X\to Y$ be a surjective morphism, and $\Fl$ a sheaf on $X$.
The \emph{higher direct images} of $\Fl$ are the sheaves $R^i\pi_* \Fl$, $i \ge 0$, which are defined on each affine subset $U \subset Y$ as  $R^i\pi_* \Fl(U) = H^i(\pi^{-1}(U),\Fl)$.

We say that a normal variety $Y$ has \emph{rational singularities} if for some (hence any) desingularisation $\pi\colon X \to Y$, we have $R^i\pi_* \Ol_X = 0$ for all $i > 0$.

Recall also that a variety is \emph{rationally connected} if any two general points are contained in a rational curve (see \cite[IV.3]{Kollar_rational}). 

\begin{definition}
\label{def:moriDream}
Let $\eta\colon X\to B$ be a surjective morphism between normal varieties.
We say that $X/B$ is a \emph{Mori dream space} if the following conditions hold:
\begin{enumerate}[(MD1)]
\item \label{dream:Q_factorial} $X$ is $\Q$-factorial, and both $X,B$ have rational singularities.
\item \label{dream:rat_connected} A general fibre of $\eta$ is rationally connected and has rational singularities. 
\item \label{dream:nef} $\Nef(X/B)$ is the convex cone generated by finitely many semiample divisors; 
\item \label{dream:mov}
There exist finitely many pseudo-isomorphisms $f_i\colon X\ps X_i$ over $B$,  such that each $X_i$ is a $\Q$-factorial variety satisfying \ref{dream:nef}, and 
\[\widebar{\Mov}(X/B)=\bigcup f_i^*(\Nef(X_i/B)).\]
\end{enumerate}
\end{definition}

\begin{lemma}\label{lem:N1=Pic}
Let $\eta\colon X\to B$ be a surjective morphism between normal varieties, and $F$ a general fibre.
Assume that $X$ and $B$ have rational singularities, and assume:
\begin{enumerate}[$(i)$]
\item \label{N1=Pic:F rat connected}
$F$ is rationally connected and has rational singularities.
\end{enumerate}
Then the following properties hold true:
\begin{enumerate}[$(i)$,resume]
\item \label{N1=Pic:H1 F}
$H^i(F,\Ol_F) = 0$ for all $i > 0$;
\item \label{N1=Pic:R}
$\eta_* \Ol_X = \Ol_B$ and $R^i\eta_* \Ol_X = 0$ for all $i > 0$; 
\item \label{N1=Pic:H1 U}
$H^1(\eta^{-1}(U), \Ol_{\eta^{-1}(U)}) = 0$ for each affine open set $U \subset B$;
\item \label{N1=Pic:Pic XB}
$\Pic(X/B)_\Q = N^1(X/B)_\Q$.
\end{enumerate}
\end{lemma}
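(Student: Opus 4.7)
The plan is to prove the four conclusions in the stated order, since each uses the previous.

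For $(ii)$, I would pass to a resolution $\pi\colon \tilde F\to F$. Rational connectedness is a birational invariant among smooth projective varieties, so $\tilde F$ is rationally connected, and the theorem of Koll\'ar--Miyaoka--Mori then gives $H^i(\tilde F,\Ol_{\tilde F})=0$ for all $i>0$. The rational singularities of $F$ yield $R\pi_{*}\Ol_{\tilde F}=\Ol_F$ in the derived category; applying the Leray spectral sequence to $\pi$ transfers the vanishing from $\tilde F$ down to $F$.

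For $(iii)$, I would separate the two assertions. The identity $\eta_{*}\Ol_X=\Ol_B$ comes from Stein factorisation $\eta=g\circ f$: rational connectedness forces the general fibre to be connected, hence $g$ is birational, and normality of $B$ upgrades this to an isomorphism. For $R^i\eta_{*}\Ol_X=0$ with $i>0$, I would resolve $\pi\colon\tilde X\to X$ and use rational singularities of $X$ to identify $R\eta_{*}\Ol_X$ with $R(\eta\pi)_{*}\Ol_{\tilde X}$; the composite $\eta\pi$ is a projective morphism from a smooth variety whose general fibre is smooth and rationally connected, and Koll\'ar's vanishing theorem for higher direct images of the structure sheaf under such morphisms yields the result. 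Item $(iv)$ is then immediate from $(iii)$ via the Leray spectral sequence
\[
E_2^{p,q}=H^p(U,R^q\eta_{*}\Ol_X)\Longrightarrow H^{p+q}(\eta^{-1}(U),\Ol_{\eta^{-1}(U)}),
\]
which collapses to $H^p(U,\Ol_B)=0$ for $p>0$ because $U$ is affine.

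The main obstacle is $(v)$. Here I would exploit $(iii)$ by combining it with the Leray spectral sequence for the additive structure sheaf to obtain that $\eta^{*}\colon H^1(B,\Ol_B)\to H^1(X,\Ol_X)$ is an isomorphism, whence $\eta^{*}$ induces an isogeny $\Pic^0(B)\to\Pic^0(X)$ and in particular $\Pic^0(X)_\Q\subseteq\eta^{*}\Pic(B)_\Q$. Surjectivity of the natural map $\Pic(X/B)_\Q\to N^1(X/B)_\Q$ is immediate from surjectivity of $\Pic(X)_\Q\to N^1(X)_\Q$. For injectivity, a class $[D]\in\Pic(X/B)_\Q$ vanishing in $N^1(X/B)_\Q$ has numerically trivial restriction to the generic fibre, which is rationally connected and inherits $H^1(\Ol)=0$ from $(ii)$; hence some multiple $mD$ restricts trivially to the generic fibre, yielding $mD=\eta^{*}D'+E$ with $E$ an $\eta$-vertical divisor on $X$. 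The delicate technical point is then to absorb $E$ into $\eta^{*}\Pic(B)_\Q$; this rests on the normality of $B$ together with the orthogonality condition defining $N^1(X/B)$, forcing $E$ itself to be $\eta$-numerically trivial and hence expressible as a $\Q$-combination of pullbacks of the prime divisors on $B$ whose preimage contains the components of $E$.
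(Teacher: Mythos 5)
Your treatment of \emph{(ii)}--\emph{(iv)} follows essentially the same route as the paper: resolve $F$ and combine the rational-singularities hypothesis with the vanishing $H^i(\widehat F,\Ol_{\widehat F})=0$ for smooth rationally connected varieties; resolve $X$ and apply Koll\'ar's theorem on higher direct images; and deduce \emph{(iv)} formally (the paper gets it directly from the definition of $R^1\eta_*\Ol_X$, your Leray argument is equivalent). One caveat in \emph{(iii)}: the vanishing theorem you invoke is \emph{not} true for an arbitrary projective morphism from a smooth variety with smooth rationally connected general fibre --- it requires the target to have rational singularities (already for a birational morphism onto a base with irrational singularities one has $R^1\neq 0$). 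The hypothesis is available in the lemma and the paper uses it explicitly when citing Koll\'ar's Theorem 7.1, but as phrased your appeal to ``such morphisms'' states a false theorem; you should record that the rational singularities of $B$ are what make it applicable.

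The genuine gap is in \emph{(v)}. The paper disposes of this item by citing Koll\'ar--Mori (1992), 12.1.4: a line bundle numerically trivial on the contracted curves is, up to a multiple, a pull-back from $B$, \emph{because} $R^1\eta_*\Ol_X=0$. Your sketch correctly reduces (modulo standard facts) to a vertical divisor $E$ that is $\eta$-numerically trivial, but the decisive step --- ``hence expressible as a $\Q$-combination of pullbacks of the prime divisors on $B$ whose preimage contains the components of $E$'' --- is asserted rather than proved, and as stated it is neither meaningful nor true at this level of generality: $B$ is only assumed normal, so the relevant prime divisors on $B$ need not be $\Q$-Cartier and their pull-backs are not defined; components of $E$ may lie over subsets of codimension at least $2$ in $B$, where there is no divisor of $B$ to pull back; and $\eta$-numerical triviality of a vertical Cartier divisor does not by itself force it to be a pull-back --- this is precisely where the vanishing $R^1\eta_*\Ol_X=0$ from \emph{(iii)} must enter (the paper even remarks that only the $i=1$ vanishing is needed for this item), yet your argument for this step never uses it. The opening observation that $\eta^*\colon H^1(B,\Ol_B)\to H^1(X,\Ol_X)$ is an isomorphism, hence a statement about $\Pic^0$, is likewise never brought to bear where the difficulty sits. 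To close the gap you would have to either quote the Koll\'ar--Mori result, or reprove its content: show, using $\eta_*\Ol_X=\Ol_B$ and $R^1\eta_*\Ol_X=0$, that a line bundle numerically trivial on all contracted curves descends to $B$ after passing to a multiple.
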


\begin{remark}
Condition \ref{N1=Pic:F rat connected} from Lemma \ref{lem:N1=Pic} is our condition \ref{dream:rat_connected}.
The lemma implies that we would obtain a more general definition replacing \ref{dream:rat_connected} by Condition \ref{N1=Pic:H1 U}, which is the choice of \cite{BCHM}, or by Condition \ref{N1=Pic:Pic XB}, which is a relative version of the choice made in \cite{KKL}.
However our more restrictive definition suits to our purpose and seems easier to check in practice.
\end{remark}

\begin{proof}
\ref{N1=Pic:F rat connected} $\Longrightarrow$ \ref{N1=Pic:H1 F}.
Consider a resolution of singularities $\pi\colon \hat F\rightarrow F$.
Since $F$ has rational singularities, we have $R^i\pi_*\Ol_{\hat F}=0$ for $i>0$. 
Then \cite[III, Ex.8.1]{Hartshorne} implies that $H^i(\hat F,\Ol_{\hat F})\simeq H^i(F,\pi_*\Ol_{\hat F})=H^i(F,\Ol_F)$ for all $i\geq0$. 
Finally $H^i(F,\Ol_F)=H^i(\hat{F},\Ol_{\hat{F}})=0$ by \cite[IV.3.8]{Kollar_rational}.

\ref{N1=Pic:F rat connected} $\Longrightarrow$ \ref{N1=Pic:R}.
Since $X$ has rational singularities, without loss in generality we can replace $X$ by a desingularisation and assume $X$ smooth.
Since $\eta$ has connected fibres, we get $\eta_* \Ol_X = \Ol_B$.
We just saw that  $H^i(F,\Ol_F) = 0$ for all $i > 0$, and since we assume that $B$ has rational singularities, the result follows from \cite[Theorem 7.1]{Kollar_1986}. 

\ref{N1=Pic:R}  $\Longrightarrow$ \ref{N1=Pic:H1 U}.
This is just the definition of $R^1\eta_* \Ol_X = 0$.

\ref{N1=Pic:R} $\Longrightarrow$ \ref{N1=Pic:Pic XB}.
Let $D \in \Div(X)_\Q$ a divisor which is numerically trivial against the contracted curves. We want to show that $D$ is trivial in $\Pic(X/B)_\Q$, that is, a multiple of $D$ is a pull-back.
This is exactly the content of \cite[12.1.4]{KollarMori1992}.
Observe that here again we only need the vanishing assumption for $i = 1$.
\end{proof}
 
Let $\eta\colon X \to B$ be a surjective morphism between normal varieties, and $L_1$, $\dots$, $L_r$ some $\Q$-divisors on $X$. 
We define the divisorial sheaf $R(X/B; L_1, \dots, L_r)$ to be the sheaf of graded $\Ol_B$-algebras defined on every open affine set $U \subset B$ as 
\[
R(X/B;L_1,\dots,L_r)(U) = \bigoplus_{(m_1, \dots,m_r) \in \N^r} H^0(\eta^{-1}(U)/U, m_1L_1 + \dots + m_rL_r), 
\] 
where for any $D \in \Pic(X)_\Q$ 
\begin{multline*}
H^0(\eta^{-1}(U)/U, D) = \\ 
\left\{ f \in \k(\eta^{-1}(U))^* \mid \exists L \in \Pic_\Q(U), \div(f) + D + \eta^* L \ge 0 \right\}\cup \{0\}.
\end{multline*}

If moreover $\Eff(X/B) \subseteq \sum \R_+ L_i$, which ensures that we would get the same algebras using a $\Z^r$-grading instead of $\N^r$, then we say that the sheaf is a \emph{Cox sheaf}, and we denote 
\[\Cox(X/B;L_1,\dots,L_r) := R(X/B;L_1,\dots,L_r).\]
We say that a divisorial sheaf $R(X/B;L_1,\dots,L_r)$ is finitely generated if for every affine set $U$ the $\N^r$-graded $\Ol_B(U)$-algebra $R(X/B;L_1,\dots,L_r)(U)$ is finitely generated.

As the following lemma shows, for Cox sheaves this property of finite generation is independent of the choice of the $L_i$, and therefore we shall usually omit the reference to such a choice and denote a Cox sheaf simply by $\Cox(X/B)$. 

\begin{lemma}[{\cite[I.1.2.2]{ADHL_2015}}] \label{lem:FiniteGeneration}
Let $\eta\colon X \to B$ be a surjective morphism between normal varieties, $L_1, \dots, L_r \in \Pic(X)_\Q$ such that $\Eff(X/B) \subseteq \sum \R^+ L_i$, and $\Cox(X/B;L_1,\dots,L_r)$ the associated Cox sheaf.
Let $L_1',\dots,L_s'\in \Pic(X)_\Q$. 
If $\Cox(X/B;L_1,\dots,L_r)$ is finitely generated, then the divisorial sheaf $R(X/B;L_1',\dots,$ $L_s')$ also is finitely generated.
In particular, the property of finite generation of a Cox sheaf of $X/B$ does not depend on the choice of the $L_i$. 
\end{lemma}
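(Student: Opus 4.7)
The plan is to reduce to an affine base, clear denominators, and embed $R(X/B; L_1', \dots, L_s')$ into $\Cox(X/B; L_1, \dots, L_r)$ as a graded sub-algebra indexed by a finitely generated sub-monoid of the grading lattice. Finite generation is then inherited from the ambient Cox algebra by the standard multigraded commutative algebra fact recorded in the quoted reference.

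First I would restrict to an affine open $U \subseteq B$, writing $A_0 = \Ol_B(U)$, $R = \Cox(X/B; L_1, \dots, L_r)(U)$ and $R' = R(X/B; L_1', \dots, L_s')(U)$: finite generation of a sheaf of graded $\Ol_B$-algebras can be checked on an affine cover. A common-denominator Veronese reduction then lets me assume all $L_i$ and $L_j'$ are integral Cartier divisors, since finite generation of a multigraded algebra is equivalent to that of any of its $N$-th Veronese sub-algebras $\bigoplus_{\mathbf m} R_{N\mathbf m}$.

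The core step is to embed $R'$ into $R$ as a graded sub-algebra corresponding to a finitely generated sub-monoid of $\N^r$. The crucial observation is that $H^0(X/B; D)$ depends only on the class of $D$ in $\Pic(X/B)_\Q$, since the parameter $L \in \Pic(B)_\Q$ in the defining condition $\div(f) + D + \eta^* L \geq 0$ may be chosen freely. If the multidegree-$\mathbf m$ component of $R'$ is non-zero, then $\sum_j m_j L_j' \in \Eff(X/B) \subseteq \sum_i \R_+ L_i$ modulo $\eta^*\Pic(B)_\Q$. After isolating those $L_j'$ whose classes lie outside the effective cone (their only non-trivial contribution is through mixed multidegrees with other generators) and one further Veronese step to clear denominators, I may write each remaining $L_j' \equiv \sum_i c_{ij} L_i \pmod{\eta^* \Pic(B)_\Q}$ with $c_{ij} \in \N$. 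The resulting canonical equality
\[ H^0\!\Bigl(X/B;\textstyle \sum_j m_j L_j'\Bigr) = H^0\!\Bigl(X/B;\textstyle \sum_i \bigl(\sum_j m_j c_{ij}\bigr) L_i\Bigr) \]
then yields a multidegree-preserving injection of $A_0$-algebras $R' \hookrightarrow R$ whose image is the sub-algebra indexed by the finitely generated sub-monoid $S \subseteq \N^r$ generated by the vectors $(c_{1j}, \dots, c_{rj})$.

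To conclude, I would invoke the general multigraded commutative algebra fact of \cite[I.1.2.2]{ADHL_2015}: a graded sub-algebra of a finitely generated multigraded algebra determined by a finitely generated sub-monoid of the grading lattice is itself finitely generated. The \emph{in particular} statement then follows by applying the main conclusion to $L_1',\dots,L_s'$ satisfying the same hypothesis as the $L_i$, since in that case $R(X/B; L_1',\dots,L_s')$ is itself the Cox sheaf $\Cox(X/B;L_1',\dots,L_s')$. The main obstacle is the combinatorial bookkeeping in the embedding step, in particular handling the $L_j'$ whose classes are not effective and ensuring that the choice of representative $\sum_i c_{ij} L_i \pmod{\eta^*\Pic(B)_\Q}$ is made uniformly so that the resulting map $\N^s \to \N^r$ is a well-defined monoid homomorphism; once this set-up is in place, the reduction to the algebraic fact is routine.
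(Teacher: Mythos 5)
The paper contains no argument for this lemma: it is quoted directly from \cite[I.1.2.2]{ADHL_2015}, which is essentially this statement for divisorial algebras of finitely generated groups of divisors, and the (unwritten) work consists of translating that result to the relative setting. So there is no written proof to measure you against; your proposal is an attempt to redo the reduction by hand, and its general strategy (restrict to affine opens, compare $R(X/B;L_1',\dots,L_s')$ with the given Cox algebra, and invoke a multigraded finite-generation principle) is in the right spirit. However, as written it has three genuine gaps, each sitting exactly where the content of the quoted proposition lies.

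(i) Your Veronese reduction invokes a false general principle: for an arbitrary multigraded algebra, finite generation of an $N$-th Veronese subalgebra does \emph{not} imply finite generation of the whole algebra (e.g.\ $k[x_1,x_2,\dots]/(x_ix_j)$ with all $x_i$ of degree $1$ has second Veronese subalgebra $k$ but is not finitely generated). The implication you need is true for the divisorial algebras at hand, but it requires an argument specific to them — for instance that $R'$ is a domain, integral over its truncation, hence contained in the integral closure of a finitely generated domain in a finite field extension, which is a finite module — and no such argument appears. (ii) The ``canonical equality'' $H^{0}(X/B;\sum_j m_jL_j')=H^{0}(X/B;\sum_i(\sum_j m_jc_{ij})L_i)$ needs the two classes to agree in $\Pic(X)_\Q$ modulo $\eta^*\Pic(B)_\Q$, i.e.\ up to relative $\Q$-linear equivalence. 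But the hypothesis $\Eff(X/B)\subseteq\sum\R^+L_i$ is a statement in $N^1(X/B)$, so it only yields the relation up to relative \emph{numerical} equivalence; nothing in the lemma forces $\Pic(X/B)_\Q=N^1(X/B)_\Q$ (that is only known under the extra hypotheses of Lemma~\ref{lem:N1=Pic}), and when the two differ the graded pieces need not coincide, so your embedding is not defined. (iii) The generators $L_j'$ whose classes lie outside the cone are never actually treated: multidegrees mixing them with the others can still carry nonzero sections and are not in the image of the monoid map you define on the remaining coordinates; moreover, when cancellations occur (say $L_2'\equiv -L_1'$ relative to $B$), distinct multidegrees of $R'$ map to the same multidegree of $R$, so the resulting algebra map is not injective and its image is not the Veronese-type subalgebra indexed by your monoid $S$. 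Deferring this as ``routine bookkeeping'' leaves out precisely the part of the argument that the reference is being cited for; a cleaner route is to compare both collections inside the divisorial algebra of the group generated by all the $L_i$, $L_j'$ (and the relevant pullbacks) and use invariants of a quasitorus action for the passage to subgroups, rather than a monoid map $\N^s\to\N^r$.
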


\begin{lemma}\label{lem:Mds}
Let $X/B$ be a surjective morphism between normal varieties, whose general fibres are rationally connected.
Assume that $X$ is $\Q$-factorial, and that $X$, $B$ and the general fibres have rational singularities.
Then $X/B$ is a Mori dream space if and only if its Cox sheaf is finitely generated.
\end{lemma}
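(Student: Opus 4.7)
The plan is to prove the equivalence by working affine-locally on $B$ and reducing to standard Cox-ring/Mori-chamber arguments from the absolute setting. First, I would use Lemma~\ref{lem:N1=Pic} to identify $\Pic(X/B)_\Q$ with $N^1(X/B)_\Q$ and to secure the vanishing $R^1\eta_*\Ol_X=0$, which is what lets the chamber-type constructions be carried out on affine opens $U\subseteq B$ and then glued. By Lemma~\ref{lem:FiniteGeneration}, I may choose any convenient tuple $L_1,\dots,L_r$ whose nonnegative combinations span $\Eff(X/B)$; taking them to contain a basis of $N^1(X/B)_\Q$ and a finite generating set of $\widebar{\Eff}(X/B)$ will be the most practical choice.

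For the implication \emph{Mori dream space} $\Rightarrow$ \emph{Cox sheaf finitely generated}, I would argue as follows. Condition \ref{dream:mov} gives finitely many pseudo-isomorphisms $f_i\colon X\ps X_i$ whose pulled-back nef cones cover $\widebar{\Mov}(X/B)$. Divisors in $\Eff(X/B)\setminus\widebar{\Mov}(X/B)$ split into a fixed effective part plus a movable part, which reduces finite generation to finite generation of the section rings of the $f_i^*$ of the finitely many semiample generators of $\Nef(X_i/B)$ provided by \ref{dream:nef}. Since $f_i$ is an isomorphism in codimension~$1$, its pullback preserves sections, so it suffices to see that on each $X_i$ the multi-section ring of finitely many semiample $\Q$-divisors over $B$ is finitely generated: this is the classical statement that a semiample divisor defines a morphism whose graded algebra of sections is a finitely generated $\Ol_B(U)$-algebra over any affine $U\subseteq B$, applied summand-wise via a Veronese reduction to the integral grading.

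For the converse, assume the Cox sheaf is finitely generated. Over each affine $U\subseteq B$, the finitely generated $\N^r$-graded $\Ol_B(U)$-algebra $\Cox(X/B)(U)$ admits the GIT/Mori-chamber decomposition of Hu–Keel, adapted to the relative setting. This produces finitely many rational polyhedral chambers in $\widebar{\Mov}(X/B)$, each of which is of the form $f_i^*\Nef(X_i/B)$ for a small $\Q$-factorial modification $f_i\colon X\ps X_i$ obtained as a relative $\Proj$ of a Veronese subalgebra; on $X_i$ the Cox sheaf stays finitely generated by Lemma~\ref{lem:FiniteGeneration}, so any nef class lies in a chamber whose associated graded piece is a finitely generated algebra defining a morphism, hence is semiample. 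This yields \ref{dream:nef} for each $X_i$ and the covering property \ref{dream:mov}. The patching of these data across a cover of $B$ by affines is harmless because $R^1\eta_*\Ol_X=0$ guarantees that sections on overlaps glue.

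The main obstacle will be the relative GIT/chamber step: in the absolute case this is Hu–Keel, but here I must argue that the Proj of the Cox algebra over each affine $U\subseteq B$ truly recovers small $\Q$-factorial modifications of $X$ over $U$, and that the resulting birational maps $f_i$ are independent of the choice of $U$ and globalise to pseudo-isomorphisms $X\ps X_i$ over $B$. Making this precise uses the identification $\Pic(X/B)_\Q=N^1(X/B)_\Q$ (to talk about numerical classes as if they were divisor classes on affine opens), together with $\eta_*\Ol_X=\Ol_B$ and $R^1\eta_*\Ol_X=0$ from Lemma~\ref{lem:N1=Pic} to patch the local constructions. Once this is in place, semiampleness of nef divisors in each chamber is immediate from finite generation of the corresponding section algebra, and the equivalence follows.
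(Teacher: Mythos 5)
Your outline is correct in substance, but it does not follow the route the paper takes: the paper's proof of Lemma~\ref{lem:Mds} is a one-line reduction to the non-relative statements of \cite[Corollaries 4.4 and 5.7]{KKL}, whose arguments are MMP-based (finite generation of adjoint rings and the geography of ample models) and avoid GIT, whereas your converse direction runs through the Hu--Keel GIT/variation-of-quotient picture \cite{HK} adapted over affine opens of $B$. Both engines work; the KKL-style route has the advantage that the globalisation issue you flag at the end does not arise, since the chamber data live in $N^1(X/B)$ and only the finite generation statements are checked affine-locally, exactly as in Lemma~\ref{lem:FiniteGeneration} -- in your setup the same repair is achieved most cleanly by taking the relative $\Proj$ of the sheaf of graded algebras over all of $B$ rather than over each $U$ separately, so the $f_i$ are global from the start. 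Your use of Lemma~\ref{lem:N1=Pic} to identify $\Pic(X/B)_\Q$ with $N^1(X/B)_\Q$ and of Lemma~\ref{lem:FiniteGeneration} to make the choice of the $L_i$ irrelevant matches the paper's intent. Two small points to tighten: $\widebar{\Eff}(X/B)$ is not known to be polyhedral before the lemma is proved, so you cannot choose the $L_i$ to be a ``finite generating set'' of it -- but the definition of the Cox sheaf only requires $\Eff(X/B)\subseteq\sum\R_+L_i$, and a simplicial cone containing the pseudo-effective cone always exists; and in the forward direction the splitting of an effective class into a fixed part supported on finitely many exceptional prime divisors plus a movable class is not automatic -- it relies on the finiteness of small modifications and contractions encoded in condition~\ref{dream:mov} (as in \cite[Proposition 1.11]{HK}), which you should invoke explicitly rather than treat as formal.
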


\begin{proof}
The proof is similar to the proofs in the non-relative setting of \cite[Corollaries 4.4 and 5.7]{KKL}.
\end{proof}

\begin{example} \label{ex:Mds}
Standard examples of Mori dream spaces in the non relative case (i.e.~when $B$ is a point) are toric varieties and Fano varieties.
Both of these classes of varieties are special examples of log Fano varieties, which are Mori dream spaces by \cite[Corollary~1.3.2]{BCHM}. 
If $F$ is a log Fano variety, and $B$ is any smooth variety, then $(F \times B) / B$ is a basic example of relative Mori dream space.
\end{example}

\subsection{Minimal model programme}\label{sec:MMP}

Let $X$ be a normal $\Q$-factorial variety, and $C \in \widebar{\NE}(X)$ an extremal class.
We say that the \emph{contraction} of $C$ exists (and in that case it is unique), if there exists a surjective morphism $\pi\colon X \to Y$ with connected fibres to a normal variety $Y$,
with $\rho(X/Y) = 1$, and such that any curve contracted by $\pi$ is numerically proportional to $C$. 
If $\pi$ is a small contraction, we say that the \emph{log-flip} of $C$ exists (and again, in that case it is unique) is there exists $X \ps X'$ a pseudo-isomorphism over $Y$ which is not an isomorphism, such that $X'$ is normal $\Q$-factorial and $X' \to Y$ is a small contraction that contracts curves proportional to a class $C'$.
For each $D \in N^1(X)$, if $D'$ is the image of $D$ under the pseudo-isomorphism, we have a sign change between $D \cdot C$ and $D' \cdot C'$.
We say that $X \ps X'$ is a \emph{$D$-flip}, resp.~a \emph{$D$-flop}, resp.~a \emph{$D$-antiflip} when $D \cdot C < 0$, resp.~$D \cdot C = 0$, resp.~$D \cdot C > 0$. 

If $D$ is nef on $X$, we say that $X$ is a \emph{$D$-minimal model}.
If there exists a contraction $X \to Y$ with $\rho(X/Y) = 1$, $\dim Y < \dim X$ and $-D$ relatively ample, we say that $X/Y$ is a \emph{$D$-Mori fibre space}.

A \emph{step} in the $D$-Minimal Model Programme (or in the $D$-MMP for short) is the removal of an extremal class $C$ with $D \cdot C < 0$, either via a divisorial contraction, or via a $D$-flip.
In this paper we will ensure the existence of each step in a $D$-MMP by working in one the following contexts.
Either $D = K_X + \Delta$ will be an adjoint divisor with $\Delta$ ample and we can apply the main result of \cite{BCHM}, or we will assume that $X$ is a Mori dream space, and rely on Lemma \ref{lem:anyMMP} below (which is the reason for the name).
By \emph{running a $D$-MMP from $X$}, we mean performing a sequence of such steps, replacing each time $D$ by its image, until reaching one of the following two possible outputs: a $D$-minimal model or a $D$-Mori fibre space.  
In particular, observe that for us the output of a $D$-MMP is always of the same dimension as the starting variety, and the whole process makes sense even for $D$ not pseudo-effective (in contrast with another possible convention which would be to define the output of a $D$-MMP as $\Proj(\bigoplus_{n} H^0(X,nD))$. 

We will often work in a relative setting where all steps are maps over a base variety $B$, and we will indicate such a setting by saying that we run a $D$-MMP \emph{over $B$}.

When $D = K_X$ is the canonical divisor, we usually omit the mention of the divisor in the previous notations.
So for instance given a small contraction contracting the class of a curve $C$, we speak of the flip of $C$ only if $K_X\cdot C < 0$, of the $D$-flip of $C$ if $D\cdot C <0$, and of the log-flip of $C$ when we do not want to emphasise the sign of the intersection against any divisor. 

\begin{lemma}[{see \cite[Proposition 1.11]{HK} or \cite[Theorem 5.4]{KKL}}]
\label{lem:anyMMP}
If $X/B$ is a Mori dream space, then for any class $D \in N^1(X)$ one can run a $D$-MMP from $X$ over $B$, and there are only finitely many possible outputs for such MMP.
\end{lemma}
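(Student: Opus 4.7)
The strategy is the classical one of Hu--Keel \cite{HK}, adapted to the relative setting as in \cite{KKL}. The essential input is the Mori chamber decomposition of $\widebar{\Mov}(X/B)$ afforded by condition (MD4), combined with the semiampleness in (MD3), which together package the finite generation of the Cox sheaf $\Cox(X/B)$ (Lemma \ref{lem:Mds}).

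Fix an ample class $A$ on $X$ and, given $D\in N^1(X)$, consider the one-parameter family $D_t:=D+tA$ for $t\geq 0$. For $t\gg 0$ the class $D_t$ is ample, hence lies in the interior of the chamber $\Nef(X/B)=\id^*\Nef(X/B)$ of the movable cone. As $t$ decreases, the ray $D_t$ crosses a finite sequence of walls of the chamber decomposition of $\widebar{\Mov}(X/B)$ (and possibly exits $\widebar{\Eff}(X/B)$). At each wall-crossing the proper transform of $D$ on the current model $X_i$ acquires a new $D$-negative extremal ray $R$. By (MD3), the face supported by $R$ is spanned by a semiample class, whose associated morphism realises the contraction of $R$ over $B$. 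If the wall lies in the interior of $\widebar{\Mov}(X/B)$, the contraction is small, and the pseudo-isomorphism to the neighbouring chamber $f_j^*\Nef(X_j/B)$ given by (MD4) realises the $D$-flip (a sign change occurs across the wall by construction). If instead the wall lies on $\partial \widebar{\Mov}(X/B)$, the contraction is divisorial; after performing it we obtain a new variety $X_i'/B$ which is again a Mori dream space (this preservation relies on the finite generation of $\Cox(X/B)$ and is exactly the content of \cite[Section 5]{KKL}), and we iterate the process on $X_i'$ in a strictly smaller Picard rank.

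The procedure terminates in one of two ways. If $D$ is pseudo-effective over $B$, the ray $D_t$ eventually reaches $D$ while inside some chamber $f_i^*\Nef(X_i/B)$, and then $X_i$ is a $D$-minimal model. Otherwise the ray exits $\widebar{\Eff}(X/B)$ first, at some $t_0>0$, and the supporting face at the exit point determines a $D$-Mori fibre space structure over $B$. Finiteness of outputs is then immediate: (MD4) yields finitely many models $X_i$, and each has only finitely many extremal faces of its nef cone, so only finitely many minimal model or Mori fibre space outputs can arise. The main technical obstacle is really the stability statement underlying Case (2): one must check that after a divisorial contraction the new variety still satisfies (MD1)--(MD4) over $B$, so that the argument can be iterated; this is where finite generation of the Cox sheaf, via Lemma \ref{lem:FiniteGeneration}, plays its crucial role.
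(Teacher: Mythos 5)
Your argument is correct and is essentially the approach the paper relies on: the lemma is not proved in the paper but quoted from \cite[Proposition 1.11]{HK} and \cite[Theorem 5.4]{KKL}, and your sketch (MMP with scaling of an ample class, wall-crossings in the chamber decomposition provided by conditions \ref{dream:nef} and \ref{dream:mov} of Definition \ref{def:moriDream}, realisation of flips by the pseudo-isomorphisms $f_i$, and preservation of the Mori dream space property under divisorial contraction via finite generation of the Cox sheaf as in Lemma \ref{lem:FiniteGeneration}) is precisely the standard proof from those sources. The one place worth tightening is the closing finiteness claim: outputs reached after a divisorial contraction are not among the finitely many models of \ref{dream:mov} for the original $X$, so finiteness should be concluded by the induction on relative Picard rank that you already set up when iterating on the contracted variety, rather than directly from the chamber structure on $X$ alone.
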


\subsection{Singularities}
\label{sec:singularities}

Let $X$ be a normal 
% $\Q$-factorial 
variety, and let $\pi\colon Z \to X$ be a resolution of singularities,
with exceptional divisors $E_1$, $\dots$, $E_r$.
We say that $X$ has \emph{terminal singularities}, or that $X$ is terminal, if $K_X$ is $\Q$-Cartier and in the ramification formula
\[
K_Z = \pi^* K_X + \sum a_i E_i,
\]
we have $a_i >0$ for each $i$.
Similarly we say that $X$ has \emph{Kawamata log terminal} (\emph{klt} for short) singularities, or that $X$ is klt, if $a_i > -1$ for each $i$. 
Each coefficient $a_i$, which is often called the \emph{discrepancy} of $E_i$, does not depend on a choice of resolution in the sense that it is an invariant of the geometric valuation associated to $E_i$.  
Let $\Delta$ an effective $\Q$-divisor on $X$. We call $(X,\Delta)$ a klt pair if $K_X+\Delta$ is $\Q$-Cartier and if for a (and hence any) resolution of singularities $\pi\colon Z\to X$ such that the divisor $(\pi^{-1})_* \Delta \cup\Ex(\pi)$ has normal crossing support we have
\[
K_Z=\pi^*(K_X+\Delta)+\sum a_iE_i
\]
where $\pi_*(\sum a_iE_i) +\Delta=0$ and $a_i>-1$ for all $i$.
Observe that if $(X,\Delta)$ is a klt pair and $X$ is $\Q$-factorial, then for any $\Delta \ge \Delta ' \ge 0$ the pair $(X,\Delta')$ also is klt.
In particular taking $\Delta' = 0$ we get that $X$ is klt.

\begin{lemma}
\label{lem:oppositeSigns}
Let $X, Y$ be $\Q$-factorial varieties, and $\pi\colon X \to Y$ the divisorial contraction of an extremal curve $C$, with exceptional divisor $E = \Ex(\pi)$.
If $D \in \Div(X)$ and $D' = \pi_* D$, then in the ramification formula
\[
D = \pi^* D' + aE,
\]
the numbers $a$ and $D \cdot C$ have opposite signs. 
In particular, if $X$ is terminal, then $Y$ is terminal if and only if $K_X \cdot C <0$.
\end{lemma}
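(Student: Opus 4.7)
The plan is to intersect the ramification formula $D = \pi^* D' + aE$ with $C$ and apply the projection formula. Since $\pi$ contracts $C$ we have $\pi_* C = 0$, hence
\[
D \cdot C \;=\; \pi^* D' \cdot C \;+\; a(E \cdot C) \;=\; D' \cdot \pi_* C + a(E \cdot C) \;=\; a(E\cdot C).
\]
The sign statement then reduces to the strict inequality $E\cdot C < 0$, which I expect to be the main geometric input of the proof.

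To establish $E\cdot C < 0$, I would use that $E$ is a nonzero effective prime divisor with $\pi_* E = 0$ (because $\pi$ is a divisorial contraction) and that $\rho(X/Y)=1$, so that the relative cone $\widebar{\NE}(X/Y)$ is the ray $\R_{\ge 0}\cdot[C]$. If $E\cdot C\ge 0$ then $E$ is $\pi$-nef, and the negativity lemma \cite[Lemma~3.39]{KM} applied to $B=-E$ forces $-E$ to be effective as soon as $\pi_*(-E)=0$ is, which is trivially the case. This contradicts $E$ being a nonzero effective divisor, so $E\cdot C<0$, and the identity above yields the first claim.

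For the terminal criterion, apply the first part to $D=K_X$ and $D'=K_Y$ (both $\Q$-Cartier by $\Q$-factoriality of $X$ and $Y$): the number $a$ is then the discrepancy over $Y$ of the geometric valuation defined by $E$, and what we have proved becomes $a>0 \iff K_X\cdot C<0$. It remains to see that, given $X$ terminal, $Y$ is terminal if and only if $a>0$. For this, take a log resolution $\pi_X\colon Z\to X$ and write $K_Z=\pi_X^* K_X+\sum_i b_iE_i$ with all $b_i>0$ by terminality of $X$, and $\pi_X^* E=\tilde E+\sum_i m_iE_i$ with $m_i\ge 0$ and $\tilde E$ the strict transform of $E$. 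Substituting $K_X=\pi^*K_Y+aE$ gives
\[
K_Z \;=\; (\pi\circ\pi_X)^*K_Y \;+\; a\,\tilde E \;+\; \sum_i(b_i+am_i)E_i.
\]
If $a>0$, every coefficient on the right is strictly positive, so $Y$ is terminal. If $a\le 0$, then $E$ (realised on $Z$ as $\tilde E$) has discrepancy $a\le 0$ over $Y$, so $Y$ is not terminal. Combined with the equivalence $a>0 \iff K_X\cdot C<0$, this gives the desired characterisation.
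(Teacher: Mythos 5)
Your proof is correct and takes essentially the same route as the paper: intersecting $D = \pi^* D' + aE$ with $C$ and using the projection formula reduces everything to $E \cdot C < 0$, which the paper simply cites from \cite[Lemma 3.6.2(3)]{BCHM}, whereas you derive it from the negativity lemma applied to $-E$ --- a valid, self-contained substitute. Your discrepancy computation for the final assertion correctly fills in what the paper dispatches with ``take $D=K_X$ and $D'=K_Y$''; just note that the resolution should be chosen as a log resolution of the pair $(X,E)$, so that $\tilde E \cup \bigcup_i E_i$ is simple normal crossing and terminality of $Y$ can indeed be tested on that single model.
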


\begin{proof}
We have $D \cdot C = a E \cdot C$, so the claim follows from $E \cdot C < 0$.
For this, see for instance \cite[Lemma 3.6.2\parent{3}]{BCHM}.
The last assertion follows by taking $D=K_X$ and $D'=K_Y$.
\end{proof}

If we start with a $\Q$-factorial terminal variety and we run the classical MMP (that is, relatively to the canonical divisor), then each step (divisorial contraction or flip) of the MMP keeps us in the category of $\Q$-factorial terminal varieties (for divisorial contractions, this follows from Lemma~\ref{lem:oppositeSigns}). 
Moreover, when one reaches a Mori fibre space $X/B$, the base $B$ is $\Q$-factorial as mentioned above, but might not be terminal. 
However by the following result $B$ has at worst klt singularities. 

\begin{proposition}[{\cite[Corollary 4.6]{Fujino1999}}]
\label{pro:sing_of_B}
Let $X /B$ be a Mori fibre space, where $X$ is a $\Q$-factorial klt variety.
Then $B$ also is a $\Q$-factorial klt variety. 
\end{proposition}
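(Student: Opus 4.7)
My plan is to establish the two conclusions separately, both relying on $\rho(X/B) = 1$ together with the $\pi$-ampleness of $-K_X$.

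\emph{$\Q$-factoriality of $B$.} First I would fix a prime Weil divisor $D \subset B$. Since $\pi$ has connected fibres, there is a unique prime divisor $E \subset X$ dominating $D$, and a general curve $C$ in a general fibre of $\pi$ is disjoint from $E$. As $X$ is $\Q$-factorial some $mE$ is Cartier, and $mE \cdot C = 0$ combined with $\rho(X/B)=1$ shows that $mE$ is numerically $\pi$-trivial. The general fibre of $\pi$ is a klt Fano variety (because $X$ is klt and $-K_X$ is $\pi$-ample), hence rationally connected with rational singularities, so Lemma~\ref{lem:N1=Pic} applies and gives $\Pic(X/B)_\Q = N^1(X/B)_\Q$. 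Consequently $m'mE \sim \pi^*\tilde L$ for some Cartier divisor $\tilde L$ on $B$ and some $m' > 0$. The rational function $f$ with $\div(f) = m'mE - \pi^*\tilde L$ has purely vertical divisor, hence restricts to a regular function on each general proper connected fibre, and therefore descends to a rational function $g$ on $B$ with $f = \pi^* g$. Replacing $\tilde L$ by $L := \tilde L + \div(g)$ yields a genuine equality of Weil divisors $m'mE = \pi^*L$. Comparison of supports forces $L = nD$ for some $n \in \Q_{>0}$, so some positive multiple of $D$ is Cartier and $D$ is $\Q$-Cartier.

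\emph{Klt property of $B$.} Since $-K_X$ is $\pi$-ample, I would pick an ample divisor $H$ on $B$ large enough that $-K_X + \pi^*H$ is ample on $X$, and then a general effective $\Q$-divisor $\Delta \sim_\Q -K_X + \pi^*H$. Bertini's theorem ensures that $(X, \Delta)$ remains klt, and the relation $K_X + \Delta \sim_\Q \pi^*H$ turns $\pi$ into a klt log Calabi-Yau fibration over $B$. The Ambro-Kawamata canonical bundle formula, in the refined form made available by Fujino, then produces an effective $\Q$-divisor $\Delta_B$ on $B$ such that $(B, \Delta_B)$ is klt and $K_B + \Delta_B \sim_\Q H$. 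In particular $B$ itself has klt singularities.

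\emph{Main obstacle.} The serious difficulty is the klt assertion: the canonical bundle formula is a deep ingredient, and extracting from it a genuinely klt pair $(B, \Delta_B)$ requires Fujino's positivity results for the moduli $b$-divisor (ultimately relying on semistable reduction in codimension one together with variation of Hodge structure arguments for rationally connected fibres). The $\Q$-factoriality part is more elementary, but the one delicate point there is the upgrade from the $\Q$-linear equivalence supplied by Lemma~\ref{lem:N1=Pic} to an honest equality of Weil divisors of the precise shape $\pi^*(nD)$; this relies on the standard descent of rational functions with purely vertical divisor along a fibration with proper connected general fibres.
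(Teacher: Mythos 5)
The paper itself contains no argument here: Proposition~\ref{pro:sing_of_B} is quoted verbatim from \cite[Corollary 4.6]{Fujino1999}, and your two-step plan --- the classical descent argument for $\Q$-factoriality, and the perturbation $\Delta\sim_\Q -K_X+\pi^*H$ followed by the canonical bundle formula for the klt statement --- is exactly the route taken in that reference, with the genuinely hard half correctly delegated to the Kawamata--Ambro--Fujino positivity machinery. So the strategy is the right one; the issues are in the logical ordering and in one justification.

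The main problem is a circularity between your two steps. Your $\Q$-factoriality argument invokes Lemma~\ref{lem:N1=Pic} to get $\Pic(X/B)_\Q=N^1(X/B)_\Q$, but that lemma assumes that $B$ has rational singularities, which at that stage is precisely what is not yet known (it is part of the klt conclusion); conversely, your final sentence, deducing that $B$ is klt from the klt pair $(B,\Delta_B)$, needs $K_B$ to be $\Q$-Cartier, i.e.\ the $\Q$-factoriality already in hand. As written, one of the two steps is therefore unjustified. The circle is easy to break, but you must do it explicitly: either obtain the vanishing $R^i\pi_*\Ol_X=0$, $i>0$, directly from relative Kawamata--Viehweg vanishing ($X$ is klt and $-K_X$ is $\pi$-ample, no hypothesis on $B$ needed), so that the implication used in Lemma~\ref{lem:N1=Pic} applies, or quote the descent statement of the contraction theorem (a Cartier divisor trivial on the extremal ray is a pullback, see \cite{KM}); alternatively, run the canonical-bundle-formula step first, observe that $(B,\Delta_B)$ klt already gives $B$ rational singularities, then prove $\Q$-factoriality, and only then conclude that $(B,0)$ is klt. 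A second, minor point: connectedness of the fibres does \emph{not} imply that there is a unique prime divisor of $X$ dominating $D$ (preimages of prime divisors can be reducible; when $\rho(X/B)=1$ irreducibility does hold, but by an extremality argument, not by connectedness). Fortunately uniqueness is never used: any single component of $\pi^{-1}(D)$ dominating $D$, or the full divisorial part of $\pi^{-1}(D)$, is disjoint from a general fibre, and the rest of your computation --- descent of the vertical principal divisor, comparison of supports, and nonvanishing of the resulting multiple of $D$ --- goes through unchanged.
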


We will also use the following related result:

\begin{proposition}[{\cite[Theorem 1.5]{Fujino2015}}]
\label{pro:sing_of_Proj}
Let $(X,\Delta)$ be a klt pair, and consider the log canonical model
\[ Y = \Proj \Bigl( \bigoplus_m H^0(X, m(K_X + \Delta)) \Bigr) \]
where the sum is over all positive integers $m$ such that $m(K_X + \Delta)$ is Cartier.
Then there exists an effective $\Q$-divisor $\Delta_Y$ such that the pair $(Y,\Delta_Y)$ is klt. 
\end{proposition}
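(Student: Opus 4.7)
The plan is to reduce the problem, via the minimal model programme, to a situation where $Y$ appears as the base of a klt-trivial fibration, and then to invoke a canonical bundle formula to transport the klt property downstairs.

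First, I would use \cite{BCHM} to guarantee that the log canonical ring $\bigoplus_m H^0(X,m(K_X+\Delta))$ is finitely generated, so that $Y$ is a well-defined projective variety. Running a $(K_X+\Delta)$-MMP produces a klt log minimal model $(X',\Delta')$ on which $K_{X'}+\Delta'$ is nef; the base-point-free theorem then upgrades nef to semiample, and the induced morphism $f\colon X'\to Y$ (which we may assume has connected fibres after Stein factorisation) realises $Y$ as the ample model. In particular there is an ample $\Q$-Cartier divisor $A$ on $Y$ with $K_{X'}+\Delta'\sim_{\Q} f^*A$, so that $K_{X'}+\Delta'$ has zero relative numerical class over $Y$.

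Next, I would view $f$ as a klt-trivial fibration and apply the canonical bundle formula of Ambro, with later refinements by Fujino--Mori and Kawamata. This yields effective $\Q$-divisors $B_Y$ (the discriminant part) and $M_Y$ (the moduli part) on $Y$ such that
\[K_{X'}+\Delta'\sim_{\Q} f^*(K_Y+B_Y+M_Y),\]
and such that the pair $(Y,B_Y+M_Y)$ is klt. In particular $K_Y$ is $\Q$-Cartier. Since removing an effective boundary can only increase discrepancies, the pair $(Y,0)$ is also klt, which is what is claimed.

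The main obstacle is the canonical bundle formula: producing an effective moduli part $M_Y$ that preserves the klt property requires non-trivial input from the variation of Hodge structure of the fibres of $f$, and upgrading the output from numerical to $\Q$-linear equivalence is delicate. This is precisely the technical heart of \cite{Fujino2015}, whose specific strategy seems to exploit the existence of good minimal models for the fibres in order to sidestep part of the Hodge-theoretic machinery.
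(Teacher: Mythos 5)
Note first that the paper contains no proof of this proposition: it is quoted directly from \cite[Theorem 1.5]{Fujino2015}, so your sketch can only be compared with the strategy of that reference, which is indeed of the canonical-bundle-formula type you describe. As written, however, your argument has two genuine gaps. The first is the reduction to a good minimal model. You run a $(K_X+\Delta)$-MMP to reach $(X',\Delta')$ with $K_{X'}+\Delta'$ nef and then invoke the base-point-free theorem to upgrade nef to semiample. The base-point-free theorem requires $K_{X'}+\Delta'$ to be nef \emph{and big}, and already the termination of such an MMP is only guaranteed by \cite{BCHM} under a bigness hypothesis; without bigness, semiampleness of a nef $K_{X'}+\Delta'$ is the abundance conjecture and the existence of a good minimal model of $(X,\Delta)$ is open. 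The proposition carries no bigness assumption, and in the paper's application (Proposition~\ref{pro:coneC}) the essential case is precisely the non-big one, where $Y$ has smaller dimension than $X$. The known way around this---and the reduction used in the cited reference and in the proof of finite generation of the log canonical ring in the non-big case---is not to take a good minimal model of $(X,\Delta)$ itself, but to pass to the Iitaka fibration and use the Fujino--Mori canonical bundle formula to rewrite a truncation of the log canonical ring as that of a klt pair with \emph{big} log divisor on a lower-dimensional base, where \cite{BCHM} applies; the klt-trivial fibration input then enters on that base.

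The second gap is the final inference. From ``$(Y,B_Y+M_Y)$ is klt'' you conclude ``in particular $K_Y$ is $\Q$-Cartier'' and then drop the boundary. Klt-ness of the pair only makes $K_Y+B_Y+M_Y$ $\Q$-Cartier, not $K_Y$, and this is not a repairable oversight: take a terminal threefold $X$ with a flipping contraction $X\to Y$ and a general ample $\Q$-divisor $H$ with small coefficients such that $K_X+H$ is big, nef and numerically trivial exactly on the flipping curve; then $Y=\Proj\bigl(\bigoplus_m H^0(X,m(K_X+H))\bigr)$ is the log canonical model of the klt pair $(X,H)$, and $K_Y$ is not $\Q$-Cartier because the contraction is small and $K_X$ is negative on the contracted curve. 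What your argument (and \cite{Fujino2015}) actually yields is that $Y$ is of \emph{klt type}: there exists an effective $\Q$-divisor $\Delta_Y$ such that $(Y,\Delta_Y)$ is klt. This is how the statement should be read, and it suffices for every use made of it in the paper (rational singularities of $Y$, and the rational-connectedness results of \cite{HMcK2007} quoted in Lemma~\ref{lem:fibresRatConnected}, which hold for klt pairs).
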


The following class of Mori fibre spaces will be of special importance to us.

\begin{definition} \label{def:conicBundle}
A \emph{conic bundle} is a $\Q$-factorial terminal Mori fibre space $X/B$ with $\dim B = \dim X - 1$. 
The \emph{discriminant locus} of $X/B$ is defined as the union of irreducible hypersurfaces $\Gamma\subset B$ such that the preimage of a general point of $\Gamma$ is not irreducible. 
We emphasise that the terminology of conic bundle is often used in a broader sense (for instance, for any morphism whose general fibre is isomorphic to $\p^1$, with no restriction on the singularities of $X$ or on the relative Picard rank), but for our purpose we will stick to the above more restricted definition. 

We say that two conic bundles $X/B$ and $X'/B'$ are \emph{equivalent} if there exists a commutative diagram 
\[\begin{tikzcd}[link]
X \ar[dd,swap]\ar[r,"\psi",dashed]& X'\ar[dd]  \\ \\
B  \ar[r,"\theta",dashed]& B' 
\end{tikzcd}\]
where $\psi, \theta$ are birational.
\end{definition}

The singular locus of a terminal variety has codimension at least $3$ (\cite[5.18]{KM}).
This fact is crucial in the following result.

\begin{lemma}
\label{lem:divContToCodim2}
Let $\pi\colon X \to Y$ be a divisorial contraction between $\Q$-factorial terminal varieties, with exceptional divisor $E$, and assume that $\Gamma = \pi(E)$ has codimension $2$ in $Y$. Then, the following hold:
\begin{enumerate}
\item\label{ExistenceU}
There is an open subset $U\subseteq Y$ such that $U\cap \Gamma$, $U$ and $\pi^{-1}(U)$ are non-empty and contained in the smooth locus of $\Gamma$, $Y$ and $X$ respectively. 
 \item\label{BlowUponU}
For each choice of $U$ as in \ref{ExistenceU}, $\pi|_{\pi^{-1}(U)}\colon \pi^{-1}(U) \to U$ is the blow-up  of $U\cap \Gamma$ $($with reduced structure$)$. In particular, for each $p\in U$, the fibre $f=\pi^{-1}(\{p\})$ is a smooth rational curve such that
$K_X \cdot f = E \cdot f = -1$.
\end{enumerate}
\end{lemma}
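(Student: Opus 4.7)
The plan is to reduce everything, via part \ref{ExistenceU}, to the case where $\pi$ is a divisorial contraction between smooth varieties with smooth codimension-$2$ image, and then identify this locally with a smooth blow-up and compute.

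For part \ref{ExistenceU}, I would use the fact (already cited from \cite[5.18]{KM}) that on a terminal variety the singular locus has codimension at least $3$. Thus $\mathrm{Sing}(Y)$ has codimension $\geq 3$ in $Y$, and since $\pi$ is proper and birational, $\pi(\mathrm{Sing}(X))$ is a closed subset of $Y$ of dimension at most $\dim X - 3 = \dim Y - 3$, hence of codimension $\geq 3$ in $Y$. Together with $\mathrm{Sing}(\Gamma)$, these closed subsets meet $\Gamma$ in proper closed subsets; the open complement in $\Gamma$ is non-empty, and any sufficiently small open neighbourhood $U \subseteq Y$ of a point therein automatically satisfies $U \subseteq Y_{\mathrm{sm}}$, $U \cap \Gamma \subseteq \Gamma_{\mathrm{sm}}$ and $\pi^{-1}(U) \subseteq X_{\mathrm{sm}}$ (since $U$ avoids $\pi(\mathrm{Sing}(X))$).

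For part \ref{BlowUponU}, I would fix such a $U$ and write $X' = \pi^{-1}(U)$, $\Gamma' = \Gamma \cap U$. Then $\pi|_{X'}\colon X' \to U$ is a proper birational morphism between smooth varieties with $\rho(X'/U) = 1$; its irreducible exceptional divisor $E' = E \cap X'$ surjects onto the smooth codimension-$2$ subvariety $\Gamma'$; and by Lemma~\ref{lem:oppositeSigns} applied with $D = K_X$ the contraction is $K_{X'}$-negative. The main step is to identify $\pi|_{X'}$ with the blow-up of $\Gamma'$: this is a classical statement for extremal divisorial $K$-negative contractions in the smooth category, and can also be verified directly by localising at the generic point of $\Gamma'$, where the situation reduces to a proper birational morphism from a smooth surface to a regular $2$-dimensional local base, which by Zariski factorisation is a composition of point blow-ups, with $\rho(X'/U) = 1$ forcing a single blow-up.

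Once this identification is in place, all the numerical assertions are routine. The exceptional divisor $E'$ is a $\p^1$-bundle over $\Gamma'$, so each fibre $f = \pi^{-1}(\{p\})$ with $p \in U \cap \Gamma$ is a smooth rational curve. The standard ramification formula for the blow-up of a smooth codimension-$2$ centre yields $K_{X'} = \pi^* K_U + E'$, whence $K_X \cdot f = \pi^* K_U \cdot f + E' \cdot f = E \cdot f$, and the restriction of $\Ol_{X'}(E')$ to a fibre of $E' \to \Gamma'$ is the tautological $\Ol_{\p^1}(-1)$, giving $E \cdot f = -1$. The only real obstacle is the identification of $\pi|_{X'}$ with the smooth blow-up; once that is granted, the remaining computation is immediate.
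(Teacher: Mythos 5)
Your part \ref{ExistenceU} is fine and is essentially the paper's own (terse) argument, with the details correctly filled in. The genuine gap is in part \ref{BlowUponU}, at exactly the step you yourself flag as ``the only real obstacle''. The lemma asserts the blow-up description for \emph{every} open set $U$ as in \ref{ExistenceU}, and in particular that \emph{every} fibre over $U\cap\Gamma$ is a single smooth rational curve. Your fallback argument -- localising at the generic point of $\Gamma$ and applying Zariski factorisation for birational morphisms of regular $2$-dimensional schemes -- only controls $\pi$ over a neighbourhood of the generic point, i.e.\ over some dense open subset of $\Gamma$; it says nothing about special points of $U\cap\Gamma$, where a priori the fibre could be reducible or even $2$-dimensional (the set-theoretic equality $\pi^{-1}(\Gamma)=E$ gives no bound on fibre dimensions). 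So it proves the statement only for some sufficiently small $U$, not ``for each choice of $U$ as in (1)''. Your primary route, invoking ``a classical statement for extremal divisorial $K$-negative contractions in the smooth category'', is essentially Ando's theorem; but that theorem assumes all fibres have dimension $\le 1$ (or is formulated for global extremal contractions of smooth projective varieties), and this hypothesis is precisely part of what must be proved here. You also assert $\rho(\pi^{-1}(U)/U)=1$, which does not formally follow from $\rho(X/Y)=1$ by restriction to an open set (though for the generic-point argument the relevant fact is just that $E$ is the unique exceptional prime divisor).

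The paper closes this gap by a pointwise argument: for an \emph{arbitrary} $p\in\Gamma\cap U$ it takes a general smooth surface $S\subseteq Y$ through $p$, whose strict transform $\tilde S$ is again a smooth surface, so that $\tilde S\to S$ is a composition of $m$ point blow-ups; the hypothesis $\rho(X/Y)=1$ makes all contracted curves $C_i$ numerically proportional in $X$, so that $(C_i^2)_{\tilde S}=C_i\cdot E$ takes the same value for all $i$, which must then be $-1$, and connectedness together with negative definiteness of the exceptional configuration forces $m=1$. It is this argument, valid at every point of $U\cap\Gamma$, that yields the blow-up description and the fibre computation in the stated generality. To repair your proof you would either need such a pointwise argument, or first prove that all fibres over $U\cap\Gamma$ are $1$-dimensional and only then invoke Ando's theorem (with a precise reference covering the relative, quasi-projective setting); your concluding numerical computations are correct once the blow-up identification is in place.
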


\begin{proof}
Assertion~\ref{ExistenceU} follows from the fact that  $X$ and $Y$ are smooth in codimension~$2$.

Let $U$ be as in \ref{ExistenceU}, let $p\in \Gamma \cap U$ and take a general smooth surface $S \subseteq U$ containing~$p$.
Up to shrinking $U$ we can assume that $p$ is the only intersection point of $S$ and $\Gamma$.
The strict transform $\tilde{S}$ of $S$ is again a smooth surface.
Let $C_1, \dots, C_m$ be the irreducible curves contracted by the birational morphism $\tilde S \to S$, which is the composition of $m$ blow-ups.
We now show  that $m = 1$.
The condition $\rho(X/Y)=1$ implies that all $C_i$ are numerically equivalent in $X$, so for each $i,j$ we have
\[(C_i^2)_{\tilde S} = C_i \cdot E = C_j \cdot E = (C_j^2)_{\tilde S}.\]
Since at least one of the self-intersection $(C_i^2)_{\tilde S}$ must be equal to $-1$, and the exceptional locus of $\tilde S \to S$ is connected, we conclude that $m = 1$.
So $\tilde{S}\to S$ is the blow-up of $p$, hence $\pi^{-1}(U) \to U$ is the blow-up of $U\cap \Gamma$, which gives~\ref{BlowUponU}.
\end{proof}

\begin{lemma} \label{lem:fibresAreTerminal}
Let $\eta\colon X \to B$ be a morphism between normal varieties with $X$ terminal $($resp.~klt$)$.
Then for a general point $p \in B$, the fibre $\eta^{-1}(p)$ also is terminal $($resp.~klt$)$, so in particular it has rational singularities.
\end{lemma}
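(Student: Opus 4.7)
This is a standard adjunction plus Bertini/generic smoothness argument. I would start by fixing a log resolution $\pi\colon Z\to X$ with ramification formula $K_Z=\pi^*K_X+\sum a_iE_i$, where $\sum E_i$ has simple normal crossings support and, by hypothesis, $a_i>0$ in the terminal case (resp.\ $a_i>-1$ in the klt case).

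Applying generic flatness and generic smoothness to the morphism $\eta\circ\pi\colon Z\to B$ and to its restrictions to each stratum of the SNC divisor $\sum E_i$, I obtain an open dense $U\subseteq B$ such that, for every $p\in U$: the point $p$ lies in the smooth locus of $B$, and so is cut out locally by a regular system of parameters $f_1,\dots,f_d$ with $d=\dim B$; the scheme-theoretic fibre $F:=\eta^{-1}(p)$ is reduced, pure-dimensional, and locally defined in $X$ by the regular sequence $\eta^*f_1,\dots,\eta^*f_d$; the preimage $\tilde F:=(\eta\circ\pi)^{-1}(p)$ is smooth; and $\sum_i(E_i\cap\tilde F)$ has simple normal crossings on $\tilde F$. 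In particular the restriction $\pi|_{\tilde F}\colon\tilde F\to F$ is a birational morphism, hence a log resolution of $F$ whose exceptional divisors appear among the components of $\sum_i(E_i\cap\tilde F)$.

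Adjunction then yields $K_F=K_X|_F$ and $K_{\tilde F}=K_Z|_{\tilde F}$, because $F$ (resp.\ $\tilde F$) is, in a neighbourhood of itself, a complete intersection of $d$ principal divisors pulled back from $B$, so its conormal bundle is trivial. Restricting the ramification formula to $\tilde F$ gives
\[
K_{\tilde F}=(\pi|_{\tilde F})^*K_F+\sum_i a_i\,(E_i\cap\tilde F),
\]
and therefore every exceptional divisor of $\pi|_{\tilde F}\colon\tilde F\to F$ has discrepancy at least $\min_i a_i$. Hence $F$ is terminal (resp.\ klt). The final ``rational singularities'' assertion follows from the classical fact that klt singularities are rational (see e.g.~\cite[Theorem 5.22]{KM}).

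The only delicate point is to arrange the Bertini-type inputs so that $\tilde F\to F$ is genuinely a log resolution with exceptional divisors controlled by the $E_i$; this is routine once $\pi$ is fixed and generic smoothness is applied stratum by stratum along the SNC divisor $\sum E_i$.
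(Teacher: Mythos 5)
Your proof is correct in substance, but it takes a different (more self-contained) route than the paper, whose entire proof is a citation: the paper deduces the statement from \cite[7.7]{Kollar_SantaCruz}, viewing the general fibre as obtained by successive general hyperplane sections pulled back from $B$ and locally cutting out $p$, and then quotes \cite[5.22]{KM} for rationality of klt singularities. What you do is essentially re-prove that cited Bertini-type result directly: fix one log resolution $\pi\colon Z\to X$, restrict it to a general fibre, and restrict the discrepancy formula. This buys independence from the reference at the cost of a few verifications that your sketch leaves implicit and that are really the content of the cited result: (i) for general $p$ the scheme $\tilde F=(\eta\circ\pi)^{-1}(p)$ has no components lying over the locus $W\subset X$ (of codimension $\ge 2$) where $\pi$ fails to be an isomorphism — this follows from generic flatness, which makes $\tilde F$ pure of dimension $\dim X-\dim B$, together with $\dim\bigl(\pi^{-1}(W)\cap\tilde F\bigr)\le\dim\pi^{-1}(W)-\dim B<\dim\tilde F$ for general $p$ — so $\pi|_{\tilde F}$ really is birational onto $F$; (ii) every component of $E_i\cap\tilde F$ is $\pi|_{\tilde F}$-exceptional, because its image lies in $\pi(E_i)\cap F$, which has codimension $\ge 2$ in $F$ for general $p$ since $\pi(E_i)$ has codimension $\ge 2$ in $X$; without this, a non-exceptional component appearing with coefficient $a_i\neq 0$ would prevent you from reading discrepancies off your displayed formula; (iii) conversely, every $\pi|_{\tilde F}$-exceptional divisor is contained in $\bigl(\bigcup_i E_i\bigr)\cap\tilde F$ (using $\pi^{-1}(W)\subseteq\Ex(\pi)$, which follows from Zariski's main theorem), which is what excludes exceptional divisors of discrepancy $0$ and is needed in the terminal case; and (iv) for the conclusion to make sense you need $F$ normal with $K_F$ $\Q$-Cartier, which again holds for general $p$ in characteristic zero (general fibres of a dominant morphism from a normal variety are normal, and $K_F=K_X|_F$ since the two agree outside a subset of codimension $\ge 2$ of $F$, using that $X$ is Cohen--Macaulay because klt singularities are rational). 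None of these is an obstruction — they are exactly the "routine" steps you allude to — but they should be spelled out if you choose this direct argument instead of simply invoking Kollár's theorem as the paper does.
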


\begin{proof}
The fact that $\eta^{-1}(p)$ is terminal (resp.~klt) follows from \cite[7.7]{Kollar_SantaCruz} by taking successive hyperplane sections on $B$ locally defining $p$.
As already mentioned klt singularities are rational, see \cite[5.22]{KM}.
\end{proof}

\begin{lemma}\label{lem:fibresRatConnected}\phantomsection~
\begin{enumerate}
\item \label{connected:-KBigNef}
Let $(X,\Delta)$ be a klt pair, and $\pi\colon X \to Y$ be a morphism with connected fibres such that $-(K_X+\Delta)$ is $\pi$-big and $\pi$-nef. 
Then for every $p \in Y$ the fibre $\pi^{-1}(p)$ is covered by rational curves, and for a general $p \in Y$ the fibre $\pi^{-1}(p)$ is rationally connected with klt singularities.  
\item  \label{connected:birational}
Let $(Y, \Delta_Y)$ be a klt pair,
 and $\pi\colon X \to Y$ a birational morphism.
Then every fibre of $\pi$ is covered by rational curves.
\item  \label{connected:flips}
Let $\phi\colon X \ps X'$ be a sequence of log-flips between $\Q$-factorial klt varieties, and $\Gamma \subset X$ a codimension $2$ subvariety contained in the base locus of $\phi$.
Then $\Gamma$ is covered by rational curves. 
\end{enumerate}
\end{lemma}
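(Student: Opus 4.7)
The plan is to handle each part using rational connectedness results for fibres of Fano-type fibrations and for small contractions, ultimately resting on Zhang's theorem on log Fano varieties and the theorem of Hacon--McKernan on Shokurov's rational connectedness conjecture.

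For part~\ref{connected:-KBigNef}, the general fibre $F$ inherits the klt property via successive general hyperplane sections in $Y$, as in Lemma~\ref{lem:fibresAreTerminal}; adjunction then shows that $(F,\Delta|_F)$ is a klt log Fano pair, and Zhang's theorem yields rational connectedness of $F$. To upgrade this to the statement that \emph{every} (not just general) fibre is covered by rational curves, I would invoke the Hacon--McKernan theorem: for $(X,\Delta)$ klt and $-(K_X+\Delta)$ both $\pi$-nef and $\pi$-big, every fibre of $\pi$ is rationally chain connected, hence covered by rational curves.

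For part~\ref{connected:birational}, I would directly appeal to the corresponding birational form of Hacon--McKernan's theorem: every fibre of a proper birational morphism to a klt variety is rationally chain connected.

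For part~\ref{connected:flips}, the strategy is to decompose $\phi$ into individual log-flips $X = X_0 \ps X_1 \ps \cdots \ps X_k = X'$, each factored through a small extremal contraction $\psi_i\colon X_i \to Y_i$ from a $\Q$-factorial klt variety. Kawamata's length of extremal rays together with bend-and-break (or the appropriate form of Hacon--McKernan) shows that $\Ex(\psi_i)$ is covered by rational curves. The hypothesis that $\Gamma$ lies in the base locus of $\phi$ forces its strict transform $\Gamma_i \subseteq X_i$ (well-defined since the intermediate pseudo-isomorphisms are isomorphisms in codimension~$1$) to be contained in $\Ex(\psi_i)$ for some index~$i$, so $\Gamma_i$ is covered by rational curves. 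Transporting these curves back along $X \ps X_i$, which is an isomorphism outside a set of codimension~$2$, yields a covering family of rational curves on $\Gamma$.

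The main obstacle I anticipate lies in part~\ref{connected:-KBigNef}: upgrading rational connectedness of the general fibre to the same property for \emph{every} fibre is the substantive content of Hacon--McKernan, relying on bend-and-break together with uniform bounds on extremal ray lengths in the relative setting. A secondary subtlety in part~\ref{connected:flips} is checking that a covering family on $\Gamma_i$ transports to a covering family on $\Gamma$: this holds because the indeterminacy locus of the pseudo-isomorphism $X \ps X_i$ has codimension $\ge 2$, so a general curve of the family meeting $\Gamma_i$ corresponds birationally to a rational curve meeting $\Gamma$, and the closure of such a family still dominates $\Gamma$.
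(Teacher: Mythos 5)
Your proof follows the paper's own argument: parts (1) and (2) are precisely the citation of Hacon--McKernan \cite[Corollaries 1.3(1) and 1.5(1)]{HMcK2007} (your extra detour through hyperplane sections and Zhang's theorem for the general fibre is harmless), and part (3) is the same deduction the paper makes by applying (1) to the small contraction of the flip whose exceptional locus contains the strict transform of $\Gamma$. The only details worth tightening are that $\Gamma_i$ is well defined because you should take the \emph{first} index at which the strict transform falls into the flipping locus (isomorphy in codimension $1$ alone does not suffice, since $\Gamma$ itself has codimension $2$), and that the rational curves produced in the fibres actually lie in $\Gamma_i$ because $\Gamma_i$, being of codimension $2$, is an irreducible component of $\Ex(\psi_i)$, so a curve through a general point of $\Gamma_i$ contained in $\Ex(\psi_i)$ is contained in $\Gamma_i$.
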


\begin{proof}
\ref{connected:-KBigNef} and \ref{connected:birational} follow from \cite[Corollary 1.3(1) and Corollary 1.5(1)]{HMcK2007}.
Then \ref{connected:flips} is a straightforward consequence of \ref{connected:-KBigNef} applied in the case of a small contraction.
\end{proof}

\begin{lemma}\label{lem:B'_over_B_is_Mds}
Let $X \to Y$ be a morphism that factorises as $X \to W$ and $W \to Y$, where $W$ is a $\Q$-factorial klt variety.
If $X/Y$ is a Mori dream space then $W/Y$ also is a Mori dream space.
\end{lemma}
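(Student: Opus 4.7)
The plan is to verify the four axioms (MD1)--(MD4) of Definition~\ref{def:moriDream} for $W/Y$, writing $\pi\colon X \to W$ and $\eta_W\colon W \to Y$ for the two factors of $\eta_X = \eta_W \circ \pi \colon X \to Y$. Axiom (MD1) is immediate: $W$ is $\Q$-factorial and klt by hypothesis, so it has rational singularities, and $Y$ has rational singularities since $X/Y$ is a Mori dream space. For (MD2), a general fibre $F_W$ of $\eta_W$ is klt by Lemma~\ref{lem:fibresAreTerminal}, hence has rational singularities, and is rationally connected as the image under $\pi|_{F_X}$ of the rationally connected general fibre $F_X$ of $\eta_X$.

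To establish (MD3) and (MD4) simultaneously, I would appeal to Lemma~\ref{lem:Mds} and instead prove that the Cox sheaf of $W/Y$ is finitely generated. Choose classes $L_1, \dots, L_r \in \Pic(W)_\Q$ with $\Eff(W/Y) \subseteq \sum \R^+ L_i$, and extend $\pi^* L_1, \dots, \pi^* L_r$ by classes $M_1, \dots, M_s \in \Pic(X)_\Q$ so that together they span $\Eff(X/Y)$. By hypothesis and Lemma~\ref{lem:FiniteGeneration}, the Cox sheaf $\Cox(X/Y; \pi^* L_1, \dots, \pi^* L_r, M_1, \dots, M_s)$ is finitely generated. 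Granting that $\pi_* \Ol_X = \Ol_W$, pull-back induces for each affine open $U \subseteq Y$ a natural isomorphism
\[
H^0(\eta_W^{-1}(U)/U, \textstyle\sum m_i L_i) \cong H^0(\eta_X^{-1}(U)/U, \textstyle\sum m_i \pi^* L_i),
\]
so that $R(W/Y; L_1, \dots, L_r)(U)$ is identified with the $\N^r$-graded subalgebra of $\Cox(X/Y)(U)$ of elements of zero degree in each $M_j$. Since this $\N^{r+s}$-graded algebra over $\Ol_Y(U)$ is finitely generated and all its generator degrees lie in $\N^{r+s}$, the subalgebra at multidegrees $(\N^r, 0, \dots, 0)$ is generated by those among the original generators that already lie in this face, and hence is finitely generated. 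By Lemma~\ref{lem:Mds}, this gives axioms (MD3) and (MD4) for $W/Y$.

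The main technical step is the justification of $\pi_* \Ol_X = \Ol_W$, equivalently that $\pi$ has connected fibres. I would deduce this from the Stein factorisation $X \to X' \to W$ of $\pi$: the general fibre of $\eta_X$ is rationally connected and in particular irreducible, so the general fibre of $X' \to Y$ is also irreducible, which forces the finite morphism $X' \to W$ to have degree one. This intermediate-contraction argument is in the spirit of Lemma~\ref{lem:N1=Pic} and can be carried out by essentially the same route. A secondary point to verify carefully is that the subalgebra cut out inside a finitely generated $\N^{r+s}$-graded algebra by vanishing of the last $s$ degrees is itself finitely generated; this is standard for non-negatively graded algebras, as any monomial expression of an element of multidegree $(\ast,0,\dots,0)$ must use only generators of that shape.
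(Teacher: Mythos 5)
Your overall route is the same as the paper's: conditions (MD1) and (MD2) are checked exactly as in the paper's proof, and finite generation of the Cox sheaf of $W/Y$ is deduced from that of $X/Y$ via Lemmas~\ref{lem:FiniteGeneration} and~\ref{lem:Mds}. Your ``face of the grading'' argument is correct: in a finitely generated algebra graded by $\N^{r+s}$, any monomial of degree $(\ast,0,\dots,0)$ can only involve generators of that shape, so the corresponding graded subalgebra is finitely generated. The genuine gap is the step you yourself single out as the main technical one, namely $\pi_*\Ol_X=\Ol_W$. Your justification fails: irreducibility, or even rational connectedness, of the general fibre of $X\to Y$ does not force the finite part of the Stein factorisation $X\to X'\to W$ to have degree one. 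Take any smooth projective $Y$, let $X=W=\p^1\times Y$ with $\eta_X,\eta_W$ the second projections, and let $\pi$ act fibrewise by $[u:v]\mapsto[u^2:v^2]$: every fibre of $\eta_X$ is an irreducible rational curve, $X'=X$, and $X'\to W$ has degree $2$. This example satisfies all hypotheses of the lemma ($X/Y$ is a Mori dream space and $W$ is smooth), so $\pi_*\Ol_X=\Ol_W$ is simply not a consequence of the statement as given -- the lemma never assumes $X\to W$ has connected fibres. Without it, pull-back only embeds each piece $H^0(\eta_W^{-1}(U)/U,\sum m_iL_i)$ into $H^0(\eta_X^{-1}(U)/U,\sum m_i\pi^*L_i)$, so $R(W/Y;L_1,\dots,L_r)(U)$ is merely a graded subalgebra of the face you control; since subalgebras of finitely generated algebras need not be finitely generated, the argument does not close.

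To be fair, the paper's one-line proof hides the same point in the phrase ``embeds by pull-back as a subalgebra, hence is finitely generated by Lemma~\ref{lem:FiniteGeneration}'': that lemma controls the divisorial sheaf $R(X/Y;\pi^*L_1,\dots,\pi^*L_r)$, and the conclusion is immediate only when the pull-back identifies $\Cox(W/Y)(U)$ with it, i.e.\ precisely when $\pi_*\Ol_X=\Ol_W$. In every application in the paper the map $X\to W$ is a Mori fibre space or a contraction produced by an MMP, hence has connected fibres, and then your remaining argument (sections descend along the fibres of $\pi$, on which $\pi^*L_i$ and classes pulled back from $Y$ are trivial, plus the grading-face argument) is complete and in fact more detailed than the paper's. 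So the fix is either to add the hypothesis that $X\to W$ is a contraction, or, for the statement in full generality, to replace the degree-one claim by a genuinely different input handling the finite part of the Stein factorisation (in the spirit of Okawa's theorem that images of Mori dream spaces under surjective morphisms are Mori dream spaces); the implication you propose is false as stated.
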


\begin{proof}
The general fibres of $W/Y$ are rationally connected because they are images of the rationally connected fibres of $X/Y$, and they have rational singularities by Lemma \ref{lem:fibresAreTerminal}. 
For any affine open subset $U \subset Y$, the algebra $\Cox(W/Y)(U)$ embeds by pull-back as a subalgebra of $\Cox(X/Y)(U)$, hence is finitely generated by Lemma \ref{lem:FiniteGeneration}.
We conclude by Lemma \ref{lem:Mds}.
\end{proof}

\subsection{Two-rays game}
\label{sec:2rays}

A reference for the notion of two-rays game is \cite[\S2.2]{Corti_explicit}.
We use a slightly different setting in the discussion below. 
Namely, first we ensure that all moves do exist by putting a Mori dream space assumption, and secondly we do not put strong restrictions on singularities (this will come later in Definition~\ref{def:rankFibration}).

Let $Y \to X$ be a surjective morphism between normal varieties, with $\rho(Y/X) = 2$.
Assume also that there exists a morphism $X/B$ such that $Y/B$ is a Mori dream space.
In particular, by Lemma \ref{lem:anyMMP} for any divisor $D$ on $Y$ one can run a $D$-MMP over $B$, hence a fortiori over $X$.
Then $\NE(Y/X)$ is a closed 2-dimensional cone, generated by two extremal classes represented by curves $C_1, C_2$.
Let $D = -A$ where $A$ is an ample divisor on $Y$, so that a $D$-minimal model does not exist.   
Then by Lemma~\ref{lem:anyMMP} for each $i = 1,2$ we can run a $D$-MMP from $Y$ over $X$, which starts by the divisorial contraction or log-flip of the class $C_i$, and produce a commutative diagram that we call the \emph{two-rays game} associated to $Y/X$ (and which does not depend on the choice of $D$):
\[
\begin{tikzcd}[link]
Y_1 \ar[dd]  & Y \ar[l,dotted] \ar[r,dotted] \ar[ddd] & Y_2 \ar[dd] \\ \\
X_1 \ar[dr] && X_2 \ar[dl] \\
& X &
\end{tikzcd}
\]
Here $Y \ps Y_i$ is a (possibly empty) sequence of $D$-flips, and $Y_i \to X_i$ is either a divisorial contraction or a $D$-Mori fibre space.

Now we give a few direct consequences of the two-rays game construction.

\begin{lemma}
\label{lem:2raysOverFlip}
Let $Y_1/B$ be a Mori dream space, $Y_1 \to X_1$ a morphism over $B$ with $\rho(Y_1/X_1) = 1$, and $X_1 \ps X_2$ a sequence of relative log-flips over $B$. 
Then there exists a sequence of log-flips $Y_1 \ps Y_2$ over $B$ such that the induced map $Y_2 \to X_2$ is a morphism, of relative Picard rank~$1$ by construction. 
Moreover if $Y_1/X_1$ is a divisorial contraction $($resp.~a Mori fibre space$)$, then $Y_2/X_2$ also is.
\end{lemma}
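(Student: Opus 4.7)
The plan is to argue by induction on the number $N$ of log-flips in the sequence $X_1 \dashrightarrow X_2$. For the inductive step, after handling the first flip via the base case to produce $Y_1 \dashrightarrow Y_1^{(1)} \to X_1^{(1)}$, I observe that $Y_1^{(1)}/B$ is again a Mori dream space: indeed $Y_1 \ps Y_1^{(1)}$ is a pseudo-isomorphism over $B$, and condition~\ref{dream:mov} is symmetric among pseudo-isomorphic models (each $X_i$ plays the same role as $X$), so I can apply the base case again to the remaining $N-1$ flips.

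For the base case $N=1$, I write the single flip as small contractions $X_1 \to Z$ and $X_2 \to Z$ to a common base $Z$. Since the flip is over $B$, the flipping curves are contracted by $X_1 \to B$, so the morphism $X_1 \to B$ factors through $Z$, yielding the chain $Y_1 \to X_1 \to Z \to B$, and $\rho(Y_1/Z) = \rho(Y_1/X_1) + \rho(X_1/Z) = 2$. I then apply the two-rays game of \S\ref{sec:2rays} to $Y_1/Z$ (valid since $Y_1/B$ is a Mori dream space with $Z \to B$). The cone $\widebar{\NE}(Y_1/Z)$ has exactly two extremal rays: one is the class of curves contracted by $Y_1 \to X_1$, giving the ``known'' side $Y_1 \to X_1 \to Z$; the other ray produces a sequence of log-flips $Y_1 \dashrightarrow Y_2$ over $Z$ followed by a morphism $Y_2 \to X_2'$ of relative Picard rank $1$ (a divisorial contraction or Mori fibre space) together with $X_2' \to Z$.

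The key step is to identify $X_2' \cong X_2$ over $Z$. By a divisor-counting argument using that $Y_1 \ps Y_2$ is an isomorphism in codimension $1$, the morphism $X_2' \to Z$ is forced to be a small contraction with $\rho(X_2'/Z) = 1$: any divisor contracted by $Y_2 \to Z$ corresponds via the pseudo-isomorphism to a divisor contracted by $Y_1 \to Z$, and the latter is either the exceptional divisor of $Y_1 \to X_1$ (in the divisorial case) or empty (in the Mori fibre space case, since $X_1 \to Z$ is small), which in either case is already accounted for by $Y_2 \to X_2'$. Since $Z$ has exactly two $\Q$-factorial small modifications, namely $X_1$ and its unique flip $X_2$, and the two sides of the two-rays game are distinct, I conclude $X_2' = X_2$.

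Finally, the type preservation (divisorial contraction versus Mori fibre space) follows from a dimension comparison combined with the same divisor-counting argument: if $Y_1 \to X_1$ is divisorial then $\dim Y_2 = \dim Y_1 = \dim X_1 = \dim X_2$, forcing $Y_2 \to X_2$ to be divisorial (an MFS is ruled out by dimension); if $Y_1 \to X_1$ is a Mori fibre space then $\dim Y_2 > \dim X_2$, and by divisor counting $Y_2 \to X_2$ cannot be divisorial, hence is an MFS. The main obstacle I expect is the rigorous bookkeeping of exceptional divisors through the pseudo-isomorphism $Y_1 \ps Y_2$ needed to certify that $X_2' \to Z$ is small, and the invocation of the uniqueness of the flipped model over $Z$.
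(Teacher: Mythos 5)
Your overall route is the same as the paper's: reduce by induction to a single log-flip over the non-$\Q$-factorial variety $Z$, play the two-rays game for $Y_1/Z$ (possible since $Y_1/B$ is a Mori dream space), and identify the output $X_2'$ of the new side with the flipped model by uniqueness of the log-flip; your remark that the Mori dream space property survives the intermediate log-flips is also needed, and is fine. The genuine gap is in the step certifying that $X_2'\to Z$ is a \emph{small} contraction in the Mori fibre space case. Smallness presupposes birationality, and divisor counting cannot deliver that: nothing in your argument excludes the scenario where, on the new side of the game, $Y_2\to X_2'$ is a divisorial contraction (or a fibration of smaller relative dimension) while $X_2'\to Z$ is of fibre type. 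In that scenario the exceptional divisor of $Y_2\to X_2'$ may map \emph{onto a divisor} of $Z$ (for instance when $Y_1/X_1$ has one-dimensional fibres), so no divisor of $Y_2$ is contracted to codimension $\ge 2$ in $Z$, your bookkeeping is vacuously satisfied, and yet the conclusion ``$X_2'\to Z$ is small'' does not follow. Relatedly, your claim that in the Mori fibre space case no divisor of $Y_1$ is contracted by $Y_1\to Z$ ``since $X_1\to Z$ is small'' is unjustified: a Mori fibre space of relative Picard rank $1$ need not be equidimensional and can contract a divisor to codimension $\ge 2$ in its base (compare the non-flat conic bundles mentioned in Remark~\ref{rem:FlatisConicFib}). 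In the divisorial case your counting does work, but even there it silently uses that $Y_2\to X_2'$ is birational, which should be said (it follows because $Y_2\to Z$ is birational).

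What closes the gap --- and is exactly the paper's key sentence --- is the $\Q$-factoriality obstruction rather than divisor counting: the target of an extremal contraction of relative Picard rank $1$ from a $\Q$-factorial variety, whether divisorial or of fibre type, is again $\Q$-factorial, whereas $Z$ is not $\Q$-factorial. Since $X_2'$ is $\Q$-factorial (being the target of a divisorial contraction or the base of a $D$-Mori fibre space from $Y_2$) and $\rho(X_2'/Z)=1$, the morphism $X_2'\to Z$ can be neither divisorial nor of fibre type, hence is a small contraction; in particular in the Mori fibre space case $Y_2\to X_2'$ cannot be birational. Uniqueness of the log-flip of the small contraction $X_1\to Z$ then gives $X_2'=X_2$, after observing (e.g.\ via Lemma~\ref{lem:corti2.7}, as in Lemma~\ref{lem:2rays2Divisors}) that the induced pseudo-isomorphism $X_1\rat X_2'$ is not an isomorphism, so that one recovers the flip and not $X_1$ itself. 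With this replacement for your counting argument, the rest of your proof (induction, base case set-up, and the final type-preservation once $X_2'=X_2$ is known) goes through and matches the paper's proof.
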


\begin{proof}
By induction, it is sufficient to consider the case where $X_1 \ps X_2$ is a single log-flip over a non $\Q$-factorial variety $X$ dominating $B$, given by a diagram
\[
\begin{tikzcd}[link]
X_1 \ar[rd] \ar[rr,dotted] && X_2 \ar[ld] \\
& X \ar[dd] \\ \\
& B
\end{tikzcd}
\]
In this situation, the two-rays game $Y_1/X$ gives a diagram
\[
\begin{tikzcd}[link]
Y_1 \ar[rr, dotted, -] \ar[dd] && Y_2 \ar[dd] \\ \\
X_1 \ar[dr] && X' \ar[dl] \\
& X &
\end{tikzcd}
\]
where $Y_1 \ps Y_2$ is a sequence of log-flips and $Y_2 \to X'$ is a morphism of relative Picard rank~$1$, with $X'$ a $\Q$-factorial variety.
If $Y_1/X_1$ is a divisorial contraction, then $Y_2/X'$ must be birational hence also is a divisorial contraction. 
On the other hand if $Y_1/X_1$ is a Mori fibre space, then $Y_2/X'$ cannot be birational, otherwise $X'/X$ would be a $D$-Mori fibre space for some divisor $D$; impossible since $X'$ is $\Q$-factorial but not $X$.  
By uniqueness of the log-flip associated to the small contraction $X_1 \to X$, we conclude in both cases that $X' = X_2$.
\end{proof}

\begin{lemma}
\label{lem:corti2.7}
Let $\phi\colon Y \ps Y'$ be a pseudo-isomorphism over $X$, where $X, Y, Y'$ are $\Q$-factorial varieties, and assume we are in one of the following situations:
\begin{enumerate}
\item \label{corti2.7:1}
$Y/X$ and $Y'/X$ are Mori fibre spaces;
\item \label{corti2.7:2}
$Y/X$ and $Y'/X$ are divisorial contractions.
\end{enumerate}
Then $\phi$ is an isomorphism.
\end{lemma}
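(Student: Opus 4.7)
My plan is to exhibit in each case a $\pi$-ample $\Q$-Cartier divisor $A$ on $Y$ whose image $A' := \phi_* A$ is $\pi'$-ample on $Y'$ (the divisor $A'$ is well-defined and $\Q$-Cartier because $\phi$ is an isomorphism in codimension~$1$ and $Y'$ is $\Q$-factorial), and then to reconstruct $Y$ and $Y'$ as the relative $\Proj_X$ of the section rings of $A$ and $A'$, using the fact that $\phi$ canonically identifies these rings in order to force $\phi$ itself to be an isomorphism.

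In case~\ref{corti2.7:1}, I take $A := -K_Y$, which is $\pi$-ample by definition of a Mori fibre space. Since $\phi_* K_Y = K_{Y'}$ (an isomorphism in codimension~$1$ identifies canonical divisors, and $\Q$-factoriality promotes this identification to $\Q$-Cartier divisors), we have $A' = -K_{Y'}$, which is $\pi'$-ample by assumption. In case~\ref{corti2.7:2}, let $F$ and $F'$ denote the exceptional prime divisors of $\pi$ and $\pi'$, and take $A := -F$. Lemma~\ref{lem:oppositeSigns} yields $F \cdot C < 0$ for an extremal curve $C$ of $\pi$, and combined with $\rho(Y/X) = 1$ this forces $-F$ to be $\pi$-ample. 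Moreover $\phi_* F = F'$: the pseudo-isomorphism $\phi$ sends $F$ to a prime divisor of $Y'$ whose image in $X$ equals $\pi(F)$, of codimension $\ge 2$, so it must coincide with the unique exceptional prime divisor of the divisorial contraction $\pi'$. Hence $A' = -F'$ is $\pi'$-ample by the symmetric argument.

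Next I pick an integer $m > 0$ sufficiently divisible so that $mA$ and $mA'$ are Cartier and very ample relatively to $\pi$, respectively $\pi'$. Then
\[
Y \cong \Proj_X \bigoplus_{k \ge 0} \pi_* \Ol_Y(kmA), \qquad Y' \cong \Proj_X \bigoplus_{k \ge 0} \pi'_* \Ol_{Y'}(kmA').
\]
Over every affine open $U \subset X$, the sections $H^0(\pi^{-1}(U), kmA)$ are characterised by codimension-$1$ data on the normal variety $\pi^{-1}(U)$, namely the multiplicities of $\div(f) + kmA$ along prime divisors. Since $\phi$ identifies the function fields, the prime divisors, and carries $A$ to $A'$, it induces an isomorphism between the two graded $\Ol_X$-algebras, and taking relative $\Proj_X$ produces an isomorphism $Y \iso Y'$ over $X$ that restricts to $\phi$ on the iso locus, so $\phi$ itself is this isomorphism.

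The main technical points to verify are the identification $\phi_* F = F'$ in case~\ref{corti2.7:2}, which reduces to the uniqueness of the exceptional prime divisor of a divisorial contraction with $\rho(Y'/X) = 1$, and the reduction of section modules of a $\Q$-Cartier divisor on a normal variety to its codimension-$1$ data; both are routine but should be spelled out.
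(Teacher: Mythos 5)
Your proof is correct and follows essentially the same route as the paper: the paper handles case \ref{corti2.7:1} by citing \cite[Proposition 3.5]{Corti1995} (whose proof is exactly your anticanonical argument) and case \ref{corti2.7:2} by noting $\phi_* E = E'$, pushing forward an ample divisor $\pi^*A - \eps E$ to $\pi'^*A - \eps E'$ and invoking \cite[Proposition 2.7]{Corti1995}, whose content is precisely your reconstruction of $Y$ and $Y'$ as $\Proj$ of the section algebra of an ample divisor transported by the pseudo-isomorphism. The only cosmetic difference is that you work with relatively ample divisors ($-K_Y$, resp.\ $-F$) and the relative $\Proj_X$, and spell out the section-ring identification instead of citing Corti.
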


\begin{proof}
Assertion \ref{corti2.7:1} is \cite[Proposition 3.5]{Corti1995} (the proof given there extends \emph{verbatim} in the higher dimensional case).
We now give a proof of \ref{corti2.7:2}, which is very similar.
Let $E$ and $E'$ be the exceptional divisors of $\pi\colon Y \to X$ and $\pi'\colon Y' \to X$, respectively.
Observe that $\phi_* E = E'$.
Pick $A$ a general ample divisor on $X$ and $0 < \eps \ll 1$, and consider $H = \pi^*A - \eps E$, $H' = {\pi'}^*A-\eps E'$.
Both $H$ and $H'$ are ample, and we have $H' = \phi_* H$, so by \cite[Proposition 2.7]{Corti1995} we conclude that $Y  \ps Y'$ is an isomorphism.
\end{proof}

\begin{lemma}
\label{lem:2rays2Divisors}
Let $T \to Y$ and $Y\to X$ be two divisorial contractions between $\Q$-factorial varieties, with respective exceptional divisors $E$ and $F$.
Assume that there exists a morphism $X \to B$ such that $T/B$ is a Mori dream space.
Then there exist two others $\Q$-factorial varieties $T'$ and $Y'$, with a pseudo-isomorphism $T \ps T'$ and birational contractions $T' \to Y' \to X$, with respective exceptional divisors the strict transforms of $F$ and $E$, such that the following diagram commutes:
\[
\begin{tikzcd}[link]
T \ar[rr, dotted, -] \ar[dd,"E",swap] && T' \ar[dd,"F"] \\ \\
Y \ar[dr,"F",swap] && Y' \ar[dl,"E"] \\
& X &
\end{tikzcd}
\]
\end{lemma}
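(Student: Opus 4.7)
The plan is to apply the two-rays game of \S\ref{sec:2rays} to the morphism $T \to X$. Since $T \to Y$ and $Y \to X$ are divisorial contractions of relative Picard rank one, we have $\rho(T/X) = 2$; and since $T/B$ is a Mori dream space and $X$ sits between $T$ and $B$, the game is available. The given factorisation realises one side of the game: $T \to Y$ contracts one extremal ray $C_1$ of $\widebar{\NE}(T/X)$ with exceptional divisor $E$, and then $Y \to X$ contracts the unique extremal ray of $\widebar{\NE}(Y/X)$ with exceptional divisor $F$. The opposite side produces a diagram
\[
T \ps T' \to Y' \to X
\]
where $T \ps T'$ is a (possibly empty) sequence of log-flips over $X$, the morphism $T' \to Y'$ is either a divisorial contraction or a Mori fibre space, and $\rho(Y'/X) = 1$.

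Next I would rule out the degenerate outcomes. Because $T \to Y$ and $Y \to X$ are birational and pseudo-isomorphisms preserve dimension, $\dim T' = \dim T = \dim X$. If $T' \to Y'$ were a Mori fibre space, then $\dim Y' < \dim T' = \dim X$ would contradict the surjectivity of $Y' \to X$, so $T' \to Y'$ is a divisorial contraction. Consequently $\dim Y' = \dim X$, and $Y' \to X$ is birational with $\rho(Y'/X) = 1$. A small contraction from a $\Q$-factorial variety forces the target to fail $\Q$-factoriality (as recalled in \S\ref{sec:preliminaries}); since $X$ is $\Q$-factorial, $Y' \to X$ is therefore also a divisorial contraction.

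The crucial step is to identify the exceptional divisors. Let $E'$ be the exceptional divisor of $T' \to Y'$ and $F'$ that of $Y' \to X$. The birational morphisms $T \to X$ and $T' \to X$ have exceptional loci consisting, respectively, of exactly the two prime divisors $E, \tilde F$ on $T$ (where $\tilde F$ is the strict transform of $F$) and $E', \tilde{F'}$ on $T'$; since $T \ps T'$ is an isomorphism in codimension one, these two unordered pairs correspond via strict transform. I would then argue by contradiction that $E'$ is not the strict transform of $E$: if it were, the induced birational map $Y \rat Y'$ obtained from $Y \gets T \rat T' \to Y'$ would be an isomorphism in codimension one, since every prime divisor on $Y$ other than $F$ lifts to a divisor on $T$ distinct from $E$ and $\tilde F$, is transported by the pseudo-isomorphism to a divisor on $T'$ distinct from $E'$, and hence is not contracted by $T' \to Y'$. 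Applying Lemma~\ref{lem:corti2.7}\ref{corti2.7:2} to $Y \rat Y'$ over $X$ gives $Y = Y'$, and a second application to $T \rat T'$ over $Y$ gives $T = T'$, which collapses the two sides of the two-rays game into the same factorisation and contradicts $C_1 \neq C_2$. Therefore $E'$ is the strict transform of $\tilde F$ and $F'$ is the image in $Y'$ of the strict transform of $E$, as asserted. The main obstacle is the bookkeeping through the flips $T \rat T'$ required to verify that the induced map $Y \rat Y'$ is a genuine pseudo-isomorphism; once this is in hand, the two applications of Lemma~\ref{lem:corti2.7}\ref{corti2.7:2} close the argument.
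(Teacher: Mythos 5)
Your proposal is correct and follows essentially the same route as the paper: run the two-rays game for $T/X$, take the given factorisation as one side, rule out the degenerate outcomes on the other side, and show by contradiction—via two applications of Lemma~\ref{lem:corti2.7}\ref{corti2.7:2}, first to $Y \ps Y'$ and then to $T \ps T'$—that the two sides cannot contract the divisors in the same order, since that would force both sides of the game to coincide. Your extra verifications (that $T'\to Y'$ cannot be a Mori fibre space, that $Y'\to X$ is divisorial because $X$ is $\Q$-factorial, and that $Y\rat Y'$ is a pseudo-isomorphism) are exactly the details the paper leaves implicit.
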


\begin{proof}
The diagram comes from the two-rays game associated to $T/X$.
The only thing to prove is that the divisors are not contracted in the same order on the two sides of the two-rays game.
Assume that both $\pi\colon Y\to X$ and $\pi'\colon Y' \to X$ contract the strict transforms of the same divisor $F$. 
Then $T\to Y$ and $T'\to Y'$ both contract a same divisor $E$ and $T\ps T'$ descends to a pseudo-isomorphism $Y\ps Y'$.
By Lemma~\ref{lem:corti2.7}\ref{corti2.7:2} the pseudo-isomorphism  $Y  \ps Y'$ is an isomorphism.
Then applying  again Lemma~\ref{lem:corti2.7}\ref{corti2.7:2} to the two divisorial contractions from $T, T'$ to $Y \simeq Y'$, with same exceptional divisor $E$, we obtain that $T \ps T'$ also is an isomorphism.
The morphisms $T/Y$ and $T/Y'$ are then divisorial contractions of the same extremal ray, contradicting the assumption that the diagram was produced by a two-rays game.
\end{proof}

\subsection{Gonality and covering gonality}
\label{sec:gonality}
Recal that the \emph{gonality} $\gon(C)$ of a (possibly singular) curve $C$ is defined to be the least degree of the field extension associated to a dominant rational map $C\rat \p^1$.

Note that $\gon(C) = 1$ if and only if $C$ is rational. 
Moreover, for each smooth curve $C\subset \p^2$ of degree $>1$ we have $\gon(C)=\deg(C)-1$. 
Indeed, the inequality $\gon(C)\le \deg(C)-1$ is given by the projection from a general point of $C$ and the other inequality is given by a result of Noether (see for instance \cite{BDELU}).

The following definitions are taken from \cite{BDELU} (with a slight change, see Remark~\ref{rem:CovGonConnGon}).

\begin{definition}
For each variety $X$ we define the \emph{covering gonality} of $X$ to be
\[ 
\covgon(X) = \min \left  \{ c > 0 \ 
\left | 
\parbox{2.7in}{
\begin{center} 
There is a dense open subset $U\subseteq X$ such that each point $x\in U$ is contained in an irreducible curve $C \subseteq X$ with  $\gon(C) \le c$. 
\end{center}
}
\right \}\right..
\]
Similarly we define the \emph{connecting gonality} of $X$ to be 
\[ 
\conngon(X) = \min \left  \{ c > 0 \ 
\left | 
\parbox{2.7in}{
\begin{center} 
There is a dense open subset $U\subseteq X$ such that any two points $x,y\in U$ are contained in an irreducible curve $C \subseteq X$ with  $\gon(C) \le c$.
\end{center}
}
\right \}\right..
\]
\end{definition}

\begin{remark}\label{rem:CovGonConnGon}\item
\begin{enumerate}
\item
Our definitions of the covering and connecting gonality slightly differ from those of  \cite{BDELU}, as we ask $\gon(C)\le c$ where they ask $\gon(C)=c$. 
Lemma~\ref{lem:Covgonfamily} shows that the covering gonality is the same for both definitions.  A similar argument should also work for the connecting gonality, but we do not need it here, as we will not use any result of  \cite{BDELU} involving the connecting gonality.
\item
 The covering gonality and connecting gonality are integers which are invariant under birational maps.
\item
For each variety $X$, we have
\[\covgon(X) \le \conngon(X).\]
Moreover, if $\dim(X)=1$, then $\covgon(X)=\conngon(X)=\gon(X)$.
\item
If $\covgon(X)=1$ one says that $X$ is \emph{uniruled}. 
This corresponds to asking that the union of all rational curves on $X$ contains an open subset of $X$.
Similarly, $X$ is said to be \emph{rationally connected} if $\conngon(X)=1$. 
As already mentioned in \S\ref{sec:moriDream}, this corresponds to asking that a rational curve passes through two general points.
\item \label{gonality:P1xB}
Each rationally connected variety is uniruled. However, the converse does not hold in general. Indeed, for each variety $B$, we have  $\covgon(B\times \p^n)=1$ for each $n\ge 1$, but $\conngon(B\times \p^n)=\conngon(B)$ as the following lemma shows:
Lemma~\ref{lem:CovgonRatMorphism}\ref{covconngonXY} applied to $B\times \p^n/B$ gives $\conngon(B\times \p^n)\ge \conngon(B)$, and the other inequality is given by taking sections in $B\times \p^n$ of curves in $B$.
\end{enumerate}
\end{remark}

We recall the following classical facts:
\begin{lemma}\label{lem:CovgonRatMorphism}
Let $X,Y$ be varieties 
and $\phi\colon X\to Y$ a surjective morphism. 
\begin{enumerate}
\item\label{covXY}
If $X$ and $Y$ have dimension $1$, then $\gon(X)\ge \gon(Y)$.
\item\label{covconngonXY}
We have $\conngon(X)\ge\conngon(Y)$ $($but not $\covgon(X)\ge \covgon(Y)$ in general,
see Remark~$\ref{rem:CovGonConnGon}\ref{gonality:P1xB})$.
\item\label{covgonfinitemorphism}
 If $\dim X=\dim Y$, denote by $\deg(\phi)$ the degree of the associated field extension $\C(Y)\subseteq \C(X)$. Then
\[\covgon(X)\le  \covgon(Y)\cdot \deg(\phi).\]
\item\label{covgondeg}
If $X\subseteq\p^n$ is a closed subvariety, then $\covgon(X)\le \deg(X)$.
\end{enumerate}
\end{lemma}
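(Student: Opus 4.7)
The plan is to handle the four parts in order. Part \ref{covXY} carries the substance and will be the main obstacle; the remaining parts are short reductions to \ref{covXY} or applications of standard projective geometry.

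For \ref{covXY}, I first reduce to the smooth case by normalisation, since gonality is a birational invariant of curves. Let $f\colon X\to\p^1$ realise the gonality $d:=\gon(X)$ and set $e:=\deg\phi$. The key construction is the morphism $F\colon Y\to\Sym^e\p^1\simeq\p^e$ sending a general $y\in Y$ to the effective $0$-cycle $f_*(\phi^{-1}(y))$ of degree $e$ on $\p^1$; this extends to a morphism everywhere because $Y$ is a smooth curve and $\p^e$ is projective. If $F$ were constant, the support of $f_*(\phi^{-1}(y))$ would be a fixed finite set $T\subset\p^1$, forcing $X\subseteq f^{-1}(T)$ to be finite, which is absurd. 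So $F(Y)$ is an irreducible curve $C'\subseteq\p^e$. For generic $t\in\p^1$ the pullback by $F$ of the hyperplane $H_t\subseteq\Sym^e\p^1$ of cycles whose support contains $t$ equals, on the level of divisors, $\phi(f^{-1}(t))$, which has degree $d$. Hence $\deg(Y\to C')\cdot\deg(C')=d$. A general linear projection $C'\to\p^1$ has degree $\deg(C')$, and composing with $Y\to C'$ yields a morphism $Y\to\p^1$ of degree $\le d$, so $\gon(Y)\le d=\gon(X)$.

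For \ref{covconngonXY}, fix an open $U\subseteq X$ witnessing $\conngon(X)=c$; surjectivity of $\phi$ guarantees that $\phi(U)$ contains an open $V\subseteq Y$. For distinct $y_1,y_2\in V$, lift them to $x_1,x_2\in U$, connect these by an irreducible curve $C\subseteq X$ with $\gon(C)\le c$, and consider $\phi(C)\subseteq Y$. Since $\phi(x_1)\ne\phi(x_2)$, the image $\phi(C)$ is a curve, and \ref{covXY} applied to the surjection $C\to\phi(C)$ gives $\gon(\phi(C))\le\gon(C)\le c$. For \ref{covgonfinitemorphism}, the dimension hypothesis forces $\phi$ to be generically finite of degree $\deg\phi$. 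For $x$ in a suitable open subset of $X$, pick an irreducible curve $C\subseteq Y$ through $\phi(x)$ with $\gon(C)\le\covgon(Y)$, and let $\tilde{C}\subseteq X$ be an irreducible component of $\phi^{-1}(C)$ through $x$. Then $\tilde{C}\to C$ has degree at most $\deg\phi$, so precomposing a degree-$\covgon(Y)$ morphism $C\to\p^1$ with $\tilde{C}\to C$ bounds $\gon(\tilde{C})$ by $\covgon(Y)\cdot\deg\phi$.

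For \ref{covgondeg}, for a general $x\in X$ take a general linear subspace $L\subseteq\p^n$ of dimension $n-\dim X+1$ containing $x$. By Bézout, $L\cap X$ is a curve of degree $\deg X$ in $L$, and any irreducible component $C_0$ through $x$ has $\deg C_0\le\deg X$. Projecting $C_0\subseteq L$ from a general codimension-$2$ linear subspace of $L$ disjoint from $C_0$ produces a dominant morphism $C_0\to\p^1$ of degree $\le\deg C_0\le\deg X$, whence $\covgon(X)\le\deg X$. The main subtlety in the whole proof is the multiplicity computation of the pullback $F^{-1}(H_t)$ in part \ref{covXY}: genericity of $t$, and hence of $f^{-1}(t)$ and of its image under $\phi$, is exactly what makes the intersection count clean and equal to $d$.
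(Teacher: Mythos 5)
Your overall strategy is sound and, for parts (2) and (3), it is essentially the paper's own argument: (2) lifts two general points of $Y$ to $X$, connects the lifts by a low-gonality curve $C$ and pushes $C$ forward using (1), and (3) takes the one-dimensional components of preimages of the covering curves on $Y$, multiplying the gonality bound by $\deg(\phi)$. Where you genuinely diverge is in (1) and (4). For (1) the paper simply cites Poonen (Proposition A.1(vii)), whereas you reprove the statement via the classical trick $F\colon Y\to \Sym^e\p^1\simeq\p^e$, $y\mapsto f_*(\phi^*y)$; this makes the lemma self-contained at the cost of the multiplicity bookkeeping discussed below. For (4) the paper reduces to (3) by a general linear projection $X\to Y\subseteq\p^n$ with $\dim Y=\dim X$ (finite of degree $\deg X$ onto a linear space, which has covering gonality $1$), whereas you slice: a general linear space $L$ of dimension $n-\dim X+1$ through a point $x$ cuts out a curve through $x$ of degree at most $\deg X$, whose gonality is bounded by its degree via projection. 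Both routes work; yours avoids invoking (3) but needs the projective dimension theorem to guarantee that the component of $L\cap X$ through $x$ really is a curve, while the paper's is a one-line reduction.

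One point in your proof of (1) deserves correction, though it does not break the argument. You write that $F^{-1}(H_t)$ ``equals, on the level of divisors, $\phi(f^{-1}(t))$, which has degree $d$,'' and in your closing remark you attribute this to genericity of $t$. Genericity is not the reason: even for a general $t$ the set $\phi(f^{-1}(t))$ can have strictly fewer than $d$ points (take $X$ hyperelliptic, $f$ the hyperelliptic map and $\phi=f$, so that $\phi(f^{-1}(t))$ is a single point while $d=2$). The correct statement is that the divisor $F^*H_t$ equals the cycle-theoretic pushforward $\phi_*(f^*t)$, whose degree is $d$ because pushforward of $0$-cycles preserves degree; this identity is proved by a short local computation comparing ramification indices of $\phi$ with vanishing orders of $f-t$ along the branches over a point of $Y$ (genericity of $t$ is only needed to ensure $H_t$ does not contain $F(Y)$). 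With that fix, $\deg(Y\to C')\cdot\deg(C')=d$ and the rest of your argument goes through.
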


\begin{proof}
\ref{covXY}. 
See for instance \cite[Proposition A.1(vii)]{Poonen}. 

\ref{covconngonXY}.
We take two general points $y_1,y_2\in Y$, choose then two general points $x_1,x_2\in X$ with $\phi(x_i)=y_i$ for $i=1,2$, and take an irreducible curve $C\subset X$ of gonality $\le \conngon(X)$ which contains $x_1$ and $x_2$. The image is an irreducible curve $\phi(C)$ of gonality $\le \conngon(X)$ (by \ref{covXY}), containing $y_1$ and $y_2$.

\ref{covgonfinitemorphism}.
By definition of $\covgon(Y)$, the union of irreducible curves $C$ of $Y$ with $\gon(C)\le\covgon(Y)$ covers a dense open subset of $Y$. 
Taking the preimages of general such curves, we obtain a covering of a dense open subset of $X$ by irreducible curves $D$ of $X$ with $\gon(D)\le \covgon(Y)\cdot \deg(\phi)$. 

\ref{covgondeg}.
If $X\subseteq\p^n$ is a closed subvariety, we apply \ref{covgonfinitemorphism} to the projection onto a general linear subspace $Y\subseteq\p^n$ of dimension $\dim(Y)=\dim(X)$.
\end{proof}

\begin{lemma}\label{lem:Covgonfamily}
Let $X$ be a variety with $\covgon(X)=c$. 
There is a smooth projective morphism $\mathcal{C}\to T$ over a quasi-projective irreducible base variety $T$, with irreducible fibres of dimension one and of gonality $c$, together with a dominant morphism $\mathcal{C}\to X$ such that a general fibre of $\mathcal{C}/T$ is birational to its image in $X$. 
In particular, there is a dense open subset $U$ of $X$ such that through every point $p\in U$ there is an irreducible curve $C\subseteq X$ with $\gon(C)=c$.
\end{lemma}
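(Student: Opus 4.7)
The plan is to realise the covering of $X$ by curves of gonality $\le c$ as a single algebraic family, and then refine it so that the general fibre has gonality exactly $c$, is smooth and projective, and maps birationally to its image. To this end, I first parametrise pairs $(C,f)$ consisting of an irreducible one-dimensional closed subvariety $C \subseteq X$ together with a dominant rational map $f\colon C \rat \p^1$ of some degree $d \le c$. Using the Hilbert scheme of $X$ to parametrise $C$ and a relative Hom scheme to parametrise $f$, such pairs are parametrised by a countable disjoint union $\bigsqcup_\alpha P_\alpha$ of quasi-projective schemes, each carrying a universal family $\mathcal{C}_\alpha \to P_\alpha$ together with a natural morphism $\mathcal{C}_\alpha \to X$. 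By Chevalley's theorem each image $Z_\alpha \subseteq X$ is constructible, hence (as $X$ is irreducible) either contained in a proper closed subvariety of $X$ or containing a dense open of $X$. The definition of $\covgon(X) = c$ forces $\bigcup_\alpha Z_\alpha$ to contain a dense open of $X$, and since $X$ cannot be a countable union of proper closed subvarieties (it is defined over the uncountable field $\C$), at least one index $\alpha_0$ must satisfy the second alternative.

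I then shrink $P_{\alpha_0}$ to a dense open $T_1$ over which the universal family is flat with geometrically irreducible one-dimensional fibres and still dominates $X$. For each integer $d' \le c$ the locus $\{t \in T_1 : \gon(\mathcal{C}_{\alpha_0,t}) \le d'\}$ is constructible (it is the image in $T_1$ of the relative Hom scheme of degree-$d'$ maps to $\p^1$), so after a further restriction I may assume the gonality is constant on $T_1$ with some value $g \le c$. If $g$ were strictly less than $c$, the dense image of the family in $X$ would then be covered by irreducible curves of gonality $g$, contradicting $\covgon(X) = c$; therefore $g = c$.

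To obtain the smoothness and projectivity, I replace the family by its relative normalisation over $T_1$. In characteristic zero the generic fibre of this normalisation is a smooth irreducible curve, so after shrinking once more to a dense open $T$ the morphism $\mathcal{C} \to T$ becomes smooth and projective with irreducible one-dimensional fibres, and each fibre still has gonality $c$ (a dominant rational map to $\p^1$ lifts uniquely through the normalisation with the same degree). The induced morphism $\eta\colon \mathcal{C} \to X$ remains dominant, and since the original pairs parametrised closed subvarieties of $X$, the restriction of $\eta$ to a general fibre $\mathcal{C}_t$ is the normalisation of an irreducible curve $\eta(\mathcal{C}_t) \subseteq X$, hence birational onto its image. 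Finally, choosing $U$ to be a dense open of $X$ contained in the image of $\mathcal{C}_{T^\circ} \to X$, where $T^\circ \subseteq T$ is the dense open over which each fibre maps birationally to its image, yields the ``in particular'' statement: any $p \in U$ lies on some $\eta(\mathcal{C}_t)$ with $t \in T^\circ$, which is an irreducible curve in $X$ of gonality $c$.

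The main obstacle is the very first step, where one passes from the pointwise definition of $\covgon$ to a single irreducible algebraic family: this is precisely where the Baire-type observation ``a complex variety is not a countable union of proper closed subvarieties'' enters, and is the reason the argument works over $\C$. The remaining steps only use generic constructibility and the existence of a relative normalisation in characteristic zero.
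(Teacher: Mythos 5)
Your argument is correct and follows essentially the same route as the paper's proof, which itself defers to \cite[Lemma 2.1]{GouKou}: parametrise curves by the Hilbert scheme with its countably many components, use that $X$ over $\C$ is not a countable union of proper closed subsets to extract one irreducible family dominating $X$, stratify by gonality using constructibility, invoke the minimality in the definition of $\covgon(X)$ to pin the generic gonality at $c$, and finish by relative normalisation and shrinking the base. The only detail worth tidying is that on singular fibres a dominant rational map to $\p^1$ need not be a morphism, so the loci $\{\gon \le d'\}$ should be realised as images of relative Hom schemes taken from the relative normalisation of the family rather than from the family itself.
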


\begin{proof}
The proof is analogue to the one of \cite[Lemma 2.1]{GouKou}. 
We consider the Hilbert Scheme $\Hl$ of all one-dimensional subschemes of $X$, which is not of finite type, but has countably many components. 
One of the irreducible components contains enough curves of gonality $\le \covgon(X)$ to get a dominant map to $X$. 
We then look at the set of gonality $i$ for each $i$ and obtain algebraic varieties parametrising these, as in \cite[Lemma 2.1]{GouKou}. 
Having finitely many constructible subsets in the image, at least one integer $i\le \covgon(X)$ gives a dominant map to $X$ parametrising curves of gonality $i$. 
By definition of $\covgon(X)$, this integer $i$ has to be equal to $\covgon(X)$.
\end{proof}

The following result gives a bound from below that complements the easy bound from above from Lemma~\ref{lem:CovgonRatMorphism}.

\begin{theorem}[{\cite[Theorem A]{BDELU}}]
\label{thm:gonality}
Let $X\subset \p^{n+1}$ be an irreducible hypersurface of degree $d\ge n+2$ with canonical singularities. Then, $ \covgon(X)\ge d-n$.
\end{theorem}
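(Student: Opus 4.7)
The plan is to argue by contradiction, assuming $c := \covgon(X) \leq d-n-1$, and to exploit the adjunction formula $\omega_X \simeq \Ol_X(d-n-2)$, valid since $X$ is a Gorenstein hypersurface. The boundary case $d = n+2$ reduces to showing that $X$ is not uniruled, which follows from $\omega_X \simeq \Ol_X$ being pseudo-effective while the canonical class of a uniruled variety is not. So we may henceforth assume $d \geq n+3$, in which case $K_X$ is a positive multiple of the hyperplane class. The strategy is to produce a zero-cycle $Z \subset X$ of length $c$ satisfying a \emph{Cayley--Bacharach condition} with respect to $|K_X|$, incompatible with generic position in $\p^{n+1}$.

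By Lemma~\ref{lem:Covgonfamily}, fix a smooth covering family $\pi\colon \Cl \to T$ of gonality-$c$ curves, generically birational onto their image in $X$. After shrinking $T$, each general $C_t$ carries a base-point-free pencil $|A_t|$ of degree $c$. Through a general point $x \in X$, choose $C_t$ passing through $x$ and let $Z = \{x_1 = x, x_2, \ldots, x_c\} \subset C_t$ be a general divisor of $|A_t|$. The classical fact that any fibre of a base-point-free pencil on a smooth curve satisfies the Cayley--Bacharach condition with respect to the canonical sheaf gives that $Z$ satisfies CB with respect to $K_{C_t}$. The adjunction inclusion $K_X|_{C_t} \hookrightarrow K_{C_t}$, valid because a general $C_t$ avoids the codimension $\geq 2$ singular locus of $X$, transfers this to CB with respect to $K_X|_{C_t}$, and then the restriction map $H^0(X, K_X) \to H^0(C_t, K_X|_{C_t})$ promotes it to CB on $X$ with respect to $|K_X|$.

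On the other hand, since $K_X \sim (d-n-2) H|_X$, the restriction $H^0(\p^{n+1}, \Ol(d-n-2)) \to H^0(X, K_X)$ is surjective (via the Koszul resolution of $\Ol_X$ in $\p^{n+1}$ twisted appropriately). For $c \leq d-n-1$ and $c$ points in sufficiently general position in $\p^{n+1}$, which we can arrange because the evaluation map $\Cl \to X$ is dominant and the pencils sweep a positive-dimensional family of configurations, the points $x_1, \ldots, x_c$ impose independent conditions on forms of degree $d-n-2 \geq c-1$: a product of $c-1$ generic linear forms vanishes on $x_2,\ldots,x_c$ but not at $x_1$, and multiplying by any form of complementary degree $d-n-2-(c-1)\geq 0$ not vanishing at $x_1$ produces a degree $d-n-2$ form with the same property. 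Its restriction to $X$ is a section of $K_X$ violating the Cayley--Bacharach property, a contradiction.

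The main obstacle is the Cayley--Bacharach step, where the canonical singularities hypothesis plays a decisive role: it guarantees both that $\omega_X$ really is the line bundle $\Ol_X(d-n-2)$ and that restriction of its sections along the general curve $C_t$ is compatible with the adjunction injection into $K_{C_t}$. A secondary technical point is the general-position claim for $Z$ in $\p^{n+1}$: this is handled by a dimension-count/incidence argument, since if the configurations $Z$ swept out by the family were always constrained to lie on a proper subvariety of the relevant Hilbert scheme of $c$ points, then the covering family would factor nontrivially, contradicting the minimality of $c = \covgon(X)$.
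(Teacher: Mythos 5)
First, a point of comparison: the paper does not prove this statement at all --- it is quoted verbatim from \cite[Theorem A]{BDELU}, and the only related machinery the paper records is the $\BVA_p$ formalism (Theorem~\ref{thm:BVAK} and Lemma~\ref{lem:BVApEasy}), through which the original proof runs. Your proposal is essentially that proof unwound into an explicit Cayley--Bacharach argument, and most of the skeleton is sound (the Riemann--Roch fact for divisors of a base-point-free pencil, the surjectivity of $H^0(\p^{n+1},\Ol(d-n-2))\to H^0(X,\omega_X)$, the separate treatment of $d=n+2$ via non-pseudo-effectivity of the canonical class of a uniruled variety, passed through a resolution). The genuine gap is the step you call ``the adjunction inclusion $K_X|_{C_t}\hookrightarrow K_{C_t}$, valid because a general $C_t$ avoids the codimension $\ge 2$ singular locus of $X$''. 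Avoiding the singular locus gives no such inclusion: for a curve that does not move --- say a rigid rational curve $C$ on a smooth variety of general type --- one has $\deg f^*K_X>2g(C)-2$ and there is no nonzero map $f^*K_X\to K_C$. The inclusion holds only because the curves $C_t$ sweep out $X$, and proving it is the heart of the theorem: one cuts the family of Lemma~\ref{lem:Covgonfamily} down to an $(n-1)$-dimensional base still dominating a resolution $\mu\colon \tilde X\to X$, so that the evaluation map $e\colon \mathcal{C}\to \tilde X$ is generically finite between smooth varieties; then $K_{\mathcal{C}}=e^*K_{\tilde X}+R$ with $R$ effective, restriction to a general fibre gives $K_{C_t}=f^*K_{\tilde X}+(\text{effective})$, and finally $K_{\tilde X}=\mu^*K_X+E$ with $E\ge 0$ precisely because the singularities are canonical. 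This is also exactly where the hypothesis is indispensable: a cone in $\p^3$ over a smooth plane curve of large degree $d$ is covered by lines, so its covering gonality is $1\ll d-2$, and the step that fails is this inclusion (the discrepancy at the vertex is negative). Your text invokes the hypothesis but never supplies this argument, so the central implication is unsupported as written.

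Two smaller remarks. The closing ``general position'' paragraph is both unnecessary and not an argument (a covering family ``factoring nontrivially'' does not contradict minimality of $c$ in any precise sense): to violate Cayley--Bacharach you only need the $c$ points to be distinct in $\p^{n+1}$, since for each $i\ge 2$ you may choose a hyperplane through $x_i$ missing $x_1$ and multiply by a power of a linear form avoiding $x_1$, using $c-1\le d-n-2$; distinctness holds because a general divisor of a base-point-free pencil is reduced and $C_t\to X$ is birational onto its image. Also, $\omega_X\simeq\Ol_X(d-n-2)$ is Gorenstein adjunction for any hypersurface and does not use canonical singularities, and in the transfer step you should additionally choose the divisor of the pencil to avoid the finitely many points where the map $f^*K_X\to K_{C_t}$ vanishes --- this is the role of the bad locus $Z$ in Definition~\ref{BVA}.
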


We now recall the following definition of \cite{BDELU}, which is a birational version of the classical $p$-very ampleness criterion, which asks that  every subscheme of length $p+1$ imposes independent  conditions on the sections of a line bundle.

\begin{definition} \label{BVA}
Let $X$ be variety  and let $p\ge 0$ be an integer.

A line bundle $L$ on $X$ \textit{satisfies property $\BVA_p$} if there exists a proper Zariski-closed subset $Z = Z(L) \subsetneqq X$ depending on $L$ such that the restriction map
$H^{0}(X,L) \to H^{0}(X,{L \otimes \Ol_{\xi}})$ is surjective for every finite subscheme $\xi \subset X$ of length $p+1$ whose support is disjoint from $Z$. 

The line bundle is moreover $p$-\textit{very ample} if one can choose $Z$ to be empty.
\end{definition}

The property $\BVA_0$ corresponds to asking that $L$ is effective, and $\BVA_1$ is usually called ``birationally very ample''. This explains the notation.
This notion is related to the covering gonality via the following result, which essentially follows from the fact that if the canonical divisor  $K_C$ of a smooth irreducible curve $C$ satisfies $\BVA_p$, then $\gon(C)\ge p+2$ (see  \cite[Lemma 1.3]{BDELU}):

\begin{theorem}[{\cite[Theorem 1.10]{BDELU}}]
\label{thm:BVAK}
Let $X$ be a variety, and $p\ge 0$ an integer.
If $K_X$ satisfies $\BVA_p$, then $\covgon(X)\ge p+2$.
\end{theorem}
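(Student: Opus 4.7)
The plan is to reduce the statement to the curve case cited just before the theorem: if the canonical divisor $K_C$ of a smooth irreducible curve $C$ satisfies $\BVA_p$, then $\gon(C)\ge p+2$ (\cite[Lemma 1.3]{BDELU}). The reduction goes via a well-chosen covering family of curves of minimal gonality on $X$.

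First I invoke Lemma~\ref{lem:Covgonfamily} to obtain a smooth family $\pi\colon \mathcal{C} \to T$ of irreducible curves of gonality $c = \covgon(X)$, together with a dominant morphism $f\colon \mathcal{C} \to X$ sending the general fibre birationally onto its image. After replacing $T$ by a suitable general subvariety of dimension $\dim X - 1$, I may assume $\dim \mathcal{C} = \dim X$, so that $f$ is a dominant generically finite morphism between smooth varieties. The Hurwitz formula then gives $K_{\mathcal{C}} = f^{*} K_X + R_f$, with $R_f$ an effective ramification divisor on $\mathcal{C}$. Shrinking $T$ further, I may also assume that every fibre $\mathcal{C}_t$ meets $X \setminus Z$ (where $Z \subsetneq X$ is the bad locus for $\BVA_p$ of $K_X$) and is not contained in $R_f$; using $K_{\mathcal{C}_t} = K_{\mathcal{C}}|_{\mathcal{C}_t}$ (valid since $\mathcal{C}/T$ is smooth), I get $K_{\mathcal{C}_t} = f^{*}K_X|_{\mathcal{C}_t} + E_t$ with $E_t = R_f|_{\mathcal{C}_t}$ effective.

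The main step is to deduce $\BVA_p$ for $K_{\mathcal{C}_t}$ from $\BVA_p$ for $K_X$. Set $Z_t := (f^{-1}(Z) \cup \mathrm{Supp}(E_t)) \cap \mathcal{C}_t \subsetneq \mathcal{C}_t$. For any length $p+1$ subscheme $\xi \subset \mathcal{C}_t \setminus Z_t$, the morphism $f$ is unramified near $\xi$, so $f(\xi)\subset X\setminus Z$ still has length $p+1$, and $E_t$ vanishes on $\xi$, yielding a canonical identification $K_{\mathcal{C}_t}|_\xi \simeq f^{*}K_X|_\xi \simeq K_X|_{f(\xi)}$. Since $K_X$ satisfies $\BVA_p$, the map $H^0(X, K_X) \to H^0(f(\xi), K_X|_{f(\xi)})$ is surjective, and it factors as
\[ H^0(X, K_X) \to H^0(\mathcal{C}_t, f^{*}K_X|_{\mathcal{C}_t}) \hookrightarrow H^0(\mathcal{C}_t, K_{\mathcal{C}_t}) \to H^0(\xi, K_{\mathcal{C}_t}|_\xi), \]
the middle inclusion being induced by the effective divisor $E_t$. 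Hence the restriction $H^0(\mathcal{C}_t, K_{\mathcal{C}_t}) \to H^0(\xi, K_{\mathcal{C}_t}|_\xi)$ is surjective, showing that $K_{\mathcal{C}_t}$ satisfies $\BVA_p$.

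Applying the curve case \cite[Lemma 1.3]{BDELU}, I conclude $\gon(\mathcal{C}_t) \ge p+2$, whence $\covgon(X) = c = \gon(\mathcal{C}_t) \ge p+2$. The main subtlety is to guarantee that the comparison between $K_X$ and $K_{\mathcal{C}_t}$ goes in the direction that preserves the $\BVA_p$ surjection, i.e.\ that $K_{\mathcal{C}_t} - f^{*}K_X|_{\mathcal{C}_t}$ is effective. This positivity is provided precisely by the ramification divisor $R_f$ of the generically finite morphism $f$, which is why it is crucial to engineer the family so that $f$ is generically finite and the chosen fibre avoids $R_f$.
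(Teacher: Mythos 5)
Your strategy is the right one, and in fact it is essentially the argument of the cited source: the paper itself gives no proof of Theorem~\ref{thm:BVAK} (it quotes it as \cite[Theorem 1.10]{BDELU}, remarking only that it ``essentially follows'' from the curve case \cite[Lemma 1.3]{BDELU}), and the intended proof is precisely your reduction --- slice the covering family of Lemma~\ref{lem:Covgonfamily} down to a base of dimension $\dim X-1$, use the ramification divisor of the resulting generically finite morphism to write $K_{\mathcal{C}_t}=f^*K_X|_{\mathcal{C}_t}+\text{(effective)}$, transfer $\BVA_p$, and apply the curve lemma. (You also implicitly assume $X$ smooth so that $K_X$ is a line bundle and the ramification formula applies; this is the setting of \cite{BDELU} and of every application in the paper, so it is harmless, but worth saying.)

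Two steps need a bit more care. First, after replacing $T$ by a general $(\dim X-1)$-dimensional slice you must check that the restricted family still dominates $X$; this follows from a standard incidence count (for general $x$ the locus of $t$ with $x\in f(C_t)$ has dimension at least $\dim T-(\dim X-1)$, so a general complete-intersection slice of complementary codimension meets it), but it is not automatic and should be stated. Second, and more substantively, ``$f$ is unramified near $\xi$'' does not by itself give that $f(\xi)$ has length $p+1$: two distinct points of $\xi$ could be identified by $f$, in which case the identification $K_{\mathcal{C}_t}\otimes\Ol_\xi\simeq K_X\otimes\Ol_{f(\xi)}$ breaks down. What you need is that $f|_{\mathcal{C}_t}$ is an isomorphism onto its image near $\xi$; since $f|_{\mathcal{C}_t}$ is birational onto its image, this fails only at finitely many points of $\mathcal{C}_t$, which you can simply add to $Z_t$ --- or, more efficiently, replace that paragraph by the paper's own Lemma~\ref{lem:BVApEasy}\ref{BVAmor} applied to $f|_{\mathcal{C}_t}$, followed by Lemma~\ref{lem:BVApEasy}\ref{BVAeff} for the effective term $E_t$. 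With these repairs the proof is complete.
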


We will use the following observations of \cite{BDELU} to check the hypothesis of Theorem~\ref{thm:BVAK}:

\begin{lemma}\label{lem:BVApEasy}
Let $X$ be a variety, $L$ a line bundle on $X$ and $p\ge 0$ an integer.
\begin{enumerate}
\item\label{BVAeff}
If $L$ satisfies $\BVA_p$ and $E$ is an effective divisor on $X$, then  $\Ol_X(L+E)$ satisfies $\BVA_p$.
\item\label{BVAmor}
Suppose that $f\colon Y\to X$ is a morphism which is birational onto its image, that $L$ satisfies $\BVA_p$ and that the closed set $Z\subseteq X$ from Definition~$\ref{BVA}$ does not contain the image of $f$. 
Then,  $f^*L $ satisfies $\BVA_p$.
\item\label{BVAPn}
For each $d\ge 0$, $\Ol_{\p^n}(p)$ is $p$-very ample, i.e.~satisfies  $\BVA_p$ with an empty closed set $Z\subseteq \p^n$.
\end{enumerate}
\end{lemma}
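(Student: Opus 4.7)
The plan is to prove the three assertions separately; (1) and (2) will follow formally from the definition of $\BVA_p$, while (3) will require a classical cohomological input. For (1), I would take the bad set to be $Z(L+E) := Z(L)\cup\operatorname{Supp}(E)$ and exploit multiplication by a canonical section $s_E\in H^0(X,\Ol_X(E))$ cutting out $E$. For any length-$(p+1)$ subscheme $\xi\subset X$ disjoint from $Z(L+E)$, the section $s_E$ is nowhere vanishing on $\xi$, so the commutative square
\[
\begin{tikzcd}
H^0(X,L)\ar[r]\ar[d,"\cdot s_E"] & H^0(\xi,L|_\xi)\ar[d,"\cdot s_E"] \\
H^0(X,L+E)\ar[r] & H^0(\xi,(L+E)|_\xi)
\end{tikzcd}
\]
has a bijective right vertical arrow. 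Since its top row is surjective by the $\BVA_p$ hypothesis on $L$, the bottom row is too.

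For (2), let $V\subseteq Y$ be the largest open set on which $f$ restricts to an isomorphism onto its image; since $f$ is birational onto its image, $V$ is dense in $Y$, and since the image of $f$ is not contained in $Z(L)$ the preimage $f^{-1}(Z(L))$ is a proper closed subset of $Y$. I would then take $Z(f^*L):=(Y\smallsetminus V)\cup f^{-1}(Z(L))$. For a length-$(p+1)$ subscheme $\xi\subset Y$ disjoint from $Z(f^*L)$, the restriction $f|_\xi\colon\xi\to f(\xi)$ is a scheme isomorphism and $f(\xi)\subset X$ has length $p+1$ and avoids $Z(L)$, so the $\BVA_p$ property of $L$ yields a surjection $H^0(X,L)\twoheadrightarrow H^0(f(\xi),L|_{f(\xi)})$. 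Composing with the pullback $f^*\colon H^0(X,L)\to H^0(Y,f^*L)$ and the canonical identification $H^0(\xi,(f^*L)|_\xi)\simeq H^0(f(\xi),L|_{f(\xi)})$ induced by $f|_\xi$ furnishes the required surjection.

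For (3), the strategy is to prove the stronger vanishing $H^1(\p^n,\Il_\xi(p))=0$ for every zero-dimensional subscheme $\xi\subset\p^n$ of length $p+1$; via the long exact sequence of $0\to\Il_\xi(p)\to\Ol(p)\to\Ol_\xi(p)\to 0$ this immediately yields surjectivity of $H^0(\p^n,\Ol(p))\to H^0(\xi,\Ol_\xi(p))$. This vanishing is a classical Castelnuovo--Mumford regularity statement for zero-dimensional subschemes: the ideal sheaf $\Il_\xi$ of any length-$\ell$ zero-dimensional subscheme of $\p^n$ is $\ell$-regular. This regularity input is the only non-routine ingredient, and is the hard part of the lemma; parts (1) and (2) are formal diagram chases from the definition of $\BVA_p$ and should pose no obstacle.
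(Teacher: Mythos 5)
Your proof is correct and is essentially the intended argument: the paper's own proof is just a citation of \cite[Example 1.2]{BDELU}, and your parts (1) and (2) are exactly the formal diagram chases from Definition~\ref{BVA} (multiplication by the canonical section of $E$, respectively pullback along $f$ restricted to $\xi\subset V$), while your part (3) rests on the classical fact that a zero-dimensional subscheme of length $\ell$ in $\p^n$ imposes independent conditions on forms of degree $\ge \ell-1$ (equivalently, $\Il_\xi$ is $\ell$-regular), which is precisely the content behind the $p$-very ampleness of $\Ol_{\p^n}(p)$. The only points worth making explicit are that your proposed bad loci $Z(L)\cup\operatorname{Supp}(E)$ and $(Y\setminus V)\cup f^{-1}(Z(L))$ are indeed \emph{proper} closed subsets, which holds since the varieties are irreducible, $V$ is dense, and $f(Y)\not\subseteq Z(L)$ by hypothesis.
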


\begin{proof}
The three assertions follow from the definition of $\BVA_p$, as mentioned in \cite[Example 1.2]{BDELU}.
\end{proof}

\section{Rank \textit{r} fibrations and Sarkisov links}\label{sec:SarkisovLinks}

In this section we introduce the notion of rank $r$ fibration, recovering the notion of Sarkisov link for $r = 2$.
Then we focus on rank $r$ fibrations and Sarkisov links with general fibre a curve.

\subsection{Rank \textit{r} fibrations}

The notion of rank~$r$ fibration is a key concept in this paper. 
Essentially these are (relative) Mori dream spaces with strong constraints on singularities.
The cases of $r = 1,2,3$ will allow us to recover respectively the notion of terminal Mori fibre spaces, of Sarkisov links, and of elementary relations between those.
The precise definition is as follows.

\begin{definition} \label{def:rankFibration}
Let $r\ge 1$ be an integer.
A morphism $\eta\colon X\to B$ is a \emph{rank~$r$ fibration} if the following conditions hold:
\begin{enumerate}[(RF1)]
\item \label{fib:dream} 
$X/B$ is a Mori dream space (see Definition~\ref{def:moriDream});
\item \label{fib:eta} 
$\dim X > \dim B \ge 0$ and $\rho(X/B) = r$;
\item \label{fib:singX} 
$X$ is $\Q$-factorial and terminal, and for any divisor $D$ on $X$, the output of any $D$-MMP from $X$ over $B$ is  still $\Q$-factorial and terminal (recall that such an output has the same dimension as $X$ by definition, see \S\ref{sec:MMP});
\item \label{fib:singB}
There exists an effective $\Q$-divisor $\Delta_B$ such that $(B, \Delta_B)$ is klt.
\item \label{fib:big} 
The anticanonical divisor $-K_X$ is $\eta$-big (see Lemma~\ref{lem:piBig}). 
\end{enumerate}

We say that a rank~$r$ fibration $X/B$ \emph{factorises through} a rank~$r'$ fibration $X'/B'$, or that \emph{$X'/B'$ is dominated by $X/B$}, if the fibrations $X/B$ and $X'/B'$ fit in a commutative diagram 
\[
\begin{tikzcd}[link]
X \ar[rrr] \ar[dr,dashed] &&& B \\
& X' \ar[r] & B' \ar[ur]
\end{tikzcd}
\]
where $X \rat X'$ is a birational contraction, and $B' \to B$ is a morphism with connected fibres.
This implies $r \ge r'$.
\end{definition}

The notion of rank $r$ fibration bears some resemblance with the notion of fibration of Fano type in \cite{BirkarA}.
Note however that our condition \ref{fib:singX} imposing strong restriction on singularities does not seem to appear previously in the literature.

\begin{example}\label{ex:rank_r} \phantomsection~
\begin{enumerate}
\item \label{ex:rank_r:2}
If $X$ is a $\Q$-factorial terminal Fano variety of rank~$r$, then $X/\pt$ is a rank~$r$ fibration.
Indeed as already mentioned in Example \ref{ex:Mds}, $X$ is a Mori dream space, and moreover for any divisor $D$ the output of a $D$-MMP is $\Q$-factorial and terminal. 
Both assertions follow from the fact that we can pick a small rational number $\eps >0$ such that $-K_X + \eps D$ is ample, and then writing $\eps D = K_X + (-K_X + \eps D)$ we see that a $D$-MMP is also a ($K_X+$ ample)-MMP. 
\item \label{ex:rank_r:3}
Let $p_1, p_2$ be two distinct points on a fibre $f$ of $\p^1 \times \p^1/\p^1$, and consider $S \to \p^1 \times \p^1$ the blow-up of $p_1$ and $p_2$.
Then $S$ is a weak del Pezzo toric surface of Picard rank~$4$, hence in particular $S/\pt$ is a Mori dream space.
However $S/\pt$ is \emph{not} a rank~$4$ fibration, because when contracting the strict transform of $f$ one gets a  singular point (hence non terminal as we work here with surfaces), which is forbidden by condition~\ref{fib:singX} of Definition~\ref{def:rankFibration}.
\end{enumerate}
\end{example}

Other basic examples are terminal Mori fibre spaces:

\begin{lemma} \label{lem:rank1=Mfs}
Let $\eta\colon X \to B$ a surjective morphism between normal varieties.
Then $X/B$ is a rank~$1$ fibration if and only if $X/B$ is a terminal Mori fibre space.
\end{lemma}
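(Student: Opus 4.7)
The plan is to verify the defining conditions of a rank~$1$ fibration in both directions, exploiting that $\rho(X/B)=1$ forces the relevant cones in $N^1(X/B)$ to coincide. For the forward direction, suppose $X/B$ is a rank~$1$ fibration. From \ref{fib:singX} and \ref{fib:eta} we directly get that $X$ is $\Q$-factorial terminal, $\dim B<\dim X$ and $\rho(X/B)=1$, so only the $\eta$-ampleness of $-K_X$ remains. I would first establish that $\widebar{\Eff}(X/B)=\Nef(X/B)$ inside $N^1(X/B)\cong\R$: by \ref{dream:nef} the nef cone is a non-trivial closed half-line, and any effective divisor lies on the same $\eta$-nef side because its horizontal components meet a general fibre curve non-negatively, while its vertical components pull back from $B$ and are numerically trivial over $B$. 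Consequently $\BigD(X/B)$ and $\Ample(X/B)$, being the interiors of these equal cones, coincide, and \ref{fib:big} then forces $-K_X$ to be $\eta$-ample.

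For the backward direction, suppose $X/B$ is a terminal Mori fibre space. Conditions \ref{fib:eta} and \ref{fib:big} are immediate (ample implies big), and \ref{fib:singB} is exactly Proposition~\ref{pro:sing_of_B}. For \ref{fib:singX}, I would observe that the unique extremal ray of $\NE(X/B)$ is generated by a curve in a fibre whose Mori contraction is $\eta$ itself; since $\dim B<\dim X$, this contraction is of fibre type, which under the convention of \S\ref{sec:MMP} is the endpoint of an MMP rather than one of its steps. Hence for any divisor $D$ the output of a $D$-MMP from $X$ over $B$ is $X$ itself, which is $\Q$-factorial and terminal by assumption.

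It then remains to verify \ref{fib:dream}, that $X/B$ is a Mori dream space in the sense of Definition~\ref{def:moriDream}. I would check \ref{dream:Q_factorial}--\ref{dream:mov} one by one: \ref{dream:Q_factorial} follows from the hypothesis, from Proposition~\ref{pro:sing_of_B}, and from the fact that klt implies rational singularities; \ref{dream:rat_connected} is Lemma~\ref{lem:fibresRatConnected}\ref{connected:-KBigNef} applied to the pair $(X,0)$, using that $-K_X$ is $\eta$-ample and hence $\eta$-big and $\eta$-nef; \ref{dream:nef} holds because $\Nef(X/B)=\R_{\ge 0}[-K_X]$ and $-K_X$ is ample hence semiample; and for \ref{dream:mov} one reuses the equality of effective, movable and nef cones from the forward direction to see that the identity pseudo-isomorphism $X\ps X$ already suffices.

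The only mildly delicate step is the identification $\widebar{\Eff}(X/B)=\Nef(X/B)$ under $\rho(X/B)=1$: a priori an effective divisor could meet the extremal curve negatively, but this is ruled out by the horizontal/vertical decomposition sketched above. Everything else is a direct application of results already recorded earlier in \S\ref{sec:preliminaries}.
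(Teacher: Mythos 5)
Your proof is correct and follows essentially the same route as the paper: the forward direction amounts to the observation that $\eta$-big and $\eta$-ample coincide when $\rho(X/B)=1$ (the paper gets this in one line via Lemma~\ref{lem:piBig}, whereas you derive $\widebar{\Eff}(X/B)=\Nef(X/B)$ directly), and the converse checks \ref{fib:eta}--\ref{fib:big} and \ref{dream:Q_factorial}--\ref{dream:mov} exactly as the paper does, using Proposition~\ref{pro:sing_of_B}, Lemma~\ref{lem:fibresRatConnected}\ref{connected:-KBigNef}, the single-ray cone structure, and the fact that $X$ is the only possible output of any $D$-MMP over $B$. One small inaccuracy, harmless here: vertical prime divisors need not be pull-backs from $B$; their $\eta$-numerical triviality (or more simply the $\eta$-nefness of any effective divisor) instead follows because the contracted curves cover $X$, so a prime divisor negative on the unique ray would have to contain all of them.
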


\begin{proof}
Observe that if $\rho(X/B) = 1$, the notions of $\eta$-ample and $\eta$-big are equivalent.
So the implication 
\[
X/B \text{ is a rank $1$ fibration} \implies  X/B \text{ is a terminal Mori fibre space}
\] 
is immediate from the definitions, and we need to check the converse.

Assume that $X/B$ is a terminal Mori fibre space.
Then $\dim X > \dim B$ and $\rho(X/B) = 1$, which is \ref{fib:eta}, by Proposition~\ref{pro:sing_of_B} the base $B$ is klt, which gives
\ref{fib:singB}, and $-K_X$ is $\eta$-ample, which gives \ref{fib:big}.

We now prove that $X/B$ is a Mori dream space, which is \ref{fib:dream}.
Condition \ref{dream:Q_factorial} holds by assumption.
By Lemma~\ref{lem:fibresRatConnected}\ref{connected:-KBigNef} the general fibre of $X/B$ is rationally connected with rational singularities, which gives \ref{dream:rat_connected}.
Moreover since $\rho(X/B) = 1$, we have $\Ample(X/B) = \Nef(X/B) = \widebar{\Mov}(X/B)$ equal to a single ray, and so conditions \ref{dream:nef} and \ref{dream:mov} are immediate.

Finally we prove \ref{fib:singX}.
By assumption $X$ is terminal and $\Q$-factorial.
For any divisor $D$, either $D$ is $\eta$-nef and $X/B$ is a $D$-minimal model, or $-D$ is $\eta$-ample and $X/B$ is a $D$-Mori fibre space.
So $X$ is the only possible output for a $D$-MMP, which proves the claim.  
\end{proof}

\begin{lemma}\label{lem:StableUnderMMP}
Let $X/B$ be a rank~$r$ fibration.
\begin{enumerate}
\item \label{stab_under_MMP:1}
If $X'$ is obtained from $X$ by performing a log-flip $($resp.~a divisorial contrac\-tion$)$ over $B$, then $X'/B$ is a rank~$r$ fibration $($resp.~a rank~$(r-1)$-fibration$)$.
\item \label{stab_under_MMP:2}
Assume that $X/B$ factorizes through a rank~$s$ fibration $X'/B'$ such that the birational map $X \to X'$ is a morphism.
Let $t = \rho(X/B')$.
Then $X/B'$ is a rank~$t$ fibration.
\end{enumerate}
\end{lemma}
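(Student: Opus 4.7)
My plan is to verify each of the five conditions \ref{fib:dream}--\ref{fib:big} of Definition~\ref{def:rankFibration} in both cases, leveraging the unifying idea that the new fibration arises from the old one $X/B$ via an MMP step, so that the strong singularity assumption \ref{fib:singX} on $X/B$ transfers.

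For part (1), I first realise $X'$ as the output of an explicit MMP from $X$ over $B$. In the divisorial contraction case $\pi\colon X\to X'$ with exceptional divisor $E$ and contracted class $C$, Lemma~\ref{lem:oppositeSigns} gives $E\cdot C<0$, so the $E$-MMP from $X$ begins with $\pi$ and terminates at $X'$ (since $\pi_*E = 0$ is nef). In the log-flip case $\phi\colon X\dashrightarrow X'$, I take $D = \phi^*A'$ for $A'$ a relatively ample divisor on $X'/B$: the sign change at the flip forces the log-flip to be the first step of the $D$-MMP, which then stops at $X'$. Consequently $X'$ is $\Q$-factorial and terminal by \ref{fib:singX} for $X/B$; the Picard rank condition \ref{fib:eta} behaves correctly (drops by one for divisorial contraction, preserved for log-flip); and \ref{fib:singB} is automatic as $B$ is unchanged.

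To get the full MMP-stability clause of \ref{fib:singX} for $X'/B$, I lift arbitrary $D'$-MMPs from $X'$ to MMPs from $X$. In the divisorial case, $D = \pi^*D' + cE$ with $c > 0$ small satisfies $D\cdot C<0$ and $\pi_*D = D'$, so the $D$-MMP from $X$ starts with $\pi$ and then runs identically to the $D'$-MMP from $X'$. In the log-flip case, the pseudo-isomorphism between the Mori dream spaces $X/B$ and $X'/B$ identifies the finite family of pseudo-isomorphic models, whence the possible MMP outputs coincide and inherit $\Q$-factoriality and terminality from \ref{fib:singX} for $X/B$. For \ref{fib:dream}, the pseudo-isomorphism identifies Cox sheaves in the log-flip case, and Lemma~\ref{lem:B'_over_B_is_Mds} applied to $X\to X'\to B$ handles the divisorial case. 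For \ref{fib:big}, the log-flip case is immediate since the generic fibres of $X/B$ and $X'/B$ are pseudo-isomorphic; in the divisorial case, Lemma~\ref{lem:oppositeSigns} applied to both terminal varieties $X, X'$ gives $\pi^*(-K_{X'}) = -K_X + aE$ with $a > 0$, which is big over $B$ as a sum of a big and an effective divisor, whence $-K_{X'}$ is big over $B$ by birational invariance of bigness on generic fibres.

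For part (2), most axioms are essentially automatic: \ref{fib:eta} by the definition of $t$ together with $\dim X' > \dim B'$ and $X\to X'$ birational; \ref{fib:singX} because any $D$-MMP over $B'$ is automatically over $B$, so its output is $\Q$-factorial and terminal by \ref{fib:singX} for $X/B$; \ref{fib:singB} since $B'$ is klt by \ref{fib:singB} for $X'/B'$; and \ref{fib:dream} via Lemma~\ref{lem:Mds}, using that the general fibre of $X/B'$ is birational to the rationally connected general fibre of $X'/B'$ and that $\Cox(X/B')$ is finitely generated (deduced from $\Cox(X/B)$ by base change). The one genuinely new check is \ref{fib:big}: by Lemma~\ref{lem:piBig}, write $-K_X = A + E_0$ over $B$ with $A$ ample over $B$ and $E_0$ effective; since every curve contracted to a point in $B'$ is contracted to a point in $B$, we have $\widebar{\NE}(X/B')\subseteq\widebar{\NE}(X/B)$, and the relative Kleiman criterion forces $A$ to be ample over $B'$ too, so $-K_X = A + E_0$ is big over $B'$. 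The main technical obstacle lies in the MMP-lifting in part (1), particularly in the log-flip case, where one must use the Mori dream space structure to match up MMP outputs across pseudo-isomorphic models.
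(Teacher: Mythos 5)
Your proof is correct and follows essentially the same route as the paper: a condition-by-condition check of (RF1)--(RF5), in which your explicit lift $D=\pi^*D'+cE$ (via Lemma~\ref{lem:oppositeSigns}) spells out what the paper asserts in one line, your appeal to Lemma~\ref{lem:B'_over_B_is_Mds} replaces the paper's direct Cox-sheaf embedding, and part (2) is argued exactly as in the paper. The only soft spot is your one-line claim that MMP outputs over $B$ from $X$ and $X'$ coincide in the log-flip case of (RF3) (condition (MD\ref{dream:mov}) only controls models inside the movable cone, so a short ample-model or precomposition argument is still needed), but since the paper itself dismisses the log-flip case as ``similar and easier'' and leaves it to the reader, this matches the paper's own level of detail.
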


\begin{proof} 
\ref{stab_under_MMP:1}.
Let $\pi\colon X\to X'$ be a divisorial contraction over $B$, with exceptional divisor $E$ (the case of a log-flip, which is similar and easier, is left to the reader). 

\ref{fib:dream}. 
The general fibre of $X'/B$ remains rationally connected, and is terminal by Lemma~\ref{lem:fibresAreTerminal}, so it remains to show that a Cox sheaf of $X'/B$ is finitely generated (Lemma~\ref{lem:Mds}). 

Let $L_1, \dots, L_p \in \Pic(X)_\Q$ and $L_1',\dots,L_q' \in \Pic_\Q(X')$ such that $\Eff(X/B) \subseteq \sum \R_+ L_i$ and $\Eff(X'/B) \subseteq \sum \R_+ L_i'$.
For each open set $U \subseteq B$,  by pulling-back we get an injective morphism of algebras 
\[\Cox(X'/B;L_1',\dots,L_q')(U) \hookto \Cox(X/B; E,\pi^*L_1',\dots,\pi^*L_q',L_1,\dots, L_p)(U).\] 
Since $X/B$ is a rank~$r$ fibration, its Cox sheaf is finitely generated by Lemma~\ref{lem:Mds}, and so $\Cox(X'/B;L_1',\dots,L_q')$ also is finitely generated by  Lemma~\ref{lem:FiniteGeneration}.

\ref{fib:eta}.
By definition of a divisorial contraction we have $\dim X' = \dim X > \dim B$, and $\rho(X') = \rho(X) -1$, so $\rho(X'/B) = r - 1$.

\ref{fib:singX}.
The output of any MMP from $X'$ also is the output of a MMP from $X$, and so is $\Q$-factorial and terminal by assumption. 

\ref{fib:singB} holds by assumption.

\ref{fib:big}. 
Follows from the fact that the image of a big divisor by a birational morphism is still big. 

\ref{stab_under_MMP:2}.
The conditions of \ref{fib:eta} and \ref{fib:singB} hold by assumption. 
\ref{fib:singX} follows because any MMP over $B'$ also is a MMP over $B$.
For \ref{fib:big} we observe that a curve contracted by $X/B'$ also is contracted by $X/B$, so a divisor relatively ample for $X/B$ also is relatively ample for $X/B'$. Then we can restrict a decomposition $-K_X = \eta\text{-ample} + \text{effective}$ for $X/B$ to get a similar decomposition for $X/B'$.

Finally we show \ref{fib:dream}. 
Let $L_1, \dots, L_r$ be $\Q$-divisors on $X$ such that $\Eff(X/B) \subseteq \sum \R_+ L_i$, which implies $\Eff(X/B') \subseteq \sum \R_+ L_i$.
Let $\phi\colon B' \to B$ the morphism given by assumption. 
Then for each affine open set $U' \subset B'$, we have
\[
\Cox(X/B'; L_1, \dots, L_r)(U') = \Cox(X/B; L_1, \dots, L_r)(\phi(U')),
\]
and the latter is finitely generated by assumption.
A general fibre of $X/B'$ is rationally connected because it is birational to a fibre of $X'/B'$, and it has rational singularities by Lemma~\ref{lem:fibresAreTerminal}.
We conclude by Lemma \ref{lem:Mds}.
\end{proof}

\begin{lemma} 
\label{lem:weakFano}
Any rank~$r$ fibration $X/B$ is pseudo-isomorphic, via a sequence of antiflips over $B$, to another rank~$r$ fibration $Y/B$ such that $-K_Y$ is relatively nef and big over $B$.
\end{lemma}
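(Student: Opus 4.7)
The plan is to run a $(-K_X)$-MMP over $B$ starting from $X$, to show that every step is a $K$-antiflip, and that the MMP terminates at a $(-K)$-minimal model $Y/B$ with $-K_Y$ relatively nef. Such an MMP exists, has only finitely many possible outputs, and terminates, by the Mori dream space property \ref{fib:dream} and Lemma~\ref{lem:anyMMP}.

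First I would rule out divisorial contractions along the MMP. Suppose at some step $X_i \to X_{i+1}$ the MMP performs a divisorial contraction of an extremal ray $R$ with $(-K_{X_i}) \cdot R < 0$, i.e.\ $K_{X_i} \cdot R > 0$. By Lemma~\ref{lem:StableUnderMMP}\ref{stab_under_MMP:1} all previously performed log-flips keep $X_i/B$ a rank~$r$ fibration, so $X_i$ satisfies condition \ref{fib:singX}. Letting $E_0 = \Ex(X_i \to X_{i+1})$, one observes that the one-step $E_0$-MMP from $X_i$ contracting $R$ has output $X_{i+1}$ (since $E_0 \cdot R < 0$ and the pushforward of $E_0$ is zero, hence nef), so \ref{fib:singX} would force $X_{i+1}$ to be $\Q$-factorial and terminal. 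This contradicts the last assertion of Lemma~\ref{lem:oppositeSigns}, which says exactly that contracting a ray with $K_{X_i}\cdot R > 0$ destroys terminality. Hence only $(-K)$-flips, i.e.\ $K$-antiflips, can occur, and $X \ps Y$ is a pseudo-isomorphism over $B$.

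The more delicate step is to rule out termination at a $(-K)$-Mori fibre space $X_k \to B_k$ over $B$ with $\dim B_k < \dim X_k$. Such an output would have $K_{X_k}$ relatively ample over $B_k$, so for a general closed point $b' \in B_k$ lying over a general point $b \in B$ the fibre $F = (X_k)_{b'}$ would be smooth with trivial normal bundle, and adjunction would give $K_F = K_{X_k}|_F$ ample. To derive a contradiction, I would use that $-K_{X_k}$ is still $\eta$-big, since $X \ps X_k$ is a pseudo-isomorphism over $B$ and bigness of the restriction to the generic fibre of $\eta$ is a birational invariant. By Lemma~\ref{lem:piBig} write $-K_{X_k} \equiv A + E$ with $A$ an $\eta$-ample $\Q$-divisor and $E$ effective. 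Since $F$ lies inside a general fibre of $\eta$ and is not contained in $\mathrm{Supp}(E)$, restriction gives $-K_F \equiv A|_F + E|_F$, the sum of an ample and an effective divisor, hence big; but $K_F$ ample makes $-K_F$ anti-ample on a positive-dimensional variety, which has no positive multiple with global sections and is in particular not big, a contradiction.

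Combining the two steps, the MMP terminates at a $(-K)$-minimal model $Y/B$: by construction $-K_Y$ is $\eta$-nef, while $\eta$-bigness is preserved through the sequence of antiflips, and $Y/B$ remains a rank~$r$ fibration by iterating Lemma~\ref{lem:StableUnderMMP}\ref{stab_under_MMP:1}. I expect the main obstacle to be the second step: condition \ref{fib:big} enters there essentially, for without relative bigness of $-K_X$ one would not be able to exclude a Mori fibre space output by this adjunction argument.
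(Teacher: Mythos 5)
Your proof is correct and takes essentially the same route as the paper: run a $(-K)$-MMP over $B$, exclude divisorial contractions by combining Lemma~\ref{lem:oppositeSigns} with condition \ref{fib:singX}, and exclude a Mori fibre space output using the relative bigness assumption \ref{fib:big}, so that only antiflips occur and the output has $-K$ relatively nef and big. Your extra details (realising the divisorial contraction as the output of an $E_0$-MMP from the intermediate rank~$r$ fibration, and restricting the decomposition $-K \equiv \eta\text{-ample} + \text{effective}$ to a general fibre of the putative Mori fibre space) simply make explicit what the paper's one-line contradictions leave implicit.
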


\begin{proof}
We run a $(-K)$-MMP from $X$ over $B$ (recall that by Lemma~\ref{lem:anyMMP}, one can run a $D$-MMP for an arbitrary divisor $D$).
It is not possible to have a divisorial contraction, because by Lemma~\ref{lem:oppositeSigns} the resulting singularity would not be terminal, in contradiction with assumption \ref{fib:singX} in the definition of rank~$r$ fibration.
If there exists an extremal class that gives a small contraction, we anti-flip it.
After finitely many such steps, either $-K$ is relatively nef, or there exists a fibration such that $K$ is relatively ample. 
But this last situation contradicts the assumption \ref{fib:big} that the anti-canonical divisor is big over $B$.
So finally $-K$ is also relatively nef over $B$, as expected.
\end{proof}

\begin{corollary}
\label{cor:weakFanoFiber}
Let $\eta\colon Y \to B$ be a rank~$r$ fibration, $p \in B$ a general point, and $Y_p = \eta^{-1}(p)$ the fibre over $p$.
\begin{enumerate}
\item \label{cor:weakFanoFiber1}
If $-K_Y$ is relatively nef and big over $B$, then the curves $C \subset Y_p$ such that $K_{Y_p} \cdot C = 0$ cover a subset of codimension at least $2$ in $Y_p$.
\item \label{cor:weakFanoFiber2}
Without assumption on $-K_Y$, the fibre $Y_p$ is pseudo-isomorphic to a weak Fano terminal variety, and the curves $C\subset Y_p$ that satisfy $ K_{Y_p}\cdot C \ge 0$ cover a subset of codimension at least $2$ in $Y_p$.
\end{enumerate}
\end{corollary}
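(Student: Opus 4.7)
Both parts begin with the same preliminary reduction: for general $p\in B$, Lemma~\ref{lem:fibresAreTerminal} ensures that $Y_p$ is terminal, and adjunction across the trivial normal bundle of $Y_p$ in $Y$ at a smooth point of $B$ gives $K_{Y_p}=K_Y|_{Y_p}$, so intersection numbers against curves contained in $Y_p$ can be computed with either canonical class.

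For part~\ref{cor:weakFanoFiber1}, the hypothesis makes $Y_p$ a terminal weak Fano variety. I would apply the relative base-point-free theorem to $-K_Y$ over $B$ to obtain an $\eta$-semiample birational morphism $\phi\colon Y\to Z$ over~$B$ with $-K_Y\sim_{\Q,B}\phi^\ast L$ for some $\eta_Z$-ample $L$, whose restriction to $Y_p$ is the anticanonical morphism $\phi_p\colon Y_p\to Z_p$. A curve $C\subset Y_p$ satisfies $K_{Y_p}\cdot C=0$ exactly when it is contracted by $\phi_p$, so the statement reduces to showing that $\Ex(\phi_p)$ has codimension at least~$2$. The plan is to argue by contradiction: a prime divisor $E\subset Y_p$ contracted by $\phi_p$ would, for general~$p$, extend to a prime divisor $\tilde E\subset Y$ dominating $B$ and contracted by $\phi$; since $Y/Z$ inherits the Mori dream space property from $Y/B$ via Lemma~\ref{lem:B'_over_B_is_Mds}, and the $\phi$-exceptionality of $\tilde E$ together with the negativity lemma yields an extremal ray $R$ of $\widebar{\NE}(Y/Z)$ with $\tilde E\cdot R<0$, one runs a $\tilde E$-MMP over~$Z$ (Lemma~\ref{lem:anyMMP}) until a divisorial contraction of a strict transform of $\tilde E$ occurs. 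Because $K_Y\equiv 0$ over $Z$, this extremal ray is $K$-trivial, so Lemma~\ref{lem:oppositeSigns} forces the target to be non-terminal; since non-terminal klt singularities cannot be reverted by further log-flips and divisorial contractions, the final output of the corresponding $D$-MMP over~$B$ remains non-terminal, contradicting~\ref{fib:singX}.

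For part~\ref{cor:weakFanoFiber2}, Lemma~\ref{lem:weakFano} produces a pseudo-isomorphism $\psi\colon Y\ps Y'$ over~$B$ consisting of antiflips, with $Y'/B$ still a rank~$r$ fibration (Lemma~\ref{lem:StableUnderMMP}\ref{stab_under_MMP:1}) and with $-K_{Y'}$ nef and big over~$B$; restricting to general $p$ gives the pseudo-isomorphism $\psi_p\colon Y_p\ps Y'_p$ onto a terminal weak Fano variety $Y'_p$, which is the first assertion. The second assertion then follows from part~\ref{cor:weakFanoFiber1} applied to $Y'/B$---giving a subset $S'\subset Y'_p$ of codimension at least~$2$ containing every curve with $K_{Y'_p}\cdot C'\geq 0$ (equivalently $=0$ by nefness of $-K_{Y'_p}$)---together with the fact that $\psi_p$ is an isomorphism outside a subset of codimension $\geq 2$, so that the locus swept by $K_{Y_p}\cdot C\geq 0$ curves in $Y_p$ pulls back from $S'$ modulo the indeterminacy of $\psi_p$. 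The compatibility of $K$-intersections comes from each step being a $(-K)$-flip, so that the $K$-positive part of the cone of curves is concentrated on the flipping loci, all of which lie inside the indeterminacy locus of $\psi_p$.

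The main obstacle is the MMP argument in~\ref{cor:weakFanoFiber1}: realising the contraction of $\tilde E$ as a step of a bona fide $D$-MMP over~$B$ and checking that the non-terminality this introduces cannot be repaired by subsequent log-flips or divisorial contractions, so that~\ref{fib:singX} is indeed violated. A related delicate point is the sign bookkeeping for $K$-intersections under antiflips in~\ref{cor:weakFanoFiber2}: showing precisely that a curve $C\subset Y_p$ with $K_{Y_p}\cdot C\geq 0$ and not contained in the indeterminacy of $\psi_p$ must have its strict transform $C'\subset Y'_p$ entirely inside $S'$.
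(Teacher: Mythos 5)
Your part~\ref{cor:weakFanoFiber1} takes a genuinely different route from the paper, but as written it has two gaps. First, Lemma~\ref{lem:B'_over_B_is_Mds} does not give you what you invoke it for: applied to $Y \to Z \to B$ it shows that $Z/B$ is a Mori dream space, not that $Y/Z$ is one, so the existence of your $\tilde E$-MMP over $Z$ via Lemma~\ref{lem:anyMMP} is unjustified (it is repairable, e.g.\ by running a $(K_Y+\eps\tilde E)$-MMP over $Z$ using \cite{BCHM}, since $K_Y\equiv 0$ over $Z$ and $\tilde E$ is big over the birational base $Z$, but that is a different argument). Second, and more seriously, your concluding principle --- that a non-terminal klt singularity ``cannot be reverted by further log-flips and divisorial contractions'', so the output of the $D$-MMP over $B$ stays non-terminal --- is false for an arbitrary $D$-MMP: a $D$-flip can be a $K$-flip, which strictly increases the relevant discrepancies and can restore terminality. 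What actually saves your argument is something you do not say: once the strict transform of $\tilde E$ is contracted, the pushforward of $D=\tilde E$ is numerically trivial, hence nef, so the MMP over $B$ terminates at that very step (the contracted ray being $K$-trivial because everything happened over $Z$, Lemma~\ref{lem:oppositeSigns} gives a discrepancy-zero divisor), and this terminal-most output directly contradicts \ref{fib:singX}. Note that the paper avoids all of this: it uses Koll\'ar's effective base-point freeness to get a single $m$ with $\lvert -mK_{Y_p}\rvert$ free for all general $p$, obtains a birational contraction $\phi$ on $Y_U$, and then gets an immediate sign contradiction by writing $K_{Y_U}=\phi^*K_{X_U}+\sum a_iE_i$ for any hypothetical divisor $E_i$ in the $K$-trivial locus: $a_i>0$ by terminality of the target, $a_i<0$ by the Negativity Lemma since $K_Y$ is $\phi$-trivial.

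In part~\ref{cor:weakFanoFiber2} the reduction via Lemma~\ref{lem:weakFano} and Lemma~\ref{lem:StableUnderMMP}\ref{stab_under_MMP:1} matches the paper, but the step you yourself flag as delicate is exactly where your justification is not right: the $K$-positive curves are \emph{not} ``concentrated on the flipping loci'' (the problem is curves with $K_{Y_p}\cdot C\ge 0$ that merely \emph{meet} the indeterminacy locus), and you never prove that such a curve has strict transform with $K_{Y'_p}\cdot C'\ge 0$. The correct mechanism in your setup would be that, since $Y\ps Y'$ is a composition of $K$-antiflips in the direction $Y\to Y'$, on a common resolution one has $q^*K_{Y'_p}-p^*K_{Y_p}=F\ge 0$ effective and exceptional, so $K_{Y'_p}\cdot C'\ge K_{Y_p}\cdot C$ for any curve not contained in the indeterminacy locus; this would complete your argument, but it must be stated and proved. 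The paper instead argues via bigness of $-K_{Y_p}$: a curve with $K_{Y_p}\cdot C>0$ lies in the base locus of $\lvert -mK_{Y_p}\rvert$ (codimension $\ge 2$), while a $K$-trivial curve not in that base locus is disjoint from a general member of $\lvert -mK_{Y_p}\rvert$, hence untouched by the antiflips, and its transform is $K_{Y'_p}$-trivial. Either route works, but yours currently rests on an unproved (and as phrased, incorrect) claim.
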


\begin{proof}
\begin{enumerate}
\item[\ref{cor:weakFanoFiber1}]
By Lemma~\ref{lem:fibresAreTerminal} there is a dense open subset $U\subseteq B$ such that for each $p \in U$ the fibre $Y_p$ is terminal.
As $K_{Y_p}=K_Y|_{Y_p}$ is big and nef for each $p\in U$, \cite[Theorem 1.1]{Kollar93} gives an integer $m$ such that $-mK_{Y_p}$ is base-point free for each $p\in U$. In particular, the rational $\phi :=  \lvert -mK_Y \rvert \times \eta \colon Y \rat \p^N \times B$ is a morphism on $Y_U := \eta^{-1}(U)$, and $\phi$ induces a birational contraction over $U$ from $Y_U$ onto its image~$X_U \subset \p^N \times U$.
Let $\Gamma \subset Y_U$ be the subset covered by curves contracted by $Y_U/U$ that are trivial against the canonical divisor.
Write $K_{Y_U} = \phi^* K_{X_U} + \sum a_i E_i$, where the $E_i$ run over all the divisors contained in $\Gamma$.
Each $a_i$ is positive because $X$ is terminal by definition of a rank~$r$ fibration, but since $K_{Y}$ is $\phi$-nef the Negativity Lemma also says that $a_i <0$ for all $i$.
In conclusion $\Gamma$ does not contain any divisor.
So $\Gamma$ has codimension at least 2 in $Y_U$, hence $\Gamma_p = \Gamma \cap Y_p$ has codimension at least 2 in $Y_p$ for a general $p$.

\item[\ref{cor:weakFanoFiber2}]
By Lemma \ref{lem:weakFano}, the rank $r$ fibration $Y/B$ is pseudo-isomorphic, via a sequence of antiflips over $B$, to another rank~$r$ fibration $Y'/B$ such that $-K_{Y'}$ is relatively nef and big over $B$.
For a general $p\in B$, the fibre $Y_p\subset Y$ is pseudo-isomorphic to the fibre $Y'_p\subset Y'$. Denote by $F\subseteq Y_p$ (respectively $F'\subseteq Y'_p$) the closure of the union of the curves $C\subset Y_p$ that satisfy $C\cdot K_{Y_p}\ge 0$ (respectively  $C\subset Y'_p$ that satisfy $C\cdot K_{Y'_p}\ge 0$). We want to prove that the codimension of $F$ in $Y_p$ is at least~$2$.

By \ref{cor:weakFanoFiber1}, the set $F'$ has codimension at least 2 in $Y'_p$.
As $-K_Y$ is relatively big, the divisor $-K_{Y_p}$ is big. Hence, for some large $m>0$, the base-locus $\Bl$ of $-mK_{Y_p}$ has codimension $\ge 2$.
It remains to see that each curve $C\subset Y_p$ such that $C\cdot K_{Y_p}\ge 0$ is contained either in $\Bl$ or is the strict transform of a curve $C'\subset F'$.
If $C\cdot K_{Y_p}>0$, then $C\cdot (-mK_{Y_p})<0$, so $C$ is contained in $\Bl$. If $C\cdot K_{Y_p} =  0$ and  $C$ is not contained in $\Bl$, then $C$ is disjoint from a general member of the linear system $\lvert -mK_Y \rvert$, and so is not affected by the sequence of antiflips.
Hence, the strict transform of $C$ is a curve $C'\subset Y'_p$ that is also disjoint from a general member of $\lvert -mK_{Y'_p} \rvert$ whence  $C'\cdot K_{Y'_p}= 0$.
\qedhere
\end{enumerate}
\end{proof}

\subsection{Sarkisov links}

The notion of rank~$2$ fibration recovers the notion of Sarkisov link: 

\begin{lemma} \label{lem:rank 2 and link}
Let $Y/B$ be a rank~$2$ fibration.
Then $Y/B$ factorises through exactly two rank~$1$ fibrations $X_1/B_1, X_2/B_2$, which both fit into a diagram
\[
\begin{tikzcd}[link]
\ar[dd]  & Y \ar[l,dotted] \ar[r,dotted] & \ar[dd] \\ \\
 \ar[dr] && \ar[dl] \\
& B &
\end{tikzcd}
\]
where the top dotted arrows are sequences of log-flips, and the other four arrows are morphisms of relative Picard rank~$1$.
\end{lemma}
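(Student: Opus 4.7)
The approach is to apply the two-rays game of Section~\ref{sec:2rays} to the morphism $Y/B$. Since $Y/B$ is a Mori dream space (by condition~\ref{fib:dream}) with $\rho(Y/B) = 2$, the closed cone $\widebar{\NE}(Y/B)$ is rational polyhedral of dimension exactly $2$, so it has precisely two extremal rays $R_1, R_2$, each generated by a curve $C_i$. I pick an ample divisor $A$ on $Y$ and set $D = -A$. By Lemma~\ref{lem:anyMMP}, for each $i = 1, 2$ I can run a $D$-MMP from $Y$ over $B$ that starts by contracting $R_i$. Since $D = -A$ is never nef, no $D$-minimal model is ever reached, so the MMP must terminate with a (possibly empty) sequence of log-flips $Y \ps Y_i$ followed by a morphism $Y_i \to X_i$ of relative Picard rank~$1$, which is either a divisorial contraction or a Mori fibre space. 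Assembling both MMPs produces the claimed diagram, with $X_1, X_2$ both mapping to $B$.

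Next I would verify that each output yields a rank~$1$ fibration through which $Y/B$ factorises. By Lemma~\ref{lem:StableUnderMMP}\ref{stab_under_MMP:1}, each log-flip in the sequence $Y \ps Y_i$ preserves the rank~$2$ fibration property, so $Y_i/B$ remains a rank~$2$ fibration. If $Y_i \to X_i$ is a divisorial contraction, then Lemma~\ref{lem:StableUnderMMP}\ref{stab_under_MMP:1} gives that $X_i/B$ is a rank~$1$ fibration, and $Y/B$ factorises through it via $Y \ps Y_i \to X_i \to B$. If $Y_i \to X_i$ is a Mori fibre space, then $Y_i$ is $\Q$-factorial terminal (properties preserved through flips), $\rho(Y_i/X_i) = 1$, and $X_i$ is klt by Proposition~\ref{pro:sing_of_B}, so $Y_i/X_i$ is a terminal Mori fibre space and hence a rank~$1$ fibration by Lemma~\ref{lem:rank1=Mfs}; the factorisation $Y \ps Y_i \to X_i \to B$ then realises $Y/B$ as factorising through $Y_i/X_i$.

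For the claim of exactness, any rank~$1$ fibration through which $Y/B$ factorises must be obtained by decreasing the relative Picard rank from $2$ to $1$, and in the MMP this can only be achieved by contracting one of the two extremal rays; thus we obtain exactly two such fibrations, corresponding to $R_1$ and $R_2$. The main technical step I expect to require care is ensuring that the singularity conditions ($\Q$-factoriality and terminality of the total space, klt-ness of the base) persist throughout the log-flip sequences and the terminal contraction, particularly in the Mori fibre space case where the new base $X_i$ must be shown to be klt in order to satisfy~\ref{fib:singB}. This is handled precisely by combining Proposition~\ref{pro:sing_of_B} with condition~\ref{fib:singX} of Definition~\ref{def:rankFibration} (which ensures log-flips and divisorial contractions in a $D$-MMP from $Y$ land in $\Q$-factorial terminal varieties) and Lemma~\ref{lem:StableUnderMMP}.
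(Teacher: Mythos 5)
Your overall strategy — run the two-rays game on $Y/B$ with $D=-A$ for $A$ ample, use Lemma~\ref{lem:anyMMP} to get the two sides, and treat separately the divisorial-contraction case (via Lemma~\ref{lem:StableUnderMMP}\ref{stab_under_MMP:1}) and the fibre-space case — is exactly the paper's approach. The divisorial case is fine. But there is a genuine gap in the fibre-space case: the output $Y_i\to X_i$ of the $(-A)$-MMP is only a \emph{$D$-Mori fibre space} for $D=-A$ (that is, $A$ is relatively ample, which is automatic), and this does \emph{not} by itself say anything about the sign of $K_{Y_i}$ on the contracted curves. To conclude that $Y_i/X_i$ is a rank~$1$ fibration via Lemma~\ref{lem:rank1=Mfs} you need it to be a \emph{terminal Mori fibre space in the $K$-sense}, i.e.\ $-K_{Y_i}$ relatively ample over $X_i$; your proof never verifies this. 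Your appeal to Proposition~\ref{pro:sing_of_B} at that point is also circular, since that proposition is stated for ($K$-)Mori fibre spaces and so cannot be invoked before the $K$-negativity is established. A priori the extremal ray contracted on that side of the two-rays game could be $K$-trivial or $K$-positive, in which case the output would not be a rank~$1$ fibration at all.

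The missing ingredient is condition~\ref{fib:big} of Definition~\ref{def:rankFibration}: since $-K_{Y_i}$ is relatively big over $B$, write $-K_{Y_i}= A'+E$ with $A'$ relatively ample and $E$ effective (Lemma~\ref{lem:piBig}); because $\dim X_i<\dim Y_i$ the contracted curves of $Y_i/X_i$ cover $Y_i$, so a general one is not contained in $E$ and satisfies $-K_{Y_i}\cdot C>0$, whence $-K_{Y_i}$ is relatively ample over $X_i$ as $\rho(Y_i/X_i)=1$. This is exactly the extra argument in the paper's proof (which alternatively invokes Lemma~\ref{lem:StableUnderMMP}\ref{stab_under_MMP:2}); with it, your proof matches the paper's. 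The "exactly two" part is treated at the same (informal) level as in the paper and is acceptable.
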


\begin{proof}
The diagram comes from the two-rays game associated to $Y/B$, as explained in \S\ref{sec:2rays}.
Morever, since $\dim Y > \dim B$, on each side of the diagram exactly one of the two descending arrows corresponds to a morphisms $X_i \to B_i$ with $\dim Y = \dim X_i > \dim B_i$.
If $B_i = B$ then $X_i/B_i$ is a rank~$1$ fibration by Lemma~\ref{lem:StableUnderMMP}\ref{stab_under_MMP:1}.
If $\rho(B_i/B) = 1$, we can use Lemma~\ref{lem:StableUnderMMP}\ref{stab_under_MMP:2}, or alternatively use the following simpler argument.
Since $-K_{X_i}$ is relatively big over $B$ we have $-K_{X_i}\cdot C > 0$ for a general contracted curve of $X_i/B_i$ (write $-K_{X_i} = A + E$ with $A$ relatively ample and $E$ effective, and take $C$ not contained in $E$.) 
So $-K_{X_i}$ is relatively ample over $B_i$, hence $X_i/B_i$ is a terminal Mori fibre space, or equivalently a rank~$1$ fibration (Lemma~\ref{lem:rank1=Mfs}).
\end{proof}

\begin{definition} 
\label{def:sarkisovLink}
In the situation of Lemma~\ref{lem:rank 2 and link}, we say that the birational map $\chi\colon X_1 \rat X_2$ is a \emph{Sarkisov link}.
The diagram is called a \emph{Sarkisov diagram}.
Observe that a rank 2 fibration uniquely defines a Sarkisov diagram, but such a diagram does not have a canonical ``left side'' or ``right side''. In other words,  when $\chi$ is not an involution, the rank 2 fibration only defines the unordered pair $\{ \chi, \chi^{-1} \}$ of a Sarkisov link and its inverse.
Nevertheless we will commit the slight abuse of speaking of \emph{the} Sarkisov link associated to a rank 2 fibration.

If a rank~$r$ fibration factorises through $Y/B$, we equivalently say that it \emph{factorises through} the Sarkisov link associated to $Y/B$.

We say that the Sarkisov link associated with a rank 2 fibration $Y/B$ is a \emph{Sarkisov link of conic bundles} if $\dim B = \dim X - 1$. 
Observe that in this situation both $X_1/B_1$ and $X_2/B_2$ are indeed conic bundles in the sense of Definition~\ref{def:conicBundle}. 
\end{definition}

\begin{definition} \label{def:4TypesOfLinks}
In the diagram of Lemma~\ref{lem:rank 2 and link}, there are two possibilities for the sequence of two morphisms on each side of the diagram: either the first arrow is already a Mori fibre space, or it is divisorial and in this case the second arrow is a Mori fibre space.
This gives $4$ possibilities, which correspond to the usual definition of \emph{Sarkisov links of type \I, \II, \III and \IV}, as illustrated on Figure~\ref{fig:SarkisovTypes}.
\end{definition}

\begin{figure}[ht]
\[
{
\def\arraystretch{2.2}
\begin{array}{cc}
\begin{tikzcd}[ampersand replacement=\&,column sep=1.3cm,row sep=0.16cm]
\ar[dd,"div",swap]  \ar[rr,dotted,-] \&\& X_2 \ar[dd,"fib"] \\ \\
X_1 \ar[uurr,"\chi",dashed,swap] \ar[dr,"fib",swap] \&  \& B_2 \ar[dl] \\
\& B_1 = B \&
\end{tikzcd}
&
\begin{tikzcd}[ampersand replacement=\&,column sep=.8cm,row sep=0.16cm]
\phantom{X}\ar[dd,"div",swap]  \ar[rr,dotted,-] \&\& \ar[dd,"div"] \\ \\
X_1 \ar[rr,"\chi",dashed,swap] \ar[dr,"fib",swap] \&  \& X_2 \ar[dl,"fib"] \\
\& B_1 = B = B_2 \&
\end{tikzcd}
\\
\I & \II 
\\
\begin{tikzcd}[ampersand replacement=\&,column sep=1.3cm,row sep=0.16cm]
X_1 \ar[ddrr,"\chi",dashed,swap] \ar[dd,"fib",swap]  \ar[rr,dotted,-] \&\& \ar[dd,"div"] \\ \\
B_1 \ar[dr] \& \& X_2 \ar[dl,"fib"] \\
\& B = B_2 \&
\end{tikzcd}
&
\begin{tikzcd}[ampersand replacement=\&,column sep=1.7cm,row sep=0.16cm]
X_1 \ar[rr,"\chi",dotted,swap] \ar[dd,"fib",swap]  \&\& X_2 \ar[dd,"fib"] \\ \\
B_1 \ar[dr] \& \& B_2 \ar[dl] \\
\& B \&
\end{tikzcd}
\\
\III & \IV 
\end{array}
}
\]
\caption{The four types of Sarkisov links.}
\label{fig:SarkisovTypes}
\end{figure}

\begin{remark}
\label{rem:top_row}
The definition of a Sarkisov link in the literature is usually not very precise about the pseudo-isomorphism involved in the top row of the diagram.
An exception is \cite[Definition 3.1.4\parent{b}]{CPR}, but even there they do not make clear that there is at most one flop, and that all varieties admit morphisms to a common~$B$.
Observe that our definition is a priori more restrictive, notably because we assume the anticanonical divisor of a rank 2 fibration to be relatively big.
However one could check that the definition is equivalent to the usual one.

It follows from our definition that there are strong constraints about the sequence of antiflips, flops and flips (that is, about the sign of the intersection of the exceptional curves against the canonical divisor).
Precisely, the top row of a Sarkisov diagram has the following form:
\[
\begin{tikzcd}[column sep=1.5cm,row sep=0.2cm]
Y_m \ar[d] \ar[r,dotted,<-] & \dots \ar[r,dotted,<-] & Y_0 \ar[r,dotted,-] & Y'_0 \ar[r,dotted,->] & \dots \ar[r,dotted,->] & Y'_n \ar[d] \\
\text{}  &&&&& \text{} \\
&& \ar[r,phantom,"B",""{name=B, inner sep=5mm}] &
\ar[from=1-1, to=B] \ar[from=1-2, to=B] \ar[from=1-3, to=B] 
\ar[from=1-4, to=B] \ar[from=1-5, to=B] \ar[from=1-6, to=B]
\ar[from=2-1, to=B] \ar[from=2-6, to=B]
\end{tikzcd}
\]
where $Y_0 \ps Y'_0$ is a flop over $B$ (or an isomorphism), $m,n \ge 0$, and each $Y_i \ps Y_{i+1}$, $Y'_i \ps Y'_{i+1}$ is a flip over $B$.
This follows from the fact that for $Y = Y_i$ or $Y_i'$, a general contracted curve $C$ of the fibration $Y/B$ satisfies $K_Y \cdot C < 0$, hence at least one of the two extremal rays of the cone $\NE(Y/B)$ is strictly negative against $K_Y$.

Observe also that both $Y_0/B$ and $Y_0'/B$ are relatively weak Fano (or Fano if the flop is an isomorphism) over $B$, as predicted by Lemma~\ref{lem:weakFano}. 
All other $Y_i/B$ and $Y'_i/B$ are not weak Fano over $B$, but still each is a rank 2 fibration that uniquely defines the Sarkisov diagram.
\end{remark}

\begin{example}
\label{ex:simple_links}
We give some simple examples of Sarkisov links of each type in dimension 3.
Here all varieties are smooth, and the pseudo-isomorphisms in the top rows of the Sarkisov diagrams are isomorphisms.
For more complicated (and typical) examples, see \S\ref{sec:examples}.
Observe that \ref{simple_link:1} and \ref{simple_link:2} are examples of Sarkisov links of conic bundles, while \ref{simple_link:3} and \ref{simple_link:4} are not.
\begin{enumerate}
\item \label{simple_link:1}
Let $X_1/B_1 = \p^1 \times \p^2 / \p^2$, and let $X_2 \to X_1$ be the blow-up of one fibre.
Then $X_2 = \p^1 \times \F_1$ is a Mori fibre space over the Hirzebruch surface $B_2 = \F_1$. The map $\chi\colon X_1/B_1 \rat X_2/B_2$ is a link of type \I, or equivalently $\chi^{-1}\colon X_2/B_2 \to X_1/B_1$ is a link of type \III. 
\item \label{simple_link:2}
Take again $X_1/B_1 = \p^1 \times \p^2 / \p^2$, let $L \subset \p^2$ be a line, and $\Gamma = \{0\} \times L \subset X_1$.
Let $Y\to X_1$ be the blow-up of $\Gamma$, and denote by $D$ the strict transform on $Y$ of $\p^1 \times L \subset X_1$.
Then there is a divisorial contraction $Y \to X_2$ that contracts $D$ to a curve, and $X_2/\p^2$ is still a $\p^1$-bundle (but not a trivial product).
The map $\chi\colon X_1/\p^2 \rat X_2/\p^2$ is a link of type \II.
\item \label{simple_link:3}
A general cubo-cubic map in $\Bir(\p^3)$ provides an example of link of type \II with $X_1, X_2$ equal to $\p^3$ and $B_1 = B_2 = \pt$ a point.
Indeed the resolution of such a map consists in blowing-up a smooth curve of genus 3 and degree 6 in $X_1$, and then contracting a divisor onto a curve of the same kind in $X_2$. 
This is the only example of a link of type \II from $\p^3$ to $\p^3$ starting with the blow-up of a smooth curve where the pseudo-isomorphism is in fact an isomorphism: see \cite{Katz}. 
\item \label{simple_link:4}
Finally, take $X_1 = X_2 = \p^1 \times \p^2$, $B_1 = \p^1$, $B_2 = \p^2$, and let $X_1/B_1$ and $X_2/B_2$ be respectively the first and second projection.
Then the identity map $\id\colon X_1/B_1 \to X_2/B_2$ is a link of type \IV.
\end{enumerate}
\end{example}

\begin{lemma}
\label{lem:distinctDivisors}
Consider a Sarkisov link of type \II:
\[
\begin{tikzcd}[link]
Y_1 \ar[rr,dotted,->,"\phi"] \ar[dd,"\pi_1",swap]&& Y_2 \ar[dd,"\pi_2"]\\ \\
X_1  \ar[rr,"\chi",dashed] \ar[rd,swap]&& X_2 \ar[ld]\\
&B 
\end{tikzcd}
\]
and denote $E_1, E_2$ the respective exceptional divisors of $\pi_1, \pi_2$.
Then $\phi_* E_1 \neq E_2$.
\end{lemma}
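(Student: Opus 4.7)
The plan is to argue by contradiction. Suppose $\phi_* E_1 = E_2$. Since $\phi$ is an isomorphism in codimension~$1$, this is equivalent to $(\phi^{-1})_* E_2 = E_1$, so the situation will be symmetric in the two sides of the diagram.

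First I would form the composition $\chi := \pi_2 \circ \phi \circ \pi_1^{-1} \colon X_1 \dashrightarrow X_2$ and verify that neither $\chi$ nor $\chi^{-1}$ contracts a divisor. The key observation is that for any prime divisor $D \subset X_1$, its strict transform on $Y_1$ is a prime divisor different from $E_1$; its image under $\phi$ is then a prime divisor on $Y_2$ different from $\phi_* E_1 = E_2$, so its image under $\pi_2$ remains a divisor in $X_2$. The symmetric argument applies to $\chi^{-1}$. Hence $\chi$ is a pseudo-isomorphism between the Mori fibre spaces $X_1/B$ and $X_2/B$, and Lemma~\ref{lem:corti2.7}\ref{corti2.7:1} forces $\chi$ to be an isomorphism over $B$.

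Next I would identify $X_1$ with $X_2$ via $\chi$. Under this identification $\phi$ becomes a pseudo-isomorphism over the common base $X := X_1 = X_2$, sandwiched between the two divisorial contractions $\pi_1 \colon Y_1 \to X$ and $\pi_2 \colon Y_2 \to X$, whose exceptional divisors correspond to each other under $\phi$. Lemma~\ref{lem:corti2.7}\ref{corti2.7:2} then forces $\phi$ itself to be an isomorphism, giving $\pi_1 = \pi_2 \circ \phi$ as honest morphisms $Y_1 \to X$.

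The final step is to extract a contradiction from the two-rays game structure. The Sarkisov diagram is precisely the two-rays game associated with the rank~$2$ fibration $Y_1/B$, so $\NE(Y_1/B)$ is a two-dimensional cone with two distinct extremal rays: one is contracted by $\pi_1$, and the other is contracted (after a sequence of log-flips) by the right-hand side $Y_1 \ps Y_2 \to X_2$. After the identifications above, this right-hand contraction collapses to $\pi_1$ itself, forcing the two extremal rays to coincide and contradicting $\rho(Y_1/B) = 2$. I expect the first step, carefully tracking prime divisors to show that $\chi$ is a pseudo-isomorphism of Mori fibre spaces, to be the only point requiring real care; the two applications of Lemma~\ref{lem:corti2.7} and the two-rays conclusion are then essentially formal.
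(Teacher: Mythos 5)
Your proof is correct and follows essentially the same route as the paper: assume $\phi_*E_1=E_2$, deduce that $\chi$ is a pseudo-isomorphism and hence an isomorphism by Lemma~\ref{lem:corti2.7}\ref{corti2.7:1}, then that $\phi$ is an isomorphism by Lemma~\ref{lem:corti2.7}\ref{corti2.7:2}, and conclude via the two-rays game that both sides would contract the same extremal ray. You merely spell out details the paper leaves implicit (why $\chi$ contracts no divisor), and your final contradiction, phrased via the coincidence of the two extremal rays, is the same as the paper's appeal to the two-rays game structure.
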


\begin{proof}
Assume that $\phi_* E_1 = E_2$.
Then $\chi\colon X_1 \ps X_2$ is a pseudo-isomorphism, hence an isomorphism by Lemma~\ref{lem:corti2.7}\ref{corti2.7:1}.
Then Lemma~\ref{lem:corti2.7}\ref{corti2.7:2} implies that the pseudo-isomorphism $\phi\colon Y_1 \ps Y_2$ also is an isomorphism.
The morphisms $Y_1/X_1$ and $Y_1/X_2$ are then divisorial contractions of the same extremal ray,  contradicting the assumption that the diagram was the result of the two-rays game from $Y_1/B$.
\end{proof}

\begin{lemma} \label{lem:B'toB}
Let $X/B$ be a rank~$2$ fibration that factorises through a rank~$1$ fibration $\sigma\colon X \to B'$, with $\dim X - 1 = \dim B' > \dim B$ $($i.e.~through a conic bundle $X/B')$.
Then $\eta\colon B' \to B$ is a klt Mori fibre space, and in particular for each $p \in B$, the fibre $\eta^{-1}(p)$ is covered by rational curves.
\end{lemma}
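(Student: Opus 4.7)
The plan is to verify the four defining conditions of a klt Mori fibre space for $\eta\colon B'\to B$: that $B'$ is $\Q$-factorial klt, that $B$ is $\Q$-factorial klt, that $\rho(B'/B)=1$, and that $-K_{B'}$ is $\eta$-ample. The first three follow quickly from the preliminaries; the only real work lies in establishing $\eta$-ampleness of $-K_{B'}$.

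First I would identify $X/B'$ with one of the two rank~$1$ fibrations appearing in the Sarkisov diagram of $X/B$ provided by Lemma~\ref{lem:rank 2 and link}: since $\sigma\colon X\to B'$ is already a morphism, the birational contraction on this side of the diagram is the identity and no log-flips are needed. This immediately gives that $\eta\colon B'\to B$ is a morphism of relative Picard rank~$1$. The singularity conditions then come from Proposition~\ref{pro:sing_of_B} applied to the Mori fibre space $\sigma$ (so that $B'$ is $\Q$-factorial klt), together with condition~\ref{fib:singB} in the definition of a rank~$r$ fibration (so that $B$ is $\Q$-factorial klt).

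The main obstacle is to prove that $-K_{B'}$ is $\eta$-ample. Since $\rho(B'/B)=1$, this is equivalent to $-K_{B'}$ being $\eta$-big, which in turn reduces to showing that $-K_F$ is big for a general fibre $F$ of $\eta$. I would pick a general $p\in B$ and set $F=\eta^{-1}(p)$; then $\sigma^{-1}(F)=(\eta\sigma)^{-1}(p)$ is a general fibre of $X/B$, hence klt by Lemma~\ref{lem:fibresAreTerminal}, and the restriction $\sigma|_{\sigma^{-1}(F)}\colon \sigma^{-1}(F)\to F$ remains a Mori fibre space of relative Picard rank~$1$ with relatively ample anticanonical class. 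I would then invoke the theorem of Prokhorov--Shokurov that the base of a klt Fano-type contraction is of Fano type: this produces an effective $\Q$-divisor $\Delta_F$ such that $(F,\Delta_F)$ is klt and $-(K_F+\Delta_F)$ is ample, so that $-K_F=-(K_F+\Delta_F)+\Delta_F$ is a sum of ample and effective, hence big.

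Finally, the ``in particular'' clause is immediate from Lemma~\ref{lem:fibresRatConnected}\ref{connected:-KBigNef} applied to the klt pair $(B',0)$ and the fibration $\eta$: once $-K_{B'}$ is known to be $\eta$-ample, it is in particular $\eta$-big and $\eta$-nef, so every fibre of $\eta$ is covered by rational curves.
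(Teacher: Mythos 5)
Your overall framing is fine---the singularity statements via Proposition~\ref{pro:sing_of_B} and condition~\ref{fib:singB}, the fact that $\rho(B'/B)=1$, and the final appeal to Lemma~\ref{lem:fibresRatConnected}\ref{connected:-KBigNef} with $\Delta=0$ all match the paper---but the central step is wrong as stated. The Prokhorov--Shokurov theorem you invoke says: if $X\to Y\to Z$ are contractions and $X$ is of Fano type \emph{over $Z$}, then $Y$ is of Fano type over $Z$. What you establish about $\sigma^{-1}(F)\to F$ is only that its anticanonical class is relatively ample \emph{over $F$}, i.e.\ that $\sigma^{-1}(F)$ is of Fano type over $F$; feeding this into the theorem yields only the vacuous conclusion that $F$ is of Fano type over itself, and gives no boundary $\Delta_F$ with $-(K_F+\Delta_F)$ ample. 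Indeed the implication you use is false in that generality: for $F$ a smooth variety of general type, $\p^1\times F\to F$ is a klt contraction with relatively ample anticanonical divisor, yet $-K_F$ is not big. The feature that separates the lemma's situation from this counterexample is precisely condition~\ref{fib:big} of Definition~\ref{def:rankFibration}---the bigness of $-K_X$ over $B$, i.e.\ a global positivity on the whole fibre $X_p=(\eta\circ\sigma)^{-1}(p)$---which your argument never uses. (The claim that $\sigma^{-1}(F)\to F$ has relative Picard rank $1$ is also unjustified, since restriction to a fibre can increase the relative rank, but this is a side issue because the Fano-type statement does not need it.)

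To salvage your route you would have to show that $X_p$ is of Fano type over the point, for instance by combining Corollary~\ref{cor:weakFanoFiber}\ref{cor:weakFanoFiber2} (the general fibre is pseudo-isomorphic to a weak Fano terminal variety) with the fact---true, but not proved in the paper---that being of Fano type is preserved under pseudo-isomorphisms of $\Q$-factorial varieties; then Prokhorov--Shokurov applied to the contraction $X_p\to F$ would give that $F$ is of Fano type, hence $-K_F$ big. The paper instead argues more directly: since $\rho(B'/B)=1$, it suffices to produce one contracted curve $C\subset B'$ with $-K_{B'}\cdot C>0$. It chooses $C$ general in a fibre of $\eta$, transverse to the discriminant of $\sigma$, avoiding the codimension-$2$ locus from Corollary~\ref{cor:weakFanoFiber}, and lying in a general fibre $F$ of $X/B$ where $(-K_X)|_F$ is big (this is where \ref{fib:big} enters), and then applies the Mori--Mukai formula $-4K_{B'}\equiv\sigma_*(-K_X)^2+\Delta$ to get $-4K_{B'}\cdot C\ge (-K_X)|_F\cdot(-K_X)|_F\cdot\sigma^*C>0$. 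Either repair works, but as written your proof has a genuine gap at the point where global bigness of the anticanonical class is needed.
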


\begin{proof}
Note that $B'$ is $\Q$-factorial and klt (Proposition~\ref{pro:sing_of_B}).
We need to show that $-K_{B'}$ is $\eta$-ample, and then the fibre $\eta^{-1}(p)$ is covered by rational curves for each $p\in B$ 
by Lemma~\ref{lem:fibresRatConnected}\ref{connected:-KBigNef}, applied with $\Delta = 0$.

By assumption $\rho(B'/B) = 1$, so we only need to show that there exists a contracted curve $C \subseteq B'$ such that $-K_{B'} \cdot C > 0$.
Since $\dim B' > \dim B$, the contracted curves cover $B'$, so we can choose $C$ sufficiently general in a fibre $\eta^{-1}(q)$ of a general point $q\in B$ such that the following holds:

\begin{enumerate}[$(i)$]
\item \label{B'toB:1}
$C$ is not contained in the discriminant locus $\Delta' \subset B'$ of the conic bundle $\sigma\colon X \to B'$;

\item \label{B'toB:2}
The surface $\sigma^{-1}(C)$ does not contain any of the curves $C' \subseteq X$ contracted by $\eta \circ \sigma$ with $-K_X \cdot C' \le 0$.

\item \label{B'toB:3}
The fibre $F=(\eta\circ \sigma)^{-1}(q)$ of $\eta \circ \sigma \colon X \to B$ containing the surface $\sigma^{-1}(C)$ is general, so that $(-K_X)|_F$ is big 
\end{enumerate} 
More precisely, for \ref{B'toB:1} is suffices to choose $\eta^{-1}(q)$ not contained in the hypersurface $\Delta' \subset B'$.
We can ensure \ref{B'toB:2} because by Corollary~\ref{cor:weakFanoFiber} such curves cover at most a codimension 2 subset of $F$.
Finally for \ref{B'toB:3} recall first that since $X/B$ is a rank~$2$ fibration, $-K_X$ is relatively big by \ref{fib:big}.
Moreover the intersection $(-K_X)|_F \cdot \sigma^{-1}(C)$ is a non-trivial effective $1$-cycle.
Indeed, since $(-K_X)|_F$ is big, we can take a large integer $m>0$ and find that $(-mK_X)|_F$ induces a rational morphism contracting no curve on the complement of a divisor of $F$.
It suffices then to choose $C$ such that $\sigma^{-1}(C)$ is not contained in this divisor.

As in \cite[Corollary 4.6]{MoriMukai}, we have $-4K_{B'} \equiv \sigma_*(-K_X)^2 + \Delta$. Intersecting with $C$, we obtain
\begin{align*}
-4K_{B'}\cdot C &=  \sigma_*(-K_X)^2 \cdot C + \Delta\cdot C \\
& \ge (-K_X)|_F \cdot (-K_X)|_F \cdot \sigma^* C  \\
& > 0 \text{ by our choice of $C$}. \qedhere
\end{align*}
\end{proof}

\subsection{Rank \textit{r} fibrations with general fibre a curve}
\label{sec:conicFibrations}

Let $\eta\colon  T \to B$ be a rank~$r$ fibration, with $\dim B = \dim T -1$.
If $\Gamma \subset B$ is an irreducible hypersurface, we define $\eta^\sharp(\Gamma) \subseteq T$ to be the Zariski closure of all fibres of dimension $1$ over $\Gamma$.
The reason for introducing this notion is twofold:
first $B$ might not be $\Q$-factorial, so we cannot consider the pull-back of $\Gamma$ as a $\Q$-Cartier divisor, and second the preimage $\eta^{-1}(\Gamma)$ might contain superfluous components (see Example~\ref{ex:type_I_and_II_div}).  

Now we distinguish two classes of special divisors in $T$, and we shall show in Proposition~\ref{pro:rankrFibrations} below that they account for the relative rank of $T/B$. 
Let $D \subset T$ be a prime divisor.
If $\eta(D)$ has codimension at least 2 in $B$, we say that $D$ is a \emph{divisor of type} \I.
If $\eta(D)$ is a divisor in $B$, and the inclusion $D \subsetneq \eta^{\sharp} (\eta (D))$ is strict, we say that $D$ is a \emph{divisor of type} \II.

\begin{remark}
The similarity between the terminology for Sarkisov links and for special divisors of type \I or \II is intentional.
See Lemma~\ref{lem:LinksCB}\ref{classification2} below. 
\end{remark}

\begin{example}
\label{ex:type_I_and_II_div}
We give an example illustrating the definitions above, which also shows that the inclusion $\eta^\sharp (\Gamma) \subseteq \eta^{-1}(\Gamma)$ might be strict.
For $B$ an arbitrary smooth variety, consider $Y = \p^1 \times B$ with $Y/B$ the second projection.
Let $\Gamma \subset B$ be any irreducible smooth divisor, $D = \p^1 \times \Gamma$ the pull-back of $\Gamma$ in $Y$, $\Gamma' = \{t\} \times \Gamma \subset D$ a section and $p \in D \setminus \Gamma'$ a point.
Let $T \to Y$ be the blow-up of $\Gamma'$ and $p$, with respective exceptional divisors $D'$ and $E$, and denote again $D$ the strict transform of $\p^1 \times \Gamma$ in $T$. 
Then one can check that the induced morphism $\eta\colon T \to B$ is a rank 3 fibration (see Example~\ref{ex:with_flip} for the case $B = \p^2$), $E$ is a divisor of type I, $D \cup D'$ is a pair of divisors of type II, and
\[
\eta^\sharp(\Gamma) = D \cup D' \subsetneq D \cup D' \cup E = \eta^{-1}(\Gamma).
\]
\end{example}

\begin{proposition} \label{pro:rankrFibrations}
Let $\eta\colon  T \to B$ be a rank~$r$ fibration, with $\dim B = \dim T -1$.
\begin{enumerate}
\item \label{rankr:0}
For any rank~$r'$ fibration $T'/B'$ such that $T/B$ factorises through $T'/B'$, any divisor contracted by the birational contraction $T \rat T'$ is a divisor of type \I or \II for $T/B$.
\item \label{rankr:1}
Divisors of type \II always come in pairs: for each divisor $D_1$ of type \II, there exists another divisor $D_2$ of type \II such that
\[
D_1 \cup D_2 =  \eta^{\sharp} (\eta (D_1)) =  \eta^{\sharp} (\eta (D_2)).
\]
\item \label{rankr:2}
If $D_1 \cup D_2$ is a pair of divisors of type \II, and $p$ a general point of $\eta(D_1) = \eta(D_2)$, then $\eta^{-1}(p) = f_1 \cup f_2$ with $f_i \subseteq D_i$, $i =1,2$, some smooth rational curves satisfying
\begin{align*}
K_T \cdot f_i = -1, && D_i\cdot f_i = -1,  && D_1\cdot f_2 = D_2\cdot f_1 = 1.
\end{align*}
\item \label{rankr:3}
Let $D \subset T$ be a divisor of type \I or \II. 
Then there exists a birational contraction over $B$
\[
T \rat X \to B
\]
that contracts $D$ and such that $\rho(X) = \rho(T) -1$. 
\item \label{rankr:4}
Assume $B$ is $\Q$-factorial.
Let $d_1$ $($resp.~$d_2)$ be the number of divisors of type~\I $($resp.~the number of pairs of divisors of type $\II)$.
Then
\[
r = 1 + d_1 + d_2.
\] 
\end{enumerate}
\end{proposition}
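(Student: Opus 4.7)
The plan is to prove the five assertions in order, using the Mori dream space structure and the strong terminality condition \ref{fib:singX} of the rank $r$ fibration $T/B$, together with Lemma~\ref{lem:divContToCodim2} and the MMP machinery.

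For~\ref{rankr:0}, let $D \subset T$ be a prime divisor contracted by the birational contraction $T \rat T'$. Its image in $T'$ has dimension at most $\dim T - 2 = \dim B - 1$, so $\eta(D)$ has codimension at least~$1$ in $B$. If the codimension is at least~$2$, then $D$ is of type~\I. Otherwise $\eta(D)$ is a prime divisor of $B$, and I would produce a second component of $\eta^\sharp(\eta(D))$ as follows. Comparing dimensions shows that $B' \to B$ is birational, so $T' \to B$ is a surjective morphism with $1$-dimensional general fibre, whence upper semicontinuity of fibre dimension yields that $\eta_{T'}^{-1}(\eta(D))$ is a divisor of $T'$. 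Any of its components dominating $\eta(D)$ has strict transform $\tilde D \subset T$ which is non-contracted (strict transforms of divisors are divisors), hence distinct from $D$, while still dominating $\eta(D)$ with $1$-dimensional generic fibres; thus $\tilde D \subseteq \eta^\sharp(\eta(D))$, giving the strict inclusion $D \subsetneq \eta^\sharp(\eta(D))$.

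For~\ref{rankr:3}, I would invoke the Mori dream space property of $T/B$ and Lemma~\ref{lem:anyMMP}. Running a suitably chosen MMP over $B$ --- for instance a $(-A + \varepsilon D)$-MMP with $A$ a general $\eta$-ample divisor and $\varepsilon > 0$ picked generically so that the ray spanned by $D$ is the only new extremal ray to become contracted --- the process terminates in a model $T \rat X \to B$ whose unique divisorial step contracts exactly $D$. Combining Lemma~\ref{lem:StableUnderMMP}\ref{stab_under_MMP:1} across the intermediate flips and the final divisorial step shows that $X/B$ is a rank~$(r-1)$ fibration with $\rho(X) = \rho(T) - 1$.

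For~\ref{rankr:1} and~\ref{rankr:2}, I apply~\ref{rankr:3} to the type~\II divisor $D_1$ to obtain $\pi \colon T \to X$ divisorial over $B$ with exceptional divisor $D_1$ and $X/B$ a rank~$(r-1)$ fibration. Since $\eta(\pi(D_1)) = \eta(D_1)$ has codimension~$1$ in $B$ and $D_1 \to \pi(D_1)$ has $1$-dimensional generic fibres, $\Gamma := \pi(D_1)$ has codimension~$2$ in $X$, so Lemma~\ref{lem:divContToCodim2} provides, for a general $q \in \Gamma$, a smooth rational curve $f_1 = \pi^{-1}(q)$ with $K_T \cdot f_1 = D_1 \cdot f_1 = -1$, sitting inside the fibre $T_p$ over $p = \eta(q)$. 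The heart of the argument is to show that $\eta^\sharp(\eta(D_1))$ has exactly two components. Suppose it had three or more: then $T_p$ would be a connected tree of rational curves of arithmetic genus $0$ (its generic value by upper semicontinuity, equal to that of a general $\p^1$-fibre) with at least three components; by adjunction on $T_p$, some non-end component $f$ would satisfy $f^2 = -2$ on $T_p$ and $K_T \cdot f = 0$, and contracting $f$ from a suitable birational model would produce a divisorial contraction of discrepancy~$0$, i.e.~a non-terminal output, contradicting \ref{fib:singX}. With $\eta^\sharp(\eta(D_1)) = D_1 \cup D_2$ and $T_p = f_1 \cup f_2$ meeting transversely at one point, the relations $D_1 \cdot f_2 = D_2 \cdot f_1 = 1$ follow from transversality together with $\eta^*\eta(D_1) \cdot f_i = 0$ (any other components of $\eta^*\eta(D_1)$ are type~\I and disjoint from the $f_i$), and the values $K_T \cdot f_2 = D_2 \cdot f_2 = -1$ follow by symmetry upon applying~\ref{rankr:3} to $D_2$.

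Finally, for~\ref{rankr:4}, I would induct on $r$. In the base $r = 1$, $T/B$ is a terminal Mori fibre space by Lemma~\ref{lem:rank1=Mfs} with $\rho(T/B) = 1$; any type~\I or~\II divisor $D$ satisfies $D \cdot f = 0$ on a general fibre $f$ while $[D]$ is not a pull-back from $B$, contradicting $\rho(T/B) = 1$, so $d_1 = d_2 = 0$. For $r \ge 2$, the same dimension count produces at least one type~\I or~\II divisor; applying~\ref{rankr:3} yields a rank~$(r-1)$ fibration $X/B$ in which contracting a type~\I divisor decreases $d_1$ by~$1$ and leaves $d_2$ unchanged, while contracting $D_1$ from a type~\II pair turns its partner $D_2$ into the unique component of $\eta^\sharp_X(\eta(D_1))$ in $X$ by~\ref{rankr:1}, thereby removing one type~\II pair. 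In either case, $r$ and $1 + d_1 + d_2$ both drop by~$1$, and the inductive hypothesis concludes the proof. The main obstacle is the rigidity argument inside~\ref{rankr:1}: without the strong terminality condition~\ref{fib:singX}, longer chains of fibre components over $\eta(D_1)$ can occur and must be excluded by the $K$-trivial middle-curve contradiction.
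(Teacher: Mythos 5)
Your part~\ref{rankr:0} is correct, and in fact it is a genuine alternative to the paper's argument (the paper argues by contradiction, running a $D$-MMP and using Lemma~\ref{lem:divContToCodim2} to see that the contracted fibre would be proportional to a general fibre of $\eta$; you instead exhibit directly a second divisorial component of $\eta^\sharp(\eta(D))$ as the strict transform of a dominating component of the preimage of $\eta(D)$ in $T'$). The trouble starts with your part~\ref{rankr:3}, on which your proof of~\ref{rankr:1}--\ref{rankr:2} then rests. The proposed $(-A+\varepsilon D)$-MMP does not do what you claim: $-A$ is negative on \emph{every} curve contracted over $B$, so for any small $\varepsilon$ essentially all extremal rays of $\NE(T/B)$ are negative for $-A+\varepsilon D$; there is no choice of $\varepsilon$ making ``the ray spanned by $D$'' the only ray that gets contracted, and such an MMP has no reason to contract $D$, to contract only $D$, or to stop after a single divisorial step. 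The input actually needed --- and which the paper establishes \emph{before} running any MMP --- is that $D$ is covered by curves $\ell$ with $D\cdot\ell<0$: for a divisor of type \II this comes from $D\cdot\eta^{-1}(p)=0$ plus connectedness of the fibre, and for a divisor of type \I from a surface-section argument ($\ell=S\cap D$ has negative self-intersection on a general surface $S$ whose image in $B$ is a surface). Your proposal contains no substitute for either, and in particular offers nothing at all for type \I divisors. Once this covering is known, a plain $D$-MMP suffices, since any divisor contracted along a $D$-MMP is forced to be $D$ itself (a $D$-negative contracted curve not lying in $D$ would satisfy $D\cdot C\ge 0$).

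The second genuine gap is in~\ref{rankr:2}: the exclusion of three or more components is only asserted, via ``contracting $f$ from a suitable birational model would produce a divisorial contraction of discrepancy $0$''. Producing such a contraction, with contracted curve class equal to the $K$-nonnegative component, is exactly the technical heart of the statement; it requires knowing $D_j\cdot f<0$ for the type \II divisor $D_j$ containing that component, running the $D_j$-MMP over $B$, checking that the preliminary log-flips miss the fibre over a general point of $\Gamma$ (their loci are covered by contracted curves, so their images in $B$ have dimension $<\dim\Gamma$), and then applying Lemma~\ref{lem:divContToCodim2} together with \ref{fib:singX} --- which is precisely the paper's route, and which yields directly $K_T\cdot f_i=D_i\cdot f_i=-1$ for \emph{every} component, so that $K_T\cdot f=-2$ forces $s=2$ without ever needing your middle-component dichotomy (also note that a bad component need only satisfy $K_T\cdot f\ge 0$, not $=0$, and that $K_T\cdot f=-2$ itself requires a flatness or algebraic-equivalence argument at general points of $\Gamma$, not mere semicontinuity). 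Since your~\ref{rankr:3} is both unproved and logically prior to your~\ref{rankr:2}, the argument collapses here. Smaller gaps of the same nature occur in~\ref{rankr:4}: in the base case $r=1$ you need a reason why a special divisor cannot be numerically trivial over $B$ (for type \I this is again the surface-section negativity), and in the induction step the claim that $r\ge 2$ forces a special divisor to exist is not a ``dimension count'' --- in the paper it goes through running an MMP over $B$ and, when it ends with a Mori fibre space over some $B'\neq B$, using Lemma~\ref{lem:B'_over_B_is_Mds} and a $D$-MMP on $B'$ over $B$ to manufacture a type \I divisor.
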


\begin{proof}
\ref{rankr:0}.
Assume $D$ is a prime divisor contracted by $T\rat T'$, which is neither of type \I nor of type \II for $T/B$.
So $\eta(D) \subset B$ is a divisor, and $D=\eta^{\sharp}(\eta(D))$.
By running a $D$-MMP over $B$ we produce a sequence of log-flips $T \ps T_1$, and then a divisorial contraction $\pi\colon T_1 \to T_2$ contracting $D$.
Since a log-flip does not change the type of special divisors, without loss of generality we can assume $T = T_1$.
Since $\eta(D) \subset B$ is a divisor, $\pi(D)$ has codimension 2 in $T_2$.
By Lemma~\ref{lem:divContToCodim2}, a general fibre $f$ of $\pi$ is an irreducible curve, and since $D=\eta^{\sharp}(\eta(D))$, we have $f = \eta^{-1}(p)$ for some $p \in \eta(D)$.
So $f$ is proportional to a general fibre of $\eta$, in contradiction with the fact that the extremal contraction of $f$ is divisorial.

\ref{rankr:1} and \ref{rankr:2}.
Let $D_1$ be a divisor of type \II, and let $D_2, \dots, D_s$ be the other divisors of type \II such that
\(
\eta^{\sharp} (\eta (D_1)) = D_1 \cup \dots \cup D_s.
\)
By definition of $\eta^{\sharp}$, for each~$i$ the general fibres of $D_i\to \eta(D_i)$ are curves.
Hence, $\Gamma = \eta(D_i)$ is a hypersurface in $B$, which does not depend on $i$.
Let $p \in \Gamma$ be a general point, and write $f := \eta^{-1} (p) = f_1 + \dots + f_s$ with $f_i$ a curve in $D_i$.
We have $D_i \cdot \eta^{-1} (p) = 0$ for each $i$, $D_i \cdot f_j > 0$ for at least one $j$ by connectedness of $f$, which gives
\(
D_i \cdot f_i < 0.
\)

Then by running a $D_i$-MMP from $T$ over $B$, we obtain a sequence of log-flips that does not affect $f$, and then a divisorial contraction of $D_i$ to a center of codimension~$2$.
By Lemma~\ref{lem:divContToCodim2}, this implies that $f_i$ is smooth with $K_T \cdot f_i = D_i \cdot f_i = -1$.
But $K_T \cdot f = -2$, so we conclude that $s = 2$ as expected.
The equality $D_1 \cdot f_2 = D_2 \cdot f_1 =1$ follows immediately from $D_i \cdot f = 0$.

To prove \ref{rankr:3}, we show that the divisor $D$ is covered by curves $\ell$ such that $D \cdot \ell <0$, and then we get the expected birational contraction by running a $D$-MMP.
When $D$ has type \II we showed in \ref{rankr:2} that $D$ is covered by such curves.
Now let $D$ be a divisor of type \I, $p$ a general point in $\eta(D)$, and let $d \ge 0$ be the dimension of $\eta(D)$.
By definition of a divisor of type \I we have $n -3 \ge d$, where $n = \dim T$.
Now consider a surface $S \subset T$ obtained as
\[
S = \bigcap_{i = 1}^{n-2-d} H_i \cap \bigcap_{j = 1}^{d} \eta^* H'_j,
\]
where the $H_i$ are general hyperplane sections of $T$, and the $H'_j$ general hyperplane sections of $B$ through $p$.
By construction, $\ell := S \cap D$ is an irreducible curve contracted to $p$ by $\eta$.
Moreover $\eta(S)$ is a surface; indeed each $H_i$ is transverse to the general fibres of $\eta$, which are curves, and $n-2-d\ge 1$.
Since a curve contracted by a morphism between two surfaces has negative self-intersection, we obtain $D \cdot \ell = (\ell \cdot \ell)_S < 0$ as expected.

To prove \ref{rankr:4}, first observe that the contraction of a divisor of type \I does not affect the other special divisors, and the contraction of a divisor of type \II only affects the other divisor in the pair, which is not special anymore.
So by applying several times \ref{rankr:3}, we may assume $d_1 = d_2 = 0$, and we want to show $r = 1$, or equivalently, that $T/B$ is a terminal Mori fibre space.
Now we run a MMP from $T$ over $B$.
A flip does not change $d_1$ nor $d_2$, so we can assume that we have a divisorial contraction or a Mori fibre space.
By \ref{rankr:0}, a divisorial contraction would contradict our assumption $d_1 = d_2 = 0$.
On the other hand, if $T \to B'$ is a Mori fibre space,
then both $B'$ and $B$ are $(n-1)$-dimensional varieties, and $B'$ is $\Q$-factorial klt by Proposition~\ref{pro:sing_of_B}.
If the birational morphism $B' \to B$ is not an isomorphism, it must contracts at least one divisor $D$ because $B$ is $\Q$-factorial by assumption.
By Lemma \ref{lem:B'_over_B_is_Mds} $B'/B$ is a Mori dream space, so we can run a $D$-MMP from $B'$ over $B$.
After a sequence of $D$-flips this produces a divisorial contraction, hence a divisor of type \I on $T$ by pulling-back, and again a contradiction.
In conclusion, $B' \iso B$ is an isomorphism and $T/B$ is a Mori fibre space, as expected.
\end{proof}

\begin{lemma} \label{lem:gonalityOfTypeII}
Let $\eta\colon T \to B$ be a rank~$r$ fibration with $\dim B = \dim T -1$. 
Assume that $D$ is a divisor of type \II for $T/B$, with $\covgon(\eta(D))> 1$ $($i.e.~$\eta(D)$ is not uniruled$)$.
Then for any rank~$r'$ fibration $T'/B'$ that factorises through $T/B$, with $\dim B'=\dim T'-1=\dim B$, the strict transform of $D$ is a divisor of type \II for $T'/B'$.
\end{lemma}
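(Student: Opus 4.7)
The plan is to compare the rank~$r$ and rank~$r'$ fibrations via the birational contraction coming from the factorisation, tracking not only $D$ but also its type~\II partner $D_2$ produced by Proposition~\ref{pro:rankrFibrations}\ref{rankr:1}.

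First I set up notation. Write $\eta'\colon T'\to B'$ for the given fibration; the factorisation provides a birational contraction $\psi\colon T'\rat T$ and a morphism $\beta\colon B\to B'$ with connected fibres such that $\eta' = \beta\circ\eta\circ\psi$ wherever $\psi$ is defined. Since $\dim B=\dim B'$ and $\beta$ is surjective with connected fibres, $\beta$ is automatically birational (general fibres are $0$-dimensional and connected, hence singletons). Because $\psi$ is a birational contraction, its inverse $\psi^{-1}\colon T\rat T'$ does not contract any divisor, so both $D$ and $D_2$ possess well-defined prime strict transforms $D',D_2'$ on $T'$, and $D'\ne D_2'$ since $\psi^{-1}$ is injective in codimension one.

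The heart of the proof is the claim that $\beta$ does not contract the prime divisor $\eta(D) = \eta(D_2)\subset B$. Suppose for contradiction that it does. Then $\eta(D)$ is a prime exceptional divisor of the birational morphism $\beta$ between the klt varieties $B$ and $B'$ (klt by axiom~\ref{fib:singB} of Definition~\ref{def:rankFibration}). A standard MMP argument, factoring $\beta$ over $B'$ into a sequence of log-flips and extremal $K$-negative divisorial contractions (possible since $B$ is $\Q$-factorial klt), shows that any such prime exceptional divisor is uniruled: at each divisorial step the exceptional divisor is covered by the contracted extremal rational curves, and uniruledness is preserved under the birational correspondences between the intermediate models. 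This contradicts $\covgon(\eta(D)) > 1$.

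Granted that $\beta$ does not contract $\eta(D)$, one reads off $\eta'(D') = \beta(\eta(D))$ as a codimension-one subvariety of $B'$, and likewise $\eta'(D_2') = \eta'(D')$. A dimension count on the surjection $D_2'\to \eta'(D_2')$ gives general fibres of dimension $\dim T - \dim B = 1$, hence $D_2' \subseteq (\eta')^\sharp(\eta'(D'))$. Since $D'\ne D_2'$ and both lie in $(\eta')^\sharp(\eta'(D'))$, we conclude $D'\subsetneq (\eta')^\sharp(\eta'(D'))$, which is exactly the condition for $D'$ to be a divisor of type~\II for $T'/B'$. The main obstacle is the rigorous justification of the uniruledness of a prime exceptional divisor of $\beta$: Lemma~\ref{lem:fibresRatConnected}\ref{connected:birational} directly yields only that each fibre of $\beta$ is covered by rational curves, which does not immediately imply uniruledness of an exceptional prime divisor (since such a divisor meets a general positive-dimensional fibre in codimension one, and the covering rational curves of the fibre need not lie inside it), so either the MMP factorisation above or an appeal to a classical uniruledness result of Kawamata/Kollár type is required.
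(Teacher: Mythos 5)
Your overall strategy coincides with the paper's: you track the partner divisor $D_2$ provided by Proposition~\ref{pro:rankrFibrations}\ref{rankr:1}, you reduce everything to showing that $\beta\colon B\to B'$ does not contract $\Gamma=\eta(D)$, and you conclude via $(\eta')^{\sharp}$ exactly as the paper does. The genuine problem lies in your substitute for the uniruledness step. First, $B$ is not known to be $\Q$-factorial: Definition~\ref{def:rankFibration}\ref{fib:singB} only asks that $B$ be klt, and bases of rank~$2$ fibrations with $\dim B=\dim T-1$ can fail to be $\Q$-factorial (this is exactly the type~\IV situation of Lemma~\ref{lem:LinksCB}\ref{classification1}), so the parenthetical ``possible since $B$ is $\Q$-factorial klt'' is unjustified (repairable via a $\Q$-factorialization, but you do not do this, and you would also need to justify running and terminating the MMP, since no Mori dream space structure for $B/B'$ is available in the paper's toolkit). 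Second, and more seriously, the claimed factorisation of $\beta$ into log-flips and $K$-negative divisorial contractions over $B'$ does not exist in general: a relative $K$-MMP of $B$ over $B'$ stops at a relative minimal model which need not be $B'$, and by the negativity lemma it contracts only the $\beta$-exceptional divisors of positive discrepancy. Since $B'$ is merely klt, $\Gamma$ may have discrepancy in $(-1,0]$ (compare the minimal resolution of a Du Val singularity, where the relative MMP contracts nothing), in which case your MMP never contracts $\Gamma$ and produces no rational curves in it. So the key claim ``any prime exceptional divisor of $\beta$ is uniruled'', as you argue it, is not established; your fallback ``classical result of Kawamata/Koll\'ar type'' is left unspecified.

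Moreover, the obstacle you attribute to the paper's route is not a real one. For a general point $p\in\pi(\Gamma)$, a dimension count using that $\pi$ is birational (so $\pi^{-1}(\pi(\Gamma))$ is a proper closed subset of $B$) shows that every irreducible component of the fibre $\pi^{-1}(p)$ has dimension at most $\dim\Gamma-\dim\pi(\Gamma)$, while the components of $\Gamma\cap\pi^{-1}(p)$ have exactly this dimension; hence they are irreducible components of $\pi^{-1}(p)$. An irreducible rational curve inside the fibre passing through a general point of such a component cannot meet any other component (their traces on it are proper closed subsets), so it is contained in that component, hence in $\Gamma$. Thus Lemma~\ref{lem:fibresRatConnected}\ref{connected:birational} does yield that $\Gamma$ is covered by rational curves, contradicting $\covgon(\Gamma)>1$, exactly as in the paper. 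Your final step ($D'$ and $D_2'$ both lie in $(\eta')^{\sharp}(\Gamma')$ and are distinct, forcing the strict inclusion) agrees with the paper's; it is the middle step that needs to be redone, either as above or by making the MMP argument precise after a $\Q$-factorialization and with the correct choice of auxiliary boundary.
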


\begin{proof}
Recall that $T'\rat T$ is a birational contraction and $\pi\colon B\to B'$ is a morphism with connected fibres between klt pairs 
(Definition~\ref{def:rankFibration}\ref{fib:singB}), which in our situation is birational as $\dim(B)=\dim(B')$. 
We write $D=D_1$ and by Proposition~\ref{pro:rankrFibrations}\ref{rankr:1} we have a pair $D_1\cup D_2$ of divisors of type \II for $T/B$, where $\Gamma=\eta(D_1)=\eta(D_2)$ is a divisor of $B$ and $D_1 \cup D_2 =  \eta^{\sharp} (\Gamma)$.

We first observe that the image of $\Gamma$ in $B'$ is again a divisor $\Gamma'\subset B'$. 
Indeed otherwise, the divisor $\Gamma \subset B$ is one of the divisors contracted by the birational morphism $\pi\colon B \to B'$.
By Lemma~\ref{lem:fibresRatConnected}\ref{connected:birational}, this implies that $\Gamma$ is covered by rational curves, in contradiction with our assumption $\covgon(\Gamma) > 1$.

Writing $\eta'\colon T'\to B'$ the rank~$r'$ fibration, one observe that the strict transforms  $\tilde{D}_1$ and $\tilde{D}_2$ of $D_1$ and $D_2$ are such that $\tilde{D}_1\cup \tilde{D}_2\subseteq \eta^{\sharp}(\Gamma')$. Hence, $\tilde{D}_1$ and $\tilde{D}_2$ are divisors of type \II for $T'/B'$.
\end{proof}

\begin{lemma}
\label{lem:factorThroughRank1}
Let $T/B$ be a rank~$r$ fibration with $\dim B = \dim T -1$ and $B$ $\Q$-factorial.
Assume that for each divisor $D$ of type \II for $T/B$, we have 
\[\covgon(\eta(D)) > 1.\]
Then $T/B$ factorises through a rank~$1$ fibration $T'/B'$ such that $T \ps T'$ is a pseudo-isomorphism if and only if $T/B$ does not admit any divisor of type \II.  

If this holds, then $\dim B' = \dim T - 1$, $B' \to B$ is a birational morphism and $\rho(B'/B) = r-1$.  
\end{lemma}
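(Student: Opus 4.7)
My plan is to prove the two directions of the equivalence separately, the forward direction being a routine combination of earlier results and the backward direction being the main challenge.

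For the forward direction, suppose $T \ps T'$ is a pseudo-isomorphism and $T'/B'$ is a rank~$1$ fibration. I would first verify the final clause of the statement: by Lemma~\ref{lem:rank1=Mfs}, $T'/B'$ is a terminal Mori fibre space, so Proposition~\ref{pro:sing_of_B} gives that $B'$ is $\Q$-factorial. Since the pseudo-isomorphism preserves dimension and relative Picard rank, $\dim T' = \dim T = \dim B + 1$ and $\rho(T'/B) = r$; since $B' \to B$ has connected fibres it is surjective, so $\dim B' \ge \dim B$, which combined with $\dim B' < \dim T'$ forces $\dim B' = \dim B = \dim T' - 1$, so $B' \to B$ is birational and $\rho(B'/B) = r - 1$. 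For the equivalence itself, if $T/B$ admitted a type~\II divisor $D_1$, by Proposition~\ref{pro:rankrFibrations}\ref{rankr:1} it would come in a pair $D_1 \cup D_2 = \eta^\sharp(\Gamma)$ with $\Gamma = \eta(D_1)$, and by hypothesis $\covgon(\Gamma) > 1$. The pseudo-isomorphism preserves divisors, so the strict transforms $D_1', D_2'$ are divisors on $T'$, and Lemma~\ref{lem:gonalityOfTypeII} applies and forces $D_1'$ to be a type~\II divisor on $T'/B'$. But Proposition~\ref{pro:rankrFibrations}\ref{rankr:4} applied to the rank~$1$ fibration $T'/B'$ forces $d_2 = 0$, a contradiction.

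For the backward direction, assuming $T/B$ has no type~\II divisor, I would proceed by induction on $r$. The base case $r = 1$ is immediate: set $T' := T$ and $B' := B$. For the induction step $r \ge 2$, Proposition~\ref{pro:rankrFibrations}\ref{rankr:4} combined with $d_2 = 0$ gives $d_1 = r - 1 \ge 1$, so $T/B$ admits at least one type~\I divisor. The plan is to construct an intermediate factorisation
\[ T \ps T_1 \longrightarrow B_1 \longrightarrow B, \]
where $T \ps T_1$ is a pseudo-isomorphism, $B_1 \to B$ is a birational morphism of relative Picard rank $1$, and $T_1/B_1$ is a rank $(r-1)$ fibration still satisfying the hypotheses of the lemma. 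The induction hypothesis applied to $T_1/B_1$ would then yield a pseudo-isomorphism $T_1 \ps T'$ with $T'/B'$ of rank $1$ and $B' \to B_1$ birational of relative Picard rank $r-2$, and composition yields the desired factorisation of $T/B$.

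The hard part lies in producing $(T_1, B_1)$ and in verifying the inheritance of the no-type-\II hypothesis. My approach is to exploit the Mori dream space structure of $T/B$: by the finiteness of the Mori chamber decomposition of $\widebar{\Mov}(T/B)$ (Lemma~\ref{lem:anyMMP}), some small $\Q$-factorial modification $T_1 \ps T$ should admit an extremal contraction which is a Mori fibration $T_1 \to B_1$ with $B_1 \to B$ birational of relative Picard rank $1$. The absence of type~\II divisors on $T/B$ is the crucial ingredient that prevents both rays of the associated two-rays game from being divisorial contractions: by Proposition~\ref{pro:rankrFibrations}\ref{rankr:0}, \ref{rankr:1} and \ref{rankr:2}, the two exceptional divisors would then assemble into a type~\II pair on $T/B$, contradicting the assumption. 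The main subtlety in the inheritance of the no-type-\II hypothesis by $T_1/B_1$ is that a potential type~\II divisor on $T_1/B_1$ whose image in $B_1$ lies over the exceptional locus of $B_1 \to B$ has covering gonality $1$ (by Lemma~\ref{lem:fibresRatConnected}\ref{connected:birational}), and so cannot be excluded by Lemma~\ref{lem:gonalityOfTypeII} alone; I expect this to require an additional case analysis comparing the structure of $T_1/B_1$ over this exceptional locus with that of $T/B$.
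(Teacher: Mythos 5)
Your forward implication rests on Lemma~\ref{lem:gonalityOfTypeII}, but that lemma transfers type \II divisors in the opposite direction to the one you need: its hypothesis places the type \II divisor on the \emph{dominated} (lower-rank) fibration, and its conclusion concerns the \emph{dominating} one (in its notation $T'/B'$ factorises through $T/B$, so the birational contraction goes $T'\rat T$). In your situation the type \II divisor lives on the dominating fibration $T/B$ and you want to push it down to the rank~$1$ fibration $T'/B'$, so the lemma's hypotheses are not met; and the claim is not automatic, because the indeterminacy locus of the pseudo-isomorphism $T\ps T'$ (a codimension~$2$ set) may dominate $\Gamma=\eta(D_1)$ and thus modify the general fibre over $\Gamma$, so a separate argument is required. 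The paper's own argument for this direction is more elementary and does not use covering gonality at all: the strict transforms $\tilde D_1,\tilde D_2$ have the same image in $B'$, which is a divisor because $B'\to B$ is birational, and then Proposition~\ref{pro:rankrFibrations}\ref{rankr:2} produces fibres of $T'/B'$ of the form $f_1+f_2$ with $f_1,f_2$ non-proportional, contradicting $\rho(T'/B')=1$. (Your dimension count for the last clause of the statement is fine.)

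The backward implication is where your proposal is genuinely incomplete, partly by your own admission. First, the intermediate object is never constructed: you ask for $T\ps T_1\to B_1\to B$ with $\rho(B_1/B)=1$ and $T_1/B_1$ a rank~$(r-1)$ fibration, but the proposed mechanism --- a small $\Q$-factorial modification admitting an extremal contraction ``which is a Mori fibration $T_1\to B_1$'' --- is internally inconsistent (an extremal fibration has $\rho(T_1/B_1)=1$, not $r-1$), and producing a Mori fibre space pseudo-isomorphic to $T$ is essentially the statement being proved, so invoking finiteness of the chamber decomposition here begs the question; moreover the two-rays game you appeal to only exists in relative Picard rank~$2$, whereas $\rho(T/B)=r$. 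Second, you explicitly leave unresolved the inheritance of the no-type-\II condition for $T_1/B_1$ over the exceptional locus of $B_1\to B$, which is precisely the hard point of your scheme, since Lemma~\ref{lem:gonalityOfTypeII} cannot exclude type \II divisors with uniruled image. The paper avoids both difficulties by inducting on the number $d_1$ of type \I divisors rather than on $r$: Proposition~\ref{pro:rankrFibrations}\ref{rankr:3} contracts one type \I divisor over the \emph{same} base, $T\ps T_1\to X_1\to B$, Lemma~\ref{lem:StableUnderMMP} makes $X_1/B$ a rank~$(r-1)$ fibration, Lemma~\ref{lem:gonalityOfTypeII} (applied in its correct direction, from $X_1/B$ up to $T/B$) rules out type \II divisors on $X_1/B$ so the induction hypothesis applies, Lemma~\ref{lem:2raysOverFlip} lifts the resulting pseudo-isomorphism to $T_1\ps T_2$ with $T_2\to X_2$ divisorial, and only at the very end does a two-rays game on the rank~$2$ fibration $T_2/B_2$ produce the enlarged base $B'\to B$. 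As written, your argument does not establish the backward direction.
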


\begin{proof}
If $T/B$ factorises through a rank~$r'$ fibration $T'/B'$ such that $T \ps T'$ is a pseudo-isomorphism, first observe that $\rho(B'/B) = r - r'$, and $B'\to B$ is birational, since $\dim(B)=\dim(B')$, which follows from  
\[\dim(T)=\dim(T')>\dim(B')\ge\dim(B)=\dim(T)-1.\]

If $D_1\cup D_2$ is a pair of divisors of type \II for $T/B$, then their strict transforms $\tilde{D}_1,\tilde{D}_2$ have the same image in $B'$, which is a divisor because $B'\to B$ is birational. 
So if $T/B$ admits at least one divisor of type \II, then by Proposition~\ref{pro:rankrFibrations}\ref{rankr:2} some fibres of $T'/B'$ have the form $f_1 + f_2$ with $f_1, f_2$ non proportional.
In particular $r' = \rho(T'/B') \ge 2$ and so $T'/B'$ is not a Mori fibre space.

To prove the converse, we assume that $T/B$ does not admit any divisor of type \II, and we proceed by induction on the number $d_1$ of divisors of type \I.
If $d_1 = 0$ then by Proposition~\ref{pro:rankrFibrations}\ref{rankr:4},  $T/B$ is already a rank~$1$ fibration, so we just take $T'/B' = T/B$.
Now if $d_1 > 0$, by Proposition~\ref{pro:rankrFibrations}\ref{rankr:3} there exists a birational contraction over $B$, $T \rat X_1 \to B$, which contracts one divisor $D$ of type \I. 
Since the contraction is obtained by running a $D$-MMP, in fact it factorises as $T \ps T_1 \to X_1$, where $T \ps T_1$ is a sequence of $D$-flips and $T_1 \to X_1$ is a divisorial contraction.
Then by induction hypothesis $X_1/B$ factorises through a rank~$1$ fibration $X_2/B_2$ with $X_1 \ps X_2$ a pseudo-isomorphism (here we use Lemma~\ref{lem:gonalityOfTypeII}, which shows that $X_1/B$ does not admit any divisor of type \II).
By Lemma~\ref{lem:2raysOverFlip}, there exist a pseudo-isomorphism $T_1 \ps T_2$ and a divisorial contraction $T_2 \to X_2$ that makes the diagram on Figure~\ref{fig:contractsTypeI} commute.
Finally we play the two-rays game $T_2/B_2$.
Since $T_2/B_2$ admits one divisor of type \I and no divisor of type \II (by our assumption on the covering gonality and by Lemma~\ref{lem:gonalityOfTypeII}), the other side of the two-rays game must be a Mori fibre space, which gives the expected rank~$1$ fibration $T'/B'$.
\end{proof}

\begin{figure}[ht]
\[
\begin{tikzcd}[link]
T \ar[ddddddr] \ar[r,dotted] & T_1 \ar[r,dotted] \ar[dd] & T_2 \ar[r,dotted] \ar[dd] & T ' \ar[dd] \\ \\
  & X_1 \ar[dddd] \ar[r,dotted] & X_2 \ar[dd] & B' \ar[ldd] \\ \\
  &     & B_2 \ar[ddl] \\ \\
  & B
\end{tikzcd}
\]
\caption{} \label{fig:contractsTypeI}
\end{figure}

\subsection{Sarkisov links of conic bundles}
\label{sec:SarkisovLinksCB}

In this subsection, by applying Proposition~\ref{pro:rankrFibrations} to the case $r=2$, we classify Sarkisov links of conic bundles. 

\begin{lemma}
\label{lem:LinksCB}
Let $Y/B$ be a rank~$2$ fibration with $\dim B = \dim Y -1$, and $\chi$ the associated Sarkisov link, well-defined up to taking inverse.
\begin{enumerate}
\item \label{classification1}
$\chi$ has type \IV if and only if $B$ is not $\Q$-factorial.
\item \label{classification2}
If $B$ is $\Q$-factorial, let $d_1$ $($resp.~$d_2)$ be the number of special divisors of type \I $($resp.~of type \textup{II}$)$ for $Y/B$. Then
\begin{itemize}
\item $\chi$ has type \I or \III if and only if $(d_1,d_2)=(1,0)$.
\item $\chi$ has type \II if and only if $(d_1,d_2)=(0,1)$.
\end{itemize}
\end{enumerate}
\end{lemma}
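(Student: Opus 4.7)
My approach is to analyze the two-rays game associated with $Y/B$ via Lemma~\ref{lem:rank 2 and link}, which produces two rank~$1$ fibrations $X_1/B_1$ and $X_2/B_2$ through which $Y/B$ factorises. Since $\dim X_i = \dim Y = \dim B + 1$ and $\rho(X_i/B_i) = 1$, we must have $\dim B_i = \dim B$, so each $B_i \to B$ is birational, and each $B_i$ is $\Q$-factorial klt by Proposition~\ref{pro:sing_of_B}. Inspection of Figure~\ref{fig:SarkisovTypes} shows that types \I, \II, \III are exactly those where at least one $B_i$ equals $B$, whereas type \IV is the case where both $B_i \to B$ are non-trivial birational morphisms of relative Picard rank~$1$.

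For part~(1), the ``only if'' direction is immediate: if $\chi$ is of type \I, \II, or \III, then $B_i = B$ for some $i$, hence $B$ inherits $\Q$-factoriality. For the converse, assume $B$ is $\Q$-factorial and apply Proposition~\ref{pro:rankrFibrations}\ref{rankr:4} with $r=2$ to get $d_1 + d_2 = 1$. Proposition~\ref{pro:rankrFibrations}\ref{rankr:3} then produces a birational contraction $Y \rat Z \to B$ with $\rho(Z/B) = 1$, so $Z/B$ is a rank~$1$ fibration (Lemma~\ref{lem:rank1=Mfs}) through which $Y/B$ factorises. By the uniqueness statement in Lemma~\ref{lem:rank 2 and link}, $Z/B$ must coincide with one of the two outputs $X_i/B_i$ of the two-rays game, with $B_i = B$, so $\chi$ is not of type \IV.

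For part~(2), under $B$ $\Q$-factorial we already have $d_1 + d_2 = 1$. The case $(d_1,d_2) = (0,1)$ is the easier: by Proposition~\ref{pro:rankrFibrations}\ref{rankr:1} there is a pair $D_1 \cup D_2$ of type~\II divisors, and applying Proposition~\ref{pro:rankrFibrations}\ref{rankr:3} to each yields two distinct birational contractions $Y \rat X_i \to B$ which, by uniqueness, are the two sides of the two-rays game, so $B_1 = B_2 = B$ and $\chi$ is of type~\II. The case $(d_1,d_2) = (1,0)$ requires more care: the unique type-\I divisor $D$ produces one divisorial side. To rule out that the other side is also divisorial, I would note that any divisor contracted from $Y$ is of type~\I or~\II by Proposition~\ref{pro:rankrFibrations}\ref{rankr:0}, hence equal to $D$; but then the two outputs $X_1, X_2$ of the game would both be rank~$1$ fibrations over $B$ linked by a pseudo-isomorphism, and thus isomorphic by Lemma~\ref{lem:corti2.7}\ref{corti2.7:1}, contradicting the structure of the two-rays game. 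So the other side must be a Mori fibre space with $B_2 \neq B$, giving a type~\I or~\III link.

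The main subtlety is the bookkeeping in the case $(d_1,d_2) = (1,0)$, where one must exclude that both sides of the two-rays game are divisorial contractions of the same divisor; this relies crucially on the rigidity of pseudo-isomorphisms between Mori fibre spaces over the same base (Lemma~\ref{lem:corti2.7}\ref{corti2.7:1}). Elsewhere the argument is essentially a direct bookkeeping with Proposition~\ref{pro:rankrFibrations} and the classification of types in Figure~\ref{fig:SarkisovTypes}.
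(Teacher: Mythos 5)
Your overall strategy is the same as the paper's (the count $d_1+d_2=1$ from Proposition~\ref{pro:rankrFibrations}\ref{rankr:4}, the two-rays game, and Lemma~\ref{lem:corti2.7}), and your treatment of part (1) and of the case $(d_1,d_2)=(0,1)$ is fine. However, there is a genuine gap in your argument for the case $(d_1,d_2)=(1,0)$, at the step where you exclude that both sides of the two-rays game are divisorial. From ``both sides contract the unique special divisor $D$'' you correctly deduce via Lemma~\ref{lem:corti2.7}\ref{corti2.7:1} that $\chi\colon X_1\rat X_2$ is an isomorphism, but you then claim this already contradicts ``the structure of the two-rays game''. It does not: the two ends of a Sarkisov link can perfectly well be isomorphic as Mori fibre spaces (for instance the cubo-cubic link of Example~\ref{ex:simple_links}\ref{simple_link:3}, or any type \II link which is an involution), so an isomorphism $X_1\simeq X_2$ is not by itself absurd. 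To reach a contradiction one has to go further, exactly as in Lemma~\ref{lem:distinctDivisors}: use Lemma~\ref{lem:corti2.7}\ref{corti2.7:2} to promote the pseudo-isomorphism between the top varieties $Y_1,Y_2$ (which have divisorial contractions with the same exceptional divisor onto $X_1\simeq X_2$) to an isomorphism, and then observe that the two divisorial contractions would contract the same extremal ray, contradicting the fact that the two sides of the two-rays game start with the two distinct rays of $\NE(Y/B)$. The simplest repair is to quote Lemma~\ref{lem:distinctDivisors} directly: if the link were of type \II, its two exceptional divisors would be distinct prime divisors of type \I or \II by Proposition~\ref{pro:rankrFibrations}\ref{rankr:0}, which is impossible when $(d_1,d_2)=(1,0)$. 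This is precisely how the paper handles this case (stated as ``type \II $\Rightarrow (d_1,d_2)=(0,1)$''), and it is the one ingredient your write-up is missing.

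Two smaller remarks. First, in part (1) your converse direction (``$B$ $\Q$-factorial $\Rightarrow$ not type \IV'') is a valid alternative to the paper's argument, which instead assumes type \IV and produces three independent classes in $N^1(Y/B)$; but your citation of Lemma~\ref{lem:rank1=Mfs} is not quite what justifies that $Z/B$ is a rank~$1$ fibration, since knowing $\rho(Z/B)=1$ alone does not give a terminal Mori fibre space. The correct justification is that the contraction of Proposition~\ref{pro:rankrFibrations}\ref{rankr:3} is obtained by running a $D$-MMP over $B$ (log-flips followed by one divisorial contraction), so Lemma~\ref{lem:StableUnderMMP}\ref{stab_under_MMP:1} applies. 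Second, in the $(0,1)$ case your appeal to ``uniqueness'' to identify the two contractions of $D_1$ and $D_2$ with the two sides of the game is at the same level of detail as the paper's own phrasing, so no complaint there.
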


\begin{proof}
\ref{classification1}.
If $B$ is not $\Q$-factorial, then it follows directly that $\chi$ has type \IV, from the fact that the base of a terminal Mori fibre space if always $\Q$-factorial (Proposition~\ref{pro:sing_of_B}), and by inspection of the diagrams in Figure~\ref{fig:SarkisovTypes}.
Conversely, assuming that $\chi\colon X_1/B_1 \rat X_2/B_2$ is a link of type \IV, we show that $B$ is not $\Q$-factorial. As $\dim B=\dim Y-1$, the morphisms $B_1/B$, $B_2/B$ are birational.
If $B$ is $\Q$-factorial, then $B_1/B$ and $B_2/B$ are birational contractions with respective exceptional divisors $E_1$ and $E_2$.
If the birational map $B_1 \rat B_2$ sends $E_1$ onto $E_2$, then the map is a pseudo-isomorphism, hence an isomorphism by Lemma~\ref{lem:corti2.7}\ref{corti2.7:2}, and then $X_1 \ps X_2$
also is an isomorphism by Lemma~\ref{lem:corti2.7}\ref{corti2.7:1}, a contradiction.
Otherwise, the pull-backs of $E_1$, $E_2$ together with the choice of any ample divisor give three independent classes in $N^1(Y/B)$, in contradiction with $\rho(Y/B) = 2$.  

To prove \ref{classification2}, first we observe that Proposition~\ref{pro:rankrFibrations}\ref{rankr:4} gives $d_1 + d_2 = 1$, hence the two possibilities $(d_1,d_2) = (1,0)$ or $(0,1)$.
Recall also from Proposition~\ref{pro:rankrFibrations}\ref{rankr:0} that any divisor contracted by a  birational contraction from $Y$ over $B$ must be of type \I or \II.  
If the  link $\chi$ is of type \II, then  Lemma~\ref{lem:distinctDivisors} gives two birational contractions from $Y$ contracting distinct prime divisors, and this is possible only in the case $(d_1,d_2) = (0,1)$ where there is a pair of divisors of type \II available. Conversely, if $(d_1,d_2)=(0,1)$, we have two distinct prime divisors, that we can contract via two distinct birational contractions (Proposition~\ref{pro:rankrFibrations}\ref{rankr:3}).  These are the two starting moves of a $2$-ray game which provides a link of type \II.
\end{proof}

\begin{corollary}\label{cor:SarkiIConic}
Let $\chi$ be a Sarkisov link of conic bundles of type \I:
\[
\begin{tikzcd}[link]
Y_1 \ar[dd,"\pi_1",swap]  \ar[rr,dotted,-] && X_2 \ar[dd,"\eta_2"] \\ \\
X_1 \ar[uurr,"\chi",dashed,swap] \ar[dr,"\eta_1",swap] &  & B_2 \ar[dl] \\
& B_1 &
\end{tikzcd}
\]
Let $E_1$ be the exceptional divisor of the divisorial contraction $\pi_1$.
Then $\eta_1 \circ \pi_1(E_1)$ has codimension at least $2$ in $B_1$.
\end{corollary}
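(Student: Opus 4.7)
The plan is to show that $E_1$ is a divisor of type \I for the rank 2 fibration $Y_1/B_1$; then the conclusion is immediate from the very definition of a type \I divisor, namely that $\eta_{Y_1/B_1}(E_1) = (\eta_1 \circ \pi_1)(E_1)$ has codimension at least $2$ in $B_1$.

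First I would note that by Lemma~\ref{lem:rank 2 and link}, the Sarkisov link $\chi$ arises from a rank~$2$ fibration, and in the situation of this corollary the common base is $B_1$, so $Y_1/B_1$ is a rank~$2$ fibration factorising through the rank~$1$ fibration $X_1/B_1$ via the divisorial contraction $\pi_1$. Since $\chi$ has type \I and not \IV, Lemma~\ref{lem:LinksCB}\ref{classification1} gives that $B_1$ is $\Q$-factorial, which places us in the setting of Proposition~\ref{pro:rankrFibrations}\ref{rankr:4}. Applying Lemma~\ref{lem:LinksCB}\ref{classification2}, the numbers $(d_1, d_2)$ of type \I divisors and pairs of type \II divisors of $Y_1/B_1$ satisfy $(d_1, d_2) = (1, 0)$; in particular there are no divisors of type \II.

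Next, by Proposition~\ref{pro:rankrFibrations}\ref{rankr:0} applied to the factorisation of the rank~$2$ fibration $Y_1/B_1$ through the rank~$1$ fibration $X_1/B_1$ via $\pi_1$, the exceptional divisor $E_1$ (which is contracted by $\pi_1$) must be of type \I or \II for $Y_1/B_1$. Since $d_2 = 0$, the divisor $E_1$ is forced to be of type \I, which by definition means precisely that $(\eta_1 \circ \pi_1)(E_1)$ has codimension at least $2$ in $B_1$.

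There is no real obstacle here: the statement is essentially a bookkeeping consequence of the classification of special divisors set up in Proposition~\ref{pro:rankrFibrations} together with the dictionary between Sarkisov link types and type \I/\II divisors provided by Lemma~\ref{lem:LinksCB}. The only mild subtlety to keep in mind is that we need $B_1$ to be $\Q$-factorial before invoking the counting identity $r = 1 + d_1 + d_2$, but this is guaranteed by the assumption that $\chi$ has type \I (as opposed to \IV).
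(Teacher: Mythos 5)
Your proof is correct and follows essentially the same route as the paper, whose entire proof is the observation that $E_1$ is a divisor of type \I for $Y_1/B_1$; you have simply made explicit the justification of that fact via Lemma~\ref{lem:LinksCB} and Proposition~\ref{pro:rankrFibrations}\ref{rankr:0}, which is exactly what the paper leaves implicit.
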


\begin{proof}
Follows from the fact that $E_1$ is a divisor of type \I for $Y_1/B_1$.
\end{proof}

\begin{remark}\label{rem:TypeIVhigher}
There are examples of link of type \IV as in Lemma~\ref{lem:LinksCB}\ref{classification1} only when $\dim B \ge 3$, hence $\dim Y \ge 4$.
See the discussion on the two subtypes of type IV links in \cite[p.~391 after Theorem 1.5]{HMcK}.
For instance, take $B_1$ and $B_2$ that differ by a log-flip, and $B$ the non $\Q$-factorial target of the associated small contractions. 
Then the birational map from $(\p^1\times B_1)/B_1$ to $(\p^1\times B_2)/B_2$ induced by the log-flip is a link of type \IV.
\end{remark}

Now we focus on the case of Sarkisov links of conic bundles of type \II.
First we introduce the following definition.

\begin{definition}
\label{def:markedConic}
A \emph{marked conic bundle} is a triple $(X/B,\Gamma)$, where $X/B$ is a conic bundle in the sense of Definition \ref{def:conicBundle}, and $\Gamma \subset B$ is an irreducible hypersurface, not contained in the discriminant locus of $X/B$ $($i.e.~the fibre of a general point of $\Gamma$ is isomorphic to $\p^1)$. 
The \emph{marking} of the marked conic bundle is defined to be $\Gamma$.

We say that two marked conic bundles $(X/B,\Gamma)$, and $(X'/B',\Gamma')$ are  \emph{equivalent} if there exists a commutative diagram
\[\begin{tikzcd}[link]
X \ar[dd,swap]\ar[r,"\psi",dashed]& X'\ar[dd]  \\ \\
B  \ar[r,"\theta",dashed]& B' 
\end{tikzcd}\]
where $\psi, \theta$ are birational and such that the restriction of $\theta$ induces a birational map $\Gamma\rat \Gamma'$ between the markings.
In particular, if $(X/B,\Gamma)$, and $(X'/B',\Gamma')$ are equivalent, then the conic bundles $X/B$ and $X'/B'$ are equivalent in the sense of Definition \ref{def:conicBundle}.

For each variety $Z$, we denote by $\CB(Z)$ the set of equivalence classes of conic bundles $X/B$ with $X$ birational to $Z$  and denote, for each class of conic bundles $C\in \CB(Z)$ by $\MC(C)$ the set of equivalence classes of marked conic bundles $(X/B,\Gamma)$ where $C$ is the class of $X/B$.
\end{definition}

The next lemma explains how a Sarkisov link of conic bundles of type \II gives rise to an equivalence class of marked conic bundles.

\begin{lemma}\label{lem:SarkiIIConic}
Let $\chi$ be a Sarkisov link of conic bundles of type \II between varieties of dimension $n \ge 2$.
Recall that $\chi$ fits in a commutative diagram of the form
\[
\begin{tikzcd}[link]
Y_1 \ar[rr,dotted,"\phi"] \ar[dd,"\pi_1",swap]&& Y_2 \ar[dd,"\pi_2"]\\ \\
X_1  \ar[rr,"\chi",dashed] \ar[rd,"\eta_1",swap]&& X_2 \ar[ld,"\eta_2"]\\
&B 
\end{tikzcd}
\]
where $X_1, X_2, Y_1, Y_2$ are $\Q$-factorial terminal varieties of dimension $n$, $B$ is a $\Q$-factorial klt variety of dimension $n-1$, $\phi$ is a sequence of log-flips over $B$, and each $\pi_i$ is a divisorial contraction with exceptional divisor $E_i \subset Y_i$ and centre $\Gamma_i = \pi_i(E_i) \subset X_i$.

Then there exists an irreducible hypersurface $\Gamma \subset B$ $($of dimension $n-2)$ such that 
\begin{enumerate}
\item \label{SarkiII:1}
for $i=1,2$, the centre $\Gamma_i = \pi_i(E_i)$ has codimension $2$ in $X_i$, and the restriction $\eta_i|_{\Gamma_i}\colon \Gamma_i \to \Gamma$ is birational.
In particular, for each $i$ we have $\eta_i \circ \pi_i (E_i) = \Gamma$, and the marked conic bundles $(X_1/B,\Gamma)$ and $(X_2/B,\Gamma)$ are equivalent.
\item \label{SarkiII:4}
Let $Y$ be equal to $Y_1$, $Y_2$, or any one of the intermediate varieties in the sequence of log-flips $\phi$. 
Then $E_1\cup E_2$ is a pair of divisors of type \II for $Y/B$.
\item \label{SarkiII:2}
$\Gamma$ is not contained in the discriminant locus of $\eta_1$, or equivalently of $\eta_2$, which means that a general fibre of $\eta_i\colon \eta_i^{-1}(\Gamma)\to \Gamma$ is isomorphic to $\p^1$.
\item \label{SarkiII:3}
At a general point $x \in \Gamma_i$, the fibre of $X_i/B$ through $x$ is transverse to $\Gamma_i$. 
\end{enumerate}
\end{lemma}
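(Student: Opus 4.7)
The plan is to leverage the rank-$r$ fibration machinery of Proposition~\ref{pro:rankrFibrations}, specialised to the rank~$2$ situation coming from the link $\chi$, together with the classification of type~\II links in Lemma~\ref{lem:LinksCB}.

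First I would identify $E_1$ and $E_2$ (or rather their strict transforms under $\phi$) with a pair of type~\II divisors for $Y/B$, for each $Y$ in the chain of log-flips. Since $B$ is $\Q$-factorial and $\chi$ has type~\II, Lemma~\ref{lem:LinksCB}\ref{classification2} guarantees that every such $Y/B$ has no type~\I divisor and exactly one pair of type~\II divisors. Because $E_i$ is contracted over $B$, Proposition~\ref{pro:rankrFibrations}\ref{rankr:0} forces it to be of type~\II on $Y_i$. Proposition~\ref{pro:rankrFibrations}\ref{rankr:1} then furnishes a partner divisor $D$ on $Y_1$; passing along $\phi$ (each intermediate variety is again a rank~$2$ fibration over $B$ by Lemma~\ref{lem:StableUnderMMP}\ref{stab_under_MMP:1}, with the same count of special divisors) and using Lemma~\ref{lem:distinctDivisors} to rule out $\phi_*E_1=E_2$, I would conclude that $E_2$ is the strict transform of $D$. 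This establishes \ref{SarkiII:4}, and simultaneously identifies $\Gamma := \eta(E_1)=\eta(E_2) \subset B$ as an irreducible hypersurface.

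Next I would apply Proposition~\ref{pro:rankrFibrations}\ref{rankr:2} to the pair $E_1\cup E_2$: over a general $p\in \Gamma$, the fibre of $Y/B$ decomposes as $f_1\cup f_2$ with $f_i\subset E_i$, $K_Y\cdot f_i=E_i\cdot f_i=-1$, and $E_i\cdot f_j=1$ for $j\ne i$. Pushing this down through $\pi_i$: since $f_i\subset E_i$ it is contracted to a single point $x_i\in\Gamma_i$, while $f_j\not\subset E_i$ maps birationally onto its image, which must coincide with the full fibre $\eta_i^{-1}(p)$. Hence that fibre is irreducible and isomorphic to $\p^1$, yielding \ref{SarkiII:2}; moreover $\Gamma_i\to\Gamma$ is generically one-to-one between irreducible varieties of the same dimension $n-2$, hence birational, which gives \ref{SarkiII:1} and the codimension count. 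For \ref{SarkiII:3}, at the general $x=x_i\in\Gamma_i$ I would invoke Lemma~\ref{lem:divContToCodim2}\ref{BlowUponU} to describe $\pi_i$ locally as the blow-up of the smooth centre $\Gamma_i$ inside a smooth open of $X_i$; the strict transform of $\eta_i^{-1}(p)$ is the curve $f_j$, which meets $E_i$ in one point with $E_i\cdot f_j=1$, and under the local blow-up identification this transversal intersection with the exceptional divisor translates into $\eta_i^{-1}(p)$ being transverse to $\Gamma_i$ at $x$.

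I expect the main subtlety to be the bookkeeping in the first step: carefully tracking $E_1,D,E_2$ through the whole log-flip sequence and combining Proposition~\ref{pro:rankrFibrations}\ref{rankr:0}\ref{rankr:1} with Lemma~\ref{lem:distinctDivisors} to pin down $\phi_*D=E_2$ on every intermediate rank~$2$ fibration. Once the type~\II pair is correctly identified on every $Y$, the geometric content (fibre decomposition, birationality of $\Gamma_i\to\Gamma$, non-discriminance, and transversality) is read off directly from Proposition~\ref{pro:rankrFibrations}\ref{rankr:2} and the local blow-up description of Lemma~\ref{lem:divContToCodim2}.
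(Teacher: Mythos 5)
Your proposal is correct and follows essentially the same route as the paper's proof: identify $E_1\cup E_2$ as the unique pair of type \II divisors via Lemma~\ref{lem:LinksCB}, Proposition~\ref{pro:rankrFibrations}\ref{rankr:0}--\ref{rankr:1} and Lemma~\ref{lem:distinctDivisors}, then read off the fibre decomposition and intersection numbers from Proposition~\ref{pro:rankrFibrations}\ref{rankr:2}, and deduce birationality of $\Gamma_i\to\Gamma$, the non-discriminance, and the transversality via the local blow-up description of Lemma~\ref{lem:divContToCodim2}. The only difference is cosmetic: you push the fibre $f_1\cup f_2$ down through $\pi_i$, whereas the paper phrases the same counting by pulling a hypothetical extra component back up to $Y_i$ (``more than two components''), which is also the one-line justification you leave implicit for the generic injectivity of $\Gamma_i\to\Gamma$.
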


\begin{proof}
\ref{SarkiII:1} and \ref{SarkiII:4}.
By Lemma~\ref{lem:LinksCB}, $Y_1/B$ admits no divisor of type \I, and exactly one pair of divisors of type \II.
By Lemma~\ref{lem:distinctDivisors} we have $\phi_* E_1 \neq E_2$, so the birational contractions $Y_1 \rat X_1$ and $Y_1 \rat X_2$ contract distinct divisors.
It follows from Proposition~\ref{pro:rankrFibrations} that the pair of divisors of type \II is $E_1 \cup E_2$. 
So by definition $E_1$ and $E_2$ projects to the same hypersurface $\Gamma \subset B$. 
By Proposition~\ref{pro:rankrFibrations}\ref{rankr:2} both finite maps $\Gamma_i\to \Gamma$ are birational, otherwise the fibre in $Y_i$ over a general point of $\Gamma$ would have more than two components.

\ref{SarkiII:2} and \ref{SarkiII:3} follow from Proposition~\ref{pro:rankrFibrations}\ref{rankr:2}.
Indeed if $\Gamma$ was in the discriminant locus of $\eta_1$ then the preimage in $Y_1$ of a general point $p \in B$ would have 3 irreducible components, instead of 2. 
Moreover writing $f_1 \cup f_2$ the fibre through $x$, with $f_i \subseteq E_i$, the fact that the fibre is transverse to $\Gamma_i$ is equivalent to $f_1 \cdot E_2 = f_2 \cdot E_1 = 1$.
\end{proof}

\begin{definition}\label{def:marked_cb_link}
By Lemma~\ref{lem:SarkiIIConic}\ref{SarkiII:1}, to each Sarkisov link of conic bundles of type \II $\chi\colon X_1\rat X_2$, 
we can associate the equivalence class of the marked conic bundle $(X_1/B,\Gamma)$ given in this lemma. 
We define the \emph{marking} of $\chi$ to be $\Gamma\subset B$.
We say that two Sarkisov links of conic bundles of type \II are \emph{equivalent} if their corresponding marked conic bundles are equivalent.
\end{definition}

We also extend the notion of covering gonality (see \S\ref{sec:gonality}) to Sarkisov links of conic bundles of type \II.

\begin{definition}\label{def:Covgonchi}
Let $\chi$ be a Sarkisov of conic bundles of type \II between varieties of dimension $n\ge 3$.
We define $\covgon(\chi)$ to be $\covgon(\Gamma)$, where $\Gamma$ is the marking of $\chi$.
\end{definition}

\begin{remark}
If two Sarkisov links of conic bundles of type \II are equivalent, then their markings are birational  to each other.
In particular the number $\covgon(\chi)$ only depends on the equivalence class of $\chi$.

The above definition makes sense if the varieties $X_i$ have dimension $\ge 2$, but it is not a very good invariant if the dimension is $2$, as the centre is always a point, and there is only one class of marked conic bundles, given by a point in the base of a Hirzebruch surface. 
However, the analogue definition over $\Q$ or over a finite field, instead of over $\C$, is interesting even for surfaces.
\end{remark}

\section{Relations between Sarkisov links}\label{sec:RelSarkisov}

The fact that one can give a definition of Sarkisov links in terms of relative Mori dream spaces of Picard rank 2 as in the previous section was independently observed in \cite[\S 2]{AZ} and \cite[\S 2.3]{LZ17}.
Our next aim is to extend this observation to associate some relations between Sarkisov links to each rank 3 fibration.
First we define elementary relations, and then we relate this notion to the work of A.-S. Kaloghiros about relations in the Sarkisov programme.  

\subsection{Elementary relations}

\begin{definition}\label{def:Tequivalent}
Let $X/B$ and $X'/B'$ be two rank~$r$ fibrations, and $T \rat X$, $T \rat X'$ two birational maps from the same variety $T$.
We say that $X/B$ and $X'/B'$ are \emph{$T$-equivalent} (the birational maps being implicit) if there exist  a pseudo-isomorphism $X \ps X'$ and an isomorphism $B \iso B'$ such that all these maps fit in a commutative diagram:
\[
\begin{tikzcd}[link]
 & T \ar[dl, dashed] \ar[dr, dashed] \\
X \ar[dd] \ar[rr,dotted] &  & X' \ar[dd] \\ \\
B  \ar[rr,"\sim"] & & B'  \\ 
\end{tikzcd}
\]
\end{definition}

One should think of the maps $T \rat X$ and $T \rat X'$ as providing a marking with respect to a prefered model variety $T$.
See \S\ref{sec:graph} for an illustration of this point of view.
In particular, we do not assume $T \rat X$ and $T \rat X'$ to be birational contractions, even if it happens to be the case in the proof of the following lemma. 

\begin{lemma}
\label{lem:2 rank 2}
Let $X_3/B_3$ be a rank~$3$ fibration that factorises through a rank~$1$ fibration $X_1/B_1$.
Then up to $X_3$-equivalence there exist exactly two rank~$2$ fibrations that factorise through $X_1/B_1$, and that are dominated by $X_3/B_3$.
\end{lemma}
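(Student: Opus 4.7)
The plan is a rank-counting argument based on the invariant $k := \rho(X_3/X_1) \ge 0$. The factorisation $X_3 \rat X_1 \to B_1 \to B_3$ together with $\rho(X_3/B_3) = 3$ and $\rho(X_1/B_1) = 1$ gives the identity $3 = k + 1 + \rho(B_1/B_3)$, so $\rho(B_1/B_3) = 2 - k$ and $k \in \{0, 1, 2\}$. For any candidate rank $2$ fibration $Y/B$ as in the statement, the same identity applied to $X_3 \rat Y \rat X_1$ over $B_1 \to B \to B_3$ yields $\rho(Y/X_1) + \rho(B_1/B) = 1$, together with the inequalities $\rho(Y/X_1) \le k$ (since $X_3 \rat Y$ is a birational contraction) and $\rho(B_1/B) \le 2 - k$ (since $B \to B_3$ is a morphism). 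This leaves exactly two possibilities: either $(\rho(Y/X_1), \rho(B_1/B)) = (1, 0)$, which I call \emph{Type A}, or $(0, 1)$, which I call \emph{Type B}.

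Type A forces $B = B_1$ and makes $Y \rat X_1$ the contraction of a single divisor; it exists only when $k \ge 1$ and contributes exactly $k$ candidate $X_3$-equivalence classes, one for each divisor among the $k$ divisors contracted by $X_3 \rat X_1$ that is ``kept'' as the exceptional divisor of $Y \rat X_1$. Type B forces $Y \ps X_1$ over $B$, and $B$ is obtained by contracting one of the two extremal rays of $\NE(B_1/B_3)$; it exists only when $k \le 1$ and contributes $2 - k$ classes. In every case the total is $k + (2-k) = 2$.

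The construction of each candidate proceeds as follows. For Type A, one runs a $D$-MMP on $X_3$ over $B_1$ for a suitable effective divisor $D$ chosen so that the kept divisor is not contracted, invoking Lemma~\ref{lem:anyMMP} for existence and Lemma~\ref{lem:StableUnderMMP} to identify each output as a rank $2$ fibration. For Type B, one runs a two-rays game on $B_1/B_3$ (which is a Mori dream space by Lemma~\ref{lem:B'_over_B_is_Mds} applied via $X_1 \to B_1 \to B_3$) to produce the two intermediate bases $B$, and then a rank $2$ fibration $Y/B$ with $Y \ps X_1$ is obtained from the lift of Lemma~\ref{lem:2raysOverFlip}. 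Uniqueness within each type follows from Lemma~\ref{lem:corti2.7}: two Type A candidates with the same kept divisor give pseudo-isomorphic divisorial contractions onto $X_1$, and two Type B candidates with the same $B$ give pseudo-isomorphic Mori fibre spaces to $B$. The main technical obstacle is the verification that each constructed $Y/B$ satisfies all five conditions of Definition~\ref{def:rankFibration}, in particular the $\eta$-bigness of $-K_Y$ and the preservation of $\Q$-factoriality and terminality under further MMPs; these should follow by tracking Lemma~\ref{lem:StableUnderMMP} through the MMPs used in the construction, but the bookkeeping is most delicate in the case $k = 0$, where $Y \rat X_1$ is a pseudo-isomorphism and the construction relies on a two-rays game on the base $B_1/B_3$ rather than directly on $X_3$.
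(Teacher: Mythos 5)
Your proposal is correct and takes essentially the same route as the paper: your trichotomy on $k=\rho(X_3/X_1)$ is exactly the paper's case distinction on $\rho(B_1/B_3)=2-k$, with the same counting and the same tools for existence and uniqueness (two-rays games, Lemma~\ref{lem:2raysOverFlip}, Lemma~\ref{lem:StableUnderMMP}, Lemma~\ref{lem:corti2.7}). The only presentational difference is that where you invoke a ``suitable $D$-MMP over $B_1$'' for your Type A candidates, the paper makes this concrete by first flipping $X_3$ into a variety $X_3'$ with a morphism $X_3'\to X_1$ (via repeated use of Lemma~\ref{lem:2raysOverFlip}) and then playing the two-rays game of $X_3'/X_1$, whose two outputs are automatically the divisorial contractions you want and contract distinct divisors by Lemma~\ref{lem:2rays2Divisors}.
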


\begin{proof}
We distinguish three cases according to $\rho(B_1/B_3)$.

If $\rho(B_1/B_3) = 2$, then $B_1$ -- being the base of a klt Mori fibre space -- is $\Q$-factorial klt (Proposition~\ref{pro:sing_of_B}), and $B_1/B_3$ is a Mori dream space by Lemma~\ref{lem:B'_over_B_is_Mds}.
The associated two-rays game yields exactly two non-isomorphic $B_2, B_2'$ with $\rho(B_2/B_3) = \rho(B_2'/B_3) = 1$.
Then Lemma~\ref{lem:2raysOverFlip} provides sequences of log-flips over $B_3$, $X_1 \ps X_2$ and $X_1 \ps X_2'$, such that $X_2/B_2$, $X_2'/B_2'$ are the expected rank 2 fibrations. 

If $\rho(B_1/B_3) = 1$, then the base $B_2$ of any of the expected rank 2 fibrations must be equal to $B_1$ or $B_3$, because by assumption we have morphisms $B_1 \to B_2 \to B_3$.
By Lemma~\ref{lem:StableUnderMMP}\ref{stab_under_MMP:1} $X_1/B_3$ is the first expected rank 2 fibration, and up to equivalence it is the only one with base $B_3$, because any rank 2 fibration $X_2/B_3$ satisfies $\rho(X_2) = \rho(X_1)$, so the birational contraction $X_2 \rat X_1$ is a pseudo-isomorphism. 
Let $D$ be the pull-back on $X_3$ of an ample divisor on $X_1$. The birational contraction $X_3 \rat X_1$ is a $D$-MMP over $B_3$, and as $\rho(X_3)-\rho(X_1)=1$, it decomposes as a sequence of $D$-flips $X_3 \ps X_3'$, a divisorial contraction $X_3' \to X_1'$, and a sequence of $D$-flips $X_1' \ps X_1$. 
Then Lemma~\ref{lem:2raysOverFlip} provides a sequence of log-flips over~$B_3$, $X_3' \ps X_2$, such that $X_2\to X_1$ is a divisorial contraction, and by Lemma~\ref{lem:StableUnderMMP} $X_2/B_1$ is the second expected  rank 2 fibrations.
Any other rank 2 fibration $X_2'/B_1$ satisfying the lemma is equivalent to $X_2/B_1$, because as before the condition on Picard numbers forces $X_2 \rat X_2'$ to be a pseudo-isomorphism. 

If $\rho(B_1/B_3) = 0$, then $\rho(X_3) - \rho(X_1) = 2$, and $B_1 = B_3$ must be the base of any of the expected rank 2 fibrations.
By applying several times Lemma~\ref{lem:2raysOverFlip} we construct a sequence of log-flips over $B_3$, $X_3 \ps X_3'$, such that $X_3' \to X_1$ is a morphism.
The associated two-rays game yields exactly two divisorial contractions $X_2 \to X_1$ and $X_2' \to X_1$. 
Moreover $X_2$ and $X_2'$ are not pseudo-isomorphic by Lemma~\ref{lem:2rays2Divisors}, and are uniquely determined up to equivalence by Lemma~\ref{lem:corti2.7}\ref{corti2.7:2}.
Then $X_2/B_1$ and $X_2'/B_1$ are the expected rank 2 fibrations.
\end{proof}

\begin{proposition} \label{pro:from T3}
Let $T/B$ be a rank~$3$ fibration.
Then there are only finitely many Sarkisov links $\chi_i$ dominated by $T/B$, and they fit in a relation
\[
\chi_t \circ \dots \circ \chi_1 = \id.
\]
\end{proposition}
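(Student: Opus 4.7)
The plan is to organise the rank~$1$ and rank~$2$ fibrations dominated by $T/B$ as the vertices and edges of a finite $2$-regular graph, prove that this graph is a single cycle, and then telescope along the cycle to produce the relation. Since $T/B$ is a Mori dream space by \ref{fib:dream}, Lemma~\ref{lem:anyMMP} yields only finitely many outputs of MMPs from $T$ over $B$; in particular, there are only finitely many rank~$1$ fibrations $X_i/B_i$ dominated by $T/B$ up to $T$-equivalence. Lemma~\ref{lem:2 rank 2} attaches exactly two rank~$2$ fibrations dominated by $T/B$ to each $X_i/B_i$, while Lemma~\ref{lem:rank 2 and link} shows each rank~$2$ fibration sits above exactly two rank~$1$ fibrations. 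Hence there are finitely many Sarkisov links dominated by $T/B$, and the graph $\Gl$ with these vertices and edges is finite and $2$-regular, and therefore a disjoint union of cycles.

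The heart of the argument is to show that $\Gl$ is a single cycle. I would do this by analysing the Mori chamber decomposition of $\overline{\Mov}(T/B)\subseteq N^1(T/B)_{\R}\cong \R^3$: its chambers are the nef cones of the various models pseudo-isomorphic to $T$ over $B$, and two adjacent chambers differ by a log-flop. A generic $2$-dimensional cross-section displays $\overline{\Mov}(T/B)$ as a convex polygon tiled by cross-sections of the Mori chambers, whose topological boundary is a single closed curve. The boundary rays and edges of the Mori chambers lying on the boundary of $\overline{\Mov}(T/B)$ encode extremal contractions other than flops, and Lemmas~\ref{lem:StableUnderMMP}, \ref{lem:2raysOverFlip} and~\ref{lem:2rays2Divisors}, together with the singularity conditions \ref{fib:singX}--\ref{fib:singB}, let one identify which of them correspond to Mori fibre space outputs versus divisorial outputs. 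Matching this picture to $\Gl$ realises it as the incidence graph of the boundary of a topological disk, which is a single cycle. This step is the main obstacle: it requires care in distinguishing the boundary strata and ruling out spurious divisorial contributions that could break the cycle structure.

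Finally, writing the cycle as $X_1,X_2,\dots,X_t,X_1$ with Sarkisov links $\chi_i\colon X_i\rat X_{i+1}$, each $\chi_i$ coincides up to pseudo-isomorphism of $X_i$ and $X_{i+1}$ with $\mu_{i+1}\circ \mu_i^{-1}$, where $\mu_j\colon T\rat X_j$ is the birational contraction provided by the factorisation of $T/B$ through $X_j/B_j$. Composing along the cycle telescopes to $\mu_1\circ \mu_1^{-1}=\id$, yielding the desired relation $\chi_t\circ \cdots \circ \chi_1=\id$.
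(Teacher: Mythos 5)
Your opening and closing steps coincide with the paper's own argument: finiteness follows from Lemma~\ref{lem:anyMMP}, the two ``degree two'' statements are exactly Lemmas~\ref{lem:rank 2 and link} and~\ref{lem:2 rank 2}, and once the rank~$1$ and rank~$2$ fibrations are arranged in a cycle, the relation is obtained by telescoping the birational contractions $\mu_j\colon T\rat X_j$, as you do. The paper's proof stops essentially there: it deduces directly from these two lemmas that the bicoloured graph is circular. You are right to point out that finiteness plus $2$-regularity only gives a disjoint union of cycles a priori, so the step you single out as ``the heart'' is indeed where something must be said; but the argument you propose for it does not work as stated.

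The flaw is the identification of the graph $\Gl$ with the incidence graph of the boundary of a cross-section of $\widebar{\Mov}(T/B)$. The chambers of $\widebar{\Mov}(T/B)$ only account for the models \emph{pseudo-isomorphic} to $T$ over $B$; but many vertices of $\Gl$ are fibrations reached only after divisorial contractions, and these correspond to chambers and faces of the effective cone lying \emph{outside} the movable cone. Concretely, a rank~$2$ fibration $X'/B$ with $X'$ obtained from $T$ by one divisorial contraction, or a rank~$1$ fibration obtained by contracting two divisors, are genuine vertices of $\Gl$ (in Example~\ref{ex:with_flip} the Mori fibre space $\p^1\times\p^2/\p^2$ is dominated by the rank~$3$ fibration $T/\p^2$ precisely in this way), yet they are invisible on $\partial\widebar{\Mov}(T/B)$; the ``spurious divisorial contributions'' you propose to rule out are in fact vertices you must keep. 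The correct polyhedral picture is the non-big boundary $\partial^+$ of a suitable cone of (adjoint) classes, together with a dictionary between its inner codimension~$r$ faces and rank~$r$ fibrations; this is exactly what the paper establishes later, with substantial work, in \S\ref{sec:geography} (Proposition~\ref{pro:boundary} and Corollary~\ref{cor:face_codim}), and it is not available at the point where Proposition~\ref{pro:from T3} is proved, so it cannot be invoked as a quick fix here. As it stands, your middle step therefore has a genuine gap: either one accepts the purely graph-theoretic conclusion as the paper states it, or one develops the $\partial^+$ correspondence in full; the cross-section of the movable cone alone does not capture the graph.
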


\begin{proof}
Since $T/B$ is a Mori dream space, by Lemma~\ref{lem:anyMMP} there are only finitely many rank~$1$ or 2 fibrations dominated by $T/B$.
We construct a bicolored graph $\Gamma$ as follows. 
Vertices are rank~$1$ or 2 fibrations dominated by $T/B$, up to $T$-equivalence, and we put an edge between $X_2/B_2$ and $X_1/B_1$ if $X_2/B_2$ is a rank 2 fibration that factorises through the rank~$1$ fibration $X_1/B_1$.
By construction, two vertices of rank~$1$ of $\Gamma$ are at distance 2 if and only if there is a Sarkisov link between them.
Then by Lemmas~\ref{lem:rank 2 and link} and \ref{lem:2 rank 2} we obtain that $\Gamma$ is a circular graph, giving the expected relation.
\end{proof}

\begin{definition}\label{def:elementary relation}
In the situation of Proposition~\ref{pro:from T3}, we say that 
\[
\chi_t \circ \dots \circ \chi_1 = \id
\]
is an \emph{elementary relation} between Sarkisov links, coming from the rank 3 fibration $T/B$.
Observe that the elementary relation is uniquely defined by $T/B$, up to taking the inverse, cyclic permutations and insertion of isomorphisms. 
\end{definition}

\subsection{Geography of ample models}
\label{sec:geography}

In this section, we recall some preliminary material from \cite{BCHM,HMcK,KKL}.
The aim is to explain the construction of a polyhedral complex attached with the choice of some ample divisors on a smooth variety, and to state some properties (Proposition~\ref{pro:coneC} and Lemma \ref{lem:faces_factory}) that we will use in the next section to understand relations between Sarkisov links.

\begin{definition}[{\cite[Definition 3.6.5]{BCHM}}] \label{def:amplesemiample}  
Let $Z$ be a terminal $\Q$-factorial variety, $D$ be a $\R$-divisor on $Z$ and $\phi\colon Z\rat Y$ a dominant rational map to a normal variety $Y$. 
We take a resolution 
\[
\begin{tikzcd}[link]
& W \ar[dl,"p",swap] \ar[dr,"q"]\\
Z \ar[rr, dashed,"\phi"]& &Y
\end{tikzcd}
\]
where $W$ is smooth, $p$ is a birational morphism and $q$ a morphism with connected fibres.
We say that \emph{$\phi$ is an ample model of $D$} if there exists an ample divisor $H$ on $Y$ such that $p^*D$ is linearly equivalent 
to $q^*H+E$  where $E\ge 0$, and if for each effective $\R$-divisor $R$ linearly equivalent to $p^*D$ we have $R\ge E$.

If $\phi$ is a birational contraction, we say that $\phi$ is \emph{a semiample model of $D$} if $H=\phi_*D$ is semiample (hence in particular $\R$-Cartier) and if $p^*D=q^*H+E$ where $E\ge 0$ is $q$-exceptional. 
\end{definition}

We recall some properties related to these notions.
The first lemma gives some direct consequences of the definition of a semiample model, we leave the proof to the reader (hint: use the Negativity Lemma).

\begin{lemma} \label{lem:direct_properties}
 Let $\phi \colon Z \rat Y$ be a birational contraction between $\Q$-factorial varieties. 
\begin{enumerate}
\item \label{comp:model} 
For any $D_Y\in \Nef(Y)$, $\phi$ is a semiample model of $\phi^* D_Y$.

\item \label{comp:combination}
If $\{D_i\}$ is a finite collection of classes in $N^1(Z)$ such that $\phi$ is a semiample model of each, then $\phi$ is a semiample model for any convex combination of the $D_i$.

\item  \label{comp:composition}
If $\phi'\colon Y \rat Y'$ is a birational contraction to a $\Q$-factorial varietiy, and $\phi' \circ \phi$ is the ample model of a divisor $D$ on $Z$, then $\phi'$ is the ample model of $\phi_* D$.
\end{enumerate}
\end{lemma}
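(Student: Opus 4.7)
The plan is to prove all three statements by combining the Negativity Lemma with the following key observation: since $\phi\colon Z\rat Y$ is a birational contraction, every prime divisor on the common resolution $W$ that is $p$-exceptional is automatically $q$-exceptional. Indeed, if a $p$-exceptional prime $F\subset W$ had $q(F)$ equal to a divisor in $Y$, then $\phi^{-1}$ would contract the divisor $q(F)$, contradicting our hypothesis. I will also use repeatedly the identities $\phi^*D_Y = p_*q^*D_Y$ and, for a birational contraction between $\Q$-factorial varieties, $\phi_*D = q_*p^*D$.

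\emph{Proof of \eqref{comp:model}.} Set $E := p^*(\phi^*D_Y) - q^*D_Y$ on $W$. Applying $p_*$ gives $p_*E = 0$, so $E$ is $p$-exceptional, hence also $q$-exceptional by the observation above. For every $p$-contracted curve $C\subset W$, one has $-E\cdot C = q^*D_Y\cdot C = D_Y\cdot q_*C \geq 0$ by nefness of $D_Y$, so $-E$ is $p$-nef. The Negativity Lemma then yields $E\geq 0$. Finally $\phi_*(\phi^*D_Y) = D_Y$ is semiample, so $\phi$ is a semiample model of $\phi^*D_Y$.

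\emph{Proof of \eqref{comp:combination}.} Writing $p^*D_i = q^*H_i + E_i$ with $H_i = \phi_*D_i$ semiample and $E_i\geq 0$ effective $q$-exceptional for each $i$, any non-negative linear combination $D = \sum t_iD_i$ satisfies $p^*D = q^*\bigl(\sum t_iH_i\bigr) + \sum t_iE_i$. The divisor $\sum t_iE_i$ is effective and $q$-exceptional, while $\phi_*D = \sum t_iH_i$ is a non-negative combination of semiample divisors, hence semiample.

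\emph{Proof of \eqref{comp:composition}.} I will choose a common resolution $W''$ dominating both $W$ via $r\colon W'' \to W$ and a resolution $W'$ of $\phi'$ via $s\colon W'' \to W'$, arranged so that $q\circ r = p'\circ s$; then $W''$ resolves $\phi'\circ\phi$ with $p'' := p\circ r$ and $q'' := q'\circ s$. The hypothesis gives an ample $H'$ on $Y'$ with $p''^*D \sim q''^*H' + E''$, where $E''\geq 0$ is minimal among effective representatives. Since $\phi$ is a birational contraction and $Y$ is $\Q$-factorial, the divisor $F := p^*D - q^*\phi_*D$ satisfies $q_*F = \phi_*D - \phi_*D = 0$ and is therefore $q$-exceptional. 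Pulling this back via $r$ and using $q\circ r = p'\circ s$ yields $r^*p^*D = s^*(p'^*\phi_*D) + r^*F$, so that $s^*(p'^*\phi_*D - q'^*H') \sim E'' - r^*F$; pushing down by $s_*$ gives the decomposition $p'^*\phi_*D \sim q'^*H' + E'$ with $E' := s_*E'' - s_*r^*F$. The delicate step will be the simultaneous verification that $E'$ is effective and minimal among effective representatives of $p'^*\phi_*D$: given such a representative $R'$ on $W'$, one lifts it to $W''$ and invokes the minimality of $E''$, but one needs the Negativity Lemma applied to $s$ to account for those components of $r^*F$ that are $s$-exceptional versus those whose $s$-image is a $p'$-exceptional divisor in $W'$. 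This careful bookkeeping of exceptional divisors across all three resolutions is where the main technical work lies.
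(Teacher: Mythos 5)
Since the paper leaves this lemma to the reader (with only the hint ``use the Negativity Lemma''), there is no written argument to compare against; your parts (1) and (2) are correct and clearly implement the intended sketch: the key observation that for a birational contraction every $p$-exceptional prime divisor of the common resolution is $q$-exceptional, the Negativity Lemma applied to $p$ to get $E\ge 0$ in (1), and formal additivity in (2). One caveat you should flag rather than use silently: in (1) you conclude by saying that $\phi_*\phi^*D_Y=D_Y$ ``is semiample'', whereas the hypothesis only makes $D_Y$ nef; this tension is already in the statement of the lemma (the definition of a semiample model requires $\phi_*D$ to be semiample), so it is not a defect you introduced, but your write-up should not assert nef~$\Rightarrow$~semiample.

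The genuine gap is in part (3): you derive $p'^*\phi_*D\sim q'^*H'+E'$ with $E'=s_*E''-s_*r^*F$, but you prove neither that $E'$ is effective nor that it is minimal among effective representatives -- you explicitly defer this, and the route you sketch would fail as stated. The obstruction is precisely that $F=p^*D-q^*\phi_*D$ is only $q$-exceptional, not effective; so for an effective $R'\sim p'^*\phi_*D$ the divisor $s^*R'+r^*F\sim p''^*D$ need not be effective, and the minimality of $E''$ (which quantifies only over \emph{effective} representatives of $p''^*D$) cannot be invoked. The clean way to finish, using only the paper's tools, is to avoid minimality altogether: by Lemma~\ref{lem:semiampleample}\ref{birsemiampleample} the hypothesis makes $\phi'\circ\phi$ a semiample model of $D$ with $H'=(\phi'\circ\phi)_*D$ ample, so one may take $E''\ge 0$ $q''$-exceptional in $p''^*D= q''^*H'+E''$. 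Writing $b:=q\circ r=p'\circ s\colon W''\to Y$, one has $p''^*D=b^*\phi_*D+r^*F$, hence $b^*\phi_*D\sim q''^*H'+B$ with $B:=E''-r^*F$. For every $b$-contracted curve $C$ one gets $B\cdot C=-H'\cdot q''_*C\le 0$, so $-B$ is $b$-nef, and $b_*B=b_*E''\ge 0$ because $b_*r^*F=q_*F=0$; the Negativity Lemma applied to $b$ (not to $s$) gives $B\ge 0$. Moreover every component of $B$ is $q''$-exceptional: those of $r^*F$ are $b$-exceptional, hence $q''$-exceptional since $\phi'$ is a birational contraction. Thus $(W'',b,q'')$ exhibits $\phi'$ as a semiample model of $\phi_*D$ with $\phi'_*\phi_*D=H'$ ample, and Lemma~\ref{lem:semiampleample}\ref{birsemiampleample} then gives that $\phi'$ is the ample model of $\phi_*D$, with no bookkeeping of minimal effective representatives across the three resolutions.
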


\begin{lemma}[{\cite[Lemma 3.6.6]{BCHM}}] \label{lem:semiampleample} 
Let $Z$ be a terminal $\Q$-factorial variety and $D$ a $\R$-divisor on $Z$.
\begin{enumerate}
\item \label{uniqueAmple}
If $\phi_i\colon Z\rat Y_i$, $i=1,2$, are two ample models of $D$, there exists an isomorphism $\theta\colon Y_1\iso Y_2$ such that $\phi_2=\theta\circ \phi_1$.
\item\label{SemiampleFactorisation}
If a birational map $\psi\colon Z\rat X$ is a semiample model of $D$, the ample model $\phi\colon Z\rat Y$ exists and $\phi=\theta\circ \psi$ for some morphism $\theta\colon X\to Y$. 
Moreover, $\psi_* D =\theta^* H$, where $H$ is the ample divisor $H=\phi_* D$.
\item\label{birsemiampleample}
A birational map $\phi\colon Z\rat Y$ is the ample model of $D$ if and only if it is a semiample model of $D$ and $\phi_* D$ is ample.
\end{enumerate}
\end{lemma}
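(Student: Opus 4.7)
The three statements of this lemma are closely intertwined, so the plan is to prove them in the order (1), (2), (3), using the minimality property built into the definition of an ample model together with the Negativity Lemma.

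For part \ref{uniqueAmple}, the strategy is to resolve both ample models simultaneously by a single smooth $W$ with morphisms $p\colon W\to Z$, $q_i\colon W\to Y_i$, yielding $p^*D\sim q_i^*H_i+E_i$ with $H_i$ ample and $E_i\ge0$. The defining minimality clause (every effective $R\sim p^*D$ satisfies $R\ge E_i$) applied to $R=q_j^*H_j+E_j$ for $j\ne i$ forces $E_1=E_2$, hence $q_1^*H_1\sim q_2^*H_2$. Since $q_i$ has connected fibres and $Y_i$ is normal, projection formula gives $H^0(Y_i,mH_i)\cong H^0(W,q_i^*(mH_i))$ for all $m$, so the graded rings $\bigoplus_m H^0(Y_i,mH_i)$ coincide under the linear equivalence; taking $\Proj$ (using ampleness of $H_i$) delivers the isomorphism $\theta\colon Y_1\iso Y_2$ compatible with the $\phi_i$.

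For part \ref{SemiampleFactorisation}, let $\psi\colon Z\rat X$ be a semiample model with $H':=\psi_*D$ semiample. Since $|mH'|$ is base-point free for $m\gg0$, it defines a morphism $\theta\colon X\to Y$ with $H'=\theta^*H$ for some ample $H$ on $Y$, where $Y=\Proj\bigoplus_m H^0(X,mH')$. I claim $\phi:=\theta\circ\psi$ is an ample model. On a common resolution $p,q$ of $\psi$, the semiample-model hypothesis reads $p^*D=q^*H'+E=(\theta\circ q)^*H+E$ with $E\ge0$ and $q$-exceptional; so only the minimality has to be verified. Given any effective $R\sim p^*D$, write $R-E\sim q^*H'$ and decompose $R-E=P-N$ into its positive and negative parts with disjoint support. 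Since $q_*(R-E)=q_*R\ge0$ (as $E$ is $q$-exceptional), any non-$q$-exceptional component of $N$ would force a negative coefficient in $q_*(R-E)$, so $N$ is $q$-exceptional; but $P-N\sim q^*H'$ is numerically $q$-trivial and $-N$ is $q$-nef on its support, so the Negativity Lemma forces $N=0$. Hence $R\ge E$, proving $\phi$ is the ample model, and by construction $\psi_*D=\theta^*H$.

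For part \ref{birsemiampleample}, the ``$\Leftarrow$'' direction is immediate from part \ref{SemiampleFactorisation}: apply it to $\psi=\phi$; since $H'=\phi_*D$ is already ample, the associated $\theta$ is the identity, hence $\phi$ itself is the ample model. For ``$\Rightarrow$'', suppose $\phi\colon Z\rat Y$ is birational and the ample model of $D$, with $p^*D\sim q^*H+E$ on a resolution, $H$ ample and $E\ge0$ minimal. If some component $E_0$ of $E$ were not $q$-exceptional, then ampleness of $H$ would let us choose an effective $H''\sim H$ whose support avoids $q(E_0)$; the effective divisor $q^*H''+(E-c\,E_0)$, where $c$ is the coefficient of $E_0$ in $E$, would be linearly equivalent to $p^*D$ but have strictly smaller coefficient along $E_0$ than $E$, contradicting the minimality of $E$. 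Therefore $E$ is $q$-exceptional, $\phi_*D=q_*(q^*H+E)=H$ is ample, and $\phi$ is a semiample model of $D$.

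The main technical obstacle is the minimality check in part \ref{SemiampleFactorisation}: turning the mere $q$-exceptionality and $q$-triviality of $R-E$ into genuine effectivity requires the Negativity Lemma applied to the negative part of $R-E$, and one must invoke the correct version allowing $q$ to be only birational over the image (rather than a proper contraction of a Mori fibration). All other steps are essentially linear algebra on divisor classes or standard $\Proj$-reconstruction.
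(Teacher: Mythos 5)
The paper itself gives no argument for this lemma: it is quoted directly from \cite[Lemma 3.6.6]{BCHM}, so your proposal has to stand on its own, and it has a genuine gap in the forward implication of \ref{birsemiampleample}. The divisor $q^*H''+(E-c\,E_0)$ that you use to contradict minimality is linearly equivalent to $p^*D-cE_0$, \emph{not} to $p^*D$, so no contradiction arises: choosing an effective $H''\sim_\R H$ whose support avoids $q(E_0)$ only produces the effective divisor $q^*H''+E\sim_\R p^*D$, whose multiplicity along $E_0$ is still exactly $c$, which is perfectly compatible with the minimality of $E$. Showing that $E$ is $q$-exceptional is precisely where the work lies; one genuinely has to produce an effective divisor in the class of $p^*D$ with multiplicity $<c$ along $E_0$, for instance by exhibiting, for $m\gg 0$, a member of $|m\,q^*H+E_0|$ not containing $E_0$ (using that $q_*\mathcal{O}_W(E_0)$ strictly contains $\mathcal{O}_Y$ at the generic point of the divisor $q(E_0)$, together with vanishing for large multiples of the ample $H$). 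Moreover, to conclude that $\phi$ is a semiample model you must also prove that a birational ample model is a birational contraction (Definition~\ref{def:amplesemiample} only defines semiample models for birational contractions), and that the equality $p^*D=q^*\phi_*D+E$ demanded by that definition, not merely an $\R$-linear equivalence, holds; the latter is easy once $E$ is $q$-exceptional (a principal divisor on $W$ is the pullback of a principal divisor on $Y$), but the former is nowhere addressed in your proposal.

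Parts \ref{uniqueAmple} and \ref{SemiampleFactorisation} are essentially sound, with two repairable slips. In \ref{uniqueAmple}, $R=q_j^*H_j+E_j$ need not be effective ($q_j^*H_j$ is only the pullback of an ample class), so it cannot be fed to the minimality clause as it stands; instead, for each prime divisor $F$ on $W$ choose an effective $H_j'\sim_\R H_j$ with $q_j(F)\not\subset\operatorname{Supp}H_j'$ and compare multiplicities along $F$, which gives $E_1=E_2$; also, the $\Proj$ reconstruction is delicate for $\R$-divisors, and it is cleaner to observe that $q_1$ and $q_2$ contract exactly the curves $C$ with $(p^*D-E)\cdot C=0$ and apply rigidity to get $\theta$. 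In \ref{SemiampleFactorisation}, your sign is backwards: what your computation gives is $N\cdot C=P\cdot C\ge 0$ for $q$-contracted curves $C\subset\operatorname{Supp}N$, and it is this, via the covering-curve form of the negativity lemma, that forces $N=0$; in fact no negativity lemma is needed at all, since the relation $R-E\sim_\R q^*H'$ is realized by a principal divisor, which (as $q$ is birational) is pulled back from $X$, so $R-E=q^*(q_*R)$ with $q_*R\ge 0$, whence $R\ge E$ directly.
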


Note that composing with an isomorphism of the target does not change the notion of ample or semiample model, so it is natural to say that two ample or semiample models $\phi_1\colon Z\rat Y_1$, $\phi_2\colon Z\rat Y_2$ are equivalent if there is an isomorphism $\theta\colon Y_1\iso Y_2$ such that $\phi_2=\theta\circ \phi_1$. 
Then Lemma \ref{lem:semiampleample}\ref{uniqueAmple} says that up to equivalence, if an ample model exists then it is unique.
This justifies that we can speak of \emph{the} ample model of a divisor $D$.

\begin{definition}
We say that two divisors $D$ and $D'$ are \emph{Mori equivalent} if they have the same ample model.
\end{definition}

\begin{remark}
\label{rem:Proj}
For a $\Q$-divisor, the ample model of $D$, if it factorises through a semiample model, is the rational map $\phi_D$ associated with the linear system $\lvert mD\rvert $ for $m \gg 0$, whose image is $Z_D= \Proj (\bigoplus_{m} H^0(Z,mD))$, where the sum is over all positive integers $m$ such that $mD$ is Cartier (see \cite[Remark 2.4\textup{$(ii)$}]{KKL}). 
It does exist if the ring $\bigoplus_{m} H^0(Z,mD)$ is finitely generated, which is for instance true if $D=K_Z+A$ for some ample $\Q$-divisor $A$ (follows from \cite[Corollary 1.1.2]{BCHM}).
\end{remark}

\begin{setup} \label{setup:coneC1}
Let $Z$ be a smooth variety with $K_Z$ not pseudo-effective and let $A_1,\ldots,A_s$ be ample $\Q$-divisors that generate the $\R$-vector space $N^1(Z)$.
Assume that there exist ample effective $\Q$-divisors $A, A_1', \dots, A_s'$ such that for each $i$, $A_i = A + A_i'$.
Define
\begin{multline*}
\Cl= \Bigl\{D\in \Div(Z)_\R \Bigm| D=a_0  K_Z + \sum_{i=1}^s a_i A_i, \\ a_0,\ldots,a_s\ge 0 \text{ and }D \text{ is pseudo-effective}\Bigr\}.
\end{multline*}
Then every element of $\Cl$ has an ample model, and the Mori equivalence classes give a partition \[\Cl=\coprod_{i\in I} \Al_i.\]
For each $i \in I$ we denote by $\phi_i\colon Z\rat Z_i$ the common ample model of all $D \in \Al_i$. 
\end{setup}

Let $V_\Q$ be a $\Q$-vector space, and $V_\R = V_\Q \otimes \R$ the associated real vector space.
Recall that a \textit{rational polytope} in $V_\R$ is the convex hull of finitely many points lying in $V_\Q$. 
In particular, it is convex and compact.

\begin{proposition} \label{pro:polyhedral}
Assume Set-Up~$\ref{setup:coneC1}$.
Then the index set $I$ is finite, the set $\Cl$ is a cone over a rational polytope, and each $\Al_i$ is a finite union of relative interiors of cones over rational polytopes.   
\end{proposition}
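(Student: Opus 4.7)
The plan is to reduce Proposition \ref{pro:polyhedral} to the finiteness and polyhedrality results for ample models in the klt adjoint setting established in \cite{BCHM,KKL}, exploiting crucially the hypothesis that the $A_i$ dominate a common ample $A$.

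First I would rewrite each $D = a_0 K_Z + \sum_i a_i A_i$ in $\Cl$ as an adjoint-type divisor. When $a_0 > 0$, rescaling yields $D/a_0 = K_Z + \Delta$ with $\Delta = \sum (a_i/a_0) A_i$ ample; since $Z$ is smooth, $(Z,\Delta)$ is klt, and since $D$ is pseudo-effective so is $K_Z+\Delta$. By \cite[Corollary 1.1.2]{BCHM} the ring $R(Z, K_Z+\Delta)$ is finitely generated, so the ample model of $D$ exists via Remark \ref{rem:Proj}. On the boundary face $\{a_0=0\}$ the divisor $D = \sum a_i A_i$ is nef, and in fact ample whenever nonzero, so its ample model is the identity. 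This confirms that the partition $\Cl = \coprod_i \Al_i$ is well defined.

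Next I would invoke BCHM-type finiteness to prove the polyhedrality of the decomposition. I would fix a rational affine slice $V$ transverse to the rays of the cone spanned by $K_Z, A_1, \dots, A_s$ and consider the compact set $P = V \cap \Cl$. The hypothesis $A_i = A + A_i'$ with $A$ ample effective forces every adjoint boundary arising in $P$ to satisfy the uniform ampleness assumption of \cite[Corollary 1.1.5]{BCHM} (after normalisation by $a_0$), which guarantees that only finitely many ample models $\phi_i\colon Z \rat Z_i$ occur; this gives $|I|<\infty$. The further claim that each Mori equivalence class $\Al_i \cap V$ is a finite union of relative interiors of rational polytopes is a standard consequence of the explicit polyhedral Mori chamber structure developed in \cite{KKL} (compare the proofs of \cite[Corollaries 4.4 and 5.7]{KKL} already cited in the paper). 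Passing from the affine slice back to the cone, each $\Al_i$ is then a finite union of relative interiors of rational polyhedral cones, and $\Cl$, being the closure of the union of finitely many such cones, is itself a cone over a rational polytope.

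The main obstacle will be aligning the precise hypotheses of \cite[Corollary 1.1.5]{BCHM} with Set-Up \ref{setup:coneC1}: specifically, one must verify that the uniform lower bound on the boundary (the ``$\Delta \ge A$'' type condition ensuring boundedness of the klt pairs considered) is exactly what the decomposition $A_i = A + A_i'$ provides after rescaling, and that the boundary face $\{a_0 = 0\}$ of $\Cl$ — where one leaves the klt adjoint regime — fits into the global picture; there the ample model is the identity, contributing a single chamber whose closure meets the adjoint chambers consistently, so finiteness and polyhedrality are preserved across this transition.
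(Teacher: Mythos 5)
Your overall strategy---using the decomposition $A_i = A + A_i'$ to place a normalised slice of $\Cl$ into the uniform ``boundary dominates an ample divisor'' framework of \cite{BCHM}, importing finiteness and polyhedrality from the literature, and treating the non-adjoint part of $\Cl$ separately as ample---is the same reduction the paper performs, which simply quotes \cite[Theorem 3.3]{HMcK} (itself a packaging of the BCHM finiteness results you invoke). However, two of your steps are not correct as written. First, the assertion ``$\Delta=\sum(a_i/a_0)A_i$ is ample and, since $Z$ is smooth, $(Z,\Delta)$ is klt'' is false: klt-ness constrains the multiplicities of the boundary, not just its positivity, and for large $a_i/a_0$ (already for $\Delta=2H$ with $H$ a prime ample divisor) the pair $(Z,\Delta)$ is not even log canonical. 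One must first replace the $A_i$ by $\Q$-linearly equivalent divisors with simple normal crossing support and very small coefficients (averages of general members, as in Remark~\ref{rem:klt}), and note that ample models depend only on the ($\R$-)linear equivalence class; this is precisely the ``awkward issue'' the paper's proof flags and defers to Set-Up~\ref{setup:coneC}.

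Second, and more seriously, the locus where one leaves the klt/lc adjoint regime is not just the face $\{a_0=0\}$: on a normalised slice, for any fixed choice of representatives with coefficients at most $\delta$, the boundary $\sum(a_i/a_0)A_i$ violates log canonicity as soon as $a_0$ is small but positive, so all divisors of $\Cl$ with $0<a_0\ll\sum a_i$ escape the hypotheses of \cite[Corollary 1.1.5]{BCHM} (whose set $\mathcal{L}_A(V)$ consists of log canonical boundaries) and of \cite[Theorem 3.3]{HMcK} alike. Your argument only treats the exact face $\{a_0=0\}$, leaving this whole slab of $\Cl$ without finiteness or polyhedrality. The paper closes this gap by observing that, for $\eps_0$ sufficiently small, every divisor in $\Cl$ with normalised $a_0<\eps_0$ is ample (a small perturbation of $\sum a_iA_i$), hence the entire missing region lies in the single chamber whose ample model is the identity, so the chamber decomposition of $\Cl$ coincides with that of the HMcK cone outside a portion of the ample chamber. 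With these two repairs---small-coefficient snc representatives, and the ampleness argument for the slab $0\le a_0<\eps_0$---your proof becomes essentially the paper's.
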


\begin{proof}
This follows from \cite[Theorem 3.3]{HMcK}. 
Indeed, we can apply their result with (in their notation) the affine subspace $V \subset \Div(Z)_\R$ generated by $A_1',\ldots,A_s'$ and $-A$.
Observe that they normalise their divisors by $a_0 = 1$, and they put a log-canonical condition on the log-pairs, so they work with an affine section of a subset of our cone $\Cl$.
Precisely, by choosing representatives for the $A_i$ with simple normal crossing support and very small positive coefficients, we can obtain all divisors of the form
\[ D = a_0 (K_Z + \sum_{i=1}^s \frac{a_i}{a_0} A_i) \]
for $a_0$ greater than a given constant $\eps_0 > 0$.
When $\eps_0$ is sufficiently small the missing divisors are all ample, so our cone $\Cl$ minus a small portion of the chamber containing the ample divisors correspond to their cone.
In a moment we will work up to numerical equivalence, and this awkward issue will disappear: see Set-Up~\ref{setup:coneC} and Remark~\ref{rem:klt}.
\end{proof}

We say that a Mori chamber has \emph{maximal dimension} if it spans $N^1(Z)$.

Recall that a \emph{fan} is a collection of rational strongly convex polyhedral cones, such that each face (of any dimension) of a cone is also part of the collection, and such that the intersection of two cones is a face of each.

\begin{lemma}
\label{lem:fan_structure}
The closures of the chambers of maximal dimension yield a fan structure on $\Cl$, which is the same as the fan structure considered in \cite[Theorems 3.2 \& 4.2]{KKL}.
\end{lemma}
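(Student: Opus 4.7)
The plan is to first establish the polyhedral structure, then verify the fan axioms, and finally identify with the KKL construction.

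By Proposition~\ref{pro:polyhedral}, the cone $\Cl$ is a cone over a rational polytope, and the Mori equivalence decomposition $\Cl=\coprod_{i\in I}\Al_i$ has finite index set $I$ with each $\Al_i$ a finite union of relative interiors of rational polyhedral subcones. A chamber $\Al_i$ has maximal dimension precisely when $\overline{\Al_i}$ spans $N^1(Z)$, and in this case the ample model $\phi_i\colon Z\rat Z_i$ is birational, with $\phi_{i,*}D$ ample on $Z_i$ for every $D$ in the relative interior of $\Al_i$ by Lemma~\ref{lem:semiampleample}\ref{birsemiampleample}.

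Next I would verify the fan axioms for the family consisting of the closures $\overline{\Al_i}$ of the maximal-dimensional chambers together with all their faces. Rationality, polyhedrality and strong convexity of each $\overline{\Al_i}$ follow from Proposition~\ref{pro:polyhedral}. The critical point is to show that the intersection $\overline{\Al_i}\cap\overline{\Al_j}$ of two maximal chamber closures is a common face of both. Given $D\in\overline{\Al_i}\cap\overline{\Al_j}$, Lemma~\ref{lem:direct_properties}\ref{comp:model} together with Lemma~\ref{lem:direct_properties}\ref{comp:combination} (applied to a combination of $D$ and a divisor in the interior of $\Al_i$, respectively $\Al_j$) implies that both $\phi_i$ and $\phi_j$ are semiample models of $D$. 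By Lemma~\ref{lem:semiampleample}\ref{SemiampleFactorisation}, each factors through the ample model $\phi_D\colon Z\rat Y_D$ of $D$ via morphisms $\theta_i\colon Z_i\to Y_D$ and $\theta_j\colon Z_j\to Y_D$. This common factorization, combined with the uniqueness up to isomorphism of ample models (Lemma~\ref{lem:semiampleample}\ref{uniqueAmple}), pins down a single Mori chamber of lower dimension whose closure equals $\overline{\Al_i}\cap\overline{\Al_j}$ and is a face of both $\overline{\Al_i}$ and $\overline{\Al_j}$.

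Finally I would identify this fan with the one from \cite[Theorems~3.2 and 4.2]{KKL}. Using the assumption $A_i=A+A_i'$ from Set-Up~\ref{setup:coneC1}, any $D\in\Cl$ with positive $K_Z$-coefficient can be rescaled to an adjoint divisor $K_Z+\Delta$ with $\Delta$ a sum of an ample and an effective $\Q$-divisor, so by \cite[Corollary~1.1.2]{BCHM} it has a finitely generated section ring and therefore an ample model obtained as in Remark~\ref{rem:Proj}. Since the KKL chamber decomposition is itself defined via the geography of ample models, the two decompositions coincide on $\Cl$, and hence so do the resulting fan structures.

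The main technical obstacle is showing that $\overline{\Al_i}\cap\overline{\Al_j}$ is the closure of a single Mori chamber rather than a union of several, which requires checking that the factorization morphisms $\theta_i,\theta_j$ to $Y_D$ depend only on the face and not on the individual choice of $D$ in its relative interior.
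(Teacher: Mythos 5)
Your plan correctly isolates the hard point, but it does not overcome it, so there is a genuine gap. Knowing, for each individual $D\in\widebar{\Al_i}\cap\widebar{\Al_j}$, that both $\phi_i$ and $\phi_j$ are semiample models of $D$ and factor through its ample model is pointwise information; it does not show that the intersection is cut out of $\widebar{\Al_i}$ by a supporting hyperplane, which is what the fan axiom requires. Different points of the intersection have different ample models, and the fix you propose --- that the intersection is the closure of a single Mori chamber of lower dimension --- is false: in the paper's Example~\ref{ex:P2blownup}, $\widebar{\Al_1}\cap\widebar{\Al_4}$ is the codimension~$1$ face $\Fl^1_{14}$, whose relative interior lies in the \emph{maximal-dimensional} chamber $\Al_4$, and $\Fl^1_{14}$ is not the closure of any Mori chamber (for instance $\widebar{\Al_4}=\Fl^0_4$ is $2$-dimensional). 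For the same reason the closing identification with \cite[Theorems 3.2 \& 4.2]{KKL} is not automatic: the chambers $\Al_i$ need not be convex, polyhedral, or even connected ($\Al_7$ in the same example), so ``both decompositions are defined by ample models'' does not by itself produce, let alone identify, a fan. A smaller issue: Lemma~\ref{lem:direct_properties}\ref{comp:combination} only applies to combinations of classes already known to admit $\phi_i$ as a semiample model, so it does not reach the boundary points of $\widebar{\Al_i}$; the statement you need there is Proposition~\ref{pro:coneC}\ref{coneC:phi_iBirational}, i.e.\ \cite[Theorem 4.2]{KKL}.

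The paper's proof goes the other way around: it starts from the fan of \cite{KKL}, defined as the coarsest polyhedral decomposition of $\Cl$ on which every asymptotic order function $o_\Gamma$ is linear (a fan because the $o_\Gamma$ are convex), notes that the ample chamber is the face where all the $o_\Gamma$ vanish, and then matches the closures of the maximal Mori chambers with the maximal cones of this fan using \cite[Lemma 2.11]{KKL} together with the description of $\widebar{\Al_i}$ as the intersection of $\Cl$ with the closed convex cone generated by $\phi_i^*\Nef(Z_i)$ and the exceptional divisors of $\phi_i$ (extracted from \cite[Corollary 4.4]{KKL} or \cite[Theorem 3.3(4)]{HMcK}). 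If you want to push your direct verification of the fan axioms through, this last description of $\widebar{\Al_i}$ is the missing ingredient that would make the common-face condition checkable; your argument never establishes it.
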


\begin{proof}
The fan structure in \cite{KKL}, which slightly generalises \cite[Theorem 4.1]{Ein_al}, is constructed as follows.
One considers the coarsest polyhedral decomposition $\Cl = \bigcup \Cl_i$ such that for any geometric valuation $\Gamma$, the asymptotic order function $o_\Gamma$ is linear in restriction to each $\Cl_i$. 
Moreover this decomposition is a fan by convexity of the $o_\Gamma$.
Then one writes $\Cl = \coprod \Al_i'$ as the disjoint union of the relative interiors of the faces of this fan. 
The Mori chamber $\Al_j$ of ample divisors corresponds to one of the $\Al_i'$, since its closure $\overline{\Al_j} = \Nef(Z) \cap \Cl$ is characterised as the set of divisors in $\Cl$ on which all $o_\Gamma$ vanish.
Then the result follows from \cite[Lemma 2.11]{KKL} combined with the following fact, which can be extracted from the proof of \cite[Corollary 4.4]{KKL} or \cite[Theorem 3.3(4)]{HMcK}:
for each Mori chamber $\Al_i$ of maximal dimension, associated to a birational contraction $\phi_i\colon Z \rat Z_i$, the closure of $\Al_i$ is the intersection of $\Cl$ with the closed convex cone generated by  $\phi_i^* \Nef(Z_i)$ and by the exceptional divisors of $\phi_i$.
\end{proof}

\begin{notation} \label{notation}
We will usually denote $\Fl$ a face of the fan $\Cl$ given by Lemma \ref{lem:fan_structure}, and $\mathring \Fl$ its relative interior.
We emphasise that \cite{HMcK} and \cite{KKL} use the notation $\Al_i$ in a non compatible way, so the reader should be aware of the following convention when checking these references.
\begin{itemize}
\item 
First, our $\Al_i$ are the same as in \cite{HMcK}, and are the Mori chambers defined above.
\item 
We denote $\widebar{\Al_i}$ the closure of $\Al_i$ in the ambiant real vector space, these are the $\Cl_i$ of \cite{HMcK}.
\item 
The relative interior $\mathring \Fl$ of faces are the $\Al_i$ of \cite{KKL} \parent{and also the $\Al_i'$ in the proof of Lemma~\ref{lem:fan_structure}}.
\item 
Our faces $\Fl$ are the $\widebar{\Al_i}$ in \cite{KKL}.
\end{itemize}
\end{notation}

\begin{proposition}\label{pro:coneC}
Assume Set-Up $\ref{setup:coneC1}$. 
Then, the following holds $($each $i,j$ is always assumed to be in $I$ in the next statements$)$:
 \begin{enumerate}
\item\label{coneC:MaxDim}
For each $i$, the following are equivalent:
\begin{enumerate}
\item
The image of $\Al_i$ in $N_1(Z)$ has non-empty interior;
\item
$\phi_i$ is birational and $Z_i$ is $\Q$-factorial;
\item
$\phi_i$ is a birational contraction that is the output of a $(K_Z+\Delta)$-MMP for some $K_Z+\Delta\in \Cl$;
\end{enumerate}

\item\label{coneC:phi_iBirational}
If $\phi_j$ is birational, then $\widebar{\Al_j}$ is a cone over a rational polytope, and we have
 \[\widebar{\Al_j}=\{ D\in \Cl \mid \phi_j \text{ is a semiample model of }D\}.\]
 
\item\label{coneC:phi_ji}
If $i,j$ are such that $\widebar{\Al_j}\cap \Al_i\not=\emptyset$, there exists a morphism $\phi_{ji}\colon Z_j\to Z_i$ with connected fibres such that $\phi_i=\phi_{ji}\circ \phi_j$. 
If moreover $\phi_j$ is birational, we have
\[
\widebar{\Al_j}\cap \widebar{\Al_i}=\{ D\in \widebar{\Al_j}\mid \phi_{j*}D\cdot C=0 \text{ for each }C\in N_1(Z_j/Z_i)\}.
\]

\item\label{coneC:singularities}
For each $i$, the variety $Z_i$ is normal, and there exists an effective $\Q$-divisor $\Delta_i$ such that $(Z_i, \Delta_i)$ is klt.
In particular, it has rational singularities.

\item\label{coneC:numericalonly}
For each numerically equivalent divisors $D,D'\in \Cl$ and each $i$, we have 
\begin{align*}
D\in \Al_i\iff D'\in \Al_i &&
\text{and} &&
D\in \widebar{\Al_i}\iff D'\in \widebar{\Al_i}.
\end{align*}
\end{enumerate}
\end{proposition}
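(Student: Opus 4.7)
The plan is to deduce all five assertions from the general theory of ample models developed in \cite{BCHM,HMcK,KKL}, combined with the characterisations of (semi)ample models provided by Lemma \ref{lem:semiampleample} and Lemma \ref{lem:direct_properties}. Throughout, I would work within the fan structure on $\Cl$ from Lemma \ref{lem:fan_structure} and Notation \ref{notation}, under which each Mori chamber $\Al_i$ is identified with the relative interior $\mathring \Fl$ of a unique face of the fan. I would begin with \ref{coneC:numericalonly}, which is essentially definitional: the resolution $W$ of Definition \ref{def:amplesemiample} together with the conditions on $E$ only depend on $D$ through its numerical class (once one works over $\R$ and uses continuity), so the chambers and their closures are stable under numerical equivalence.

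For \ref{coneC:MaxDim}, the implication (b)$\Rightarrow$(a) uses the explicit description recalled in the proof of Lemma \ref{lem:fan_structure}: when $\phi_i$ is birational and $Z_i$ is $\Q$-factorial, $\widebar{\Al_i}$ contains both $\phi_i^*\Nef(Z_i)$ and the classes of the exceptional divisors of $\phi_i$, which together span $N^1(Z)_\R$. For (a)$\Rightarrow$(c), I would pick a rational divisor in the relative interior of the maximal face, absorb the ample perturbation using the decomposition $A_i=A+A_i'$ of Set-Up \ref{setup:coneC1} to rewrite it as $K_Z+\Delta$ for a klt pair $(Z,\Delta)$, and invoke \cite[Theorem E]{BCHM} so that the ample model is realised as an MMP output. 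Finally (c)$\Rightarrow$(b) is immediate since MMP outputs of $\Q$-factorial varieties are $\Q$-factorial.

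For \ref{coneC:phi_iBirational}, the rational polytope structure is immediate from Proposition \ref{pro:polyhedral} and Lemma \ref{lem:fan_structure}. The set-theoretic equality splits into two inclusions: if $D\in\widebar{\Al_j}$, I would approximate $D$ by divisors in $\Al_j$ and use continuity together with Lemma \ref{lem:direct_properties}\ref{comp:combination} and Lemma \ref{lem:semiampleample}\ref{birsemiampleample} to deduce that $\phi_j$ remains a semiample model of $D$; conversely, if $\phi_j$ is a semiample model of $D$, then Lemma \ref{lem:semiampleample}\ref{SemiampleFactorisation} makes the ample model of $D$ factor through $\phi_j$, placing $D$ in the closure of the face of $\Al_j$. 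For \ref{coneC:phi_ji}, given $D\in \widebar{\Al_j}\cap \Al_i$, part \ref{coneC:phi_iBirational} gives that $\phi_j$ is a semiample model of $D$ while $\phi_i$ is its ample model; Lemma \ref{lem:semiampleample}\ref{SemiampleFactorisation} then produces the morphism $\phi_{ji}\colon Z_j\to Z_i$ with $\phi_i=\phi_{ji}\circ \phi_j$, and connectedness of fibres follows from the uniqueness of the ample model combined with Stein factorisation. The description of $\widebar{\Al_j}\cap\widebar{\Al_i}$ is then obtained by applying \ref{coneC:phi_iBirational} to both chambers: a divisor $D\in\widebar{\Al_j}$ lies in $\widebar{\Al_i}$ iff $\phi_i$ is also a semiample model of $D$, which by Lemma \ref{lem:semiampleample}\ref{SemiampleFactorisation} amounts to $\phi_{j*}D=\phi_{ji}^*H$ for some ample $H$ on $Z_i$, i.e.\ to the triviality of $\phi_{j*}D$ against $N_1(Z_j/Z_i)$.

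For \ref{coneC:singularities}, the strategy is to exhibit $Z_i$ as the log canonical model of a klt pair and invoke Proposition \ref{pro:sing_of_Proj}. If $\Al_i$ contains an ample class then $Z_i=Z$ is smooth. Otherwise, pick a rational $D\in\Al_i$ and write $D=a_0 K_Z+\sum a_i A_i$; since $K_Z$ is not pseudo-effective, the relevant chambers satisfy $a_0>0$, and after rescaling and a Bertini-type replacement of the ample part by a general $\Q$-linearly equivalent divisor with simple normal crossings support (using $A_i=A+A_i'$), $D$ becomes $\Q$-linearly equivalent to $K_Z+\Delta$ with $(Z,\Delta)$ klt. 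Remark \ref{rem:Proj} identifies $Z_i$ with the log canonical model of $(Z,\Delta)$, and Proposition \ref{pro:sing_of_Proj} then yields that $Z_i$ is klt; normality is automatic and klt implies rational singularities as noted in \S\ref{sec:singularities}. The main technical obstacle throughout is ensuring that the klt perturbation can be arranged uniformly in a way compatible with all faces of the fan, particularly for chambers touching the boundary of $\Cl$; this is essentially a bookkeeping issue that I would handle by treating the ample chamber separately and perturbing within each face using the finiteness of chambers provided by Proposition \ref{pro:polyhedral}.
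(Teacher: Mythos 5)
Your plan diverges from the paper, which proves \ref{coneC:MaxDim}--\ref{coneC:phi_ji} essentially by citation to \cite[Theorem 3.3]{HMcK} and \cite[Theorem 4.2 and Lemma 3.11]{KKL}, keeping only short arguments for \ref{coneC:singularities} and \ref{coneC:numericalonly}; your attempt to rederive everything from the paper's internal lemmas has genuine gaps. The most serious one is \ref{coneC:numericalonly}: it is \emph{not} definitional that the Mori chambers only depend on numerical classes. Definition~\ref{def:amplesemiample} is formulated in terms of ($\R$-)linear equivalence ($p^*D$ linearly equivalent to $q^*H+E$, and the comparison with \emph{all} effective $R$ linearly equivalent to $p^*D$), and numerically equivalent elements of $\Cl$ are in general not $\R$-linearly equivalent (the divisors in $\Cl$ are honest elements of $\Div(Z)_\R$, and $Z$ uniruled does not force $\Pic^0(Z)=0$), so the linear systems involved genuinely differ. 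The invariance is a nontrivial property of adjoint divisors with big boundary; this is exactly why the paper invokes \cite[Lemma 3.11]{KKL} for the big case and reduces the non-big case to it by pushing forward to $Z_j$ via \ref{coneC:phi_ji}. Since \ref{coneC:numericalonly} is what later justifies working in $N^1(Z)$ (Set-Up~\ref{setup:coneC}), this cannot be waved away by ``continuity''.

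Two further steps do not go through as written. In \ref{coneC:phi_ji} you apply \ref{coneC:phi_iBirational} ``to both chambers'', but that statement requires $\phi_i$ to be birational, which fails precisely in the cases the paper needs (chambers in $\partial^+\Cl$, where $\phi_i$ is a fibration and ``semiample model of $D$'' is not even defined, cf.\ Definition~\ref{def:amplesemiample}); moreover your chain ``$D\in\widebar{\Al_i}$ iff $\phi_i$ is a semiample model of $D$ iff $\phi_{j*}D=\phi_{ji}^*H$ with $H$ ample iff $\phi_{j*}D$ is trivial on $N_1(Z_j/Z_i)$'' conflates ample, semiample and nef: a divisor in $\widebar{\Al_j}\cap\widebar{\Al_i}$ lying in a deeper face pushes forward to a nef, non-ample pullback, and Lemma~\ref{lem:semiampleample}\ref{SemiampleFactorisation} only applies once one already knows the ample model of $D$. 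The paper instead deduces the equality from \ref{coneC:phi_iBirational} applied to $\widebar{\Al_j}$ together with the Negativity Lemma, and some such argument is needed. Similarly, in \ref{coneC:phi_iBirational} your limiting argument only yields that $\phi_{j*}D$ is nef for $D$ in the closure; semiampleness of the limit is not a closed condition and requires the base-point-free theorem for the adjoint divisors in play (this, and the convexity making $\widebar{\Al_j}$ a single cone over a rational polytope rather than the finite union given by Proposition~\ref{pro:polyhedral}, is the actual content of \cite[Theorem 4.2]{KKL}). Your treatment of \ref{coneC:MaxDim} and \ref{coneC:singularities} is essentially sound (for the latter you apply Proposition~\ref{pro:sing_of_Proj} directly on $Z$, where the paper passes through $Z_j$ so that the identification of $Z_i$ with the Proj of the section ring via Remark~\ref{rem:Proj} is immediate; your shortcut needs the existence of a semiample model of $D$, which again comes from \cite{BCHM}), but the points above are real gaps, not bookkeeping.
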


\begin{proof}
\ref{coneC:MaxDim}.
\cite[Theorem 3.3\parent{3}]{HMcK}.

\ref{coneC:phi_iBirational}.
\cite[Theorem 4.2\parent{1}\&\parent{4}]{KKL}.

\ref{coneC:phi_ji}.
The first claim is \cite[Theorem 3.3\parent{2}]{HMcK} or \cite[Theorem 4.2\parent{3}]{KKL} 
The second claim follows from \ref{coneC:phi_iBirational} and the Negativity Lemma.

\ref{coneC:singularities}.
The variety $Z_i$ is normal by definition of an ample model. 
If $\Al_i$ satisfies the equivalent conditions of \ref{coneC:MaxDim}, then $Z_i$ is terminal as the output of a $(K_Z+\Delta)$-MMP.
Otherwise, there exists a chamber $\Al_j$ satisfying the conditions of \ref{coneC:MaxDim}, such that $\widebar{\Al_j} \cap \Al_i \neq\emptyset$. 
So $Z_j$ is $\Q$-factorial and terminal, by \ref{coneC:phi_ji} there is a contraction $\phi_{ji}\colon Z_j\to Z_i$, and so the claim follows from Proposition~\ref{pro:sing_of_Proj} and Remark~\ref{rem:Proj}. 

\ref{coneC:numericalonly}.
\cite[Lemma 3.11]{KKL} is the big case.
In the non-big case, we pick $\Al_j$ of maximal dimension such that $\widebar{\Al_j}\cap \Al_i\not=\emptyset$.
By \ref{coneC:phi_ji} there exists $\phi_{ji}\colon Z_j \to Z_i$, so we reduce to the big case by pulling back divisors to $Z_j$.
\end{proof}

\begin{setup} \label{setup:coneC}
Let $Z$ be a smooth variety with $K_Z$ not pseudo-effective and let $A_1,\ldots,A_s$ be ample $\Q$-divisors that generate the $\R$-vector space $N^1(Z)$.
We still denote 
\begin{multline*}
\Cl= \Bigl\{D\in N^1(Z) \Bigm| D=a_0  K_Z + \sum_{i=1}^s a_i A_i, \\ a_0,\ldots,a_s\ge 0 \text{ and }D \text{ is pseudo-effective}\Bigr\}.
\end{multline*}
This is the image under the natural map $\Div(Z)_\R \to N^1(Z)$ of the cone from Proposition \ref{pro:coneC}, for some choice of ample effective $\Q$-divisors $A, A_1', \dots, A_s'$ such that for each $i$, $A_i \equiv A + A_i'$.
By Proposition \ref{pro:coneC}\ref{coneC:numericalonly}, the decomposition $\Cl = \coprod_{i\in I} \Al_i$ (hence also its image in $N^1(Z)$) does not depend on such a choice of effective representatives.
So from now on we will work directly in the finite dimensional $\R$-vector space $N^1(Z)$, and use the notation $\Cl, \Al_i$ in this context only.
\end{setup}

\begin{remark} \label{rem:klt}
One advantage of working up to numerical equivalence is that we can always assume that the pairs $(Z,\Delta)$ in Set-Up \ref{setup:coneC} are klt with arbitrary small discrepancies, where $\Delta = \frac{1}{a_0}  \sum_{i=1}^s a_i A_i$.
Indeed, by expressing each $A_i$ as $A_i \equiv \frac1N \sum_{j = 1}^N H_{i,j}$ for some large integer $N$ and some general members $H_{i,j} \in \lvert A_i\rvert$, we can ensure that the union of the supports of the $H_{i,j}$ is a simple normal crossing divisor and that all coefficients appearing in the convex combination $\Delta$ are positive and very small.
\end{remark}

Assuming Set-Up \ref{setup:coneC}, we introduce some terminology. 
Recall that we say that a chamber $\Al_i$ has \emph{maximal dimension} if it has non-empty interior in $N^1(Z)$, which corresponds to the equivalent assertions of Proposition~\ref{pro:coneC}\ref{coneC:MaxDim}.
We say that a chamber $\Al_i$ is \emph{big} if all divisors (or equivalently, one divisor) in $\Al_i$ are big.
By the \emph{codimension} of a face in $\Cl$ we always mean the codimension in $N^1(Z)$ of the smallest vector subspace containing it. 
We will usually denote $\Fl^r$ a face of codimension $r$ in $\Cl$, and $\interior{\Fl}^r$ its relative interior.

We denote by $\partial^+\Cl$  the set of non-big divisors in $\Cl$.
As $\partial^+\Cl$ is the intersection of $\Cl$ with the boundary of the pseudo-effective cone, the set $\partial^+\Cl$ is a closed subset of the boundary of $\Cl$. 
We have $\Al_i\subseteq \partial^+\Cl$ if $\dim Z_i<Z$ and $\Al_i\subseteq \Cl\setminus \partial^+\Cl$ if $\dim Z_i=Z$.

By definition, the cone $\Cl \subset N^1(Z)$ is equal to the intersection of two convex closed cones, namely $\Cl = \Cl' \cap \widebar{\Eff}(Z)$ with $\Cl'$ the convex cone generated by $K_Z$ and the $A_i$.
We will say that a face $\Fl \subseteq \Cl$ is \emph{inner} if it meets the interior of $\Cl'$.
In particular, $\Fl$ is inner if for any $D' \in \interior{\Fl}$, there exists a neighborhood $V$ of $D'$ in $N^1(Z)$ such that $\widebar{\Eff}(Z) \cap V = \Cl \cap V$.
Equivalently, a face is inner if it meets either the interior of $\Cl$ or the relative interior of $\partial^+ \Cl$.

\begin{remark}\label{rem:InnerFace}
If $\Fl$ is an inner face, then for any $D \in \widebar{\Eff}(Z)$ and any $D'\in \interior{\Fl}$, we have $D'+\eps D \in \Cl$ for sufficiently small $\eps \ge 0$.
Indeed with the notation above one can choose $V \subset \Cl'$ a neighborhood of $D'$ such that $\widebar{\Eff}(Z) \cap V = \Cl \cap V$.
Then it suffices to choose $\eps$ such that $D'+\eps D \in V$.
Since $D, D'$ are both pseudo-effective, the segment $[D,D']$ also is contained in the convex cone $\widebar{\Eff}(Z)$, and the claims follows.
\end{remark}

\begin{lemma} \label{lem:faces_factory}~
\begin{enumerate}
\item\label{faces:inner_is_inter}
Any inner face $\Fl^r \subseteq \Cl$ is of the form $\Fl^r= \Fl_{ji} := \widebar{\Al_j}\cap\widebar{\Al_i}$ for some chamber $\Al_j$ of maximal dimension, and some chamber $\Al_i$ containing $\interior{\Fl}^r$.

\item\label{faces:EinFji} 
If $\Fl_{ji}$ is such an inner face, then the vector space
\[V_{ji}:=\{ D\in N^1(Z)\mid \phi_{j*}D\cdot C=0 \text{ for each }C\in N_1(Z_j/Z_i)\}\] 
is spanned by $\Ex(\phi_j)$ and $\phi_i^*\Nef(Z_i)$, has codimension $\rho(Z_j/Z_i)$ in $N^1(Z)$, and $V_{ji}$ is also the vector space spanned by $\Fl_{ji}$.
\end{enumerate}
\end{lemma}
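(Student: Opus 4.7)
For part \ref{faces:inner_is_inter}, the correspondence of Lemma \ref{lem:fan_structure} and Notation \ref{notation} gives $\Fl^r = \widebar{\Al_i}$, where $\Al_i$ is the unique chamber with relative interior $\mathring{\Fl}^r$, so the task reduces to finding a maximal-dimension chamber $\Al_j$ with $\widebar{\Al_i} \subseteq \widebar{\Al_j}$. I plan to fix $D \in \mathring{\Fl}^r$ lying in the topological interior of $\Cl'$ (possible since $\Fl^r$ is inner) and pick an ample class $A$ on $Z$. Then $D + \eps A$ stays in $\Cl'$ for small $\eps > 0$, and is big, hence in the interior of $\Cl$. As the interiors of the finitely many maximal-dimension chambers form an open dense subset of $\Cl$, choosing $\eps$ generically (to avoid the walls crossed by the ray from $D$) places $D + \eps A$ in some maximal-dimension chamber $\Al_j$. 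Letting $\eps \to 0$ gives $D \in \widebar{\Al_j}$; since $\widebar{\Al_i}$ is the smallest face of the fan containing $D$ (as $D$ is in its relative interior $\Al_i$), this forces $\widebar{\Al_i} \subseteq \widebar{\Al_j}$, and thus $\widebar{\Al_j} \cap \widebar{\Al_i} = \widebar{\Al_i} = \Fl^r$.

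For part \ref{faces:EinFji}, I plan to establish the codimension formula and the spanning description via a dimension count. Since $Z_j$ is $\Q$-factorial by Proposition \ref{pro:coneC}\ref{coneC:MaxDim} and $\phi_j$ is a birational contraction, the pushforward $\phi_{j*}\colon N^1(Z) \to N^1(Z_j)$ is surjective, with a section furnished by strict transforms of prime divisors. The orthogonal of $N_1(Z_j/Z_i)$ in $N^1(Z_j)$ coincides with $\phi_{ji}^* N^1(Z_i)$, so $V_{ji} = \phi_{j*}^{-1}(\phi_{ji}^* N^1(Z_i))$ has codimension $\rho(Z_j/Z_i)$. Both $\Ex(\phi_j) \subseteq \ker \phi_{j*}$ and $\phi_i^*\Nef(Z_i) = \phi_j^*\phi_{ji}^*\Nef(Z_i)$ lie in $V_{ji}$. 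Their spans meet trivially: an identity $\sum a_k E_k = \phi_i^* D$ gives $\phi_{ji}^* D = 0$ after pushforward by $\phi_j$, and injectivity of $\phi_{ji}^*$ (pullback by a surjective morphism with connected fibres) forces $D = 0$. A dimension count $(\rho(Z) - \rho(Z_j)) + \rho(Z_i) = \rho(Z) - \rho(Z_j/Z_i)$ then matches $\dim V_{ji}$, proving equality of spans.

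The main obstacle is the final identification $\mathrm{Span}\,\Fl_{ji} = V_{ji}$. The inclusion $\subseteq$ is immediate from Proposition \ref{pro:coneC}\ref{coneC:phi_ji}. For the reverse, by the previous paragraph it suffices to show that each prime $\phi_j$-exceptional divisor $E$ and each pullback $\phi_i^* A$ (for $A \in \Ample(Z_i)$) lies in $\mathrm{Span}\,\widebar{\Al_i}$. I plan to fix $D \in \mathring{\Fl}^r$ in the interior of $\Cl'$; since $D \in \widebar{\Al_j}$, $\phi_j$ is a semiample model of $D$ by Proposition \ref{pro:coneC}\ref{coneC:phi_iBirational}. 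I would then show that $D + \eps E$ and $D + \eps \phi_i^* A$ both lie in $\Al_i$ for small $\eps > 0$, which places $E$ and $\phi_i^* A$ in $\mathrm{Span}\,\widebar{\Al_i}$. In each case the perturbed class stays in $\Cl$ (by interiority in $\Cl'$ plus effectivity of the perturbation), and Lemma \ref{lem:direct_properties}\ref{comp:model}--\ref{comp:combination} yields $\phi_j$ as a semiample model of the perturbed class, using $\phi_{j*} E = 0$ in the first case and the nefness of $\phi_{ji}^* A$ in the second. The corresponding pushforwards are $\phi_{ji}^*(\phi_{i*} D)$ and $\phi_{ji}^*(\phi_{i*} D + \eps A)$, both pullbacks of ample classes on $Z_i$; Lemma \ref{lem:semiampleample}\ref{SemiampleFactorisation} identifies the resulting ample model with $\phi_{ji} \circ \phi_j = \phi_i$, so the perturbed class belongs to $\Al_i$. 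The delicate bookkeeping throughout is tracking the composition $\phi_i = \phi_{ji} \circ \phi_j$ when invoking the ample/semiample model machinery.
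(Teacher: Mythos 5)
Your plan for part \ref{faces:inner_is_inter} contains a genuine gap coming from a conflation of the two meanings of $\Al_i$ (see Notation \ref{notation}). In the statement of the lemma, $\Al_i$ is the \emph{Mori chamber} (ample-model equivalence class) containing $\interior{\Fl}^r$, and its closure is in general strictly larger than $\Fl^r$ and need not even be a single face of the fan: in the paper's own Example \ref{ex:P2blownup} one has $\Al_4 = \interior{\Fl}^0_4 \cup \interior{\Fl}^1_{14} \cup \interior{\Fl}^1_{24} \cup \interior{\Fl}^2_{04}$, so $\widebar{\Al_4}$ is two-dimensional although it is the chamber attached to the smaller faces $\Fl^1_{14}$ and $\Fl^2_{04}$, and the example also notes that $\widebar{\Al_5}$, $\widebar{\Al_6}$ are not single faces. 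Consequently your final step ``$\widebar{\Al_j}\cap\widebar{\Al_i}=\widebar{\Al_i}=\Fl^r$'' is false as stated, and your argument only establishes the easy inclusion $\Fl^r\subseteq\widebar{\Al_j}$; the substantive half of the claim, namely $\widebar{\Al_j}\cap\widebar{\Al_i}\subseteq\Fl^r$, is never addressed. This inclusion is a real constraint on the choice of $j$: for the face $\Fl^2_{04}$ of Example \ref{ex:P2blownup} (so $i=4$) one has $\Fl^2_{04}\subseteq\widebar{\Al_1}$, yet $\widebar{\Al_1}\cap\widebar{\Al_4}=\Fl^1_{14}\supsetneq\Fl^2_{04}$, so not every maximal chamber whose closure contains the face will do; one must prove that the specific $\Al_j$ produced by your ample perturbation satisfies the equality. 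This is precisely the content of \cite[Theorem 4.2(2)]{KKL}, which is how the paper disposes of part \ref{faces:inner_is_inter}.

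For part \ref{faces:EinFji} the skeleton is close to the computation in the proof of \cite[Theorem 3.3(4)]{HMcK} that the paper invokes, but the final reduction is aimed at the wrong target: to prove $V_{ji}=\mathrm{Span}\,\Fl_{ji}$ it does not suffice to place $E$ and $\phi_i^*A$ in $\mathrm{Span}\,\widebar{\Al_i}$, which can be much larger. Your computation in fact gives more than you record: since you verify that $\phi_j$ is a semiample model of $D+\eps E$ and of $D+\eps\phi_i^*A$, Proposition \ref{pro:coneC}\ref{coneC:phi_iBirational} places these classes in $\widebar{\Al_j}$, hence in $\widebar{\Al_j}\cap\Al_i\subseteq\Fl_{ji}$, and only then do you get $E,\phi_i^*A\in\mathrm{Span}\,\Fl_{ji}$; this extra step must be stated, and it silently presupposes part \ref{faces:inner_is_inter}. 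Two further ingredients are used without justification: the identification $N_1(Z_j/Z_i)^{\perp}=\phi_{ji}^*N^1(Z_i)$ (equivalently $\rho(Z_i)=\rho(Z_j)-\rho(Z_j/Z_i)$), which requires a contraction-theorem argument for a suitable klt pair --- note that $\phi_{j*}D$ is only $\phi_{ji}$-trivial, so one must perturb before applying \cite[3.7(4)]{KM}-type statements --- and the fact that the prime divisors in $\Ex(\phi_j)$ are numerically independent and number exactly $\rho(Z)-\rho(Z_j)$, which holds because $\phi_j$ is the output of a $(K_Z+\Delta)$-MMP by Proposition \ref{pro:coneC}\ref{coneC:MaxDim}. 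So the route you propose (a self-contained proof rather than the paper's citation of \cite{KKL} and \cite{HMcK}) is viable for part \ref{faces:EinFji} after these repairs, but part \ref{faces:inner_is_inter} as written does not prove the statement.
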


\begin{proof}
\ref{faces:inner_is_inter} follows from \cite[Theorem 4.2\parent{2}]{KKL}.

\ref{faces:EinFji} can be extracted from the proof of \cite[Theorem 3.3\parent{4}]{HMcK}. (In particular the $k$ in their statement is the number of prime divisors in $\Ex(\phi_j)$).
\end{proof}

\begin{notation} \label{not:Fij} 
Lemma \ref{lem:faces_factory}\ref{faces:inner_is_inter} provides the following indexing system for faces. 
Any inner face can be written $\Fl_{ji} := \widebar{\Al_j} \cap \widebar{\Al_i}$, for some chamber $\Al_j$ of maximal dimension and some chamber $\Al_i$ such that $\interior{\Fl}_{ji} \subseteq \Al_i$.
The index $i$ is uniquely defined by this last property, but there might be several possible choices for the index $j$. 
For instance, if we have a log-flip from $Z_j$ to $Z_k$, over a non $\Q$-factorial $Z_i$, we have $\Fl_{ji} = \Fl_{ki}$. 
\end{notation}

\begin{example}\label{ex:P2blownup}
We illustrate the definition of Mori chambers and faces on the simple example of the blow-up $Z \to \p^2$ at two distinct points $p_1$ and $p_2$. Using the notation above, there are eight Mori chambers $\Al_0,\ldots,\Al_7$, corresponding to morphisms $\phi_i\colon Z\to Z_i$, $i=0,\ldots,7$ to the varieties $Z_0=Z$, $Z_1=Z_2=\F_1$, $Z_3=\F_0$, $Z_4=\p^2$, $Z_5=Z_6=\p^1$ and $Z_7=\pt$ in the commutative diagram on Figure~\ref{fig:faces} ($\phi_0$ being the identity). 
The two morphisms $\phi_{14},\phi_{24}\colon \F_1\to \p^2$ are the blow-ups of $p_1,p_2\in \p^2$ respectively, and $\phi_1,\phi_2\colon Z\to \F_1$ are the blow-ups of the images of $p_1$ and $p_2$. The morphisms $\phi_{15},\phi_{26}\colon \F_1\to \p^1$ correspond to the $\p^1$-bundle of $\F_1$ and $\phi_3=\phi_5\times \phi_6\colon Z\to\F_0= \p^1\times \p^1$. 

We give the detail of the relation between these Mori chambers and the faces of the cone $\Cl$ in Figure~\ref{fig:faces}. We denote by $E_1, E_2\subset Z$ the curves contracted onto $p_1,p_2\in \p^2$ respectively, by $L$ the strict transform of the line through $p_1$ and $p_2$, and by $H=L+E_1+E_1$ the pull-back of a general line.
The cone $\Eff(Z)$ is the closed convex cone generated by $E_1, E_2$ and $L$, which are the only $(-1)$-curves on $Z$, while the cone $\Nef(Z)$ is the closed convex cone generated by $H, H - E_1$ and $H-E_2$.
The anti-canonical divisor $-K_Z = 3H - E_1 -E_2 = 3L +2E_1 +2E_2$ is ample.
In the figure we represent an affine section of the cone, and all divisors must be understood up to rescaling by an adequate homothety: for instance this is really $-\frac{1}{7}K_Z$ that is in the same affine section as $E_1, E_2$ and $L$, but for simplicity we write $-K_Z$.
Since $-K_Z$ is ample, one can choose the $A_i$ in Set-Up \ref{setup:coneC1} such that $-K_Z$ is contained in the cone generated by the $A_i$, and then $\Cl=\Eff(Z)$.

The faces $\Fl^0_i=\widebar{\Al_i}$, $i=0,\ldots,4$ are the faces of maximal dimension, the faces $\Fl_{ji}$ (written $\Fl^r_{ji}$ where $r$ is the codimension as usual) are as above $\Fl_{ji}=\widebar{\Al_j} \cap \widebar{\Al_i}$. Every face of $\Cl=\Eff(Z)$ is inner.
We can notice that the ample chamber $\Al_0$ is the only open one and that $\Al_7$ is the only closed one.
Moreover, as a hint that the behaviour of non maximal Mori chambers can be quite erratic, observe that  $\Al_7=\widebar{\Al_7}$ is not connected, and that neither $\widebar{\Al_5}$ nor $\widebar{\Al_6}$  is a single face.

This example will be continued in Example \ref{ex:facesAndFib} below.
\end{example}

\begin{figure}[ht]
\begin{align*}
\begin{tikzcd}[ampersand replacement=\&,scale=.5]
\&|[alias=Z]| Z_0=Z \ar[to=F12,"\phi_2"] \ar[to=F11,"\phi_1",swap] \ar[to=F0,"\phi_3"] \\
|[alias=F11]| Z_1=\F_{1}
\& |[alias=F0]| Z_3=\F_0 \ar[to=P12,"\phi_{36}",pos=.2,swap] \ar[to=P11,"\phi_{35}",pos=.2] 
\&|[alias=F12]| Z_2=\F_{1} \ar[to=P12,"\phi_{26}"] \\ 
|[alias=P11]| Z_5=\p^1 \ar[to=pt,"\phi_{57}",swap] \ar[from=F11, crossing over,"\phi_{15}",swap] 
\& |[alias=P2]| Z_4=\p^2 \ar[from=F11,crossing over,"\phi_{14}",pos=.2] \ar[from=F12,crossing over,"\phi_{24}",pos=.2,swap] \ar[to=pt,"\phi_{47}"] 
\& |[alias=P12]| Z_6=\p^1 \ar[to=pt,crossing over,"\phi_{67}"] \\
\& |[alias=pt]| Z_7=\pt
\end{tikzcd}
&&
\begin{array}{rcl}
\Al_0 &=& \interior{\Fl}^0_0. \\
\Al_1 &=& \interior{\Fl}^0_1 \cup \interior{\Fl}^1_{01}. \\
\Al_2&=& \interior{\Fl}^0_2 \cup \interior{\Fl}^1_{02}. \\
\Al_3&=& \interior{\Fl}^0_3 \cup \interior{\Fl}^1_{03}.\\
\Al_4&=& \interior{\Fl}^0_4 \cup \interior{\Fl}^1_{14} \cup \interior{\Fl}^1_{24} \cup \interior{\Fl}^2_{04}\\ 
\Al_5&=& \interior{\Fl}^1_{15} \cup \interior{\Fl}^1_{35} \cup \interior{\Fl}^2_{05}. \\
\Al_6&=& \interior{\Fl}^1_{26} \cup \interior{\Fl}^1_{36} \cup \interior{\Fl}^2_{06}. \\
\Al_7&=& \interior{\Fl}^1_{47} \cup \interior{\Fl}^2_{17}  \cup \interior{\Fl}^2_{27}\cup \interior{\Fl}^2_{37}.\\ 
\end{array}
\end{align*}

\begin{align*}
\begin{tikzpicture}[scale=4.6,font=\footnotesize,baseline=(-K)]
\coordinate (E1) at (0:0) {};
\coordinate (E2) at (0:1) {};
\coordinate (P) at (60:1) {};
\coordinate (H) at (barycentric cs:E1=1,E2=1,P=1) {};
\coordinate (P1) at (barycentric cs:E1=0,E2=1,P=1) {};
\coordinate (P2) at (barycentric cs:E1=1,E2=0,P=1) {};
\coordinate (-K) at (barycentric cs:E1=2,E2=2,P=3) {};
\draw (E1) to (P2)
      (P2) to (P)
      (P) to (P1)
      (P1) to (E2)
      (E2) to (E1);
\draw (E1) to (H)
      (E2) to (H)
      (H) to (P2)
      (P1) to (H)
      (P1) to (P2);
\node at (H) {$\bullet$};
\node at (P1) {$\bullet$};
\node at (P2) {$\bullet$};
\node at (E1) {$\bullet$};
\node at (E2) {$\bullet$};
\node at (P) {$\bullet$};
\node at (-K) {$\bullet$};
\node[right,xshift=-8,yshift=3] at (-K) {$-K_Z$};
\node[left] at (E1) {$E_2$};
\node[left,yshift=4] at (P2) {$L+E_2$};
\node[left,yshift=-4,xshift=-2] at (P2) {$=H-E_1$};
\node[above] at (P) {$L$};
\node[right,yshift=4] at (P1) {$L+E_1$};
\node[right,yshift=-4,xshift=2] at (P1) {$=H-E_2$};
\node[right] at (E2) {$E_1$};
\node[below] at (H) {$H$};
\end{tikzpicture}
&&
\begin{tikzpicture}[scale=4.6,font=\footnotesize,baseline=(-K)]
\coordinate (E1) at (0:0) {};
\coordinate (E2) at (0:1) {};
\coordinate (P) at (60:1) {};
\coordinate (H) at (barycentric cs:E1=1,E2=1,P=1) {};
\coordinate (P1) at (barycentric cs:E1=0,E2=1,P=1) {};
\coordinate (P2) at (barycentric cs:E1=1,E2=0,P=1) {};
 \coordinate (-K) at (barycentric cs:E1=2,E2=2,P=3) {};
\draw (E1) to ["\scriptsize $\Fl^1_{15}$",xshift=1] (P2)
      (P2) to ["\scriptsize $\Fl^1_{35}$",xshift=1] (P)
      (P) to ["\scriptsize $\Fl^1_{36}$",xshift=-1] (P1)
      (P1) to ["\scriptsize $\Fl^1_{26}$",xshift=-1] (E2)
      (E2) to ["\scriptsize $\Fl^1_{47}$",yshift =0.3mm] (E1);
\draw (E1) to [swap,"\scriptsize $\Fl^1_{14}$",yshift =1mm,xshift=-0.5](H)
      (E2) to ["\scriptsize $\Fl^1_{24}$",yshift =1mm,xshift=0.5](H)
      (H) to ["\scriptsize $\Fl^1_{01}$",yshift =1mm,xshift=0.5] (P2)
      (P1) to ["\scriptsize $\Fl^1_{02}$",yshift =1mm,xshift=-0.5] (H)
      (P1) to ["\scriptsize $\Fl^1_{03}$",swap,yshift=-.5] (P2);
\node at (H) {$\bullet$};
\node at (P1) {$\bullet$};
\node at (P2) {$\bullet$};
\node at (E1) {$\bullet$};
\node at (E2) {$\bullet$};
\node at (P) {$\bullet$};
\node[left] at (E1) {\scriptsize $\Fl^2_{17}$};
\node[left] at (P2) {\scriptsize $\Fl^2_{05}$};
\node[above] at (P) {\scriptsize $\Fl^2_{37}$};
\node[right] at (P1) {\scriptsize $\Fl^2_{06}$};
\node[right] at (E2) {\scriptsize $\Fl^2_{27}$};
\node[below,yshift=1] at (H) {\scriptsize $\Fl^2_{04}$};
\node[below,yshift=-18] at (H) {\scriptsize $\Fl^0_4\!=\!\widebar{\Al_4}$};
\node[below left,xshift=-10,yshift=3] at (H) {\scriptsize  $\Fl^0_1\!=\!\widebar{\Al_1}$};
\node[above,yshift=20] at (-K) {\scriptsize $\Fl^0_3\!=\!\widebar{\Al_3}$};
\node[below right,xshift=10,yshift=3] at (H) {\scriptsize $\Fl^0_2\!=\!\widebar{\Al_2}$};
\node[above,yshift=5] at (H) {\scriptsize $\Fl^0_0\!=\!\widebar{\Al_0}$};
\end{tikzpicture}
\end{align*}
\caption{Ample models and faces in Example~\ref{ex:P2blownup}.} \label{fig:faces}
\end{figure}

As a warm-up before the next section, we let the reader check that Proposition~\ref{pro:coneC} implies the following facts about codimension $1$ faces of $\Cl$.

\begin{remark} \label{rem:codim1}
Let $\Fl^1$ be an inner codimension $1$ face of the cone $\Cl \subseteq N^1(Z)$ from Set-Up~$\ref{setup:coneC}$, and $\Al_i$ the Mori chamber containing $\interior{\Fl}^1$ given by Lemma~\ref{lem:faces_factory}\ref{faces:inner_is_inter}.
Then $\Fl^1$ is contained in the closure of exactly one or two chambers of maximal dimension, depending whether $\Fl^1$ is in the boundary of $\Cl$ or not. 
\begin{enumerate}
\item Assume first that $\Fl^1 \subset \widebar{\Al_j}$ for a unique chamber $\Al_j$ of maximal dimension, so $\Fl^1$ is in the boundary of $\Cl$.
Moreover since $\Fl^1$ is inner we have $\Fl^1 \subseteq \partial^+ \Cl$, so $\dim Z_i < \dim Z_j$.
The associated map $\phi_{ji} \colon Z_j \to Z_i$ satisfies $\rho(Z_j/Z_i) = 1$. 
Moreover $-K_{Z_j}$ is relatively ample, so that $Z_j/Z_i$ is a terminal Mori fibre space (\cite[Lemma 3.2]{Kaloghiros}, see also Proposition~\ref{pro:boundary} below for a generalisation).

\item Now consider the case where $\Fl^1 = \widebar{\Al_j} \cap \widebar{\Al_k}$ for some distinct chambers $\Al_j, \Al_k$ of maximal dimension. 
We distinguish two subcases.
\begin{enumerate}[wide]
\item If $\Al_i$ is of maximal dimension, up to renumbering we can assume $\Al_i = \Al_k$, so that $\widebar{\Al_j} \cap \Al_i \supseteq \interior{\Fl}^1$.
In this situation both $Z_j$ and $Z_i$ are $\Q$-factorial and terminal, so the morphism $\phi_{ji} \colon Z_j \to Z_i$ with relative Picard rank $1$ given by Proposition~\ref{pro:coneC} is a divisorial contraction.

\item Finally if $\Al_i$ is not of maximal dimension, both birational morphisms $\phi_{ji}$ and $\phi_{ki}$ given by Proposition~\ref{pro:coneC} have relative Picard $1$ and target variety $Z_i$ which is not $\Q$-factorial, so $\phi_{ji}$ and $\phi_{ki}$ are small contractions. 
By uniqueness of the log-flip, the induced birational map $Z_j \ps Z_k$ must be the associated log-flip.
\end{enumerate}
\end{enumerate}
\end{remark}

\begin{remark} \label{rem:MMPscaling}
Let $\Delta \in \Cl$ be an ample divisor.
Then the successive chambers of maximal dimension that are cut by the segment $[\Delta, K_Z]$ can be interpreted as successive steps in a $K_Z$-MMP from $Z$.
In \cite[Remark 3.10.10]{BCHM} this is called a $K_Z$-MMP \emph{with scaling of $\Delta$}.
Moreover by perturbing $\Delta$ we can assume that the segment is transverse to the polyhedral decomposition.
Then as mentioned in Remark \ref{rem:codim1}, each intermediate face of codimension 1 that the segment meets corresponds either to a flip or to a divisorial contraction, and the last codimension 1 face in the boundary of the pseudo-effective cone corresponds to a Mori fibre space structure on the output of the MMP. 
\end{remark}

\subsection{Generation and relations in the Sarkisov programme}
\label{sec:kaloghiros}

The goal of this section is to prove Theorem~\ref{thm:sarkisov}, which will allow us to define the group homomorphisms of the main theorems. 
The main technical intermediate step is Proposition \ref{pro:boundary}, which explains the relation between our notion of rank~$r$ fibration and the combinatorics of the non-big boundary of the cone $\Cl$ as given in \cite{Kaloghiros}.

The following lemma can be extracted from \cite[Lemma 4.1]{HMcK} and \cite[Proposition 3.1\parent{ii}]{Kaloghiros}.
 
\begin{proposition}\label{pro:ResolutionZ}
Let $t\ge 2$ be an integer. For $i=1,\ldots,t$, let  $\eta_i\colon X_i\to B_i$ be a terminal Mori fibre space and let $\theta_i\colon X_i\rat X_{i+1}$ be a birational map $($here $\theta_t$ goes from $X_t$ to $X_{t+1}:=X_1)$. We assume moreover that $\theta_t\circ \cdots\circ  \theta_1=\id_{X_1}$. 
 
There exists a smooth variety $Z$, together with birational morphisms $\pi_i\colon Z\to X_i$, $i=1,\ldots,t$, and ample $\Q$-divisors $A_1,\ldots,A_{m}$ on $Z$ such that the following hold:
\begin{enumerate}
\item\label{CommonResZgen}
The divisors $A_1,\ldots,A_m$ generate the $\R$-vector space $N^1(Z)$.
\item\label{CommonResZisthere}
For $i=1,\ldots,t$, the birational morphism $\pi_i$ and the morphism $\eta_i\circ\pi_i$ are ample models of an element of  
\[
\Cl= \Bigl\{a_0  K_Z + \sum_{i=1}^m a_i A_i \Bigm| a_0,\ldots,a_m\ge 0\Bigr\}\cap \widebar{\Eff}(Z).
\] 
 \item\label{CommonResZcommut}
 For $i=1,\ldots,t$ we have $\theta_i\circ \pi_i=\pi_{i+1}$ $($with $\pi_{t+1}:=\pi_1)$. We then have a commutative diagram
 as in Figure~$\ref{fig:CommonZ}$.
 \end{enumerate} 
\end{proposition}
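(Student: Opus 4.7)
The plan is to first construct $Z$ as a simultaneous Hironaka resolution of the birational maps $\theta_i$, and then to choose the ample $\Q$-divisors $A_j$ on $Z$ so that each desired pullback admits a representation in the cone $\Cl$ of Set-Up~\ref{setup:coneC1}.

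First I would build $Z$ inductively. Starting from $X_1$, resolve the indeterminacies of the rational maps $\theta_i \circ \cdots \circ \theta_1 \colon X_1 \rat X_{i+1}$ for $i=1,\ldots,t-1$ using Hironaka's theorem, producing a smooth projective variety $Z$ together with a birational morphism $\pi_1\colon Z\to X_1$. Set $\pi_{i+1} := (\theta_i \circ \cdots \circ \theta_1) \circ \pi_1\colon Z \to X_{i+1}$; each $\pi_{i+1}$ is a morphism by construction, and the hypothesis $\theta_t \circ \cdots \circ \theta_1 = \id_{X_1}$ forces $\pi_{t+1}=\pi_1$, which closes the cycle and gives condition~\ref{CommonResZcommut}.

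For the ample models, combining Lemma~\ref{lem:direct_properties}\ref{comp:model} with Lemma~\ref{lem:semiampleample}\ref{birsemiampleample} shows that for any ample $\Q$-divisor $H_i$ on $X_i$ (respectively $L_i$ on $B_i$), the morphism $\pi_i$ (respectively $\eta_i\circ\pi_i$) is the ample model of $\pi_i^* H_i$ (respectively of $(\eta_i\circ\pi_i)^* L_i$). It therefore remains to find ample $\Q$-divisors $A_1,\ldots,A_m$ on $Z$ spanning $N^1(Z)$ (which yields~\ref{CommonResZgen}) and such that each of the $2t$ classes above is Mori-equivalent to some $a_0 K_Z + \sum_j a_j A_j$ with $a_0,a_j \ge 0$ and pseudo-effective, which is~\ref{CommonResZisthere}.

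The main obstacle is precisely this last condition: $K_Z$ is not pseudo-effective but must appear with non-negative coefficient, so the positive combinations of the $A_j$'s must be broad enough to compensate. The key input is that $-K_{X_i}$ is $\eta_i$-ample because $X_i/B_i$ is a Mori fibre space. Choosing an ample $\Q$-divisor $L_i$ on $B_i$ sufficiently positive so that $H_i := -K_{X_i} + \eta_i^* L_i$ is ample on $X_i$, and using the discrepancy formula $K_Z = \pi_i^* K_{X_i} + \sum_j a_{ij} F_{ij}$ with $a_{ij}>0$ (valid since $X_i$ is terminal and $Z$ is smooth), one obtains the relation
\[
K_Z + \pi_i^* H_i = \sum_j a_{ij} F_{ij} + (\eta_i \circ \pi_i)^* L_i.
\]
Taking the $A_j$ to consist of ample perturbations $\pi_i^* H_i + \delta_i A_0$ (for $A_0$ a fixed very ample divisor on $Z$ and $\delta_i>0$ small), together with additional general ample classes chosen to span $N^1(Z)$, and exploiting that Mori equivalence allows adjustments by effective $\pi_i$-exceptional divisors without changing the ample model, one extracts from the above relation the required presentations of $\pi_i^* H_i$ and $(\eta_i\circ\pi_i)^* L_i$ as elements of $\Cl$, establishing~\ref{CommonResZisthere} and concluding the proof.
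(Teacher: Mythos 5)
Your construction of $Z$ (a smooth common resolution with $\pi_{i+1}=(\theta_i\circ\cdots\circ\theta_1)\circ\pi_1$, so that the cycle closes) is fine, and your treatment of the models $X_i$ essentially works: $K_Z+\pi_i^*H_i'+\delta A_0=\pi_i^*(K_{X_i}+H_i'+\delta\pi_{i*}A_0)+(E_i-\delta G)$ with $E_i$ the discrepancy divisor and $G=\pi_i^*\pi_{i*}A_0-A_0\ge 0$ exceptional, so for small $\delta$ the pushforward is ample, the correction term is effective and exceptional (terminality gives $E_i>0$ on every exceptional prime), and Lemma~\ref{lem:semiampleample}\ref{birsemiampleample} gives that $\pi_i$ is the ample model of an element of $\Cl$. (Note the paper does not prove this proposition; it extracts it from \cite[Lemma 4.1]{HMcK} and \cite[Proposition 3.1(ii)]{Kaloghiros}, so the relevant comparison is with the argument there.)

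The genuine gap is exactly at the point you call the main obstacle, for the models $B_i$. The divisor $D_i:=K_Z+\pi_i^*H_i=\sum_j a_{ij}F_{ij}+(\eta_i\circ\pi_i)^*L_i$ does have ample model $\eta_i\circ\pi_i$, but it does \emph{not} lie in $\Cl$: a nonzero non-negative combination of ample classes is ample, while $\pi_i^*H_i$ is only nef, so $D_i$ cannot be written as $a_0K_Z+\sum a_jA_j$ with $a_j\ge 0$; and your proposed fix, perturbing by $\delta_iA_0$, destroys the ample model, since $K_Z+\pi_i^*H_i+\delta A_0$ is big for every $\delta>0$, hence its ample model is birational and can no longer be the fibration $B_i$. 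Adjusting by \emph{effective} exceptional divisors, as you suggest, does not help either: it neither makes the class ample nor changes the problem. The correct move (as in \cite[Lemma 4.1]{HMcK}) is a perturbation in the opposite direction: since $\pi_i$ is the blow-up of an ideal sheaf on $X_i$, there is an effective $\pi_i$-exceptional divisor $F$ with $-F$ $\pi_i$-ample, so $A_{(i)}:=\pi_i^*H_i-\delta F$ is ample for $0<\delta\ll 1$; then
\[
K_Z+A_{(i)}=(\eta_i\circ\pi_i)^*L_i+\bigl(\textstyle\sum_j a_{ij}F_{ij}-\delta F\bigr),
\]
and terminality of $X_i$ (all $a_{ij}>0$) guarantees that the bracket is effective for $\delta$ small; since it is $\pi_i$-exceptional, the projection formula gives $(\eta_i\circ\pi_i)_*\Ol_Z\bigl(m(\sum a_{ij}F_{ij}-\delta F)\bigr)=\Ol_{B_i}$, so every effective divisor in the class contains this fixed part and the ample model of $K_Z+A_{(i)}\in\Cl$ is indeed $\eta_i\circ\pi_i$. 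Without this negative exceptional twist (and the use of terminality to keep the fixed part effective), condition~\ref{CommonResZisthere} for the bases $B_i$ is not established by your argument. A last, minor, point: Lemma~\ref{lem:direct_properties}\ref{comp:model} and Lemma~\ref{lem:semiampleample}\ref{birsemiampleample} concern birational contractions, so for the non-birational morphism $\eta_i\circ\pi_i$ you should instead check Definition~\ref{def:amplesemiample} directly.
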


\begin{figure}[ht]
\[
\begin{tikzpicture}[scale=1.1,font=\small]
\node (B) at (0:0cm) {$Z$};
\foreach \x in {1,...,5}{
\node (X\x) at (\x*45+45:1.8*\Rc) {$X_{\x}$};
\draw[->] (B) to [auto,"$\pi_\x$",outer sep=-2pt,pos=.6] (X\x);
}
\node (Xt) at (360+45:1.8*\Rc) {$X_{t}$};
\draw[->] (B) to [auto,"$\pi_t$",outer sep=-2pt,pos=.6] (Xt);
\node (Xtm) at (315+45:1.8*\Rc) {$X_{t-1}$};
\draw[->] (B) to [auto,"$\pi_{t-1}$",outer sep=-2pt,pos=.6] (Xtm);
\draw[dashed,->] (X1) to [bend right=10,pos=.2,swap,"$\theta_1$"] (X2);
\draw[dashed,->] (X2) to [bend right=10,pos=.5,swap,"$\theta_2$"] (X3);
\draw[dashed,->] (X3) to [bend right=10,pos=.4,swap,"$\theta_3$"] (X4);
\draw[dashed,->] (X4) to [bend right=10,pos=.8,swap,"$\theta_4$"] (X5);
\node at (-45:1.6*\Rc) {\reflectbox{$\ddots$}};
\draw[dashed,->] (Xtm) to [bend right=10,auto,swap,"$\theta_{t-1}$",pos=.3] (Xt);
\draw[dashed,->] (Xt) to [bend right=10,auto,swap,"$\theta_t$"] (X1);
\end{tikzpicture}
\]
\caption{The commutative diagram in Proposition~\ref{pro:ResolutionZ}}
\label{fig:CommonZ}
\end{figure}

In the following discussion (and until Corollary \ref{cor:face_codim}) we work with the setting given by Proposition~\ref{pro:ResolutionZ}, that is, the commutative diagram of Figure~\ref{fig:CommonZ} and an associated choice of cone $\Cl \subset N^1(Z)$. 
Also recall that $\partial^+ \Cl \subset \Cl$ is the subset of non-big divisors.

\begin{lemma}
\label{lem:disc_or_sphere}
$\partial^+\Cl$ is the cone over a polyhedral complex homeomorphic to a disc or a sphere of dimension $\rho(Z)-2$. 
\end{lemma}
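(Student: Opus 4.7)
The polyhedral-complex structure on $\partial^+\Cl$ is free of charge: by Proposition~\ref{pro:polyhedral} together with Lemma~\ref{lem:fan_structure}, the cone $\Cl$ carries a fan structure, and $\partial^+\Cl$ is precisely the union of those faces whose relative interiors lie in a Mori chamber $\Al_i$ with $\dim Z_i<\dim Z$. So only the topological assertion requires an argument.

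My plan is to pass to an affine cross-section $P:=\Cl\cap H$, where $H\subset N^1(Z)$ is an affine hyperplane transverse to the rays of $\Cl$. Then $P$ is a compact convex body of dimension $\rho(Z)-1$, its boundary $\partial P$ is a topological sphere of dimension $\rho(Z)-2$, and $\partial^+P:=P\cap\partial\widebar{\Eff}(Z)$ is a closed subcomplex of $\partial P$ whose cone is $\partial^+\Cl$. Writing $\Cl=\Cl'\cap\widebar{\Eff}(Z)$, where $\Cl'$ is the polyhedral cone generated by $K_Z,A_1,\dots,A_s$, we have the decomposition of spheres
\[
\partial P \;=\; \partial^+P \ \cup\ (P\cap\partial\Cl'),
\]
and the dichotomy in the statement will correspond exactly to whether $\widebar{\Eff}(Z)\subseteq\Cl'$ or not.

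If $\widebar{\Eff}(Z)\subseteq\Cl'$, then $\Cl=\widebar{\Eff}(Z)$, so $\partial^+\Cl=\partial\Cl$ and $\partial^+P=\partial P$ is a topological sphere of dimension $\rho(Z)-2$. In the opposite case $P\cap\partial\Cl'$ is a non-empty closed polyhedral subset of $\partial P$, and I expect $\partial^+P$ to be a closed disc of dimension $\rho(Z)-2$. To pin this down, I would exploit that $K_Z\notin\widebar{\Eff}(Z)$: some codimension-one face $\Fl$ of the polyhedral cone $\Cl'$ containing the ray $\R_+K_Z$ is such that $\Fl\cap\widebar{\Eff}(Z)$ is a proper convex subcone of $\Fl$, and its trace in $H$ provides an open region of $\partial P$ lying strictly inside $P\cap\partial\Cl'$ and disjoint from $\partial^+P$. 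Combining this with the convexity of both $\Cl'$ and $\widebar{\Eff}(Z)$, one shows that $P\cap\partial\Cl'$ is contractible, indeed homeomorphic to a closed $(\rho(Z)-2)$-disc, and hence so is its complement $\partial^+P$ inside the sphere $\partial P$ by the Jordan--Brouwer separation theorem applied to that sphere.

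The main obstacle lies in the disc case: the naive straight-line star-shaped argument is not directly available because $\partial\Cl'$ itself is not convex. One must stratify $P\cap\partial\Cl'$ by the faces of $\Cl'$ and glue piece by piece, combining the convexity of $\widebar{\Eff}(Z)$ restricted to each face of $\Cl'$ with the combinatorics of the polyhedral cone $\Cl'$ in order to produce the required disc.
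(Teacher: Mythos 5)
Your reduction to the affine cross-section and your treatment of the sphere case (where $\widebar{\Eff}(Z)\subseteq\Cl'$, so $\Cl=\widebar{\Eff}(Z)$ and $\partial^+\Cl=\partial\Cl$) are fine, and you are right that the polyhedral-complex structure comes for free from the fan. But the disc case is exactly where the lemma has content, and there your argument stops being a proof. The key assertion ``one shows that $P\cap\partial\Cl'$ is contractible, indeed homeomorphic to a closed $(\rho(Z)-2)$-disc'' is never argued, and you yourself note that the obvious star-shaped argument fails because $\partial\Cl'$ is not convex; the stratify-and-glue strategy you gesture at is the whole difficulty. Moreover, even granting that claim, the final step is incorrect: $\partial^+P$ is \emph{not} the complement of $P\cap\partial\Cl'$ in $\partial P$ --- these are two closed sets which overlap at least along the rim where $\partial\widebar{\Eff}(Z)$ meets $\partial\Cl'$, and possibly in pieces of full dimension in $\partial P$ since the $A_i$ are not assumed generic. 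Jordan--Brouwer only gives separation; deducing that a complementary region of an embedded disc in a sphere is again a closed disc is a Schoenflies-type statement needing local flatness or a PL collar, not Jordan--Brouwer. Finally, your auxiliary step presupposes $K_Z\in\partial\Cl'$ (a codimension-one face of $\Cl'$ containing $\R_+K_Z$), which is plausible in the disc case but is not justified, and in any event only shows that $P\cap\partial\Cl'$ has nonempty interior in $\partial P$, not that it is a disc.

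The paper sidesteps all of this with a single radial projection, which you could adopt to close the gap: let $\Cl''$ be the cone spanned by the $A_i$ alone (a subcone of the ample cone), and let $\partial^+\Cl''$ be the part of its boundary visible from the point $K_Z$, i.e.\ those $D$ with $[D,K_Z]\cap\Cl''=\{D\}$. By an elementary convexity argument this visible boundary is the cone over a sphere or a disc of dimension $\rho(Z)-2$ (the sphere case occurring when $-K_Z$ is interior to $\Cl''$). Then the map sending $D\in\partial^+\Cl''$ to the unique point of $[D,K_Z]\cap\partial^+\Cl$ is well defined (the segment meets $\partial^+\Cl$ because $D\in\Cl$ while $K_Z\notin\widebar{\Eff}(Z)$, and uniquely so by convexity of $\Cl$), injective by the definition of visibility, and surjective because $\Cl$ is contained in the cone over the convex hull of $K_Z$ and the $A_i$; hence it is a homeomorphism onto $\partial^+\Cl$. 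This treats both cases at once and avoids any complement or Schoenflies argument.
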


\begin{proof}
Consider the auxiliary cone $\Cl'$ of classes of the form
\[
\sum a_i A_i \text{ where } a_i \ge 0 \text{ for all } i.
\]
In other words, $\Cl'$ is the cone over the convex hull of the $A_i$, and in particular $\Cl'$ is a closed subcone of the ample cone of $Z$.
Let $\partial^+ \Cl'$ be the points in the boundary of $\Cl'$ that are visible from the point $K_Z$. 
Formally:
\[
\partial^+ \Cl' = \left\lbrace D \in \Cl' \mid [D, K_Z] \cap \Cl' = \{D\} \right\rbrace.
\]
By an elementary convexity argument (using the fact that a closed convex set with non empty interior is homeomorphic to a ball),
this cone $\partial^+\Cl'$ is homeomorphic to the cone over a sphere or a disc of dimension $\rho(Z)-2$, the first case occurring precisely if $-K_Z$ is in the interior of $\Cl'$.
Then we have a continuous map 
\[
\begin{tikzcd}[map]
\pi \colon \partial^+\Cl' &\to& \partial^+\Cl \\
D &\mapsto& \pi(D)
\end{tikzcd}
\]
that sends $D$ to the intersection of the segment $[D,K_Z]$ with $\partial^+\Cl$. 
The intersection exists because $K_Z \not\in \widebar{\Eff}(Z)$, while $D \in \Cl$, and the intersection is unique by convexity of $\Cl$. 
The injectivity of $\pi$ follows directly from the definition of $\partial^+\Cl'$, and $\pi$ is also surjective, because by definition the cone $\Cl$ in contained in the cone over the convex hull of $K_Z$ and the $A_i$, which is the same as the cone over the convex hull of $K_Z$ and $\Cl'$.
In conclusion $\pi$ is a homeomorphism, as expected.
\end{proof}

Recall that the codimension of a face is taken relatively to the ambient space $N^1(Z)$, so in particular if $\Fl^k \subseteq \partial^+\Cl$ we have $k \ge 1$. 

By Remark \ref{rem:codim1}, a face $\Fl^1$ of codimension 1 in $\partial^+\Cl$ corresponds to a Mori fibre space, or equivalently a rank 1 fibration (Lemma \ref{lem:rank1=Mfs}).
More generally, we now prove that inner codimension $r$ faces in $\partial^+\Cl$ correspond to rank~$r$ fibrations. 

\begin{proposition} \label{pro:boundary} 
Let $\Fl^r \subseteq \partial^+ \Cl$ be an inner codimension $r$ face.
By Lemma~$\ref{lem:faces_factory}\ref{faces:inner_is_inter}$, we can write $\Fl^r = \widebar{\Al_j}\cap\widebar{\Al_i}$ with $\Al_j$ a chamber of maximal dimension and $\Al_i \subseteq \partial^+ \Cl$ the Mori chamber containing the interior of $\Fl^r$.
Then 
\begin{enumerate}
\item \label{boundary:rank} 
The associated morphism $\phi_{ji}\colon Z_j \to Z_i$ is a rank~$r$ fibration.
\item \label{boundary:through}
If $\Fl^s \subseteq \partial^+ \Cl$ is an inner codimension $s$ face and $\Fl^r\subseteq\Fl^s$, then the rank $r$ fibration associated to $\Fl^r$ from \ref{boundary:rank} factorises through the rank $s$ fibration associated to $\Fl^s$.
\end{enumerate}
\end{proposition}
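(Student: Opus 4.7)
The plan is to verify the five conditions of Definition \ref{def:rankFibration} for the morphism $\phi_{ji}\colon Z_j\to Z_i$, translating the dictionary of ample models of Proposition \ref{pro:coneC} and the face-vector-space identification of Lemma \ref{lem:faces_factory}\ref{faces:EinFji}. Condition \ref{fib:eta} is immediate: $\dim Z_j > \dim Z_i$ because $\Al_i\subseteq\partial^+\Cl$, and $\rho(Z_j/Z_i)=r$ because $\Fl^r$ spans the codimension $r$ vector space $V_{ji}$. Condition \ref{fib:singB} is Proposition \ref{pro:coneC}\ref{coneC:singularities}. The $\Q$-factoriality and terminality of $Z_j$ required in \ref{fib:singX} follow from Proposition \ref{pro:coneC}\ref{coneC:MaxDim}, since $Z$ is smooth and MMP steps from a smooth variety preserve these properties.

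The one real computation is \ref{fib:big}. I pick $D = a_0 K_Z + \sum a_i A_i \in \interior{\Fl}^r$ with $a_0 > 0$ (which exists because every nonzero point of $\partial^+\Cl$ has $a_0 > 0$, the degenerate case $\Fl^r=\{0\}$ reducing to $Z_j=Z$, for which $-K_Z$ is big by assumption). By Lemma \ref{lem:semiampleample}\ref{SemiampleFactorisation}, $\phi_{j*}D = \phi_{ji}^* H$ for some ample $H$ on $Z_i$, hence
\[
-a_0 K_{Z_j} = \sum a_i\, \phi_{j*}A_i - \phi_{ji}^* H.
\]
Each $\phi_{j*}A_i$ is the birational pushforward of an ample divisor to the $\Q$-factorial $Z_j$, hence big; Lemma \ref{lem:piBig} applied absolutely writes $\sum a_i\,\phi_{j*}A_i = A' + E'$ with $A'$ ample and $E'$ effective. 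Since $A' - \phi_{ji}^* H$ is $\phi_{ji}$-ample (the pullback term being $\phi_{ji}$-trivial), Lemma \ref{lem:piBig} used relatively concludes that $-K_{Z_j}$ is $\phi_{ji}$-big.

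The main obstacle is the Mori dream space condition \ref{fib:dream} combined with the MMP-stability clause of \ref{fib:singX}, and I would treat them together. Any $D$-MMP over $Z_i$ starting at $Z_j$ is realised by a path in the chamber decomposition that stays inside $\widebar{\Al_i}$, so its finitely many possible outputs correspond to maximal-dimension chambers of $\widebar{\Al_i}$ (which are $\Q$-factorial terminal by Proposition \ref{pro:coneC}\ref{coneC:MaxDim}) or to codimension $1$ boundary faces giving Mori fibre spaces over $Z_i$. For \ref{fib:dream}, \ref{dream:Q_factorial} is already established, \ref{dream:rat_connected} follows from Lemma \ref{lem:fibresAreTerminal} together with the Hacon-McKernan theorem on rational connectedness of klt varieties with big anticanonical divisor (applied to a general fibre, on which $-K$ is big by \ref{fib:big}), and \ref{dream:nef}, \ref{dream:mov} encode the finite subdivision of $\widebar{\Al_i}$ into maximal-dimension subchambers parametrising the small modifications of $Z_j$ over $Z_i$. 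Finite generation of $\Cox(Z_j/Z_i)$ then follows from Lemmas \ref{lem:Mds} and \ref{lem:FiniteGeneration}, inherited from the finite generation of the section algebras on $Z$ built into the definition of the fan on $\Cl$.

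For (2), the observation is that any maximal-dimension chamber whose closure contains $\Fl^s$ automatically contains the sub-face $\Fl^r$, so I may use the same $\Al_j$ for both faces and write $\Fl^s = \widebar{\Al_j}\cap\widebar{\Al_l}$. Then $\interior{\Fl}^r \subseteq \Al_i \cap \widebar{\Al_l}$ allows Proposition \ref{pro:coneC}\ref{coneC:phi_ji} to produce morphisms $\phi_{jl}\colon Z_j\to Z_l$ and $\phi_{li}\colon Z_l\to Z_i$ with connected fibres such that $\phi_{ji} = \phi_{li}\circ\phi_{jl}$, which is exactly the factorisation of the rank $r$ fibration through the rank $s$ fibration in the sense of Definition \ref{def:rankFibration}.
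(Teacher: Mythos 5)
Your verification of the easy conditions is sound and essentially matches the paper: \ref{fib:eta} via Lemma~\ref{lem:faces_factory}\ref{faces:EinFji}, \ref{fib:singB} via Proposition~\ref{pro:coneC}\ref{coneC:singularities}, and your computation for \ref{fib:big} is the paper's argument up to normalisation of $a_0$. The genuine gap is in the core of part (1), namely \ref{fib:singX} together with \ref{dream:nef}--\ref{dream:mov}. Your claim that ``any $D$-MMP over $Z_i$ starting at $Z_j$ is realised by a path in the chamber decomposition that stays inside $\widebar{\Al_i}$'' is precisely what has to be proved: an arbitrary divisor $D_j$ on $Z_j$ has no reason to pull back to a class of the form $a_0K_Z+\sum a_iA_i$ lying in $\Cl$. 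The paper bridges this by replacing $D_j$ with $\varepsilon\bigl(tD_j+(1-t)H_j\bigr)+\phi_{ji}^*H_i$: adding a large $\phi_{ji}$-trivial pullback from $Z_i$ does not change the MMP over $Z_i$, but moves the pulled-back classes into a small neighbourhood of a point $D'\in\interior{\Fl}^r$, and that neighbourhood lies in $\Cl$ exactly because the face is \emph{inner} (Remark~\ref{rem:InnerFace}). The same device is what proves \ref{dream:nef}, and \ref{dream:mov} needs a further perturbation plus the comparison of exceptional loci via Lemma~\ref{lem:faces_factory}\ref{faces:EinFji}; your proposal never invokes the innerness hypothesis at these points and simply asserts that the chambers ``encode'' the relative cones. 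The closing appeal to ``finite generation of the section algebras on $Z$ built into the definition of the fan'' is unfounded: $Z$ is not assumed to be a Mori dream space (the finiteness of the decomposition comes from finite generation of adjoint rings $K_Z+\Delta$ with $\Delta$ ample), and invoking Lemmas~\ref{lem:Mds} and \ref{lem:FiniteGeneration} after claiming \ref{dream:nef}--\ref{dream:mov} is circular. A smaller but real gap: for \ref{dream:rat_connected}, bigness of $-K$ on a general fibre is not enough for Lemma~\ref{lem:fibresRatConnected}\ref{connected:-KBigNef}, which also requires relative nefness; the paper supplies this through the $\phi_{ji}$-trivial adjoint class $\phi_{j*}D=K_{Z_j}+\phi_{j*}\Delta$ coming from Remark~\ref{rem:klt}.

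In part (2) your reduction is justified in the wrong direction. You observe that a maximal chamber whose closure contains $\Fl^s$ also contains $\Fl^r$, but what you need is that the chamber $\Al_j$ already fixed for $\Fl^r$ satisfies $\Fl^s\subseteq\widebar{\Al_j}$ (and realises $\Fl^s$ as $\widebar{\Al_j}\cap\widebar{\Al_l}$), and this can fail: in Example~\ref{ex:P2blownup} the nef chamber $\widebar{\Al_0}$ contains the vertex $\Fl^2_{05}$ but not the edge $\Fl^1_{15}$ containing it. If instead you pick a maximal chamber computing $\Fl^s$, you must still compare its model with the model $Z_j$ chosen in part (1) -- i.e.\ produce a birational \emph{contraction} between the two total spaces -- and this is the actual content of the paper's proof of \ref{boundary:through}, obtained there by running a $K_Z$-MMP with scaling of a perturbed ample divisor along a segment meeting the two chambers in the right order. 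As written, your argument only factors the morphism $Z_j\to Z_i$ through the base of the rank~$s$ fibration and implicitly assumes the two total spaces coincide, so the birational contraction required by Definition~\ref{def:rankFibration} is missing whenever they do not.
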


\begin{proof}
\ref{boundary:rank} We check the assertions of Definition~\ref{def:rankFibration}:

\ref{fib:eta}.
By Lemma~\ref{lem:faces_factory}\ref{faces:EinFji}, $\phi_{ji}\colon Z_j \to Z_i$ is a morphism with relative Picard rank equal to $r$, and $\dim Z_i < \dim Z_j$ because $\Al_i\subseteq  \partial^+ \Cl$.

\ref{fib:singB}. This is Proposition~\ref{pro:coneC}\ref{coneC:singularities}. 

\ref{fib:big}.
To show that $-K_{Z_j}$ is $\phi_{ji}$-big, we take $D\in \widebar{\Al_j}\cap\Al_i$. 
By Proposition~\ref{pro:coneC}\ref{coneC:phi_iBirational}, we have $D=K_Z+\Delta$ for some ample divisor $\Delta$, and $\phi_{j*}D \in \Nef(Z_j)$ is $\phi_{ji}$-trivial. 
By Lemma~\ref{lem:semiampleample}\ref{birsemiampleample} $\phi_j$ is a semiample model of any element of $\Al_j$.
So $\phi_j$ is a birational contraction and $\phi_{j*} K_Z=K_{Z_j}$, which we rewrite as $-K_{Z_j}=\phi_{j*}\Delta-\phi_{j*}D$. 
Since $\Delta$ is ample and $\phi_j$ is birational, the divisor $\phi_{j*} \Delta$ is big , which means we can write it as a sum of an ample and an effective divisor.
So $-K_{Z_j}$ is the sum of a $\phi_{ji}$-ample and an effective divisor and hence is $\phi_{ji}$-big by Lemma~\ref{lem:piBig}. 

\ref{fib:dream}.
We prove that $Z_j/Z_i$ is a Mori dream space:

\ref{dream:Q_factorial} and \ref{dream:rat_connected}. 
By Proposition~\ref{pro:coneC}, $Z_j$ is $\Q$-factorial terminal, $Z_i$ has rational singularities and $\dim Z_j > \dim Z_i$.
A general fibre of $\phi_{ji}$ has rational singularities by Lemma \ref{lem:fibresAreTerminal}. 
By Remark \ref{rem:klt} we can assume that $(Z,\Delta)$, and also $(Z_j,\phi_{j*}\Delta)$, are klt pairs.
By Proposition~\ref{pro:coneC}\ref{coneC:phi_ji} the divisor $K_{Z_j}+\phi_{j*} \Delta=\phi_{j*} D$ is $\phi_{ji}$-trivial. 
We have just seen that $-K_{Z_j}$ is $\phi_{ji}$-big. 
Then it follows from Lemma \ref{lem:fibresRatConnected}\ref{connected:-KBigNef} that a general fibre of $\phi_{ji}$ is rationally connected.

\ref{dream:nef}.
We show that the nef cone $\Nef(Z_j/Z_i)$ is generated by finitely many semiample divisors. 

We take $D_j\in \Nef(Z_j)$ and set  $D = \phi_j^* D_j \in N^1(Z)$. 
We choose $D' \in \interior{\Fl}^r\subseteq\Al_i\cap \widebar{\Al_j}$. 
By Remark~\ref{rem:InnerFace}, for  $t \gg 0$ we have $D + t D' \in \Cl$.
By Lemma~\ref{lem:direct_properties}\ref{comp:model}\&\ref{comp:combination}, we have $D + tD' \in\widebar{\Al_j}$. 
Since $\phi_{j*}D'$ is $\phi_{ji}$-trivial by Proposition~\ref{pro:coneC}\ref{coneC:phi_ji}, we get that 
$\phi_{j*} (D+ t D') = D_j + t\phi_{j*} D'$ is equivalent to $D_j$ in  $\Nef(Z_j/Z_i)$. 
Hence, any class in $\Nef(Z_j/Z_i)$ can be represented by a divisor in $\phi_{j*} \widebar{\Al_j}$. 
We conclude that $\Nef(Z_j/Z_i)$ is generated by finitely many divisors of the form $\phi_{j*}(K_Z+\Delta)$, where $K_Z + \Delta$ runs over the vertices of a polytope generating the cone $\widebar{\Al_j}$, and the $\phi_{j*}(K_Z+\Delta)$ are semiample by Proposition~\ref{pro:coneC}\ref{coneC:phi_iBirational}.

\ref{dream:mov}. 
Let $D_j \in \IntMov(Z_j)$, in particular $D_j$ is big.
Set $D = \phi_j^* D_j$ and pick $D'\in\interior{\Fl}^r\subseteq\Al_i\cap \widebar{\Al_j}$.
By Remark~\ref{rem:InnerFace}, for  $t \gg 0$ we have $\hat D := D + t D' \in \Cl$.

Replacing $D$ by an arbitrary close class in $\Cl$ we can assume that $\hat D = D + tD' \in \Al_k$ where $\Al_k$ is of maximal dimension. We also replace $D_j$ by $\phi_{j*}D$, which is a small perturbation of the initial class hence still in $\IntMov(Z_j)$. We keep the same notation for simplicity. (Observe that after perturbation we lose the property  $D = \phi_j^* D_j$, but we will not need it).
By finiteness of the chamber decomposition, $\Al_k$ does not depend on the choice of the large real $t$, which also implies $D'\in \widebar{\Al_k}\cap\Al_i$.
So we have $\Fl_{ji} \subseteq \Fl_{ki}$, hence a similar inclusion for the vector subspaces spanned by these faces.
By Lemma~\ref{lem:faces_factory}\ref{faces:EinFji} this implies that all divisors contracted by $\phi_j$ are also contracted by $\phi_k$, hence $f_k:= \phi_k \circ \phi_j^{-1}\colon Z_j \rat Z_k$ is a birational contraction.

As above $D_j$ and $\hat D_j := D_j + t\phi_{j*} D'$ represent the same class in $N^1(Z_j/Z_i)$.
Moreover by Lemma~\ref{lem:semiampleample}\ref{SemiampleFactorisation} we have $\phi_{j*} D' = \phi_{ji}^* D_i$, and the pull-back of an ample divisor being movable we have $\phi_{j*} D' \in \widebar{\Mov}(Z_j)$.
So we have $\hat D_j \in \IntMov(Z_j)$, and $\phi_{j*} \hat D = \hat D_j$ with $\hat D \in \Al_k$.

By Lemma \ref{lem:direct_properties}\ref{comp:composition}, the birational contraction $f_k\colon Z_j \rat Z_k$ is the ample model of $\hat D_j$.
Since $\hat D_j \in \IntMov(Z_j)$, its ample model $f_k$ is a pseudo-isomorphism.
Finally $\hat D_j \in f_k^*(\Ample(Z_k/Z_i))$ where $Z_k$ is $\Q$-factorial, and by taking closures we obtain 
\[\widebar{\Mov}(Z_j/Z_i) \subseteq \bigcup f_l^*(\Nef(Z_l/Z_i))\]
for some finite collection of pseudo-isomorphisms $f_l\colon Z_j \ps Z_l$ over $Z_i$ to $\Q$-factorial varieties.

For the other inclusion, we note that for any pseudo-isomorphism $f_l\colon Z_j\ps Z_l$ over $Z_i$, we have $f_l^*\Ample(Z_l/Z_i)\subset \Mov(Z_l/Z_i)$ and the claim follows by taking closures.

\ref{fib:singX}.
Let $D_j\in N^1(Z_j)$ be a divisor.
We now show that the output of any $D_j$-MMP from $Z_j$ over $Z_i$ can be obtained by running a $K_Z$-MMP from $Z$.  
Let $D'\in\Fl^r\subseteq\widebar{\Al_j}$. 
Then by Proposition~\ref{pro:coneC}\ref{coneC:phi_iBirational}, $\phi_j$ is a semiample model of $D'$, $\phi_i$ is its ample model, and by Lemma~\ref{lem:semiampleample}\ref{SemiampleFactorisation} $\phi_{j*} D'=\phi_{ji}^* H_i$ for some ample divisor $H_i$ on $Z_i$.
To run a $D_j$-MMP from $Z_j$ over $Z_i$, we pick $H_j\in\Ample(Z_j)$ and consider all pseudo-effective convex combinations $D_t:=\varepsilon(tD_j+(1-t)H_j)+\phi_{ij}^* H_i$ for some $1\gg\varepsilon>0$. 
The set of the $\phi_j^* D_t$ is a segment in a small neighborhood of $D'$ inside $\Cl$. 
Therefore, any intermediate variety in this $D_j$-MMP over $Z_i$ can be obtained by running a $K_Z$-MMP from $Z$. 
In particular the output of this MMP has the form $\Proj H^0(Z_j, D_{t_0}) = \Proj H^0(Z, \phi_j^* D_{t_0})$ for some $t_0 \in (0,1)$,
and by Proposition \ref{pro:coneC}\ref{coneC:MaxDim}, this is a $\Q$-factorial and terminal variety, as expected. 

\ref{boundary:through} (Analogous to \cite[Proposition 3.10(2)]{LZ17}): 
Let $\Al_i,\Al_k\subseteq\partial^+\Cl$ be the chambers containing the interior of $\Fl^r$, $\Fl^s$ respectively. By Lemma~\ref{lem:faces_factory}\ref{faces:inner_is_inter} there exist maximal chambers $\Al_j$ and $\Al_l$ such that $\Fl^r=\widebar{\Al_j}\cap\widebar{\Al_i}$ and $\Fl^s=\widebar{\Al_l}\cap\widebar{\Al_k}$.  
Since moreover $\Fl^r \subseteq \Fl^s$ implies that $\widebar{\Al_l} \cap \Al_i \neq \emptyset$, by Proposition~\ref{pro:coneC}\ref{coneC:phi_ji} we have a commutative diagram induced by the maps from $Z$:
\[
\begin{tikzcd}[link]
Z_j \ar[r,dashed] \ar[dd] & Z_l \ar[dd] \ar[ddl] \\ \\
Z_i & Z_k \ar[l]
\end{tikzcd}
\]
We want to prove that the birational map $Z_j \rat Z_l$ is a birational contraction.

Let $D\in\interior{\Fl}^r \subseteq \Al_i$. 
There exists an ample class $\Delta\in\Cl$ and $t_1 > 0$ such that $D = (1-t_1)\Delta+t_1 K_Z$. 
For $t_1 > t_0 > 0$ sufficiently close to $t_0$, any chamber of maximal dimension $\Al_{j_0}$ such that $ (1-t_0)\Delta+t_0 K_Z \in \Al_{j_0}$ satisfies $\Fl^r \subset \widebar{\Al_{j_0}}$. 
Now there exists a small perturbation $\Delta'$ of $\Delta$ such that the segment $[\Delta',K_Z]$ meets successively a chamber $\Al_{j_0}$ and then the chamber $\Al_l$. Indeed,  $t_1>t_0$ and the ordering is preserved under a small perturbation.
Up to replacing $j$ by this $j_0$, by Remark \ref{rem:MMPscaling} this segment corresponds to a $K_Z$-MMP with scaling of $\Delta'$, and provides the expected birational contraction from $Z_j$ to $Z_l$. 
\end{proof}

\begin{figure}[ht]
\begin{align*}
\begin{tikzpicture}[scale=4.5,font=\footnotesize,baseline=(-K)]
\coordinate (E1) at (0:0) {};
\coordinate (E2) at (0:1) {};
\coordinate (P) at (60:1) {};
\coordinate (H) at (barycentric cs:E1=1,E2=1,P=1) {};
\coordinate (P1) at (barycentric cs:E1=0,E2=1,P=1) {};
\coordinate (P2) at (barycentric cs:E1=1,E2=0,P=1) {};
\coordinate (-K) at (barycentric cs:E1=2,E2=2,P=3) {};
\draw (E1) to ["\scriptsize $\Fl^1_{15}$",xshift=1] (P2)
      (P2) to ["\scriptsize $\Fl^1_{35}$",xshift=1] (P)
      (P) to ["\scriptsize $\Fl^1_{36}$",xshift=-1] (P1)
      (P1) to ["\scriptsize $\Fl^1_{26}$",xshift=-1] (E2)
      (E2) to ["\scriptsize $\Fl^1_{47}$",yshift =0.3mm] (E1);
\draw (E1) to [swap,"\scriptsize $\Fl^1_{14}$",yshift =1mm,xshift=-0.5](H)
      (E2) to ["\scriptsize $\Fl^1_{24}$",yshift =1mm,xshift=0.5](H)
      (H) to ["\scriptsize $\Fl^1_{01}$",yshift =1mm,xshift=0.5] (P2)
      (P1) to ["\scriptsize $\Fl^1_{02}$",yshift =1mm,xshift=-0.5] (H)
      (P1) to ["\scriptsize $\Fl^1_{03}$",swap,yshift=-.5] (P2);
\node at (H) {$\bullet$};
\node at (P1) {$\bullet$};
\node at (P2) {$\bullet$};
\node at (E1) {$\bullet$};
\node at (E2) {$\bullet$};
\node at (P) {$\bullet$};
\node[left] at (E1) {\scriptsize $\Fl^2_{17}$};
\node[left] at (P2) {\scriptsize $\Fl^2_{05}$};
\node[above] at (P) {\scriptsize $\Fl^2_{37}$};
\node[right] at (P1) {\scriptsize $\Fl^2_{06}$};
\node[right] at (E2) {\scriptsize $\Fl^2_{27}$};
\node[below,yshift=1] at (H) {\scriptsize $\Fl^2_{04}$};
\node[below,yshift=-18] at (H) {\scriptsize $\Fl^0_4\!=\!\widebar{\Al_4}$};
\node[below left,xshift=-10,yshift=3] at (H) {\scriptsize  $\Fl^0_1\!=\!\widebar{\Al_1}$};
\node[above,yshift=20] at (-K) {\scriptsize $\Fl^0_3\!=\!\widebar{\Al_3}$};
\node[below right,xshift=10,yshift=3] at (H) {\scriptsize $\Fl^0_2\!=\!\widebar{\Al_2}$};
\node[above,yshift=5] at (H) {\scriptsize $\Fl^0_0\!=\!\widebar{\Al_0}$};
\end{tikzpicture}
&&
\begin{tikzpicture}[scale=4.5,font=\footnotesize,baseline=(-K)]
\coordinate (E1) at (0:0) {};
\coordinate (E2) at (0:1) {};
\coordinate (P) at (60:1) {};
\coordinate (H) at (barycentric cs:E1=1,E2=1,P=1) {};
\coordinate (P1) at (barycentric cs:E1=0,E2=1,P=1) {};
\coordinate (P2) at (barycentric cs:E1=1,E2=0,P=1) {};
\coordinate (-K) at (barycentric cs:E1=2,E2=2,P=3) {};
\draw (E1) to ["\scriptsize $Z_1/Z_5\!=\!\p^1$",xshift=1] (P2)
      (P2) to ["\scriptsize $Z_3/Z_5\!=\!\p^1$",xshift=1] (P)
      (P) to ["\scriptsize $Z_3/Z_6\!=\!\p^1$",xshift=-1] (P1)
      (P1) to ["\scriptsize $Z_2/Z_6\!=\!\p^1$",xshift=-1] (E2)
      (E2) to ["\scriptsize $Z_4/Z_7\!=\!\pt$",yshift =0.3mm] (E1);
\draw (E1) to (H)
      (E2) to (H)
      (H) to (P2)
      (P1) to (H)
      (P1) to (P2);
\node at (H) {$\bullet$};
\node at (P1) {$\bullet$};
\node at (P2) {$\bullet$};
\node at (E1) {$\bullet$};
\node at (E2) {$\bullet$};
\node at (P) {$\bullet$};
\node[below] at (E1) {\scriptsize $Z_1/Z_7\!=\!\pt$};
\node[left] at (P2) {\scriptsize $Z_0/Z_5\!=\!\p^1$};
\node[above] at (P) {\scriptsize $Z_3/Z_7\!=\!\pt$};
\node[right] at (P1) {\scriptsize $Z_0/Z_6\!=\!\p^1$};
\node[below] at (E2) {\scriptsize $Z_2/Z_7\!=\!\pt$};
\node[below,yshift=-18] at (H) {\scriptsize $Z_4\!=\!\p^2$};
\node[below left,xshift=-10,yshift=3] at (H) {\scriptsize  $Z_1\!=\!\F_1$};
\node[above,yshift=20] at (-K) {\scriptsize $Z_3\!=\!\F_0$};
\node[below right,xshift=10,yshift=3] at (H) {\scriptsize $Z_2\!=\!\F_1$};
\node[above,yshift=5] at (H) {\scriptsize $Z_0\!=\!Z$};
\end{tikzpicture}
\end{align*}
\caption{rank~$r$ fibrations in Example~\ref{ex:P2blownup}.} \label{fig:facesAndFib}
\end{figure}

\begin{example}
\label{ex:facesAndFib}
On Figure~\ref{fig:facesAndFib} we label the boundary faces from Example~\ref{ex:P2blownup} with their corresponding rank~$r$ fibration, as given by Proposition~\ref{pro:boundary} ($r = 1$ or $2$ here).
We also indicate the images of ample models corresponding to chambers of maximal dimension.
\end{example}

Applying Proposition \ref{pro:boundary} to faces of codimension 2 or 3, we obtain the following corollary.
Observe that the first point is well known (see e.g. \cite[Theorem 3.7]{HMcK}), and the second one is a natural generalisation.
 
\begin{corollary} \label{cor:face_codim}~
\begin{enumerate}
\item \label{cor:face_codim_2}
If the intersection $\Fl_i^1\cap\Fl_j^1$ of non-big codimension $1$ faces has codimension~$2$, then there is a Sarkisov link between the corresponding Mori fibre spaces.

\item \label{cor:face_codim_3}
Let $\Fl^3$ be a face in $\partial^+\Cl$ of codimension $3$ 
and $T/B$ be the associated rank~$3$ fibration, as given in Proposition~$\ref{pro:boundary}$. 
Then the elementary relation associated to $T/B$ corresponds to the finite collection of codimension $1$ faces $\Fl_1^1,\dots,\Fl_s^1$ containing $\Fl^3$, and ordered such that $\Fl_j^1$ and $\Fl_{j+1}^1$ share a codimension $2$ face for all $j$ $($where indexes are taken modulo $s)$.
\end{enumerate}
\end{corollary}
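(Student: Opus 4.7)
The plan is to exploit the correspondence between inner faces of $\partial^+\Cl$ and rank $r$ fibrations established in Proposition~\ref{pro:boundary}, combined with Lemma~\ref{lem:rank 2 and link} (each rank $2$ fibration factorises through exactly two Mori fibre spaces) and Proposition~\ref{pro:from T3} (every rank $3$ fibration produces an elementary relation among Sarkisov links).

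For part~\ref{cor:face_codim_2}, the intersection $\Fl^2 := \Fl_i^1 \cap \Fl_j^1$ is an inner codimension $2$ face contained in $\partial^+\Cl$, and by Proposition~\ref{pro:boundary}\ref{boundary:rank} it corresponds to a rank $2$ fibration $Y/B$. Lemma~\ref{lem:rank 2 and link} exhibits $Y/B$ as a Sarkisov diagram with two endpoints, joined by the associated Sarkisov link. The inclusions $\Fl^2 \subseteq \Fl_i^1$ and $\Fl^2 \subseteq \Fl_j^1$, together with Proposition~\ref{pro:boundary}\ref{boundary:through}, force $Y/B$ to factorise through the Mori fibre spaces attached to $\Fl_i^1$ and $\Fl_j^1$; since these are distinct, they are precisely the two endpoints of the Sarkisov link.

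For part~\ref{cor:face_codim_3}, I would first use Proposition~\ref{pro:boundary} to set up a bijective correspondence between inner codimension $r$ faces of $\partial^+\Cl$ containing $\Fl^3$ and rank $r$ fibrations dominated by $T/B$, for $r = 1, 2$. The proof of Proposition~\ref{pro:from T3} constructs a bicolored circular graph whose vertices are the rank $1$ and rank $2$ fibrations dominated by $T/B$, with edges recording factorisations, and the cyclic order on this graph produces the elementary relation $\chi_s \circ \cdots \circ \chi_1 = \id$. Transporting this cyclic structure through the bijection yields a cyclic ordering $\Fl^1_1,\ldots,\Fl^1_s$ of the codimension $1$ faces containing $\Fl^3$, such that consecutive $\Fl^1_j$ and $\Fl^1_{j+1}$ both contain a common codimension $2$ face $\Fl^2_j$, and by part~\ref{cor:face_codim_2} this codimension $2$ face corresponds exactly to the Sarkisov link $\chi_j$ appearing in the relation.

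The hard part will be establishing the bijectivity of the above correspondence. The surjectivity side is immediate from Proposition~\ref{pro:boundary}\ref{boundary:through}, but injectivity requires checking that two distinct inner faces of $\partial^+\Cl$ containing $\Fl^3$ cannot give rise to equivalent rank $r$ fibrations dominated by $T/B$. This should follow from the indexing of inner faces as $\Fl_{ji} = \widebar{\Al_j} \cap \widebar{\Al_i}$ in Notation~\ref{not:Fij} together with the uniqueness of ample models (Lemma~\ref{lem:semiampleample}\ref{uniqueAmple}), which ensure that the rank $r$ fibration $Z_j \to Z_i$ attached to $\Fl_{ji}$ determines the face via its exceptional divisors and the pulled-back nef cone of $Z_i$ (Lemma~\ref{lem:faces_factory}\ref{faces:EinFji}).
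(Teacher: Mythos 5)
Your proposal is correct and takes essentially the same route as the paper: part (1) is Proposition~\ref{pro:boundary} combined with the definition of a Sarkisov link via Lemma~\ref{lem:rank 2 and link}, and part (2) is the circular graph of Proposition~\ref{pro:from T3} transported through the face--fibration correspondence of Proposition~\ref{pro:boundary}, using part (1) to attach a link to each pair of codimension $1$ faces sharing a codimension $2$ face. The injectivity concern you raise is left implicit in the paper, whose proof simply calls part (2) a rephrasing of Proposition~\ref{pro:from T3}; your sketch via Notation~\ref{not:Fij}, uniqueness of ample models and Lemma~\ref{lem:faces_factory} is a reasonable way to make that point explicit.
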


\begin{proof}
\ref{cor:face_codim_2}
By Proposition~\ref{pro:boundary} there is a rank~$2$ fibration corresponding to the codimension $2$ face $\Fl^2:=\Fl_i^1\cap\Fl_j^1$ that factorises through the rank~$1$ fibrations associated to $\Fl_i^1$ and $\Fl_j^1$. 
This is exactly the definition of a Sarkisov link (Definition~\ref{def:sarkisovLink}).

\ref{cor:face_codim_3}
This is just a rephrasing of Proposition~\ref{pro:from T3}, using Proposition~\ref{pro:boundary} to associate a rank~$1$ or $2$ fibration dominated by $T/B$ to each codimension $1$ or $2$ face containing $\Fl^3$, and using \ref{cor:face_codim_2} to associate a Sarkisov link to each pair of codimension 1 faces sharing a common codimension 2 face.
\end{proof}

Let $X/B$ be a terminal Mori fibre space.
We denote by $\BirMori(X)$ the groupoid of birational maps between terminal Mori fibre spaces birational to $X$.
The group of birational selfmaps $\Bir(X)$ is a subgroupoid of $\BirMori(X)$.
The motivation for introducing the notion of elementary relation is the following result. 
The first part is a reformulation of \cite[Theorem 1.1]{HMcK}. 
The second part is strongly inspired by \cite[Theorem 1.3]{Kaloghiros}, observe however that our notion of elementary relation is more restrictive, and so Theorem \ref{thm:sarkisov}\ref{sarkisov2} does not follow from \cite{Kaloghiros}.

In the statement we use the formalism of presentations by generators and relations for groupoids. 
This is very similar to the more familiar setting of groups: we have natural notions of a free groupoid, and of a normal subgroupoid generated by a set of elements.
We refer to \cite[\S8.2 and 8.3]{Brown} for details.

\begin{theorem}\label{thm:sarkisov} 
Let $X/B$ be a terminal Mori fibre space. 
\begin{enumerate}
\item\label{sarkisov1} The groupoid $\BirMori(X)$ is generated by Sarkisov links and automorphisms.
\item\label{sarkisov2} Any relation between Sarkisov links in $\BirMori(X)$ is generated by elementary relations.
\end{enumerate}
\end{theorem}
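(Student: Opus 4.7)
Part \ref{sarkisov1} is essentially a rephrasing of the Sarkisov programme established in \cite{HMcK}: any birational map between terminal Mori fibre spaces decomposes as a finite product of Sarkisov links, and factorising first by an isomorphism accounts for the case where source and target coincide. Since rank~$1$ fibrations are exactly terminal Mori fibre spaces (Lemma~\ref{lem:rank1=Mfs}) and rank~$2$ fibrations exactly encode Sarkisov links (Lemma~\ref{lem:rank 2 and link}), no additional work is needed. The remainder of this plan focuses on Part \ref{sarkisov2}.

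Start with a relation $\chi_t \circ \cdots \circ \chi_1 = \id_{X_1}$ of Sarkisov links in $\BirMori(X)$, set $X_{t+1} = X_1$, and apply Proposition~\ref{pro:ResolutionZ} to obtain a smooth common resolution $Z$, birational morphisms $\pi_i \colon Z \to X_i$, and a cone $\Cl \subseteq N^1(Z)$ whose Mori chambers contain ample models for each $\pi_i$ and each $\eta_i \circ \pi_i$. By Proposition~\ref{pro:boundary} and Corollary~\ref{cor:face_codim}\ref{cor:face_codim_2}, each Mori fibre space $X_i/B_i$ corresponds to a codimension~$1$ inner face $\Fl^1_i \subseteq \partial^+ \Cl$, and each Sarkisov link $\chi_i$ corresponds to a codimension~$2$ inner face $\Fl^2_i \subseteq \Fl^1_i \cap \Fl^1_{i+1}$. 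The given relation thus translates into a closed combinatorial loop
\[ \Fl^1_1 \, \supseteq \, \Fl^2_1 \, \subseteq \, \Fl^1_2 \, \supseteq \, \Fl^2_2 \, \subseteq \, \cdots \, \supseteq \, \Fl^2_t \, \subseteq \, \Fl^1_1 \]
in the dual complex of the polyhedral decomposition of $\partial^+\Cl$ given by Proposition~\ref{pro:polyhedral}.

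By Lemma~\ref{lem:disc_or_sphere}, $\partial^+\Cl$ is the cone over a polyhedral complex homeomorphic to a disc or sphere of dimension $\rho(Z) - 2$. If necessary, we can perform additional blow-ups of $Z$ along smooth centres disjoint from the indeterminacy loci of the $\pi_i$ and $\eta_i \circ \pi_i$; this preserves the existence of the ample models and increases $\rho(Z)$, so we may assume $\rho(Z) \ge 4$, which ensures that the underlying polyhedral complex is simply connected. Hence the loop formed by the $\Fl^1_i$ and $\Fl^2_i$ is null-homotopic in the $2$-skeleton of the dual complex, and can be decomposed as a finite concatenation of boundaries of $2$-cells, i.e.\ elementary loops around codimension~$3$ inner faces of $\partial^+\Cl$. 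By Proposition~\ref{pro:boundary}\ref{boundary:rank} and Corollary~\ref{cor:face_codim}\ref{cor:face_codim_3}, each such $2$-cell corresponds to a rank~$3$ fibration and its boundary is exactly an elementary relation in the sense of Definition~\ref{def:elementary relation}. This expresses our original relation as a product of conjugates of elementary relations, as required.

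\textbf{Main obstacle.} The delicate step is justifying the passage from topological simple-connectedness of $\partial^+\Cl$ to an identity in the free groupoid of Sarkisov links modulo elementary relations. One must verify that a null-homotopy can be realised combinatorially inside the polyhedral $2$-skeleton, taking care of low-dimensional pathologies: codimension~$2$ faces lying on the boundary of $\Cl$ but not on $\partial^+\Cl$ (which could a priori produce ``non-inner'' $2$-cells not of the elementary form), and possible identifications of different links giving the same codimension~$2$ face. The passage to sufficiently large $\rho(Z)$, together with Lemma~\ref{lem:faces_factory} identifying each inner face with a relative cone of Picard rank equal to its codimension, should allow us to restrict attention to inner faces and conclude. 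This is precisely the place where our definition of elementary relation (restricted to rank~$3$ fibrations) is more delicate than Kaloghiros's, so we cannot simply quote \cite[Theorem 1.3]{Kaloghiros} but have to rerun her argument in our setting.
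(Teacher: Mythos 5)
Your plan follows essentially the same route as the paper's proof: resolve the relation on a common smooth model $Z$ (Proposition~\ref{pro:ResolutionZ}), blow up points so that $\rho(Z)\ge 4$, identify Mori fibre spaces, Sarkisov links and elementary relations with inner faces of codimension $1$, $2$, $3$ in $\partial^+\Cl$ via Proposition~\ref{pro:boundary} and Corollary~\ref{cor:face_codim}, and contract the loop given by the relation using simple connectedness of the boundary complex. The ``main obstacle'' you flag is precisely where the paper does its remaining work: it forms the simplicial complex on barycenters of codimension $\le 3$ faces (the barycentric subdivision of the $2$-skeleton of the dual complex of $\Sl$), restricts to the subcomplex of inner faces, observes that this subcomplex is a deformation retract of the interior of the ambient complex and hence still simply connected, and then the null-homotopy of the loop decomposes cell by cell around type-$3$ vertices into conjugates of elementary relations.
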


\begin{proof}
\ref{sarkisov1} is the main result of \cite{HMcK}. 
The idea of the proof is to take $Z$ a resolution of a given birational map $\phi\colon X_1/B_1 \rat X_2/B_2$, and to consider the cone $\Cl$ with a choice of ample divisors as given by Proposition~\ref{pro:ResolutionZ} (applied with $t = 2$, $\theta_1 = \phi$, $\theta_2 = \phi^{-1}$). 
Then one takes a general 2-dimensional affine slice of $\Cl$ that passes through the codimension 1 faces associated to $X_1/B_1$ and $X_2/B_2$. 
The intersection of this slice with $\partial^+ \Cl$ is a polygonal path corresponding to successive pairwise neighbour codimension 1 faces, and by Corollary~\ref{cor:face_codim}\ref{cor:face_codim_2} this gives a factorisation of $\phi$ into Sarkisov links.  

\ref{sarkisov2}.
The proof is essentially the same as in \cite[Proposition 3.15]{LZ17}, we repeat the argument for the convenience of the reader. 

Let 
\[
X_0/B_0 \stackrel{\chi_1}\longrat X_1/B_1 \stackrel{\chi_2}\longrat \dots \stackrel{\chi_t}\longrat X_t/B_t
\]
be a relation between $t$ Sarkisov links, meaning that $\chi_t \circ \cdots \circ \chi_1$ is the identity on $X_0 = X_t$.
We take a smooth resolution $Z$ dominating all the $X_i/B_i$, and consider the cone $\Cl \subset N^1(Z)$ constructed from a choice of ample divisors as in Proposition~\ref{pro:ResolutionZ}.
We may assume $\rho(Z) \ge 4$ (otherwise we simply blow-up some points on $Z$), so that by Lemma~\ref{lem:disc_or_sphere} the non-big boundary $\partial^+ \Cl$ is a cone over a polyhedral complex $\Sl$ homeomorphic to a disc or a sphere of dimension $\rho(Z) - 2 \ge 2$.
In particular, the section $\Sl$ is simply connected.
Now we construct a 2-dimensional simplicial complex $\Bl$ embedded in $\Sl$ as follows.
Vertices are the barycenters $p(\Fl^k)$ of codimension $k$ faces $\Fl^k$ for $k = 1, 2$ or $3$. 
We call $k$ the type of the vertex.
We put an edge between $p(\Fl^j)$ and $p(\Fl^k)$ if $\Fl^j$ is a proper face of $\Fl^k$, and a 2-simplex for each sequence $\Fl^3 \subset \Fl^2 \subset \Fl^1$.
The complex $\Bl$ is homeomorphic to the barycentric subdivision of the $2$-skeleton of the dual cell complex of $\Sl$.
It follows that $\Bl$ is simply connected (recall that the 2-skeleton of a simply connected complex is again simply connected, see e.g. \cite[Corollary 4.12]{Hatcher}).
Then we restrict to the subcomplex $\Il \subseteq \Bl$ corresponding to inner faces, which are the ones that intersect the relative interior of $\Sl$.
The simplicial complex $\Il$ is a deformation retract of the interior of $\Bl$, so $\Il$ again is simply connected.
By Proposition~\ref{pro:boundary} we can associate a rank~$r$ fibration to each vertex of type~$r$, and two vertices are connected by an edge if and only if the corresponding fibrations factorise through each other.
By Corollary~\ref{cor:face_codim}\ref{cor:face_codim_3}, around each vertex of type 3 there is a unique disc whose boundary loop encodes an elementary relation.
The $2$-dimensional components of the complex $\Il$ are unions of these discs.
The initial relation corresponds to a loop in $\Il$ that only passes through vertices of types 1 and 2.
We can realise the homotopy of this loop to the constant loop inside the simply connected complex $\Il$ by using these elementary relations, and this translates as a factorisation of the initial relation as a product of conjugates of elementary relations.
\end{proof}

The whole construction leading to the previous theorem can be made in a relative setting, that is, where all involved varieties admit a morphism to a fixed base variety $B$.
In fact the paper \cite{BCHM} on which relies \cite{HMcK} is written with this level of generality.
In the particular case where the base $B$ has dimension $n-1$, we obtain the following statement, slightly more precise than Theorem~\ref{thm:sarkisov}\ref{sarkisov1}.

\begin{figure}[ht]
\[
\begin{tikzcd}[column sep=1.2cm,row sep=0.16cm]
X=X_0\ar[rrrr,dashed,"\phi", bend left=13] \ar[dd,"\eta_X",swap] \ar[r,dashed,"\chi_1"] & X_1\ar[dd] \ar[r,dashed,"\chi_2"] & \cdots \ar[r,dashed,"\chi_{t-1}"]& X_{t-1} \ar[r,dashed,"\chi_t"]\ar[dd]  & X_t=Y\ar[dd,"\eta_Y"]  \\ \\
B_0=B\ar[drr,"\id",swap] & B_1\ar[dr]& \cdots& B_{t-1} \ar[dl]  & B_t=B\ar[dll,"\id"]  \\
&& B
\end{tikzcd}
\]
\caption{The diagram of Lemma \ref{lem:SarkisovOverBase}} \label{fig:SarkisovOverBase}
\end{figure}

\begin{lemma}\label{lem:SarkisovOverBase}
Let $\eta_X\colon X\to B$ and $\eta_Y\colon Y\to B$ be two conic bundles over the same base. 
Then any birational map $\phi\colon X\rat Y$ over $B$ decomposes into a sequence of Sarkisov links of conic bundles over $B$. More precisely, we have a commutative diagram as in Figure~$\ref{fig:SarkisovOverBase}$,
such that for each $i=1,\dots,t$, $B_i/B$ is a birational morphism, $X_i/B_i$ is a conic bundle and $\chi_i$ is a Sarkisov link. 
\end{lemma}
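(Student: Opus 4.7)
The plan is to reduce this statement to a relative version of Theorem~\ref{thm:sarkisov}\ref{sarkisov1} obtained by running the entire Sarkisov machinery over $B$ rather than over $\operatorname{pt}$. Recall, as the paper points out, that \cite{BCHM}, \cite{HMcK} and \cite{Kaloghiros} are all written relatively; what has to be done is to translate the cone-of-divisors arguments of \S\ref{sec:kaloghiros} verbatim to $N^1(Z/B)$.

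First I would fix a smooth projective variety $Z$ together with birational morphisms $Z\to X$ and $Z\to Y$ which are morphisms over $B$, so that $\phi$ is resolved by $Z$ above $B$. The relative version of Proposition~\ref{pro:ResolutionZ} (applied with $t=2$, $\theta_1=\phi$, $\theta_2=\phi^{-1}$) then gives ample $\Q$-divisors $A_1,\dots,A_m$ on $Z$ whose classes span $N^1(Z/B)$ and such that, setting
\[
\Cl=\Bigl\{\,a_0K_Z+\sum_{i=1}^m a_iA_i\ \Bigm|\ a_i\ge0\Bigr\}\cap\widebar{\Eff}(Z/B)\subseteq N^1(Z/B),
\]
both $\eta_X\colon Z\rat X\to B$ and $\eta_Y\colon Z\rat Y\to B$ are the ample models of elements of $\Cl$. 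The relative analogues of Propositions~\ref{pro:coneC} and~\ref{pro:boundary} then hold with identical proofs (using relative MMP over $B$ in place of absolute MMP), so $\Cl$ admits a polyhedral chamber decomposition and $\partial^+\Cl$ is the cone over a simply connected (hence disc- or sphere-like) polyhedral complex whose codimension~$k$ inner faces biject with the rank~$k$ fibrations over $B$ dominated by $Z$.

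Next I would check that every codimension~$1$ face of $\partial^+\Cl$ corresponds to a conic bundle over $B$ in the sense of the lemma; this is the only point requiring an argument specific to the case $\dim B=n-1$. If $X_i/B_i$ is the rank~$1$ fibration associated with such a face, the induced morphism $X_i\to B$ factorises as $X_i\to B_i\to B$. Since $Z\to B$ has relative dimension $1$, and every step of a relative MMP over $B$ (divisorial contraction, flip, or Mori fibre contraction over some intermediate base dominating $B$) preserves this property, $X_i\to B$ also has relative dimension $1$. Combined with $\dim B\le \dim B_i<\dim X_i=n$ and $\dim B=n-1$, this forces $\dim B_i=n-1$; together with $\rho(X_i/B_i)=1$ and Lemma~\ref{lem:B'toB} (or directly, the klt condition on $B_i$ and $B$), the morphism $B_i\to B$ is then a birational morphism. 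Thus each codimension~$1$ face does correspond to a conic bundle over $B$, and by Corollary~\ref{cor:face_codim}\ref{cor:face_codim_2} each pair of codimension~$1$ faces sharing an inner codimension~$2$ face yields a Sarkisov link of conic bundles which is automatically over $B$.

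The proof is then finished by the two-dimensional slice trick exactly as in Theorem~\ref{thm:sarkisov}\ref{sarkisov1}: the codimension~$1$ faces $\Fl_X,\Fl_Y\subset\partial^+\Cl$ corresponding to $\eta_X$ and $\eta_Y$ lie in a general $2$-dimensional affine slice of $\Cl$ (after, if needed, blowing up $Z$ to ensure $\rho(Z/B)$ is large enough); this slice cuts $\partial^+\Cl$ in a polygonal path whose edges list a sequence of codimension~$1$ faces, i.e.\ of conic bundles $X_0/B_0,\dots,X_t/B_t$ with $X_0=X$, $X_t=Y$, and $B_i/B$ birational, and whose vertices list a sequence of Sarkisov links $\chi_i\colon X_{i-1}\rat X_i$ between them. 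The resulting diagram of Figure~\ref{fig:SarkisovOverBase} commutes by construction of the ample models, which gives the required factorisation of $\phi$.

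The hard part, and the step I would double-check most carefully, is the dimension count in the second paragraph: one must be sure that the (relative) output of each step of a $K_Z$-MMP with scaling over $B$ still admits a morphism to $B$ with connected one-dimensional generic fibre, so that the resulting rank~$1$ fibration $X_i/B_i$ is genuinely a conic bundle with $B_i\to B$ birational, not a fibration to some lower dimensional intermediate base.
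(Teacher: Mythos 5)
Your proposal is correct and takes essentially the same route as the paper: the paper gives no separate proof of Lemma~\ref{lem:SarkisovOverBase}, justifying it only by the remark that the whole construction of \S\ref{sec:kaloghiros} (Proposition~\ref{pro:ResolutionZ}, the geography of ample models, and the slice argument of Theorem~\ref{thm:sarkisov}\ref{sarkisov1}) can be run relatively over $B$, which is exactly what you carry out, together with the dimension count showing that for $\dim B=n-1$ every intermediate Mori fibre space is a conic bundle over a base birational to $B$. One small repair: the birationality of $B_i\to B$ does not follow from Lemma~\ref{lem:B'toB} (which treats the case $\dim B_i>\dim B$) nor from klt-ness of the bases; it follows because the generic fibre of $X_i\to B$ is irreducible (being birational over $B$ to the conic generic fibre of $X/B$), hence maps to a single point of the generic fibre of the generically finite surjection $B_i\to B$, forcing that morphism to have degree one.
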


\subsection{Examples of elementary relations}
\label{sec:examples}

In this section we give examples of elementary relations, mostly in dimension $n = 3$.

\begin{example}
\label{ex:kaloghiros}
Let $X$ be a Fano variety with $\Q$-factorial terminal singularities and Picard rank~$3$.
Then $X/\pt$ is a rank 3 fibration (Example~\ref{ex:rank_r}\ref{ex:rank_r:2}), hence there is an associated elementary relation.
In the case where $X$ is smooth of dimension~$3$, these relations were studied systematically by Kaloghiros, using a classification result by Mori-Mukai: see \cite[Example 4.9 and Figures 3,4 \& 5]{Kaloghiros}.
With respect to the setting of \S\ref{sec:kaloghiros}, in these examples we have $Z = X$, $N^1(Z) \simeq \R^3$ and $\partial^+\Cl$ is the cone over a complex homeomorphic to a circle, which encodes the elementary relation. 
Observe that the simple 2-dimensional Example \ref{ex:facesAndFib} also belongs to this family of examples. 
\end{example}

\begin{example}
\label{ex:P3_2lines}
Let $L \cup L' \subset \p^3$ be two secant lines, and $P$ the plane containing them.
Let $X \to \p^3$ be the blow-up of $L$ with exceptional divisor $E$, let $\ell \subset E$ be the fibre intersecting the strict transform of $L'$, and let $T \to X$ be the blow-up of $L'$, with exceptional divisor $E'$.

From $T$ we can flop $\ell$ to get a 3-fold $T'$, which is obtained by the same two blow-ups in the reverse order: first the blow-up $X'\to \p^3$ of $L'\subset \p^3$ and then the blow-up $T'\to X'$ of (the strict transform of) $L$ on $X'$.

From $T$ or $T'$ one can contract the strict transform of $P$ onto a smooth point, obtaining two 3-folds $Y$ and $Y'$ also related by the flop of $\ell$.

The elementary relation associated to the rank~$3$ fibration $T/\pt$ (or equivalently to $T'/\pt$), is depicted on Figure~\ref{fig:relation_P3_2lines}.
There are five links in the relation, where $\chi_1$ has type \I, $\chi_2$ and $\chi_4$ have type \II, $\chi_3$ has type \IV, and $\chi_5$ has type \III.
\end{example}

\begin{figure}[ht]
\[
\begin{tikzpicture}[scale=1.5,font=\small]
\node (pt) at (90:0) {$\pt$};
\node (P1) at (-90:\Rc) {$\p^1$};
\node (P3) at (90:\Rc) {$\p^3$};
\node (T) at (180:\Ra) {$T$};
\node (T') at (0:\Ra) {$T'$};
\node (X) at (170:\Rb) {$X$};
\node (X') at (10:\Rb) {$X'$};
\node (Y) at (-135:\Rb) {$Y$};
\node (Y') at (-45:\Rb) {$Y'$};

\draw[->] (P1) to (pt);
\draw[->] (P3) to (pt);
\draw[->] (X) to (P1);
\draw[->] (X) to ["\tiny $E$",swap] (P3);
\draw[->] (Y) to (P1);
\draw[->] (T) to ["\tiny $E'$"] (X);
\draw[->] (T) to ["\tiny $P$",swap] (Y);
\draw[->] (X') to (P1);
\draw[->] (X') to ["\tiny $E'$"] (P3);
\draw[->] (Y') to (P1);
\draw[->] (T') to ["\tiny $E$",swap] (X');
\draw[->] (T') to ["\tiny $P$"] (Y');
\draw[-,dotted] (T) to[bend left = 80,"\tiny flop"] (T');

\draw[->,dashed] (P3) to ["$\chi_1$", bend right=15,swap] (X);
\draw[->,dashed] (X) to ["$\chi_2$", bend right=25] (Y);
\draw[->,dotted] (Y) to[bend right=20] node[above]{$\chi_3$} node[below]{\tiny flop} (Y');
\draw[->,dashed] (Y') to ["$\chi_4$", bend right=25] (X');
\draw[->,dashed] (X') to ["$\chi_5$", bend right=15,swap] (P3);
\end{tikzpicture}
\]
\caption{The elementary relation from Example~\ref{ex:P3_2lines}.}\label{fig:relation_P3_2lines}
\end{figure}

\begin{example} \label{ex:X3X2X1=1}
Consider the blow-up $\F_1 \to \p^2$ of a point, with exceptional curve $\Gamma \subset \F_1$.
In $\p^1 \times \F_1$, write $D = \p^1 \times \Gamma$, and $C = \{0\} \times \Gamma$.
Let $T$ be the blow-up of $C$, with exceptional divisor $E$.
Then $T/\p^2$ is a rank 3 fibration, and we now describe the associated elementary relation (see Figure~\ref{fig:X3X2X1=1}).
We let the reader verify the following assertions (since all varieties are toric, one can for instance use the associated fans). 

First the two-rays game $T/\F_1$ gives a link of type II 
\[\chi_1\colon \p^1 \times  \F_1 \rat \p^1 \boxtimes \F_1 ,\]
where $\p^1 \boxtimes \F_1$ denotes a smooth Mori fibre space over $\F_1$ that is a non-trivial but locally trivial $\p^1$-bundle.
The link $\chi_1$ involves the pair $D \cup E$ of divisors of type \II for $T/\F_1$.  

The divisor $D$ on $T$ can be contracted in two ways to a curve $\p^1$, that is, $T$ dominates a flop between $\p^1 \boxtimes \F_1$ and another variety $X$.
This variety $X$ admits a divisorial contraction to $\p^1 \times \p^2$, with exceptional divisor the strict transform of $E$, which here is a divisor of type \I for $X/\p^2$.
This corresponds to a link of type \III 
\[\chi_2\colon \p^1 \boxtimes \F_1 \rat \p^1 \times \p^2.\]

Finally the two-rays game $\p^1 \times \F_1/\p^2$, which factorises via $\F_1$ and $\p^1\times\p^2$, gives a link of type \I 
\[\chi_3\colon \p^1 \times \p^2 \rat  \p^1 \times \F_1.\]

In conclusion we get an elementary relation $\chi_3 \circ \chi_2 \circ \chi_1 = \id$.

In contrast with Lemma~\ref{lem:gonalityOfTypeII}, observe that $D$ and $E$ are divisors of type \II for $T/\F_1$, but divisors of type \I for $T/\p^2$.
\end{example}

\begin{figure}[ht]
\[
\begin{tikzpicture}[scale=1.5,font=\small]
\node (T) at (180:\Ra) {$T$};
\node (F1xP1) at (135:\Rb) {$\p^1 \times \F_1$};
\node (F1xP1bis) at (-135:\Rb) {$\p^1 \boxtimes \F_1$};
\node (X) at (-45:\Rb) {$X$};
\node (P2) at (0:0cm) {$\p^2$};
\node (F1) at (180:\Rc) {$\F_1$};
\node (P2xP1) at (0:\Rc) {$\p^1\times \p^2$};
\draw[->] (T) to [auto,"\tiny $E_{\textup{I}} / E_{\textup{II}}$"] (F1xP1);
\draw[->] (T) to [auto,swap,"\tiny $D_{\textup{II}}$"] (F1xP1bis);
\draw[->] (T) to [bend right = 70,"\tiny $D_{\textup{I}}$",auto,swap] (X);
\draw[->] (F1xP1)--  (F1);
\draw[->] (F1xP1) to["\tiny $D_{\textup{I}}$"]  (P2xP1);
\draw[->] (F1xP1bis)--  (F1);
\draw[->] (X) to [auto,swap,pos=.3,"\tiny $E_{\textup{I}}$"] (P2xP1);
\draw[->] (F1)--  (P2);
\draw[->] (P2xP1)--  (P2);
\draw[dashed,->] (F1xP1) to [bend right=38] node[auto,swap]{$\chi_1$} (F1xP1bis);
\draw[dashed,->] (F1xP1bis) to [bend right=42] node[auto,swap]{$\chi_2$} (P2xP1);
\draw[dashed,->] (P2xP1) to [bend right=42] node[auto,swap]{$\chi_3$} (F1xP1);
\draw[dotted,-] (F1xP1bis) to [bend right=20,"\tiny\text{flop}",swap] (X);
\end{tikzpicture}
\]
\caption{The elementary relation from Example~\ref{ex:X3X2X1=1}.
We indicate the type of contracted divisors in index.}\label{fig:X3X2X1=1}
\end{figure}

\begin{example}[Example~\ref{ex:type_I_and_II_div} over $B = \p^2$]
\label{ex:with_flip}
Consider $\p^1 \times \p^2$, and let $\Gamma \subset \p^2$ be a line, $D \simeq \p^1 \times \Gamma$ the pull-back of $\Gamma$ in $\p^1 \times \p^2$, $\Gamma' = \{t\} \times \Gamma \subset D$ a section and $p \in D \setminus \Gamma'$ a point.
Let $T \to \p^1 \times \p^2$ be the blow-up of $\Gamma'$ and $p$, with respective exceptional divisors $D'$ and $E$, and denote again $D$ the strict transform of $\p^1 \times \Gamma$ in $T$. 
Then the induced morphism $\eta\colon T \to \p^2$ is a rank 3 fibration that gives rise to the relation on Figure~\ref{fig:flip}.

The figure was computed using toric fans in $\Z^3$, starting from the standard fan of $\p^1 \times \p^2$ with primitive vectors $(1,0,0)$, $(0,1,0)$, $(-1,-1,0)$, $(0,0,1)$, $(0,0,-1)$, and with the following choices 
\begin{align*}
D: (1,0,0), && D':(1,0,1), && E: (1,1,-1).
\end{align*}
The varieties $T'$ and $W'$  both have one terminal singularity, all other varieties are smooth.
There are two distinct Francia flips from $T'$, which are $T' \ps T$ and $T' \ps T''$.
Observe also that the link $\chi_1$ is exactly Example~\ref{ex:simple_links}\ref{simple_link:2}.
\end{example}

\begin{figure}[t]
\[
\begin{tikzpicture}[scale=1.4,font=\small]
\node (P2) at (0:0) {$\p^2$};
\node (T) at (0:\Ra) {$T$};
\node (T') at (120:\Ra) {$T'$};
\node (T'') at (-120:\Ra) {$T''$};
\node (X) at (55:\Rb) {$X$};
\node (W') at (150:\Rb) {$W'$};
\node (W) at (-150:\Rb) {$W$};
\node (P1xF1) at (-90:\Rb) {$\p^1 \boxtimes \F_1$};
\node (Y) at (-55:\Rb) {$Y$};
\node (F1) at (-120:\Rc) {$\F_1$};
\node (P1xP2) at (0:\Rc) {$\p^1\times \p^2$};
\node (P1xP2bis) at (120:\Rc) {$\p^1\boxtimes \p^2$};
\draw[->] (F1) to (P2);
\draw[->] (P1xP2) to (P2);
\draw[->] (P1xP2bis) to (P2);
\draw[->] (P1xF1) to (F1);
\draw[->] (W) to (F1);
\draw[->] (T) to ["$E$",swap] (X);
\draw[->] (T') to ["$D$",swap] (W');
\draw[->] (T) to ["$D'$"] (Y);
\draw[->] (T'') to ["$D$"] (W);
\draw[->] (T'') to ["$D'$",swap] (P1xF1);
\draw[->] (X) to ["$D'$"] (P1xP2);
\draw[->] (X) to ["$D$",swap] (P1xP2bis);
\draw[->] (W') to ["$E$",swap] (P1xP2bis);
\draw[->] (Y) to ["$E$",swap] (P1xP2);
\draw[->,dotted] (T') to ["\tiny flip",bend left=50] (T);
\draw[->,dotted] (T') to ["\tiny flip",swap, bend right=50] (T'');
\draw[->,dotted] (W') to ["\tiny flip",swap] (W);
\draw[dotted] (Y) to ["\tiny flop", pos=1] (P1xF1);
\draw[->,dashed] (P1xP2) to ["$\chi_1$", bend right=20] (P1xP2bis);
\draw[->,dashed] (P1xP2bis) to ["$\chi_2$", bend right=20] (W);
\draw[->,dashed] (W) to ["$\chi_3$", bend right=20] (P1xF1);
\draw[->,dashed] (P1xF1) to ["$\chi_4$", bend right=20] (P1xP2);
\end{tikzpicture}
\]
\caption{Elementary relation from Example~\ref{ex:with_flip}.}
\label{fig:flip}
\end{figure}

\begin{example}
\label{ex:AZ}
The article \cite{AZ2017} contains a beautiful example of an elementary relation involving five Sarkisov links.
In Figure~\ref{fig:relation_AZ} we reproduce the diagram from \cite[\S5.2]{AZ2017}, and we refer to their paper for a detailed description of the varieties.
The Sarkisov links $\chi_1$ and $\chi_3$ have type \II, $\chi_2$ has type \I, $\chi_4$ has type \IV and $\chi_5$ type \III.
The relation is associated to the rank 3 fibration $Z_1'/\pt$, or equivalently to $Z_2'/\pt$.
In fact other equivalent choices of varieties of Picard rank 3 are omitted from the picture (dominating respectively $Y_1'$, $X_3'$, $X_1'$ and $X_1''$).
The morphisms from $Z, \bar Z$ and $\tilde Z$ to $\p^1$ are fibrations in cubic surfaces.
Observe that the top rows of the Sarkisov diagrams display non trivial pseudo-isomorphisms, involving flips and flops. 
Note that each pseudo-isomorphism labeled ``$n$ flops'' really corresponds to a single flop with $n$ components (which by definition are all numerically proportional), in accordance with Remark~\ref{rem:top_row}. 
\end{example}

\begin{figure}
\[
\begin{tikzpicture}[scale=1.5,font=\small]
\node (pt) at (0:0) {$\pt$};
\node (P1) at (0:\Rc) {$\p^1$};
\node (Y) at (120:\Rc) {$Y$};
\node (X) at (-120:\Rc) {$X$};
\node (barZ) at (-15:\Rb) {$\bar Z$};
\node (Z) at (15:\Rb) {$Z$};
\node (Y1') at (105:\Rb) {$Y_1'$};
\node (X3') at (135:\Rb) {$X_3'$};
\node (X1') at (-135:\Rb) {$X_1'$};
\node (X1'') at (-105:\Rb) {$X_1''$};
\node (X2') at (180:\Rb) {$X_2'$};
\node (tildeZ) at (-45:\Rb) {$\tilde Z$};
\node (X2'') at (-75:\Rb) {$X_2''$};
\node (Z1') at (-45:\Ra) {$Z_1'$};
\node (Z2') at (-15:\Ra) {$Z_2'$};
\draw[->] (P1) to (pt);
\draw[->] (Y) to (pt);
\draw[->] (X) to (pt);
\draw[->] (Z) to (P1);
\draw[->] (barZ) to (P1);
\draw[->] (tildeZ) to (P1);
\draw[->] (Y1') to (Y);
\draw[->] (X3') to (Y);
\draw[->] (X1') to (X);
\draw[->] (X1'') to (X);
\draw[->] (Z1') to (tildeZ);
\draw[->] (Z2') to (barZ);
\draw[dotted,-] (X1') to [bend left=15,"\tiny $11$ flops"] (X2');
\draw[dotted,->] (X2') to [bend left=15,"\tiny flip"] (X3');
\draw[dotted,-] (Y1') to [bend left=30,"\tiny $9$ flops"] (Z);
\draw[dotted,-] (X1'') to [bend right=12,"\tiny $7$ flops",swap] (X2'');
\draw[dotted,->] (X2'') to [bend right=12,"\tiny flip",swap] (tildeZ);
\draw[dotted,-] (Z1') to [bend right=12,"\tiny $6$ flops",swap] (Z2');
\draw[->,dashed] (Y) to ["$\chi_1$", bend right=35,swap] (X);
\draw[->,dashed] (X) to ["$\chi_2$", bend right=25] (tildeZ);
\draw[->,dashed] (tildeZ) to["$\chi_3$", bend right=15] (barZ);
\draw[->] (barZ) to[bend right=12] node[left]{\tiny $\simeq$} node[right]{$\chi_4$} (Z);
\draw[->,dashed] (Z) to ["$\chi_5$", bend right=25,swap] (Y);
\end{tikzpicture}
\]
\caption{Elementary relation from Example~\ref{ex:AZ}.}\label{fig:relation_AZ}
\end{figure}
\section{Elementary relations involving Sarkisov links of conic bundles of type \II} 
\label{sec:constraints}

This section is devoted to the study of elementary relations involving Sarkisov links of conic bundles of type \II that are complicated enough, meaning their covering gonality is large. 
We give some restriction on such relations that will allow us to prove Theorem~\ref{TheoremBirMori}. 
Firstly in Proposition~\ref{pro:boundOnGenus} we cover the case of relations over a base of dimension  $\le n- 2$, where $n$ is the dimension of the Mori fibre spaces, using the BAB conjecture and working with Sarkisov links of large enough covering gonality. 
Secondly, the case of relations over a base of dimension $n-1$ is handled in Proposition~\ref{pro:X4X3X2X1=id}, using only the assumption that the covering gonality is $>1$.

\subsection{A consequence of the BAB conjecture}
\label{sec:BAB}

The following is a consequence of the BAB conjecture, which was recently established in arbitrary dimension by C.~Birkar.

\begin{proposition}\label{pro:BAB}
Let $n$ be an integer, and let $\Ql$ be the set of weak Fano terminal varieties of dimension~$n$. 
There are integers $d,l,m\ge 1$, depending only on $n$, such that for each $X \in \Ql$ the following hold:
\begin{enumerate}
\item
$\dim(H^0(-mK_X))\le l$;
\item
The linear system $\lvert -mK_X \rvert$ is base-point free;
\item \label{BAB:3}
The morphism $\phi\colon X\stackrel{\lvert -mK_X\rvert}{\longto} \p^{\dim(H^0(-mK_X))-1}$ is birational onto its image and contracts only curves $C\subseteq X$ with $C\cdot K_X=0$;
\item
$\deg \phi(X) \le d$.
\end{enumerate} 
\end{proposition}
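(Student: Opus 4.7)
The overall strategy is to reduce all four assertions to a single boundedness statement, for which I would invoke Birkar's solution of the BAB conjecture \cite{BirkarA,BirkarS}. The natural input is not the elements of $\Ql$ themselves, but rather their anticanonical models. Indeed, for each $X\in \Ql$ the variety $X$ is klt (being terminal) and $-K_X$ is nef and big, so by the base-point-free theorem $-K_X$ is semiample; hence the anticanonical algebra $\bigoplus_k H^0(X,-kK_X)$ is finitely generated and defines a birational morphism $\psi\colon X\to X^{\mathrm{an}} := \Proj\bigoplus_k H^0(X,-kK_X)$ onto a Fano variety with canonical singularities, which contracts exactly the curves $C\subset X$ with $K_X\cdot C=0$.

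Since canonical singularities are $\epsilon$-log canonical for every $\epsilon<1$, Birkar's theorem tells us that the set of varieties of the form $X^{\mathrm{an}}$ for $X\in\Ql$ is bounded: there is a projective morphism of finite type $\Xl\to S$ over a Noetherian base $S$ together with a relatively ample $\Q$-Cartier divisor $\Ll$, such that every $X^{\mathrm{an}}$ appears as a geometric fibre on which $\Ll$ restricts to a positive multiple of $-K_{X^{\mathrm{an}}}$. After stratifying $S$, passing to finite covers, and applying cohomology and base change, I can extract a single positive integer $m$, depending only on $n$, such that for every $X\in\Ql$ the divisor $-mK_{X^{\mathrm{an}}}$ is Cartier and very ample, the integer $h^0(X^{\mathrm{an}},-mK_{X^{\mathrm{an}}})$ is bounded above by some $l$, and the degree of the image of $X^{\mathrm{an}}$ under $|{-mK_{X^{\mathrm{an}}}}|$ is bounded above by some $d$.

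The assertions of the proposition then follow by pulling back via $\psi$. Since $\psi$ is a birational morphism with connected fibres and $X^{\mathrm{an}}$ has rational singularities, $\psi_*\Ol_X=\Ol_{X^{\mathrm{an}}}$, and so $H^0(X,-mK_X)=H^0(X^{\mathrm{an}},-mK_{X^{\mathrm{an}}})$; this gives the bound on $h^0$. Base-point-freeness of $|{-mK_X}|$ follows because $|{-mK_{X^{\mathrm{an}}}}|$ is base-point free and $\psi$ is surjective. The morphism $\phi$ in \ref{BAB:3} factors as $\psi$ followed by the closed embedding defined by $|{-mK_{X^{\mathrm{an}}}}|$, so its image coincides with the embedded $X^{\mathrm{an}}$; in particular $\phi$ is birational onto its image and contracts exactly the curves $C\subset X$ with $K_X\cdot C=0$, namely the curves contracted by $\psi$. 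The degree bound $\deg \phi(X)\le d$ is inherited directly from the uniform bound on $\deg X^{\mathrm{an}}$.

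The main obstacle is conceptual rather than computational: one must verify that the anticanonical models genuinely satisfy the hypotheses of Birkar's theorem (Fano with uniform $\epsilon$-lc bound), and then turn the abstract boundedness into uniform effective statements through a spreading-out argument. Once this is done, the four assertions are essentially standard consequences of the existence and geometry of the anticanonical morphism for weak Fano terminal varieties.
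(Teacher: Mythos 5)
Your proposal is correct in substance, but it takes a different route from the paper. The paper applies Birkar's boundedness directly to the varieties $X\in\Ql$ themselves: a terminal weak Fano $X$ with boundary $B=\emptyset$ is already $\eps$-lc with $-K_X$ nef and big, so \cite[Theorem 1.1]{BirkarS} bounds $\Ql$ without any detour through anticanonical models; it then gets the uniform $m$ from bounded Cartier index (\cite[Lemma 2.24]{BirkarA}) plus Koll\'ar's effective base-point freeness, gets birationality of $\phi$ from Birkar's effective birationality (\cite[Theorem 1.2]{BirkarA}), and deduces the bounds on $h^0$ and $\deg\phi(X)$ from boundedness of the family. Your version instead replaces each $X$ by its anticanonical model $X^{\mathrm{an}}$, applies BAB to the resulting canonical Fano varieties, and converts abstract boundedness into a uniform very ample multiple of $-K_{X^{\mathrm{an}}}$ by a stratification/spreading-out argument, pulling everything back along $\psi$. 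This is legitimate, and it buys a marginally stronger form of assertion \ref{BAB:3} (the contracted curves are exactly the $K_X$-trivial ones), but it costs you two things the paper's route avoids: you must justify the crepancy $K_X=\psi^*K_{X^{\mathrm{an}}}$ explicitly (it is what makes $H^0(X,-mK_X)=H^0(X^{\mathrm{an}},-mK_{X^{\mathrm{an}}})$ and the transfer of base-point freeness work, and you only use it implicitly), and you must carry out the stratification argument producing a single $m$ with $-mK_{X^{\mathrm{an}}}$ Cartier and very ample, which is standard but is exactly the kind of effectivity that the paper outsources to Koll\'ar and Birkar; note also that full very ampleness is more than you need, since birationality of the map given by $\lvert -mK_{X^{\mathrm{an}}}\rvert$ already suffices for \ref{BAB:3}. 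Finally, your opening claim that the elements of $\Ql$ are not the natural input for BAB is not quite right: they satisfy Birkar's hypotheses as they stand, which is precisely the paper's one-line observation.
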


\begin{proof}
By \cite[Theorem 1.1]{BirkarS}, varieties in $\Ql$ form a bounded family (here we use the observation that for a given $X \in \Ql$, the pair $(X,\emptyset)$ is $\eps$-lc for any $0 < \eps < 1$).
In particular, by \cite[Lemma 2.24]{BirkarA}, the Cartier index of such varieties is uniformly bounded.
Then \cite[Theorem 1.1]{Kollar93} gives the existence of $m = m(n)$ such that $\lvert -mK_X \rvert$ is base-point free for each  $X \in \Ql$. 
By \cite[Theorem 1.2]{BirkarA}, we can increase $m$ if needed, and assume that the associated morphism 
\[
\begin{tikzcd}[column sep=1.5cm]
\phi\colon X \ar[r,"\lvert -mK_X\rvert"] & \p^{\dim(H^0(-mK_X))-1}
\end{tikzcd}
\]
is birational onto its image. 
As it is a morphism, this implies that it contracts only curves $C\subseteq X$ with $C\cdot K_X=0$.
Finally, since $\Ql$ is a bounded family, the two integers $\dim(H^0(-mK_X))$ and $\deg \phi(X)$ are bounded. 
\end{proof}

\begin{corollary} \label{cor:BAB}
Let $\pi\colon Y \to X$ be the blow-up of a reduced but not necessarily irreducible codimension $2$ subvariety $\Gamma \subset X$, $Y \ps \hat Y$ a pseudo-isomorphism, and assume that both $X$ and $\hat Y$ are weak Fano terminal varieties of dimension~$n \ge 3$,
whose loci covered by curves with trivial intersection against the canonical divisor have codimension at least $2$. 
Let $\phi$ be the birational morphism associated to the linear system $\lvert -mK_X \rvert$, with $m$ given by  Proposition~$\ref{pro:BAB}$, and assume that $\Gamma$ is not contained in the exceptional locus $\Ex(\phi)$.
Then through any point of $\Gamma \setminus \Ex(\phi)$ there is an irreducible curve $C\subseteq \Gamma$ with $\gon(C)\le d$ and $C\cdot (-mK_X)\le d$, where $d$ is the integer from Proposition~$\ref{pro:BAB}$.
\end{corollary}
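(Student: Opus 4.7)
The plan is to apply Proposition~\ref{pro:BAB} to the weak Fano terminal variety $\hat Y$, obtaining a birational morphism $\hat\phi\colon\hat Y\to\p^M$ onto a subvariety of degree at most $d$, contracting only curves with trivial intersection against $K_{\hat Y}$. Since the integer $m$ of Proposition~\ref{pro:BAB} depends only on $n$, up to replacing $d$ by a larger constant depending only on $n$ I assume that $m\le d$.

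Over the smooth locus of $\Gamma$, the blow-up $\pi\colon Y\to X$ makes $E$ a $\p^1$-bundle over $\Gamma$, and the pseudo-isomorphism $Y\ps\hat Y$ (an isomorphism in codimension $1$) transports this to a $\p^1$-bundle structure $\hat E\to\Gamma$ over the same locus. For $\gamma\in\Gamma$ in this locus, the fibre $\hat f_\gamma$ satisfies $\hat f_\gamma\cdot K_{\hat Y}=\hat f_\gamma\cdot\hat E=-1$, and hence $\hat\phi(\hat f_\gamma)$ is a rational curve of degree $m\le d$ in $\p^M$. These images fill out the divisor $\hat\phi(\hat E)\subset\hat\phi(\hat Y)$ in an $(n-2)$-parameter family birationally indexed by $\Gamma$.

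For $p\in\Gamma\setminus\Ex(\phi)$ lying over the smooth locus, let $\hat p\in\hat E$ be its unique lift. I then take a general codimension-$(n-1)$ linear subspace $L\subset\p^M$ through $\hat\phi(\hat p)$ chosen to contain the nearby fibre $\hat\phi(\hat f_{p'})$ for $p'\in\Gamma$ sufficiently close to $p$; such $L$ exist because the span of $\hat\phi(\hat f_{p'})$ has dimension bounded in terms of $m\le d$, and $L$ has enough freedom. The intersection $L\cap\hat\phi(\hat Y)$ is then one-dimensional of degree at most $d$ and decomposes as $\hat\phi(\hat f_{p'})\cup\bar C$, with $\bar C$ a residual curve of degree at most $d-m$. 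Choosing $L$ generically among those containing $\hat\phi(\hat f_{p'})$, I arrange via a Bertini-type argument that $\bar C$ lies in $\hat\phi(\hat E)$, so its preimage in $\hat Y$ is a curve $\hat C\subset\hat E$ which is horizontal for the projection $\hat E\to\Gamma$ and meets $\hat f_{p'}$. A limit argument as $p'\to p$ then yields the desired irreducible curve $C\subset\Gamma$ through $p$, as the image of $\hat C$ under $\hat E\to\Gamma$.

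The bound $\gon(C)\le d$ is immediate from Lemma~\ref{lem:CovgonRatMorphism}\ref{covgondeg}, since $C$ is birational to the curve $\bar C$ of degree at most $d$ in $\p^M$. The bound $C\cdot(-mK_X)\le d$ follows from $C\cdot(-mK_X)=\deg\phi(C)$ (since $p\notin\Ex(\phi)$, so $\phi$ is an isomorphism at $p$) by transferring intersections from $X$ to $\hat Y$ via the formula $\pi^*(-mK_X)=-mK_Y+mE$ and the pseudo-isomorphism $Y\ps\hat Y$, using the horizontal structure of $\hat C$ in $\hat E$ to control the correction term involving $\hat E$. The main obstacle is the careful arrangement of the linear section $L$ so that the residual $\bar C$ genuinely lies in $\hat\phi(\hat E)$ and is horizontal, together with the precise bookkeeping of intersection contributions under the transfer between $X$ and $\hat Y$.
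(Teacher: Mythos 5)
Your construction breaks down at the key step where you claim that, for a general codimension-$(n-1)$ linear space $L\subset\p^M$ containing $\hat\phi(\hat f_{p'})$, the residual curve $\bar C$ of $L\cap\hat\phi(\hat Y)$ can be arranged ``via a Bertini-type argument'' to lie in $\hat\phi(\hat E)$. Bertini gives exactly the opposite conclusion: $\hat\phi(\hat E)$ is a proper divisor of the $n$-dimensional variety $\hat\phi(\hat Y)$, so a general linear section through the fixed curve meets $\hat\phi(\hat E)$ residually in a finite set of points, and the residual curve $\bar C$ is not contained in $\hat\phi(\hat E)$ (nor, a fortiori, horizontal over $\Gamma$). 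Since the whole point is to produce curves \emph{inside} $\Gamma$, this step is a genuine gap and the subsequent limit argument has nothing to work with. There are secondary problems as well: the fibres $f_\gamma$ of $E\to\Gamma$ may all meet the indeterminacy locus of $Y\ps\hat Y$ (which has codimension $2$, hence can dominate the $(n-2)$-dimensional $\Gamma$), so the claimed equalities $\hat f_\gamma\cdot K_{\hat Y}=\hat f_\gamma\cdot\hat E=-1$, and the final bookkeeping transferring $C\cdot(-mK_X)$ through $\pi^*(-mK_X)=-mK_Y+mE$ and the pseudo-isomorphism, are not justified for curves affected by the flips; and replacing $d$ by a larger constant so that $m\le d$ alters the statement, which asserts the bound with the $d$ of Proposition~\ref{pro:BAB} itself.

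The paper's proof takes a different route which avoids slicing near $\hat E$ altogether: since $Y\to X$ is the blow-up of $\Gamma$, every member of $\lvert -mK_Y\rvert$ is the strict transform of a member of $\lvert -mK_X\rvert$ through $\Gamma$, so the map given by $\lvert -mK_{\hat Y}\rvert$ factors as $\phi$ followed by a linear projection from a linear subspace $\Ll\subset\p^a$ containing $\phi(\Gamma)$. Base-point-freeness of $\lvert -mK_{\hat Y}\rvert$ (Proposition~\ref{pro:BAB}), combined with the hypothesis that both anticanonical maps are pseudo-isomorphisms onto their images, rules out any $(n-1)$-dimensional component of $\phi(X)\cap\Ll$; B\'ezout then bounds the degree of every $(n-2)$-dimensional component of $\phi(X)\cap\Ll$, in particular $\deg\phi(\Gamma)\le d$. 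One concludes by projecting $\phi(\Gamma)$ generically to a linear subspace of dimension $n-2$ and taking preimages of lines through the image of $\phi(q)$, which gives simultaneously $\gon(C)\le d$ and $C\cdot(-mK_X)\le d$. If you want to salvage your approach you would need an a priori bound on $\deg\hat\phi(\hat E)$ (so that sections of $\hat\phi(\hat E)$ itself, rather than of $\hat\phi(\hat Y)$, have bounded degree), and that bound is precisely what the paper's comparison of the two linear systems provides.
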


\begin{proof}
We choose the integers $d,l,m\ge 1$ associated to the dimension $n$ in Proposition~\ref{pro:BAB}.
We write $a=\dim(H^0(-mK_{X}))-1$ and $b=\dim(H^0(-mK_{Y}))-1$. 
Using the pseudo-isomorphism  $Y \ps \hat{Y}$, we also have 
$b=\dim(H^0(-mK_{\hat{Y}}))-1$.
By Proposition~\ref{pro:BAB} the morphisms given by the linear systems $\lvert -mK_{X}\rvert$ and $\lvert -mK_{\hat{Y}}\rvert$ are birational onto their images and are moreover pseudo-isomorphisms onto their images, because of the assumption that the locus covered by curves with non-positive intersection against the canonical divisor has codimension at least 2.  

Since $Y\to X$ is the blow-up of $\Gamma$, each effective divisor equivalent to $-mK_{Y}$ is the strict transform of an effective divisor equivalent to $-mK_{X}$ passing through $\Gamma$ (with some multiplicity). In particular, we have $b\le a$ and obtain a commutative diagram
\[\begin{tikzcd}[column sep=1.6cm,row sep=0.3cm]
X \ar[d,"\lvert -mK_{X}\rvert"] \ar[d,"\phi",swap]
& Y\ar [rd,"\lvert -mK_{Y}\rvert",pos=.25,dashed,swap] \ar[l]\ar[rr,dotted,-]
&&\hat{Y} \ar[ld,"\lvert -mK_{\hat{Y}}\rvert",pos=.25]\\
\p^{a}\ar[rr,"\pi",dashed,swap]&&\p^{b}
\end{tikzcd}
\]
where $\pi$ is a linear projection away from a linear subspace $\Ll\simeq \p^r$ of $\p^{a}$ containing the image of $\Gamma$.
Recall that we write $\phi\colon X\to \p^{a}$ the morphism given by $\lvert -mK_{X}\rvert$. 
The variety $\phi(X) \subseteq \p^a$ has dimension $n$ and degree $\le d$ (Proposition~\ref{pro:BAB}), and is not contained in a hyperplane section. 
Since by assumption $\Gamma \subsetneq \Ex(\phi)$, we get that $\phi$ induces a birational morphism from $\Gamma$ to $\phi(\Gamma)$.

We now prove that there is no (irreducible) variety  $S\subseteq \phi(X)\cap \Ll$ of dimension $n-1$ (recall that $\phi(\Gamma)\subseteq \phi(X)\cap \Ll$ has dimension $n-2$).
Indeed, otherwise the strict transform of $S$ on $X$ would be a variety $S_X\subset X$ birational to $S$, so its strict transform in $\hat Y$, and in $\p^b$ is again birational to $S$ (as the birational map from $Y$ to its image in $\p^b$ is a pseudo-isomorphism). 
The linear system of the rational map $X\rat \p^b$ is obtained from the linear system associated to $X\rat \p^a$ by taking the subsystem associated to hyperplanes through $\Ll$.
Hence, if $S\subseteq \Ll$, then every element of the linear system $\lvert-mK_{Y}\rvert$ contains the strict transform $S_Y$ of $S$ in $\hat Y$.
This is impossible, as $\lvert-mK_{\hat Y}\rvert$ is base-point free (Proposition~\ref{pro:BAB}).

Now, the fact that $\phi(X)\cap \Ll\subseteq\p^a$ does not contain any variety of dimension $\ge n-1$ implies, by B\'ezout Theorem, that all its irreducible components of dimension $n-2$ have degree $\le d$
(indeed, $\phi(X)$ is irreducible of degree $\le d$ and dimension $n-1$, and $\Ll$ is a linear subspace).
Therefore, each of the irreducible components of $\phi(\Gamma)$ (birational to $\Gamma$) has degree $\le d$. 

We are now able to finish the proof, by showing that through any point $q\in\Gamma \setminus \Ex(\phi)$ 
there is an irreducible curve $C\subseteq \Gamma$ with $\gon(C)\le d$ and $C\cdot (-K_X)\le d$. 
Since $\Gamma\to \phi(\Gamma)$ is a local isomorphism at $q$, it suffices to take a general linear projection from $\p^a$ to a linear subspace of dimension $n-2$, and to take $C$ equal to the preimage of a line through the image of $\phi(q)$.
\end{proof}

\begin{proposition}\label{pro:boundOnGenus}
For each dimension $n\ge 3$, there exists an integer $d_n\ge 1$ depending only on $n$ such that the following holds.
If $\chi$ is a Sarkisov link of conic bundles of type \II that arises in an elementary relation induced by rank~$3$ fibration $T/B$ with $\dim(T)=n$ and $\dim(B)\le n-2$, then $\covgon(\chi)\le \max\{d_n,8\conngon(T)\}$. 
\end{proposition}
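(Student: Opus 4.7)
My plan is to bound $\covgon(\chi)$ by restricting the geometry of the elementary relation to general fibres of $T/B$ and applying the BAB-type estimates of Corollary~\ref{cor:BAB}, with a fallback to curves of low gonality in $T$ whenever the BAB argument degenerates.

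\textbf{Reduction via divisor types.} Since the rank~$3$ fibration $T/B$ factorises through the rank~$2$ fibration $Y_1/B'$ associated to $\chi$ (with $\dim B'=n-1$, $\rho(B'/B)=1$), the Picard-rank count $\rho(T/B')=\rho(Y_1/B')=2$ forces $T\rat Y_1$ to be in fact a pseudo-isomorphism over $B'$. Let $\tilde E_1\subset T$ be the strict transform of the exceptional divisor $E_1$ of $\pi_1\colon Y_1\to X_1$. I would first verify that $X_1/B$ is itself a rank~$2$ fibration (using $-K_{X_1}$ relatively ample over $B'$, hence relatively big over $B$, together with Lemma~\ref{lem:StableUnderMMP}), so that Proposition~\ref{pro:rankrFibrations}\ref{rankr:0} identifies $\tilde E_1$ as a divisor of type \I or \II for $T/B$. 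Consequently the image of $\tilde E_1$ in $B$, which coincides with the image of the marking $\Gamma\subset B'$ under $B'\to B$, has codimension at least one, and therefore dimension at most $n-3$.

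\textbf{Easy case $\dim B = n-2$.} Here the general fibre $B'_p$ of $B'\to B$ is an irreducible curve. A dimension count in $B'$ forces $\dim(\Gamma\cap B'_p)\ge 1$ for $p$ general in the image of $\Gamma\to B$, and irreducibility then gives $B'_p\subseteq\Gamma$. By Lemma~\ref{lem:B'toB} the fibres of $B'/B$ are covered by rational curves, and since a general $B'_p$ is irreducible one-dimensional, it is itself rational. Hence $\Gamma$ is uniruled and $\covgon(\chi)=1\le d_n$.

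\textbf{Main case $\dim B \le n-3$.} For $p$ general in the image of $\Gamma\to B$ the fibre $T_p$ has dimension $k=n-\dim B\ge 3$, and by Corollary~\ref{cor:weakFanoFiber} both $T_p$ (equivalently $Y_{1,p}$, via the pseudo-isomorphism $T\ps Y_1$ over $B'$) and $X_{1,p}$ are pseudo-isomorphic to weak Fano terminal varieties of dimension~$k$ whose loci of $K$-trivial curves have codimension at least~$2$. The restriction $Y_{1,p}\to X_{1,p}$ of $\pi_1$ is the blow-up of the reduced codimension-two subvariety $\Gamma_{1,p}\subset X_{1,p}$. Setting $d_n := \max_{3\le k\le n} d_k$ where $d_k$ is the constant from Proposition~\ref{pro:BAB} in dimension~$k$, Corollary~\ref{cor:BAB} produces, through every point of $\Gamma_{1,p}\setminus \Ex(\phi)$, an irreducible curve in $\Gamma_{1,p}$ of gonality at most $d_n$. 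Since $\Gamma_1\to\Gamma$ is birational and such $p$ sweep out $\Gamma$, one obtains $\covgon(\chi)\le d_n$ whenever $\Gamma_{1,p}\not\subseteq\Ex(\phi)$.

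\textbf{The main obstacle.} The degenerate case in which $\Gamma_{1,p}$ is entirely contained in the anticanonical exceptional locus $\Ex(\phi)$ will be the hardest part of the proof. My plan there is to abandon the BAB estimate and instead connect two general points of $\Gamma$ by lifting them to $\tilde E_1$ and using $\conngon(T)$ to produce a curve $C\subset T$ through these lifts with $\gon(C)\le\conngon(T)$. Projecting via $T\ps Y_1\to X_1\to B'$ and exploiting the conic bundle structure of $X_1/B'$ together with the intersection identity $-4K_{B'}\equiv \sigma_*(-K_{X_1})^2+\Delta$ appearing in the proof of Lemma~\ref{lem:B'toB}, I expect to produce a curve in $\Gamma$ through a general point with gonality bounded by a universal multiple of $\conngon(T)$: the combined contribution of a factor~$2$ from the conic fibres and~$4$ from the intersection formula should account for the advertised constant~$8$. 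The delicate issues are to ensure that the projected curve actually lies inside $\Gamma$ (rather than merely meeting it) and to track all intersection numbers carefully enough to extract exactly this constant.
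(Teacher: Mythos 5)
The first reduction is where your argument breaks down. The dichotomy of divisors of type \I and \II, and Proposition~\ref{pro:rankrFibrations}, are only defined and proved for rank~$r$ fibrations $\eta\colon T\to B$ with $\dim B=\dim T-1$; your $T/B$ has $\dim B\le n-2$, so $\tilde E_1$ cannot be classified this way, and the conclusion you extract --- that the image of the marking $\Gamma$ in $B$ has codimension at least one --- is in fact false in the only case that matters. When $\covgon(\Gamma)>1$, one shows (this is a step of the paper's proof) that $\Gamma\to B$ is \emph{surjective}: otherwise $\Gamma$ would be swept out by rational curves lying in the fibres of the klt Mori fibre space $\tilde B\to B$ (Lemma~\ref{lem:B'toB}), forcing $\covgon(\Gamma)=1$. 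This invalidates your ``easy case'' $\dim B=n-2$: there $\Gamma\to B$ is generically finite, the fibres $B'_p$ are not contained in $\Gamma$, and the correct conclusion is not $\covgon(\Gamma)=1$ but $\covgon(\Gamma)\le 8\covgon(B)\le 8\conngon(T)$, because for general $p$ the map $Y_p\to X_p$ is the blow-up of at most $8$ points of a del Pezzo surface, so the field extension $\C(B)\subseteq\C(\Gamma_1)$ has degree at most $8$ (Lemma~\ref{lem:CovgonRatMorphism}\ref{covgonfinitemorphism}). This is exactly where the term $8\conngon(T)$ in the statement comes from; your proposal never produces it in this case and instead asserts something false.

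The case $\dim B\le n-3$ also has gaps. Corollary~\ref{cor:BAB} requires the source and target to genuinely be weak Fano terminal varieties with $K$-trivial locus of codimension $\ge 2$, whereas the fibres $X_{1,p}$, $Y_{1,p}$ of the given models need not be: the paper first replaces $X_1$ and $Y_1$ by pseudo-isomorphic models over $B$ with relatively nef and big anticanonical divisor (Lemma~\ref{lem:weakFano} and Lemma~\ref{lem:2raysOverFlip}), and uses $\covgon(\Gamma)>1$ together with Lemma~\ref{lem:fibresRatConnected}\ref{connected:flips} to check that $\Gamma_1$ survives these flips; none of this appears in your argument. Moreover, working with ``general $p$ in the image of $\Gamma\to B$'', which on your (incorrect) reduction is a proper subvariety of $B$, is incompatible with Corollary~\ref{cor:weakFanoFiber}, which only concerns general $p\in B$. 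Finally, the ``main obstacle'' paragraph is an announced plan rather than a proof: nothing guarantees that the curve furnished by $\conngon(T)$ projects \emph{into} $\Gamma$, and the factor $8$ you hope to extract from $-4K_{B'}\equiv\sigma_*(-K_{X_1})^2+\Delta$ in fact has a different origin, namely the del Pezzo degree bound above. As written, the proposal does not establish the proposition.
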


\begin{proof}
We choose $d_n\ge 8$ to be bigger than the integers $d$ given by Proposition~\ref{pro:BAB} for the dimensions $3,\dots,n$, and prove the result for this choice of $d_n$.

The Sarkisov link $\chi$, which is dominated by $T/B$ by assumption, has the form 
\[\begin{tikzcd}[link]
 Y_1\ar[dd]\ar[r,dotted,-]& T \ar[r,dotted,-] & Y_2\ar[dd]  \\ \\
X_1 \ar[rr,"\chi",dashed] \ar[rd]&& X_2 \ar[ld]\\
&\tilde{B}\ar[dd]\\ \\
&B
\end{tikzcd}
\]
where $X_1,X_2,Y_1,Y_2$ have dimension $n$ and $\tilde{B}$ has dimension $n-1$.
Since $\dim B \le n-2$, we have $\rho(\tilde{B}/B) \ge 1$, and on the other hand $\rho(Y_i/B) \le 3$,  for $i=1,2$, which implies that $\rho(\tilde{B}/B) = 1$, and that the birational contractions $T\ps Y_1$, $T\ps Y_2$ are pseudo-isomorphisms.  
Moreover, $Y_1\to X_1$ contracts a divisor $E$ onto a variety $\Gamma_1\subset X_1$ of dimension $n-2$, birational to its image $\tilde{\Gamma}\subset \tilde{B}$ via the morphism $X_1\to \tilde{B}$ (Lemma~\ref{lem:SarkiIIConic}). 
We need to check that $\covgon(\Gamma_1)=\covgon(\tilde{\Gamma})\le d_n$, where $d_n$ is chosen as above.
We may then assume that $\covgon(\tilde{\Gamma})>1$. 

Now, $\tilde B/B$ is a klt Mori fibre space by Lemma~\ref{lem:B'toB} and $X_1/B$ is a rank $2$ fibration by Lemma~\ref{lem:StableUnderMMP}\ref{stab_under_MMP:1}.
By Lemma~\ref{lem:weakFano}, the rank~$2$ fibration $X_1/B$ is pseudo-isomorphic, via a sequence of log-flips over $B$, to another rank~$2$ fibration $X/B$ such that $-K_{X}$ is relatively nef and big over $B$. 
We then use Lemma~\ref{lem:2raysOverFlip} to obtain a sequence of log-flips $Y_1\ps Y$ over $B$ such that the induced map $Y\to X$ is a divisorial contraction. 
By Lemma~\ref{lem:weakFano} again, we get a sequence of log-flips over $B$ from $Y/B$ to another rank~$3$ fibration $\hat{Y}/B$ such that $-K_{\hat{Y}}$ is relatively nef and big over $B$.
\[\begin{tikzcd}[link]
 Y_1\ar[dd]\ar[r,dotted,-]&Y\ar[dd]\ar[r,dotted,-]& \hat{Y}\ar[ldddd]  \\ \\
X_1  \ar[rdd]\ar[r,dotted,-]& X\ar[dd]\\ \\
&B
\end{tikzcd}
\]
As $\covgon(\tilde{\Gamma})>1$, by Lemma \ref{lem:fibresRatConnected}\ref{connected:flips} the codimension $2$ subvariety $\Gamma_1\subset X_1$ is not contained in the base-locus of the pseudo-isomorphism $X_1\ps X$. 
So the image $\Gamma\subset X$ of $\Gamma_1$ is birational to $\Gamma_1$, and it suffices to show that $\covgon(\Gamma)\le d_n$. 

We take a general point $p\in B$, and consider the fibres over $p$ in $X$, $Y$ and $\hat{Y}$ respectively, that we denote by $X_p$, $Y_p$ and $\hat{Y}_p$, and which are varieties of dimension 
\[n_0=n-\dim B \in \{ 2,\dots,n\}.\] 

By Corollary~\ref{cor:weakFanoFiber} the two varieties $X_p$ and $\hat{Y}_p$ are weak Fano terminal varieties. Moreover, $Y_p$ and $\hat{Y}_p$ are pseudo-isomorphic, as $Y\ps \hat{Y}$ is a sequence of log-flips over $B$.

Observe that $\tilde{\Gamma}\subset \tilde{B}$ is a hypersurface and that $\tilde{\Gamma}\to B$ is surjective.
Indeed, otherwise $\tilde{\Gamma}$ would be the preimage of a divisor on $B$, and we would have $\covgon(\tilde{\Gamma}) = 1$, as the preimage of each point of $\tilde{B}\to B$ is covered by rational curves (Lemma~\ref{lem:B'toB}), in contradiction with our assumption. This implies that the morphism $\Gamma\to B$ induced by the restriction of $X/B$ is again surjective.

We then denote by $\Gamma_p\subset X_p$ the codimension $2$ subscheme $\Gamma_p=\Gamma\cap X_p$, which is the fibre of $\Gamma\to B$ over $p$, and which is not necessarily irreducible.  
Observe that $Y_p\to X_p$ is the blow-up of $\Gamma_p$, as $Y\to X$ is locally the blow-up of $\Gamma$ (by Lemma~\ref{lem:divContToCodim2}) and because the fibre over $p$ is transverse to $\Gamma$ (Lemma~\ref{lem:SarkiIIConic}\ref{SarkiII:3}).

Suppose first that $n_0=2$, which corresponds to $\dim(\Gamma)=\dim(B)$. 
In this case, $X_p$ and $Y_p\simeq \hat{Y}_p$ are smooth del Pezzo surfaces, because by Corollary~\ref{cor:weakFanoFiber} the locus covered by curves trivial against the canonical divisor has codimension 2, hence is empty in the case $n_0 = 2$. 
Moreover $\Gamma_p$ is a disjoint union of $r$ points, where $r$ is the degree of the field extension $\C(B)\subseteq \C(\Gamma_1)$. As $Y_p$ is obtained from $X_p$  by blowing-up $\Gamma_p$, the degree of the field extension is at most $8$, which implies that $\covgon(\Gamma)\le 8\cdot \covgon(B)\leq8\conngon(T)$ (Lemma~\ref{lem:CovgonRatMorphism}). 

We now consider the case $n_0\ge 3$, which implies that $\Gamma_p$ has dimension $n_0-2\ge 1$. 
We consider the morphism $\varphi :=  \lvert -mK_X \rvert \times \eta \colon X \rat \p^N \times B$, where $\eta\colon X\to B$ is the morphism already considered and $m$ is given by Proposition~$\ref{pro:BAB}$ applied in dimension $n_0$.
The restriction of $\varphi$ to the fibre of $p$ is a birational morphism $\varphi_p\colon X_p\to \p^N$, described in Proposition~$\ref{pro:BAB}$.
We apply Corollary~\ref{cor:BAB} to the blow-up $Y_p\to X_p$ of $\Gamma_p$, the pseudo-isomorphism $Y_p\ps \hat{Y}_p$. 
The fact that the loci on $X_p$ or $\hat Y_p$ covered by curves with trivial intersection against the canonical divisor has codimension at least 2 follows from Corollary~\ref{cor:weakFanoFiber}.  
Lemma~\ref{lem:fibresRatConnected}\ref{connected:flips} implies that $\Gamma_p$ is not contained in $\Ex(\phi_p)$ because $\covgon(\Gamma_p)>1$.
We obtain from Corollary~\ref{cor:BAB} that for a general $p$, $\Gamma_p \setminus \Ex(\phi_{p})$ is covered by curves of gonality at most $d_{n}$.

In conclusion, we have found an open set $U \subseteq \Gamma \setminus \Ex(\phi)$ covered by curves of gonality at most $d_n$, as expected. 
\end{proof}

\begin{remark} \label{rem:BAB} \phantomsection
It is not clear to us whether Proposition~\ref{pro:boundOnGenus} could also hold for a link $\chi$ of type \II between arbitrary Mori fibre spaces.

For instance in the case of threefolds, if $\chi$ is a link of type \II between del Pezzo fibrations that starts with the blow-up a curve of genus $g$ contained in one fibre, we suspect that $g$ cannot be arbitrary large but we are not aware of any bound in the literature.
\end{remark}

\subsection{Some elementary relations of length 4}

\begin{proposition} \label{pro:X4X3X2X1=id}
Let $\chi_1$ be a Sarkisov link of conic bundles of type~\II with $\covgon(\chi_1) > 1$. 
Let $T/B$ be a rank~$3$ fibration with $\dim B = \dim T -1$, which factorises through the Sarkisov link $\chi_1$.
Then, the elementary relation associated to $T/B$ has the form 
\[\chi_4\circ \chi_3\circ \chi_2\circ \chi_1=\id,\]
where $\chi_3$ is a Sarkisov link of conic bundles of type \II that is equivalent to $\chi_1$.  
\end{proposition}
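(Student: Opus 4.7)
The plan is to enumerate the special divisors of $T/B$, which by Proposition~\ref{pro:rankrFibrations} determines all rank~$1$ and rank~$2$ fibrations dominated by $T/B$, and hence via Proposition~\ref{pro:from T3} the structure of the elementary relation. Since $T/B$ factorises through the rank~$2$ fibration $Y/B$ associated to $\chi_1$, and $Y/B$ has by Lemma~\ref{lem:LinksCB} a unique pair $E_1\cup E_2$ of type~\II divisors projecting to the marking $\Gamma\subset B$ of $\chi_1$, the hypothesis $\covgon(\chi_1)=\covgon(\Gamma)>1$ combined with Lemma~\ref{lem:gonalityOfTypeII} yields a pair $P_1=\tilde E_1\cup\tilde E_2$ of type~\II divisors for $T/B$, given by the strict transforms of $E_1,E_2$. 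By Proposition~\ref{pro:rankrFibrations}\ref{rankr:4}, $\rho(T/B)=3$ forces $d_1+d_2=2$, so exactly one additional unit remains: either a second type~\II pair $P_2$ (Case A), or a single type~\I divisor $F$ (Case B).

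In Case A, contracting any of the four type~\II divisors (Proposition~\ref{pro:rankrFibrations}\ref{rankr:3}) yields a rank~$2$ fibration over $B$ by Lemma~\ref{lem:StableUnderMMP}\ref{stab_under_MMP:1}; contracting one divisor from a pair breaks the type~\II structure of that pair while preserving the other intact (Proposition~\ref{pro:rankrFibrations}\ref{rankr:2}), so by Lemma~\ref{lem:LinksCB}\ref{classification2} all four resulting Sarkisov links are of type~\II. The four rank~$1$ fibrations $X_{ij}/B$ ($i,j\in\{1,2\}$) obtained by contracting one divisor from each pair assemble into a $4$-cycle in which the Sarkisov links alternate between the marking $\Gamma$ (coming from $P_1$) and the marking of $P_2$. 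Therefore in the relation $\chi_4\circ\chi_3\circ\chi_2\circ\chi_1=\id$, the link $\chi_3$ opposite $\chi_1$ uses the same pair $P_1$ and is thus equivalent to $\chi_1$.

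In Case B, contracting $F$ recovers the rank~$2$ fibration $Y/B$ corresponding to $\chi_1$, while contracting $\tilde E_i$ ($i=1,2$) leaves only $F$ as a special divisor, so Lemma~\ref{lem:LinksCB}\ref{classification2} makes the associated Sarkisov link of type~\I or~\III, going to a Mori fibre space over a common base $B'\to B$ with $\rho(B'/B)=1$ determined by the divisorial contraction centred on $\eta(F)$. This produces four rank~$1$ fibrations $X_1/B, X_2/B, X_1'/B', X_2'/B'$, and Lemma~\ref{lem:2 rank 2} requires a fourth rank~$2$ fibration to close the cycle; I would realise this as a pseudo-isomorphic model $T_{B'}$ of $T$ factoring $T\to B$ through $B'$, available via the Mori dream space structure of $T/B$. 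Applying Lemma~\ref{lem:gonalityOfTypeII} again, the strict transform of $P_1$ in $T_{B'}$ is a type~\II pair for $T_{B'}/B'$ projecting to the strict transform $\Gamma'\subset B'$ of $\Gamma$, so the corresponding Sarkisov link is of type~\II with marking $\Gamma'$ birational to $\Gamma$, hence equivalent to $\chi_1$.

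The delicate point will be in Case B: verifying that the two type~\I/\III links obtained by contracting $\tilde E_1$ and $\tilde E_2$ yield the same base $B'$, and constructing the fourth rank~$2$ fibration $T_{B'}/B'$ as a pseudo-isomorphic model. Both should follow from the Mori dream space property of $T/B$ together with the uniqueness of the divisorial contraction on $B$ determined by $\eta(F)$, read off from the chamber decomposition of $N^1(T/B)\simeq\R^3$ developed in Section~\ref{sec:geography}.
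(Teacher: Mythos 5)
Your overall strategy is the same as the paper's: enumerate the special divisors of $T/B$ via Proposition~\ref{pro:rankrFibrations}, identify the four rank~$1$ fibrations dominated by $T/B$, and read off the length-$4$ relation from Proposition~\ref{pro:from T3}. Your Cases A and B correspond to two of the paper's cases and are handled there in essentially the way you sketch (in your Case B, the common base and the uniqueness of the models are obtained from Lemma~\ref{lem:corti2.7}, as you anticipate). However, there is a genuine gap: your dichotomy ``second type~\II pair or single type~\I divisor'' rests on the formula $r=1+d_1+d_2$ of Proposition~\ref{pro:rankrFibrations}\ref{rankr:4}, which is only valid when $B$ is $\Q$-factorial. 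You have silently excluded the third possibility, namely that $B$ is \emph{not} $\Q$-factorial. This case really occurs once $\dim B\ge 3$, i.e.\ for $n\ge 4$ (see Remark~\ref{rem:TypeIVhigher} and Lemma~\ref{lem:LinksCB}\ref{classification1}), and in it the relation has the form $\chi_4\circ\chi_3\circ\chi_2\circ\chi_1=\id$ with $\chi_2,\chi_4$ of type~\IV: one must argue via the two small contractions $\hat B\to B$ and $\hat B'\to B$ (uniqueness of the log-flip) that each birational contraction $T\rat X_i$ contracts exactly one of $E_1,E_2$, giving again four rank~$1$ fibrations and an opposite link $\chi_3$ of type~\II equivalent to $\chi_1$. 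Without this case your proof of the proposition is incomplete.

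A smaller point: in your Case B you invoke Lemma~\ref{lem:gonalityOfTypeII} to transfer the type~\II pair from $T/B$ to the rank~$2$ fibration over $B'$, but that lemma goes the other way (from a dominated fibration to a dominating one); here you need to pass from the rank~$3$ fibration to a fibration it dominates. The conclusion is still true, but by a direct argument: since $\covgon(\Gamma)>1$, $\Gamma$ is not contained in the centre of the divisorial contraction $B'\to B$, so this contraction is an isomorphism over a general point of $\Gamma$ and the pair $E_1\cup E_2$ remains of type~\II over $B'$ with marking birational to $\Gamma$. With that repair, and with the non-$\Q$-factorial case added, your argument matches the paper's proof.
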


\begin{figure}[ht]
\begin{align*}
\begin{tikzpicture}[scale=1.1,font=\small,outer sep=-1pt]
\node (Y1) at (60+90:\Ra) {$Y_1$};
\node (Y2) at (120+90:\Ra) {$Y_2$};
\node (T3') at (180+90:\Ra) {$T_3'$};
\node (T3) at (240+90:\Ra) {$T_3$};
\node (T4) at (300+90:\Ra) {$T_4$};
\node (T4') at (360+90:\Ra) {$T_4'$};
\node (X1) at (60+90:\Rb) {$X_1$};
\node (X2) at (120+90:\Rb) {$X_2$};
\node (X3) at (300:\Rc) {$X_3$};
\node (X4) at (420:\Rc) {$X_4$};
\node (Y3') at (180+90:\Rb) {$Y_3'$};
\node (Y3) at (240+90:\Rb) {$Y_3$};
\node (Y4) at (300+90:\Rb) {$Y_4$};
\node (Y4') at (360+90:\Rb) {$Y_4'$};
\node (B) at (0:0cm) {$B$};
\node (Bhat) at (180:\Rc) {$\hat{B}$};
\draw[dotted,-] (Y1) to [bend right=21] (Y2)
		(Y2) to [bend right=21] (T3')
		(T3') to [bend right=21] (T3)
		(T3) to [bend right=21] (T4)
		(T4) to [bend right=21] (T4')
		(T4') to [bend right=21] (Y1)
		(X2) to [bend right=21] (Y3')
		(Y3) to [bend right=21] (Y4)
		(Y4') to [bend right=21] (X1);
\draw[->] (Bhat) to(B);
\draw[->] (X3) to(B);
\draw[->] (X4) to(B);
\draw[->] (T4') to [auto,"\tiny $E_1$"] (Y4');
\draw[->] (Y1) to [auto,pos=.1,"\tiny $E_1$"] (X1);
\draw[->] (Y2) to [auto,swap,pos=.1,"\tiny $E_2$"] (X2);
\draw[->] (T3') to [auto,swap,"\tiny $E_2$"] (Y3');
\draw[->] (T3) to [auto,swap,"\tiny $G$"] (Y3);
\draw[->] (T4) to [auto,"\tiny $G$"] (Y4);
\draw[->] (Y3') to [auto,swap,"\tiny $G$"] (X3);
\draw[->] (Y4') to [auto,"\tiny $G$"] (X4);
\draw[->] (Y3) to [auto,swap,pos=.7,"\tiny $E_2$"] (X3);
\draw[->] (Y4) to [auto,pos=.7,"\tiny $E_1$"] (X4);
\draw[->] (X1) to  (Bhat);
\draw[->] (X2) to  (Bhat);
\draw[dashed,->] (X1) to [bend right=30,auto,swap,"$\chi_1$"] (X2);
\draw[dashed,->] (X2) to [auto,swap,pos=.3,"$\chi_2$"] (X3);
\draw[dashed,->] (X3) to [bend right=38,auto,swap,"$\chi_3$"] (X4);
\draw[dashed,->] (X4) to [auto,swap,pos=.7,"$\chi_4$"] (X1);
\end{tikzpicture}
&&
\begin{tikzpicture}[scale=1.1,font=\small,outer sep=-1pt]
\node (Y1) at (135:\Ra) {$Y_1$};
\node (Y2) at (-135:\Ra) {$Y_2$};
\node (Y3) at (-45:\Ra) {$Y_3$};
\node (Y4) at (45:\Ra) {$Y_4$};
\node (X1) at (135:\Rb) {$X_1$};
\node (X2) at (-135:\Rb) {$X_2$};
\node (X3) at (-45:\Rb) {$X_3$};
\node (X4) at (45:\Rb) {$X_4$};
\node (B) at (0:0cm) {$B$};
\node (Bl) at (180:\Rc) {$\hat{B}$};
\node (Br) at (0:\Rc) {$\hat{B'}$};
\draw[->] (Y1) to [auto,swap,"\tiny $E_1$"] (X1);
\draw[->] (Y2) to [auto,"\tiny $E_2$"] (X2);
\draw[->] (Y3) to [auto,swap,"\tiny $E_2$"] (X3);
\draw[->] (Y4) to [auto,"\tiny $E_1$"] (X4);
\draw[->] (X1) to  (Bl);
\draw[->] (X2) to  (Bl);
\draw[->] (X3) to  (Br);
\draw[->] (X4) to  (Br);
\draw[->] (Bl) to  (B);
\draw[->] (Br) to  (B);
\draw[dashed,->] (X1) to [bend right=38,auto,swap,"$\chi_1$"] (X2);
\draw[dotted,->] (X2) to [bend right=38,auto,swap,"$\chi_2$"] (X3);
\draw[dashed,->] (X3) to [bend right=38,auto,swap,"$\chi_3$"] (X4);
\draw[dotted,->] (X4) to [bend right=38,auto,swap,"$\chi_4$"] (X1);
\draw[dotted,-] (Y1) to [bend right=38] (Y2)
		(Y2) to [bend right=38] (Y3)
		(Y3) to [bend right=38] (Y4)
		(Y4) to [bend right=38] (Y1)
		(Bl) to [bend right=20] (Br);
\end{tikzpicture}
\end{align*}
\vspace{-7mm}
\[
\begin{tikzpicture}[scale=1.1,font=\small,outer sep=-1pt]
\foreach \x in {1,...,4}{
\foreach \n in {1,2}{
\FPeval{\internal}{clip(2*(\x-2)+\n+1)}
\ifthenelse{\internal=0}{\FPeval{\internal}{8}}{}
\FPeval{\thetest}{clip(\x-\n)}
\ifthenelse{\thetest = 0 \OR \thetest = 2}
{\def\exponent{'}}
{\def\exponent{}}
\node (Y\internal) at ({\n*45+\x*90-22.5}:\Rb) {$Y_{\x}\exponent$};
\node (T\internal) at ({\n*45+\x*90-22.5}:\Ra) {$T_{\x}\exponent$};
}}
\node (B) at (0:0cm) {$B$};
\foreach \x in {1,...,4}{
\node (X\x) at (\x*90+45:\Rc) {$X_{\x}$};
\draw[->] (X\x) to (B);
}
\draw[dotted,-] (T1) to [bend right=15] (T2)
		(T2) to [bend right=15] (T3)
		(T3) to [bend right=15] (T4)
		(T4) to [bend right=15] (T5)
		(T5) to [bend right=15] (T6)
		(T6) to [bend right=15] (T7)
		(T7) to [bend right=15] (T8)
		(T8) to [bend right=15] (T1)
		(Y1) to [bend right=15] (Y2)
		(Y3) to [bend right=15] (Y4)
		(Y5) to [bend right=15] (Y6)
		(Y7) to [bend right=15] (Y8);
\draw[dashed,->] (X1) to [bend right=35,swap,"$\chi_1$"] (X2);
\draw[dashed,->] (X2) to [bend right=35,swap,"$\chi_2$"] (X3);
\draw[dashed,->] (X3) to [bend right=35,swap,"$\chi_3$"] (X4);
\draw[dashed,->] (X4) to [bend right=35,swap,"$\chi_4$"] (X1);
\draw[->] (Y1) to [pos=0,"\tiny $E_1$"] (X1);
\draw[->] (Y2) to [swap,pos=0,"\tiny $E_2$"] (X2);
\draw[->] (Y3) to [pos=0,"\tiny $F_1$"] (X2);
\draw[->] (Y4) to [swap,pos=0,"\tiny $F_2$"] (X3);
\draw[->] (Y5) to [pos=0,"\tiny $E_2$"] (X3);
\draw[->] (Y6) to [swap,pos=0,"\tiny $E_1$"] (X4);
\draw[->] (Y7) to [pos=0,"\tiny $F_2$"] (X4);
\draw[->] (Y8) to [swap,pos=0,"\tiny $F_1$"] (X1);
\draw[->] (T1) to [pos=0.2,"\tiny $F_1$"] (Y1);
\draw[->] (T2) to [swap,pos=0.2,"\tiny $F_1$"] (Y2);
\draw[->] (T3) to [pos=0.2,"\tiny $E_2$"] (Y3);
\draw[->] (T4) to [swap,pos=0.2,"\tiny $E_2$"] (Y4);
\draw[->] (T5) to [pos=0.2,"\tiny $F_2$"] (Y5);
\draw[->] (T6) to [swap,pos=0.2,"\tiny $F_2$"] (Y6);
\draw[->] (T7) to [pos=0.2,"\tiny $E_1$"] (Y7);
\draw[->] (T8) to [swap,pos=0.2,"\tiny $E_1$"] (Y8);
\end{tikzpicture}
\]
\caption{The elementary relation associated to $T/B$ in cases~\ref{BisnotBhat}, \ref{BisnotBhat_bis} and \ref{BisBhat} of the proof of Proposition~\ref{pro:X4X3X2X1=id}.
Varieties are organized in circles according to their Picard rank over $B$.}
\label{fig:relations}
\end{figure}

\begin{proof}
The Sarkisov link $\chi_1$ is given by a diagram
\[\begin{tikzcd}[link]
 Y_1\ar[dd,"\pi_1",swap]\ar[rr,dotted,-]&& Y_2\ar[dd,"\pi_2"]  \\ \\
X_1\ar[rr,"\chi_1",dashed,->]  \ar[rd]&& X_2 \ar[ld]\\
&\hat{B}
\end{tikzcd}
\]
where $X_1,X_2,Y_1,Y_2$ are varieties of dimension $n$, and $\dim \hat{B} = n-1$. 
Denote by $E_1\subset Y_1$ and $E_2\subset Y_2$ the respective exceptional divisors of the divisorial contractions $\pi_1$ and $\pi_2$. 
We denote again by $E_1,E_2\subset T$ the strict transforms of these divisors, under the birational contractions $T\rat Y_1$ and $T\rat Y_2$. 
Then by Lemma~\ref{lem:SarkiIIConic}\ref{SarkiII:4}, $E_1 \cup E_2$ is a pair of divisors of type \II for $Y_1/\hat{B}$, hence also for $T/B$ by Lemma~\ref{lem:gonalityOfTypeII}. 
By Proposition~\ref{pro:rankrFibrations}\ref{rankr:4}, we are in one of the following mutually exclusive three cases:
\begin{enumerate}
\item\label{BisnotBhat}
$B$ is $\Q$-factorial, and there exists a divisor $G$ of type \I for $T/B$.

\item\label{BisnotBhat_bis}
$B$ is not $\Q$-factorial. 

\item\label{BisBhat}
$B$ is $\Q$-factorial, and there exists another pair $F_1 \cup F_2$ of divisors of type \II for $T/B$. 
\end{enumerate}

We denote $\{X_i/B_i\}$ the finite collection of all rank~$1$ fibrations dominated by $T/B$.
In each case we are going to show that this collection has cardinal 4.

Suppose first that \ref{BisnotBhat} holds.
By Proposition~\ref{pro:rankrFibrations}\ref{rankr:0}\&\ref{rankr:3} and Lemma~\ref{lem:factorThroughRank1}, we can obtain such an $X_i/B_i$ by a birational contraction contracting one the following four sets of divisors: 
$\{E_1\}$, $\{E_2\}$, $\{E_1,G\}$ or $\{E_2,G\}$. 
Moreover $X_i/B_i$ is determined up to isomorphism by such a choice of contracted divisors:
\begin{itemize}
\item If $T \rat X_i$ contracts $\{E_1,G\}$ or $\{E_2,G\}$, then $\rho(X_i) = \rho(T)  - 2$ which implies $\rho(B_i/B) = 0$, that is, $B_i \to B$ is an isomorphism. 
Then $X_i$ is uniquely determined by Lemma~\ref{lem:corti2.7}\ref{corti2.7:1}.
\item If $T \rat X_i$ contracts $\{E_1\}$ or $\{E_2\}$, then $\rho(B_i/B) = 1$, and $B_i \rat B$ is a birational contraction contracting the image of the divisor $G$. 
Then such a $B_i$ is uniquely determined by Lemma~\ref{lem:corti2.7}\ref{corti2.7:2}.
\end{itemize}
In conclusion the relation given by Proposition~\ref{pro:from T3} has the form 
\[\chi_4\circ \chi_3\circ \chi_2\circ \chi_1=\id,\]
and more precisely, up to a cyclic permutation exchanging the role of $\chi_1$ and $\chi_3$, we have a commutative diagram as in Figure~\ref{fig:relations}, top left, where $\chi_2$ and $\chi_4$ have respectively type \III and \I, and $\chi_1$ and $\chi_3$ are equivalent Sarkisov links of type II.

Now consider Case~\ref{BisnotBhat_bis}. 
As $\hat B$ is $\Q$-factorial (Proposition~\ref{pro:sing_of_B}), we have $\hat B \neq B$, hence $\rho(\hat B/B) = 1$ and the morphism $\hat B\to B$ is a small contraction. 
By uniqueness of log-flip, there are exactly two small contractions from a $\Q$-factorial variety to $B$.
Denote $\hat B' \to B$ the other one.
Then for each $X_i/B_i$, we have $B_i \simeq \hat B$ or $\hat B'$, and $\rho(X_i/B) = 2$.
Hence the birational contraction $T \rat X_i$ contracts exactly one divisor, which must be $E_1$ or $E_2$. 
Again this gives four possibilities.
The actual existence of $X_3/ \hat B'$ and $X_4/ \hat B'$ arises  from the two-rays games $X_1/B$ and $X_2/B$. 
We get a relation as in Figure~\ref{fig:relations}, top right, with $\chi_1, \chi_3$ of type \II and $\chi_2, \chi_4$ of type \IV.

Finally consider Case~\ref{BisBhat}. 
Then by Proposition~\ref{pro:rankrFibrations}\ref{rankr:0}\ref{rankr:3}, each birational contraction $T \rat X_i$ contracts one divisor among $E_1 \cup E_2$, and another one among $F_1 \cup F_2$. 
Again this gives four possibilities.
In each case $\rho(B_i/B) = 0$ hence $B_i$ is isomorphic to $B$, and then Lemma~\ref{lem:corti2.7}\ref{corti2.7:1} says that $X_i$ is determined up to isomorphism by such a choice. 
We obtain a relation with four links of type \II, as on Figure~\ref{fig:relations}, bottom.
\end{proof}

\begin{remark}
Example~\ref{ex:X3X2X1=1} illustrates why the assumption on the covering gonality is necessary in Proposition~\ref{pro:X4X3X2X1=id}.
\end{remark}

\subsection{Proof of Theorem \ref{TheoremBirMori}}
\label{sec:proofBirMori}
In order to prove Theorem~\ref{TheoremBirMori}, we use the generators and relations of $\BirMori(X)$ which are given in Theorem~\ref{thm:sarkisov}. The key results are then Propositions~\ref{pro:boundOnGenus} and~\ref{pro:X4X3X2X1=id}. 

\begin{proof}[Proof of Theorem \ref{TheoremBirMori}]
We choose the integer $d$ associated to the dimension $n$ by Proposition~\ref{pro:boundOnGenus}, and set $M = \max\{d, 8\conngon(X)\}$.
By Theorem~\ref{thm:sarkisov}\ref{sarkisov1}, the groupoid $\BirMori(X)$ is generated by Sarkisov links and automorphisms of Mori fibre spaces. 
By Theorem~\ref{thm:sarkisov}\ref{sarkisov2}, the relations are generated by elementary relations, so it suffices to show that every elementary relation is sent to the neutral element in the group \[\bigast_{C\in \CB(X)} \left(\bigoplus_{\MC(C)}\Z/2\right).\] 

Let $\chi_t\circ\cdots\circ\chi_1=\id$ 
be an elementary relation, coming from a rank~$3$ fibration $T/B$.  
We may assume that one of the $\chi_i$ is a Sarkisov link of conic bundles of type \II with $\covgon(\chi_i) > M$, otherwise the relation is sent onto the neutral element as all $\chi_i$ are sent to the neutral element.
We may moreover conjugate the relation and assume that $\chi_1$ is a Sarkisov link of conic bundles of type \II with $\covgon(\chi_1)>M$. 
By Proposition~\ref{pro:boundOnGenus}, we have $\dim(B)=n-1$. 
Then, Proposition~\ref{pro:X4X3X2X1=id} implies that $t=4$ and that $\chi_1$ and $\chi_3$ are equivalent Sarkisov links of conic bundles of type \II. 
Applying the same argument to the relation $\chi_1\circ \chi_4\circ \chi_3\circ \chi_2=\id$ we either find that both $\chi_2$ and $\chi_4$ are sent to the neutral element or are equivalent Sarkisov links of conic bundles of type \II (again by Proposition~\ref{pro:X4X3X2X1=id}). 
Moreover, all the conic bundles involved in this relation are equivalent. 
This proves the existence of the groupoid homomorphism.

The fact that it restricts to a group homomorphism from $\Bir(X)$ is immediate, and the fact that it restricts as a group homomorphism
\[
\Bir(X/B)\to \bigoplus_{\MC(X/B)} \Z/2
\]
follows from Lemma~\ref{lem:SarkisovOverBase}.
\end{proof}

\section{Image of the group homomorphism given by Theorem \ref{TheoremBirMori}}
\label{sec:surjectivity}

In this section, we study the image of $\Bir(X)$ under the group homomorphism given by Theorem~\ref{TheoremBirMori}, and more precisely the image of $\Bir(X/B)\to \bigoplus_{\MC(X/B)} \Z/2$ for some conic bundles $X/B$. 

\subsection{A criterion}
\label{sec:criterion}

Given a birational map between conic bundles over a curve $B$, for each point $p\in B$ one can define the number of base-points that lies on the fibre $p$, as proper or infinitely near points.
This amounts to counting how many links one has to perform above the point. 
In the next definition we generalise this to any dimension, by replacing the point $p$ with an irreducible hypersurface of $B$. 
As the targets of our group homomorphisms are direct sums of $\Z/2\Z$, it is natural to only count the parity of the multiplicity.

\begin{definition}\label{def:ParityPhiGamma}
Let $(X/B,\Gamma)$ be a marked conic bundle, and $\phi\colon X/B \rat Y/B$ a birational map over $B$ between conic bundles.
For a general point $p\in \Gamma$, and an irreducible curve $C\subseteq B$ transverse to $\Gamma$ at $p$,
let $b \in \N$ be the number of base-points of the birational surface map $\eta_X^{-1}(C)\rat \eta_Y^{-1}(C)$ induced by $\phi$ that are equal or infinitely near to a point of the fibre of $p$.
We call the class $\bar b \in \Z/2$ \emph{the parity of $\phi$ along $\Gamma$}.
\end{definition}

The following lemma shows that this definition does not depend on the choice of $p$ or $C$. 
We shall use it to compute the image of the group homomorphism of Theorem~\ref{TheoremBirMori} by studying locally a birational map near a hypersurface $\Gamma$ of the base.

\begin{lemma}\label{lem:SequenceParity}
Let $\eta_X\colon X\to B$ and $\eta_Y\colon Y\to B$ be two conic bundles,  $\phi\colon X\rat Y$  a birational map over $B$, and  $\Gamma\subset B$ an irreducible hypersurface not contained in the discriminant locus of $X/B$. 

For any decomposition $\phi=\chi_t\circ \cdots\circ \chi_1$ as in Lemma~$\ref{lem:SarkisovOverBase}$, the
parity of $\phi$ along $\Gamma$ is equal to the parity of the number of indexes $i\in \{1,\dots,t\}$ such that $\chi_i$ is a Sarkisov link of type \II whose marking $\Gamma_i\subset B_i$ is sent to $\Gamma$ via $B_i/B$. 
\end{lemma}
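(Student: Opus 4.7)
The plan is to reduce to a local analysis on a generic curve $C\subseteq B$ transverse to $\Gamma$ at a general point $p$, and to examine the restriction of each $\chi_i$ to the surfaces $S_i:=\eta_{X_i}^{-1}(C_i)$, where $C_i\subseteq B_i$ is the lift of $C$ under the birational morphism $B_i\to B$ (which is an isomorphism near $p$ by genericity). The parity of base-points on the fibre above $p$ will then be read off from the local description of each $\chi_i|_C$ near that fibre.

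Choose $p\in\Gamma$ general and $C\subseteq B$ sufficiently general. For each link $\chi_i$ of type \I or \III, the exceptional divisor of the divisorial contraction projects to a codimension $\geq 2$ subset of $B_{i-1}$ by Corollary~\ref{cor:SarkiIConic}; for type \IV, the non-isomorphism locus has codimension $\geq 2$ in $B$ since it comes from small modifications of a non-$\Q$-factorial base (Lemma~\ref{lem:LinksCB}\ref{classification1} and Remark~\ref{rem:TypeIVhigher}); for type \II with marking not mapped to $\Gamma$, the non-isomorphism locus is a different irreducible hypersurface of $B$. By genericity of $C$, we may assume $C$ avoids all such codimension $\geq 2$ subsets near $p$ and meets other markings only at isolated points away from $p$, so each such $\chi_i|_C$ is an isomorphism on a neighborhood of the fibre above $p$. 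For $\chi_i$ of type \II with marking sent to $\Gamma$, Lemma~\ref{lem:SarkiIIConic} together with Lemma~\ref{lem:divContToCodim2} show that, generically above $p$, both divisorial contractions $Y_{i-1}\to X_{i-1}$ and $Y_i\to X_i$ are blow-ups of smooth points $q_{i-1}\in\Gamma_{i-1}$, $q_i\in\Gamma_i$ lying on the fibres above $p$ which are transverse to $\Gamma_{i-1}$, $\Gamma_i$; the intermediate pseudo-isomorphism, being a sequence of log-flips over $B$ with exceptional loci of codimension $\geq 2$ in $B$, restricts to an isomorphism above $C$. Using the intersection numbers of Proposition~\ref{pro:rankrFibrations}\ref{rankr:2}, one identifies on the common surface $\eta_{Y_{i-1}}^{-1}(C)\simeq\eta_{Y_i}^{-1}(C)$ the exceptional divisor of the first blow-up with the strict transform of the fibre contracted by the second, so $\chi_i|_C$ is an elementary transformation of conic bundles at $q_{i-1}$, contributing exactly one base-point on the fibre above $p$.

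Writing $N$ for the count from the statement, the composition $\phi|_C$ is thus, locally near the fibre above $p$, a composition of $N$ elementary transformations (the links with trivial restriction being absorbed as local isomorphisms). Denote by $b$ the number of base-points of $\phi|_C$ on that fibre. To show $b\equiv N\pmod{2}$, resolve the composition step by step, treating each elementary transformation in order: pull its centre back to the current partial resolution. Either it pulls back to a proper point---blow it up, contributing one more blow-up to the resolution---or it pulls back to the exceptional curve of the immediately previous blow-up, in which case the current elementary transformation undoes that blow-up and no new blow-up is needed. A pair of elementary transformations that cancel in this way contributes $2$ to $N$ but $0$ net to the resolution count, so $b = N - 2k$ where $k$ is the number of cancellations, yielding $b\equiv N\pmod{2}$.

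The main obstacle is the careful local description, in the type \II case with correct marking, of the restriction $\chi_i|_C$ as an elementary transformation: this requires tracking how the exceptional divisor of one divisorial contraction becomes, after the intermediate log-flips, the strict transform of the fibre contracted by the other divisorial contraction. Once this identification is made the parity count is routine. As a byproduct, the formula shows that the parity of $\phi$ along $\Gamma$ does not depend on the choices of $p$ and $C$, justifying the implicit well-definedness in Definition~\ref{def:ParityPhiGamma}.
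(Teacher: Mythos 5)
Your proposal is correct and follows essentially the same route as the paper's proof: restrict to a general curve $C$ transverse to $\Gamma$ at a general point $p$, use Corollary~\ref{cor:SarkiIConic} and Lemma~\ref{lem:SarkiIIConic} to see that every link other than a type \II link marked by $\Gamma$ restricts to an isomorphism near the fibre of $p$, identify the remaining links with elementary transformations (blow-up of a point of the fibre, contraction of the strict transform of the fibre), and conclude by a parity count. The only differences are cosmetic: you also treat type \IV links explicitly (the paper's proof leaves them implicit), and your final cancellation bookkeeping is just a more explicit rendering of the paper's inductive step.
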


\begin{proof}
Fix a decomposition $\phi=\chi_t\circ \cdots\circ \chi_1$ as in Lemma~$\ref{lem:SarkisovOverBase}$, a general point $p\in \Gamma$ and  an irreducible curve $C\subseteq B$ transverse to $\Gamma$ at $p$. 
In particular $p$ is a smooth point of both $\Gamma$ and $C$. 
For $i=0,\dots,t$, we denote by $\eta_i\colon X_i\to B$ the morphism given by the composition $X_i\to B_i\to B$.

It suffices to prove, for $i=0,\dots,t$, that the following holds:

\begin{enumerate}[$(a)$]
\item \label{parity:a}
The morphism $\eta_i^{-1}(C)\to C$ has general fibre $\p^1$, and the fibre over $p$ is $\p^1$ (this means that $\Gamma$ is not in the discriminant locus).
\item \label{parity:b}
If $i\ge 1$, then $\chi_i\circ \cdots\circ \chi_1$ induces a birational map between surfaces over $C$ 
\[\eta_0^{-1}(C)=\eta_X^{-1}(C)\rat \eta_i^{-1}(C)\] 
and the number of base-points that are equal or infinitely near to a point of the fibre of $p$ has the same parity as the number of integers $j\in \{1,\dots,i\}$ such that $\chi_j$ is a Sarkisov link of type \II with marking $\Gamma_j\subset B_j$, sent to $\Gamma$ via $B_j/B$.
\end{enumerate}

We proceed by induction on $i$. 
If $i=0$,  \ref{parity:a} follows from the assumption that $\Gamma$ is not contained in the discriminant locus of $X/B$, and \ref{parity:b} is clear.

For $i\ge1$, the birational map $\chi_i$ induces a birational map over $C$ 
\[\theta_i\colon \eta_{i-1}^{-1}(C)\rat \eta_i^{-1}(C).\] 
If $\chi_i$ is a Sarkisov link of type \II with marking $\Gamma_i\subset B_i$, sent to $\Gamma$ via $B_i/B$, 
it follows from the description of $\chi_i$ given in Lemma~\ref{lem:SarkiIIConic} that the restriction $\theta_i$ is the composition of the blow-up of a point on the fibre of $p$, the contraction of the strict transform of the fibre and of a birational map that is an isomorphism over an open subset of $C$ that contains the fibre of $p$. 
This achieves the proof of \ref{parity:a} and \ref{parity:b} in this case, using the induction hypothesis.

If $\chi_i$ is a Sarkisov link of type \II with a marking not sent to $\Gamma$ or if $\chi_i$ is a Sarkisov link of type \I or \III, then the restriction $\theta_i$ of $\chi_i$ is an isomorphism  over an open subset of $C$ that contains the fibre of $p$. 
This follows again from Lemma~\ref{lem:SarkiIIConic} if the Sarkisov link is of type \II and from Corollary~\ref{cor:SarkiIConic} if it is of type \I or \III.
As before, this gives the result by applying the induction hypothesis.
\end{proof}

To simplify the notation in the group $\bigoplus_{\MC(X/B)} \Z/2$, we will identify an equivalence class of marked conic bundles in $\MC(X/B)$ with the associated generator of $\Z/2$.  
We can then speak about sums of elements of $\MC(X/B)$, which we see in $\bigoplus_{\MC(X/B)} \Z/2$, twice the same class being equal to zero. 

\begin{corollary}\label{cor:Parity}
Let $X/B$ be a conic bundle, where $\dim(X)\ge 3$, and $\phi\in \Bir(X/B)$. 
The image of $\phi$ under the group homomorphism 
\[ \Bir(X/B)\to \bigoplus_{\MC(X/B)} \Z/2\]
of Theorem~\ref{TheoremBirMori}  is equal to the 
sum of equivalence classes of marked conic bundles $(X/B,\Gamma)$ with $\covgon(\Gamma) > \max\{ d, 8\conngon(X) \}$ such that the parity of $\phi$ along $\Gamma$ is odd. 
\end{corollary}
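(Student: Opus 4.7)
The plan is to apply the groupoid homomorphism of Theorem~\ref{TheoremBirMori} to a decomposition of $\phi$ into Sarkisov links over $B$, and then track the contributions using Lemma~\ref{lem:SequenceParity}. First I would use Lemma~\ref{lem:SarkisovOverBase} to write $\phi=\chi_t\circ\cdots\circ\chi_1$ as a composition of Sarkisov links of conic bundles $\chi_i\colon X_{i-1}/B_{i-1}\rat X_i/B_i$ over $B$, so that each $B_i$ is equipped with a birational morphism $B_i\to B$. Since the map of Theorem~\ref{TheoremBirMori} is a groupoid homomorphism, the image of $\phi$ is the sum of the images of the $\chi_i$; by construction, only those $\chi_i$ of type \II with $\covgon(\chi_i)>M:=\max\{d,8\conngon(X)\}$ contribute, each contributing the class $[(X_{i-1}/B_{i-1},\Gamma_i)]\in\MC(X/B)$ of its marked conic bundle, where $\Gamma_i\subset B_i$ denotes the marking of $\chi_i$.

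The next step is to show that each such class can be represented by a marked conic bundle of the form $(X/B,\Gamma)$ with $\Gamma\subset B$ an actual hypersurface. This is where the covering gonality hypothesis plays its role: by birational invariance $\covgon(\Gamma_i)=\covgon(\chi_i)>M\ge 1$, so $\Gamma_i$ is not uniruled. The exceptional locus of the birational morphism $B_i\to B$ between klt varieties is covered by rational curves by Lemma~\ref{lem:fibresRatConnected}\ref{connected:birational}, hence $\Gamma_i$ cannot be contained in it and must map birationally onto a hypersurface $\Gamma\subset B$. Then $\covgon(\Gamma)=\covgon(\Gamma_i)>M$, and the commutative diagram of Lemma~\ref{lem:SarkisovOverBase} exhibits $(X_{i-1}/B_{i-1},\Gamma_i)$ and $(X/B,\Gamma)$ as equivalent marked conic bundles in the sense of Definition~\ref{def:markedConic}.

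Finally, I would group the contributions by their image $\Gamma$ in $B$. The image of $\phi$ then becomes
\[\sum_{\Gamma}\,n_\Gamma\cdot[(X/B,\Gamma)],\]
where the sum runs over equivalence classes of hypersurfaces $\Gamma\subset B$ with $\covgon(\Gamma)>M$ that are not contained in the discriminant locus, and $n_\Gamma$ counts the indices $i$ for which $\chi_i$ is a type \II Sarkisov link whose marking is sent to $\Gamma$ via $B_i\to B$ (the supplementary restriction $\covgon(\chi_i)>M$ being automatic under the running assumption $\covgon(\Gamma)>M$). By Lemma~\ref{lem:SequenceParity}, $n_\Gamma\bmod 2$ is exactly the parity of $\phi$ along $\Gamma$, yielding the claimed formula. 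There is no serious obstacle beyond careful bookkeeping; the only subtle point is excluding the possibility that the marking of a contributing link gets contracted by $B_i\to B$, which is where combining the gonality hypothesis with Lemma~\ref{lem:fibresRatConnected} is essential.
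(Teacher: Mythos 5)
Your proposal is correct and takes essentially the same route as the paper's proof: decompose $\phi$ into Sarkisov links of conic bundles over $B$ via Lemma~\ref{lem:SarkisovOverBase}, observe that only type \II links $\chi_i$ with $\covgon(\chi_i)>\max\{d,8\conngon(X)\}$ contribute their marked conic bundle classes, identify these classes with $(X/B,\Gamma)$ for $\Gamma\subset B$ the image of the marking, and conclude with Lemma~\ref{lem:SequenceParity}. Your explicit verification that the marking cannot be contracted by $B_i\to B$ (using $\covgon>1$ together with Lemma~\ref{lem:fibresRatConnected}\ref{connected:birational}), and your remark that the gonality bound is automatic for all links marking a fixed $\Gamma$ with $\covgon(\Gamma)>\max\{d,8\conngon(X)\}$, are details the paper leaves implicit but which are fully consistent with its argument (compare Lemma~\ref{lem:gonalityOfTypeII}).
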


\begin{proof}
Set $M=\max\{ d, 8\conngon(X) \}$.
Using Lemma~\ref{lem:SarkisovOverBase}, we decompose $\phi$ as $\phi=\chi_t\circ \cdots \circ\chi_1$ where each $\chi_i$ is a Sarkisov link of conic bundles from $X_{i-1}/B_{i-1}$ to $X_i/B_i$. 
Denote by $J\subseteq\{1,\dots,t\}$ the subset of indexes $i$ such that the Sarkisov link $\chi_i$ is of type \II and satisfies $\covgon(\chi_i) >M$. 
By definition of the group homomorphism 
\[\Bir(X/B)\to \bigoplus_{\MC(X/B)} \Z/2\]
of Theorem~\ref{TheoremBirMori}, the image of $\phi$ is the sum of the equivalence classes of marked conic bundles of $\chi_i$  where $i$ runs over $J$. For each $i\in J$, the marked conic bundle of $\chi_i$ is equal to $(X_i/B_i,\hat\Gamma_i)$ for some irreducible hypersurface $\hat\Gamma_i\subset B_i$ (see Definition~\ref{def:marked_cb_link}); moreover, one has
with $\covgon(\chi_i)=\covgon(\hat\Gamma_i)$ (Definition~\ref{def:Covgonchi}), so $\covgon(\hat\Gamma_i)>M$.
Hence, $(X_i/B_i,\hat\Gamma_i)$ is equivalent to $(X/B,\Gamma_i)$, where $\Gamma_i\subset B$ is the image of $\hat\Gamma_i\subset B_i$ via $B_i/B$. This implies that the image of $\phi$ is the sum of the classes of $(X/B,\Gamma_i)$, where $i$ runs over $J$.

By Lemma~\ref{lem:SequenceParity}, this sum is equal to the sum of equivalence classes of marked conic bundles $ (X/B,\Gamma)$ with $\covgon(\Gamma)>M$ and such that the parity of $\phi$ along $\Gamma$ is odd. 
\end{proof}

\subsection{The case of trivial conic bundles and the proof of Theorem~\ref{TheoremBirPnnotsimple}}
\label{sec:trivialBundle}

Given a variety $B$, let $X=\p^1\times B$, and $X/B$ the second projection. 
The group $\Bir(X/B)$ is canonically isomorphic to $\PGL_2(\C(B))$, via the action 
\[
\begin{tikzcd}[map]
\PGL_2(\C(B)) \times X & \rat &X\\
 \left(\begin{pmatrix} a(t) & b(t)\\ c(t) & d(t) \end{pmatrix} , ([u:v],t)\right) & \mapsrat &([a(t)u+b(t)v:c(t)u+d(t)v],t).
\end{tikzcd}
\]

For $B=\p^{n-1}$, the group $\Bir(X/B)$ corresponds, via a birational map $X\rat \p^n$ sending the fibres of $X/B$ to lines through a point $p\in \p^n$, to the subgroup of the Jonqui\`eres group associated to $p$ consisting of birational maps of $\p^n$ that preserves a general line through $p$ (in general a Jonqui\`eres element permutes such lines). Hence, $\Bir(X/B)$ corresponds to the factor $\PGL_2(\C(x_2, \dots, x_n))$ of the group $\PGL_2(\C(x_2, \dots, x_n)) \rtimes \Bir(\p^{n-1}) \subseteq \Bir(\p^n)$ described in \S\ref{sec:Generators}.

For $B$ general, we obtain many different varieties $X=\p^1\times B$. It can also be that $X$ is rational even if $B$ is not (in \cite[Th\'eor\`eme~1]{BCSS} a non-rational variety $Y$ of dimension $3$ is given such that $Y\times \p^3$ is rational, so  $B=Y$ or $B=Y\times \p^1$ or $B=Y\times \p^2$ gives such an example), but then the conic bundle $X/B$ is not equivalent to the trivial one $\p^n\times \p^1/\p^n$.

\begin{lemma} \label{lem:PGL2toG}
Any surjective group homomorphism $\tau\colon \PGL_2(\C(B)) \tto G$ that is not an isomorphism factorises through the quotient 
\[
\PGL_2(\C(B))/\PSL_2(\C(B)) \simeq \C(B)^*/(\C(B)^*)^2,
\]
where the isomorphism corresponds to the determinant.
In particular the target group $G$ is abelian of exponent $2$. 
\end{lemma}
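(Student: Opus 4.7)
The plan is to reduce everything to the classical simplicity of $\PSL_2(K)$ for any field $K$ with more than three elements, applied here to $K = \C(B)$. The key claim is that every non-trivial normal subgroup $N$ of $\PGL_2(K)$ contains $\PSL_2(K)$; the lemma then follows immediately by taking $N = \ker \tau$, which is non-trivial because $\tau$ is surjective but not injective.

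To prove this claim, I would consider $N \cap \PSL_2(K)$, which is normal in $\PSL_2(K)$. By simplicity, it is either all of $\PSL_2(K)$, in which case we are done, or trivial. In the trivial case, the commutator inclusion
\[ [N, \PSL_2(K)] \subseteq N \cap \PSL_2(K) = \{1\} \]
shows that $N$ lies in the centralizer of $\PSL_2(K)$ inside $\PGL_2(K)$. A direct computation, lifting a representative of an element of $N$ to a matrix $A \in \GL_2(K)$ and testing the commutation modulo scalars against diagonal and unipotent elements of $\SL_2(K)$, forces $A$ to itself be scalar. Hence $N = \{1\}$, contradicting the assumption that $N$ is non-trivial.

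Once $\PSL_2(K) \subseteq \ker \tau$ is established, $\tau$ factors through the quotient $\PGL_2(K)/\PSL_2(K)$, which is canonically identified with $K^*/(K^*)^2$ via the determinant (a class $[A] \in \PGL_2(K)$ is in $\PSL_2(K)$ if and only if $\det(A)$ is a square in $K^*$). Since $K^*/(K^*)^2$ is abelian of exponent $2$, so is $G$.

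The only potentially delicate step is the two classical inputs about $\PSL_2(K)$, namely its simplicity and the triviality of its centralizer in $\PGL_2(K)$; both are standard and hold as soon as $K$ is infinite, which is certainly the case for $K = \C(B)$. No substantial obstacle remains.
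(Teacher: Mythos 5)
Your proof is correct and takes essentially the same route as the paper: both reduce to the simplicity of $\PSL_2(\C(B))$ together with a commutator argument forcing $\PSL_2(\C(B)) \subseteq \Ker \tau$, and then identify the quotient $\PGL_2(\C(B))/\PSL_2(\C(B)) \simeq \C(B)^*/(\C(B)^*)^2$ via the determinant. The only cosmetic difference is that the paper produces a nontrivial element of $\PSL_2(\C(B)) \cap \Ker\tau$ directly from the trivial centre of $\PGL_2(\C(B))$, whereas you handle the trivial-intersection case by showing the centralizer of $\PSL_2(\C(B))$ in $\PGL_2(\C(B))$ is trivial.
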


\begin{proof}
There exists a non trivial element $A \in \Ker \tau$ by assumption.
Since the group $\PGL_2(\C(B))$ has trivial centre, we can find $N \in \PGL_2(\C(B))$ that does not commute with $A$.
Then $\id\neq ANA^{-1}N^{-1} \in \PSL_2(\C(B)) \cap \Ker \tau$, and since $\PSL_2(\C(B))$ is a simple group we get $\PSL_2(\C(B)) \subseteq \Ker \tau$, which gives the result.
\end{proof}

Write $\div \colon \C(B)^*\to \Div(B)$  the classical group homomorphism that sends a rational function onto its divisor of poles and zeros, and whose image is the group of principal divisors on $B$.
Denoting by $\Pl_B$ the set of prime divisors on $B$, the group homomorphism $\div$ naturally gives a group homomorphism
\[\PGL_2(\C(B))/\PSL_2(\C(B)) \simeq\C(B)^*/(\C(B)^*)^2\to \bigoplus_{\Pl_B} \Z/2.\] 
We project onto the sum of prime divisors with large enough covering gonality and identify the ones which are equivalent up to a birational map of $B$.
This identification corresponds to taking orbits of the action of $\Aut_\C(\C(B))$ on $\C(B)$.
The following lemma shows that the resulting group homomorphism extends from $\Bir(X/B)$ to $\Bir(X)$, and coincides with the group homomorphism from Theorem~\ref{TheoremBirMori}. 

Observe that for each $A\in\PGL_2(\C(B))$, we can speak about the parity of the multiplicity of $\det(A)\in \C(B)^*/(\C(B)^*)^2$ as pole or zero
along an irreducible hypersurface $\Gamma\subset B$, as the multiplicity of an element of $(\C(B)^*)^2$ is always even.

\begin{lemma}\label{lem:ImagePGL2}
Let $B$ be a smooth variety of dimension at least $2$,  $X=\p^1\times B$, and let $\phi_M\in \Bir(X/B)\simeq \PGL_2(\C(B))$ be the birational map
\[\phi_M\colon ([u:v],t)\mapsrat ([a(t)u+b(t)v:c(t)u+d(t)v],t),\]
where
\[M=\begin{pmatrix} a(t) & b(t)\\ c(t) & d(t) \end{pmatrix} \in \PGL_2(\C(B)).\]
The image of $\phi_M$ under the group homomorphism
\[ \Bir(X/B)\to \bigoplus_{\MC(X/B)} \Z/2\]
of Theorem~\ref{TheoremBirMori}
is equal to the sum of the equivalence classes of marked conic bundles $(X/B,\Gamma)$ such that $\Gamma\subset B$ is a irreducible hypersurfaces of $B$ with $\covgon(\Gamma) > \max\{ d, 8\conngon(X) \}$ and such that the multiplicity of $\det(M)$ along $\Gamma$ is odd.  
\end{lemma}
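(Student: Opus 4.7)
The plan is to invoke Corollary~\ref{cor:Parity}, which reduces the proof to showing that for every irreducible hypersurface $\Gamma \subset B$ (and in particular those of large covering gonality), the parity of $\phi_M$ along $\Gamma$ in the sense of Definition~\ref{def:ParityPhiGamma} agrees with the parity of the valuation $v_\Gamma(\det M)$. Both quantities depend only on the image of $\det M$ in $\C(B)^*/(\C(B)^*)^2$, so this is ultimately a local statement at the generic point of $\Gamma$.

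To carry out the parity computation, I would fix a general point $p \in \Gamma$ and a smooth irreducible curve $C \subseteq B$ transverse to $\Gamma$ at $p$, and study the induced birational self-map of $\p^1 \times C$ over $C$, which is given by the element $M_C \in \PGL_2(\C(C))$ obtained by restricting the entries of $M$. Let $t$ denote a local parameter at $p$ in $\Ol_{C,p}$. The Cartan decomposition for $\GL_2$ over a DVR (equivalently, the Smith normal form applied to a representative of $M_C$ in $\GL_2$) yields a factorisation $M_C = U_1 \cdot \diag(t^a, t^b) \cdot U_2$ with $U_1, U_2 \in \GL_2(\Ol_{C,p})$ and integers $a, b$. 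After shrinking $C$ to a neighborhood of $p$ on which the $U_i$ are regular and $\det U_i$ is a unit, the induced automorphisms $\phi_{U_i}$ are biregular near the fibre over $p$, so they contribute no base-points, and the count reduces to the diagonal model $\phi_D$ with $D = \diag(1, t^k)$ and $k = b-a$.

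Next, I would handle the diagonal case by an explicit toric computation: in affine coordinates on $\p^1$, $\phi_D$ takes the form $(w,t) \mapsrat (w/t^k, t)$ (or its inverse when $k < 0$), with a single indeterminacy point on the fibre over $p$, and an inductive blow-up calculation shows that exactly $\lvert k \rvert$ successive blow-ups are needed to resolve it. Hence the parity of $\phi_M$ along $\Gamma$ equals $k \bmod 2$. Comparing with $\det(M_C) = \det(U_1)\det(U_2)\cdot t^{a+b}$, where $\det(U_i)$ are units at $p$, one gets $v_p(\det M_C) = a+b$, which has the same parity as $b-a = k$; transversality of $C$ and $\Gamma$ at $p$ identifies $v_p(\det M_C)$ with $v_\Gamma(\det M)$, and the lemma follows from Corollary~\ref{cor:Parity}.

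The main subtle point will be to ensure that the Cartan factorisation obtained at the generic point of $\Gamma$ (that is, over the DVR $\Ol_{B,\Gamma}$) spreads correctly to a Zariski open neighborhood of a general $p \in \Gamma$, so that the restriction to a transverse curve $C$ is unambiguous. This is however routine: for $p$ chosen outside the proper closed locus of $\Gamma$ where an entry of $U_1$ or $U_2$ acquires a pole or where $\det U_1$ or $\det U_2$ vanishes, the spread exists on a neighborhood of $p$, and a general transverse curve $C$ through $p$ meets this neighborhood cleanly.
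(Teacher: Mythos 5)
Your proposal is correct, and it shares the paper's outer skeleton (invoke Corollary~\ref{cor:Parity}, then compare the parity of Definition~\ref{def:ParityPhiGamma} with the valuation of $\det(M)$ by a local computation on a curve transverse to $\Gamma$ at a general point), but the reduction in the middle is genuinely different. The paper first uses that $\PSL_2(\C(B))$ is simple and non-abelian, so it dies in the abelian target (Lemma~\ref{lem:PGL2toG}); hence the image of $\phi_M$ equals that of the dilatation $\psi_\delta$ with $\delta=\det(M)$, and then additivity of both the multiplicity and the parity (the latter via Lemma~\ref{lem:SequenceParity} and $\psi_\delta\circ\psi_{\delta'}=\psi_{\delta\delta'}$) together with the DVR $\Ol_\Gamma(B)$ reduces everything to $m_\delta(\Gamma)\in\{0,1\}$, where the fibre computation is immediate (zero or one base-point). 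You bypass the group theory entirely: Smith normal form over the DVR $\Ol_{B,\Gamma}$ gives $M=U_1\diag(\pi^a,\pi^b)U_2$ with $U_i\in\GL_2(\Ol_{B,\Gamma})$, you spread this out near a general $p\in\Gamma$, restrict to a transverse curve, discard the $U_i$ (biregular near the fibre, hence no base-points), and count exactly $\lvert k\rvert$ base-points for $\diag(1,t^k)$, with $k=b-a\equiv a+b=v_\Gamma(\det M)\pmod 2$; all of these steps, including the spreading-out you flag as the delicate point, do go through. What each approach buys: the paper's argument isolates the conceptual fact that the image factors through $\det$ modulo squares and only ever needs the easiest local case, while yours is more self-contained and elementary (no simplicity of $\PSL_2(\C(B))$, no additivity-of-parity argument), at the price of the inductive blow-up count and the spreading-out of the decomposition. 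One small remark: your opening claim that the parity itself depends only on $\det(M)$ modulo squares is a consequence of the lemma rather than something available at the outset, but since your argument never uses it, this is harmless.
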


\begin{proof}
We first observe that the image of $\PSL_2(\C(B))\subseteq \PGL_2(\C(B))\simeq \Bir(X/B)$ under the group homomorphism 
\[
\Bir(X/B)\to \bigoplus_{\MC(X/B)} \Z/2
\]
is trivial, since $\PSL_2(\C(B))$ is simple and not abelian. 
Hence, the image of an element  $\phi\in \Bir(X/B)\simeq\PGL_2(\C(B))$ is uniquely determined by its determinant $\delta\in \C(B)^*/(\C(B)^*)^2$ (Lemma~\ref{lem:PGL2toG}), and is the same as the image of the dilatation
\[\psi_\delta\colon ([u:v],t)\mapsto ([\delta(t)u:v],t).\] 
So we only need to prove the result for $M$ equal to such a dilatation.

We denote as before by $\Pl_B$ the set of prime divisors on $B$.
For $\delta\in \C(B)^*$ and $\Gamma\in \Pl_B$, we denote by $m_\delta(\Gamma)\in \Z$ the multiplicity of $\delta$ along $\Gamma$, so that 
\[\div(\delta)=\sum_{\Gamma\in \Pl_B} m_\delta(\Gamma)\, \Gamma.\] 
We also denote by $P_\delta(\Gamma)\in \{0,1\}$ the parity  of $\psi_\delta$ along $\Gamma$ as defined in Definition~\ref{def:ParityPhiGamma} and Lemma~$\ref{lem:SequenceParity}$.
The image of the dilatation $\psi_\delta\in \Bir(X/B)$ under the group homomorphism 
\[\Bir(X/B)\to \bigoplus_{\MC(X/B)} \Z/2\]
is equal to the sum of equivalence classes of marked conic bundles $(X/B,\Gamma)$ such that $\Gamma\subset B$ is an irreducible hypersurface with 
\[\covgon(\Gamma) > \max\{ d, 8\conngon(X) \}\]
and such that $P_\delta(\Gamma)$ is odd (Corollary~\ref{cor:Parity}). To prove the result, it suffices to show that $P_\delta(\Gamma)$ and $m_\delta(\Gamma)$ have the same parity.
For all $\delta,\delta'\in \C(B)^*$, we have
\[
m_\delta(\Gamma)+m_{\delta'}(\Gamma)=m_{\delta\cdot \delta'}(\Gamma)
\quad\text{ and }\quad
P_\delta(\Gamma)+P_{\delta'}(\Gamma) \equiv P_{\delta\cdot \delta'}(\Gamma)\pmod{2}.
\]
Indeed, the first equality follows from the definition of the multiplicity and the second follows from Lemma~$\ref{lem:SequenceParity}$, since $\psi_\delta\circ \psi_{\delta'}=\psi_{\delta\cdot \delta'}$. The local ring $\Ol_\Gamma(B)$ being a DVR, the group $\C(B)^*$ is generated by elements $\delta\in \C(B)^*$ with $m_\delta(\Gamma)=0$, and by one single element $\delta_0$ which satisfies $m_{\delta_0}(\Gamma)=1$. 
It therefore suffices to consider the case where $m_{\delta}(\Gamma)\in \{0,1\}$.

We take a general point $p\in \Gamma$, an irreducible curve $C\subseteq B$ transverse to $\Gamma$ at $p$, and compute the number  of base-points of the birational map $\theta\colon \p^1\times C \rat \p^1\times C$ given by $([u:v],t)\mapsto ([\delta(t)u:v],t)$ that are equal or infinitely near to a point of the fibre of $p$. 
If $m_{\delta}(\Gamma)=0$, then $\delta$ is well defined on $p$, so the birational map $\theta$ induces an isomorphism $\p^1 \times\{ p\} \to \p^1\times \{p\}$, which implies that $P_\delta(\Gamma)=0$. 
If $m_{\delta}(\Gamma)=1$, then $\delta$ has a zero of multiplicity one at $p$, so $\theta$ has exactly one base-point on $\p^1\times \{p\}$, namely $([1:0],p)$. 
The composition of $\theta$ with the blow-up of $Z\to \p^1 \times C$ of $([1:0],p)$ yields a birational map $Z\rat \p^1\times C$ with no more base-point on the exceptional divisor, as the multiplicity of both $\delta$ and $v/u$ at the point is $1$, so $P_\delta(\Gamma)=1$. 
This achieves the proof.
\end{proof}

We can now give the proof of Theorem~\ref{TheoremBirPnnotsimple}.
 
\begin{proof}[Proof of Theorem~\ref{TheoremBirPnnotsimple}]
We denote by $\Dil_\k$ the subgroup of birational dilatations 
\begin{align*}
\Dil_\k &= \{(x_1,\dots,x_n)\mapsrat \left(x_1\alpha(x_2,\cdots,x_n),x_2,\dots,x_n\right)\mid
\alpha\in\k(x_2,\dots,x_{n})^*\} \\
&\subseteq \Bir_\k(\p^n) \simeq \Aut_\k(\k(x_1,\dots,x_n)).
\end{align*}
We denote $B=\p^{n-1}$ and use the birational map (defined over $\k$) 
\[
\begin{tikzcd}[map]
 X=\p^1\times B & \rat & \p^n\\
( [u:1], [t_1,\dots,t_{n-1}:1]) & \mapsrat & {[1:u:t_1:\cdots:t_{n-1}]}
\end{tikzcd}
\]
that conjugates $\Bir(X)$ to $\Bir(\p^n)$, sending elements of the form 
\[\{([u:v],t)\mapsrat ([\alpha(t)u:v],t)\mid\alpha\in \C(B)^*\}\]
onto elements locally given by $(x_1,\dots,x_n)\mapsrat \left(x_1\alpha(x_2,\cdots,x_n),x_2,\dots,x_n\right)$.

Now we pick a large enough integer $D$ and consider the set $\Hl_D$ of degree $D$ irreducible hypersurfaces in $\p^{n-1}$. 
For each element $\Gamma\in \Hl_D$, we consider an irreducible polynomial $P \in \k[x_0,\dots,x_n]$ of degree $D$ defining the hypersurface $\Gamma$,  choose $\alpha=P/x_0^D \in \k(\p^{n-1})$ and associate to $\Gamma$ the element $\phi_{\alpha}\in \Bir(X/B)$ given by
\[\phi_{\alpha}\colon ([u:v],t)\mapsrat ([\alpha(t)u:v],t).\]
By Lemma~\ref{lem:ImagePGL2}, the image of $\phi_{\alpha}$ under the group homomorphism 
\[ \Bir(X/B)\to \bigoplus_{\MC(X/B)} \Z/2\]
of Theorem~\ref{TheoremBirMori} is the unique marked conic bundle $(X/B,\Gamma)$ (as the hypersurface $\Gamma_0\subset B$ given by $x_0=0$ satisfies $\covgon(\Gamma_0)=1$). 
It remains to observe that we have enough elements in $\Hl_D$, up to birational maps of $\p^{n-1}$, namely as much as in the field $\k$. 
Indeed, if we take two general hypersurfaces $\Gamma_1,\Gamma_2\subset \p^{n-1}$ of degree $\ge n+1$, then every birational map $\Gamma_1\rat \Gamma_2$ extends to a linear automorphism of $\p^{n-1}$; this can be shown by taking the suitable Veronese embedding of $\p^{n-1}$ such that the canonical divisors of $\Gamma_1$ and $\Gamma_2$ become hyperplane sections. 
The dimension of $\PGL_{n}(\k)$ being bounded, for a large enough degree $D$ we obtain a quotient of $\Hl_D$ by $\PGL_{n}(\k)$ which has positive dimension, hence which has the same cardinality as the ground field $\k$. 
This quotient can be taken as the indexing set $I$ in the statement of Theorem~\ref{TheoremBirPnnotsimple}. 
 \end{proof}

\begin{remark} \label{rem:thmAandB} \phantomsection
\begin{enumerate}
\item 
As all birational dilatations in Theorem~\ref{TheoremBirPnnotsimple} belong to the Jonqui\`eres subgroup of elements preserving a pencil of lines, the restriction of the group homomorphism $\Bir(\p^n)\to \bigoplus_I \Z/2$ to the Jonqui\`eres subgroup also is surjective.  
We will need other conic bundle structures on rational varieties to obtain Theorem~\ref{TheoremTame}.

\item
The proof of Theorem~\ref{TheoremBirPnnotsimple} uses Lemma~\ref{lem:ImagePGL2} in the case where $B=\p^{n-1}$. 
For a general basis $B$ we can prove along the same lines that the image of the subgroup of $\Bir(X/B)$ given by
\[\{([u:v],t)\mapsrat ([\delta(t)u:v],t)\mid\delta\in \C(B)^*\}\]
under the group homomorphism $\Bir(X/B)\to \bigoplus_{\MC(X/B)} \Z/2$
of Theorem~\ref{TheoremBirMori} is infinite.  
We omit the proof here, as it is similar to the case of $B=\p^{n-1}$, and moreover we will prove a more general result in Proposition~\ref{pro:largeimageAnyCB}.
\end{enumerate}
\end{remark}

\subsection{The case of non-trivial conic bundles and the proof of Theorem~\ref{TheoremBirXBnotsimple}}\label{sec:nontrivialBundle}

Recall that given a smooth conic $C \subset \p^2$ and a point $p \in \p^2 \setminus C$, there is an involution $\iota(p,C) \in \Bir(\p^2)$ that preserves the conic $C$. 
It is defined on each general line $L$ through $p$ as the involution that fixes $p$ and exchange the two intersection points $L \cap C$.
We say that $\iota(p,C)$ is the \emph{involution induced by the projection from $p$}.
We now use this construction in family to produce interesting involutions on some conic bundles.

\begin{lemma}\label{lem:CBInvolution}
Let $B$ be a smooth variety, $\hat{\eta}\colon P\to B$ a locally trivial $\p^2$-bundle, and $X\subset P$ a closed subvariety such that the restriction of $\hat{\eta}$ is a conic bundle $\eta\colon X\to B$. Let $s\colon B\rat P$ be a rational section $($i.e.~a rational map, birational to its image, such that $\hat{\eta}\circ s=\id_B)$, whose image is not contained in $X$. 
Let $\iota\in \Bir(X/B)$ be the birational involution whose restriction to a general fibre $\eta^{-1}(b)$ is the involution induced by the projection from $s(b)$.
Let $\Gamma\subset B$ be an irreducible hypersurface not contained in the discriminant locus of $\eta$, 
and let $F$ be a local equation of $X$ in $P$.

If the multiplicity of $F(s)$ along $\Gamma$ is equal to $0$ or $1$, 
then the parity of $\iota$ along $\Gamma$ $($in the sense of Definition~$\ref{def:ParityPhiGamma})$ is equal to this multiplicity $($modulo $2)$.
\end{lemma}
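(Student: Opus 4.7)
The plan is to reduce the statement to an explicit surface computation, performed locally on $B$ around a general point of $\Gamma$. First, by the very definition of the parity of $\iota$ along $\Gamma$, it suffices to compute the number (modulo $2$) of base points of $\iota|_S$ above the fibre over a general point $p\in\Gamma$, where $S:=\eta^{-1}(C)$ for a general curve $C\subseteq B$ transverse to $\Gamma$ at $p$. I would then shrink $B$ to a neighbourhood of $p$ on which the $\p^2$-bundle $P$ is trivial and $s$ is a morphism, and choose this trivialization so that $s$ becomes the constant section $[1:0:0]$. In these coordinates, $X$ is cut out by
\[
F=ax^2+bxy+cxz+dy^2+eyz+fz^2, \qquad a,b,c,d,e,f\in\Ol_B,
\]
and $F(s)=a$. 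A direct projection-from-$[1{:}0{:}0]$ computation on a general fibre gives the explicit formula
\[
\iota([x{:}y{:}z])=[\,dy^2+eyz+fz^2\;:\;axy\;:\;axz\,],
\]
which restricts in the affine chart $x=1$ to $(y,z)\mapsrat(ay/Q,\,az/Q)$ with $Q:=dy^2+eyz+fz^2$, subject to the relation $Q=-(a+by+cz)$ on $S$.

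If $a=F(s)$ has multiplicity $0$ along $\Gamma$, then $a(p)\neq 0$, so $s(p)\notin X_p$; the projection from $s(p)$ is then a regular isomorphism of the smooth conic $X_p$, which implies that $\iota|_S$ has no base point on or infinitely near $X_p$, and the parity is $0$ as required.

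If $a$ has multiplicity $1$ along $\Gamma$, the transversality of $C$ and $\Gamma$ at $p$ forces $a|_C$ to vanish to order exactly $1$ at $p$; in particular $S$ is smooth at the distinguished point $\sigma:=(s(p),p)\in X_p$. The goal is then to show that $\iota|_S$ has exactly one base point above $X_p$, namely $\sigma$, with no infinitely near base points. I would check this in two steps: (i) $\iota|_S$ is defined everywhere on $X_p\setminus\{\sigma\}$, because for $(y_0,z_0)\neq(0,0)$ on $X_p$ the explicit formula specialises at $t=0$ to $\iota([1{:}y_0{:}z_0])=[Q(y_0,z_0){:}0{:}0]$, which equals $\sigma$ as soon as $Q(y_0,z_0)\neq 0$; and (ii) a single blow-up of $\sigma$ in $S$ resolves the indeterminacy. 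For (ii), writing $t$ for a local parameter of $C$ at $p$ and $a=tu$ with $u$ a unit, the equation of $S$ in the chart $x=1$ lets one use $(z,t)$ as local coordinates at $\sigma$; then in each chart of the blow-up a common power of the exceptional parameter cancels between $ay$, $az$ and $Q$, so $\iota$ extends to a morphism on the blown-up surface. This yields parity $1$.

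The main obstacle is the control of the local geometry at $\sigma$ needed to ensure that the steps (i) and (ii) produce no additional base point. Concretely, one must verify that for a general $p\in\Gamma$ the two lines of $\{Q=0\}\subset\p^2_{y,z}$ meeting at $\sigma=[1{:}0{:}0]$ are transverse to the smooth tangent line $by+cz=0$ of $X_p$ at $\sigma$, so that $\{Q=0\}\cap X_p=\{\sigma\}$ and the blow-up computation is clean. This transversality statement rests on the smoothness of $X_p$ (equivalent to $\Gamma$ not being in the discriminant locus) together with the hypothesis that the multiplicity of $F(s)$ along $\Gamma$ equals exactly $1$, and it rules out both stray proper base points of $\iota|_S$ on $X_p$ and any base point infinitely near $\sigma$.
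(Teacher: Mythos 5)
Your reduction to a surface computation, the normalization ($s=[1:0:0]$, $F=ax^2+bxy+cxz+dy^2+eyz+fz^2$, $F(s)=a$) and the multiplicity-$0$ case all match the paper's proof. The gap is in the multiplicity-$1$ case, and it sits exactly where you placed the ``main obstacle''. The claim that $\{Q=0\}\cap X_p=\{\sigma\}$ for general $p\in\Gamma$ is false: since $a(p)=0$, the fibre is $X_p=\{x(b(p)y+c(p)z)+Q_p(y,z)=0\}$, so any point $[0:y_0:z_0]$ with $Q_p(y_0,z_0)=0$ lies on $X_p\cap\{Q=0\}$ and is distinct from $\sigma=[1:0:0]$; such points exist because $Q_p\not\equiv 0$ (otherwise $X_p$ would be a union of lines, contradicting that $\Gamma$ is not in the discriminant). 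At these points your representative $[Q:axy:axz]$ vanishes identically, so step (i) does not go through as written. It is repairable, but by a different argument than the one you invoke: on $X$ one has $Q=-x(ax+by+cz)$, so $x$ is a common factor of the three entries, and after cancelling one is reduced to checking $b(p)y_0+c(p)z_0\neq 0$ at those points; this holds because $\{b(p)y+c(p)z=0\}$ is the tangent line to the smooth conic $X_p$ at $\sigma$ and hence meets $X_p$ only at $\sigma$. That tangency argument, not the stated transversality of $\{Q=0\}$ with the tangent line, is what actually rules out stray base points.

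The second genuine gap is step (ii): that a single blow-up of $\sigma$ resolves the indeterminacy with no infinitely near base points is asserted (``a common power of the exceptional parameter cancels'') but never verified, and this is precisely where the hypothesis that the multiplicity of $F(s)=a$ along $\Gamma$ is exactly $1$ must do its work (for multiplicity $\ge 2$ the count is different, and your sketch would not detect it). There is also a small slip: $(z,t)$ are local coordinates at $\sigma$ only if $b(p)\neq 0$, whereas smoothness of $X_p$ at $\sigma$ only gives $(b(p),c(p))\neq(0,0)$. The paper sidesteps both issues by factoring $\iota_C=\nu^{-1}\circ\theta\circ\nu$ with $\nu\colon[x:y:z]\mapsto[ax:y:z]$ and $\theta\colon[x:y:z]\mapsto[-(x+by+cz):y:z]$: the surface $S=\nu(\eta^{-1}(C))$ has equation $x^2+bxy+cxz+aQ=0$, $\theta$ is a local isomorphism along the fibre of $p$ exchanging the two lines $x=0$ and $x+by+cz=0$, and because $a$ has multiplicity exactly $1$, the restriction of $\nu$ is precisely the blow-up of $\sigma$; hence near the fibre $\iota_C$ is one blow-up followed by one contraction, which gives parity $1$ with no further bookkeeping. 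To salvage your direct computation you would need to supply both the corrected base-point analysis above and an honest local resolution at $\sigma$ using $a=tu$ with $u$ a unit.
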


\begin{proof}
We choose a dense open subset $U \subseteq B$ that intersects $\Gamma$ and trivialises the  $\p^2$-bundle.
Inside $\p^2 \times U$, a local equation of $X$ is given by $F\in \C(B)[x,y,z]$, homogeneous of degree $2$ in $x,y,z$. 
The  fibre of $\eta \colon X\to B$ over a general point of $\Gamma$ (respectively of $B$) is a smooth conic. The section $s$ corresponds to $[\alpha:\beta:\gamma]$, where $\alpha,\beta,\gamma\in \C(B)$ are not all zero and are uniquely determined up to multiplication by an element of $\C(B)^*$.  
As $\Gamma$ is a hypersurface of $B$, the local ring $\Ol_\Gamma(B)$ is a DVR.
One can choose $\alpha,\beta,\gamma\in \Ol_\Gamma(B)$, not all vanishing on $\Gamma$: this defines $\alpha,\beta,\gamma$ uniquely, up to multiplication by an element of $\Ol_\Gamma(B)^*$. 
The evaluation $F(\alpha,\beta,\gamma)\in\C(B)$ at $s$ is then  uniquely determined by $s$ up to multiplication by the square of an element of $\Ol_\Gamma(B)^*$, so that the multiplicity of $F(\alpha,\beta,\gamma)$ along $\Gamma$ is well defined. 

The restriction of $\alpha,\beta,\gamma$ gives an element $(\bar{\alpha},\bar{\beta},\bar{\gamma})\in\C(\Gamma)^3\setminus \{0\}$. 
There exists a matrix in $\GL_3(\C(\Gamma))$  that sends $(\bar{\alpha},\bar{\beta},\bar{\gamma})$ to $(1,0,0)$.
By extending this matrix as an element of $\GL_3(\Ol_\Gamma(B))$, we can assume that $(\alpha,\beta,\gamma)=(1,0,0)$. 
We write the equation of $X$ as
\[F=ax^2+bxy+cxz+dy^2+eyz+fz^2\]
where $a,b,c,d,e,f\in \C(B)$ have no pole along $\Gamma$ and are not all simultaneously zero on $\Gamma$, and obtain that 
\(F(\alpha,\beta,\gamma)= F(1,0,0) = a.\) 
As $s$ is not contained in $X$, $a \in \C(B)$ is not identically zero.
With these coordinates, one checks that the involution $\iota \in \Bir(P/B)$ is given by the simple expression
\[\iota\colon [x:y:z]\mapsto [-(x+\tfrac{b}{a}y+\tfrac{c}{a}z):y:z].\]

Now we proceed to show that the parity of the multiplicity $r\in \{0,1\}$ of $F(s)=a$ along $\Gamma$ is equal to the parity of $\iota$ along $\Gamma$. For this, we take as in Definition~\ref{def:ParityPhiGamma} a general point $p\in \Gamma$ and an irreducible curve $C\subseteq B$ transverse to $\Gamma$ at $p$, and show that $r$ is the number of base-points of the birational surface map $\iota_C\colon \eta^{-1}(C)\rat \eta^{-1}(C)$ induced by $\iota$ that are equal or infinitely near to a point of the fibre of $p$.

If $r=0$, then $a$ does not vanish on $\Gamma$, hence the involution $\iota$ is a local isomorphism above a general point of $\Gamma$, so $\iota_C$ is an isomorphism on the fibre of $p$. This achieves the proof in this case. 

We now assume that $r=1$, or equivalently that $a$ is  a generator of the maximal ideal $\mathfrak{m}$ of $\Ol_\Gamma(B)$.
It implies that either $b$ or $c$ is not zero on $\Gamma$, otherwise $\Gamma$ would be contained in the discriminant locus of $\eta$.
As $\Gamma$ is an irreducible hypersurface of $B$, the local ring $\Ol_{B,\Gamma}$ of rational functions of $B$ defined on an open subset of $\Gamma$ is a DVR. 
We write $\iota_C\in \Bir(\eta^{-1}(C))$ as the restriction of $\nu^{-1}\circ\theta\circ \nu$, where $\theta,\nu\in \Aut_{\C(C)}(\p^2)\subset \Bir(C\times \p^2)$ are the birational maps
\[
\nu\colon[x:y:z]\mapsto [a x: y: z]\;\text{ and }\theta\colon[x:y:z]\mapsto [-( x+by+cz):y:z]\;.
\]
We denote $S = \nu (\eta^{-1}(C)) \subset \p^2\times C$, and one checks that $S$ is the surface with equation 
\[ x^2+b xy+c  xz+a (dy^2+eyz+fz^2)=0.\]

The fibre $\eta^{-1}(p) \subset \eta^{-1}(C)$ is a smooth conic. 
On the other hand the fibre of $p$ in $S$ is $\ell\cup \ell'$, where $\ell$ and $\ell'$ are the lines given by $x=0$ and $ x+by+cz=0$ respectively.
Observe that $\ell \neq \ell'$ since  $(b(p),c(p))\not=(0,0)$.

Since $S$ is the image of $\eta^{-1}(C)$ by $\nu$, the map $\theta$ induces a birational involution $\theta_S \colon S\rat S$.
The map $\theta_S$ is a local isomorphism in a neighborhood of the fibre of $p$, which exchanges $\ell$ and $\ell'$. 
Moreover, $\nu$ maps the conic $\eta^{-1}(p)$ to the line $\ell$, $\nu$ is not defined at the point $q = [1:0:0]$, and $\nu^{-1}\colon[x:y:z]\mapsto [ x: ay: az]$ contracts $\ell'$ on $q$. 
As $a$ has multiplicity $1$, $\nu$ is simply the blow-up of $q$, so the birational map $\iota_C\colon \eta^{-1}(C)\rat \eta^{-1}(C)$ is given, in a neighbourhood of $\eta^{-1}(p)$, by the blow-up of $q$ followed by the contraction of the strict transform of $\eta^{-1}(p)$. 
So the parity of $\iota_C$ along $\Gamma$ is $1$, as desired. 
\end{proof}

\begin{definition}\label{def:DecomposableCB}
We say that a conic bundle $X/B$ is a \emph{decomposable conic bundle} if $X$ and $B$ are smooth, and if we have closed embeddings $B\hookrightarrow \p^m$ and $X\hookrightarrow P$ where $P$ is a $\p^2$-bundle over $\p^m$ that is the projection of a decomposable vector bundle of rank $3$, i.e.~$P=\p(\mathcal{O}_{\p^m}\oplus \mathcal{O}_{\p^m}(a)\oplus \mathcal{O}_{\p^m}(b))$ for some $a,b\in \Z$.
We moreover ask that the morphism $X/B$ comes from the restriction of the $\p^2$-bundle $P\to \p^m$ and that $X\subset P$ is locally given by equations of degree $2$ in the $\p^2$-bundle.
\end{definition}

\begin{proposition}\label{pro:largeimageAnyCB}
For each decomposable conic bundle $\eta\colon X\to B$ with $\dim B \ge 2$, 
there are infinitely many involutions in $\Bir(X/B)$ which have distinct images via the group homomorphism $ \Bir(X/B)\to \bigoplus_{\MC(X/B)} \Z/2$ of Theorem~\ref{TheoremBirMori}. In particular, the image is infinite.
\end{proposition}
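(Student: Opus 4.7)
The plan is to use Lemma~\ref{lem:CBInvolution} as a bridge: it turns the image of a birationally defined involution into a question about the divisor of $F(s)$ for a rational section $s\colon B \rat P$. Combined with Corollary~\ref{cor:Parity}, this lets us design involutions whose images in $\bigoplus_{\MC(X/B)} \Z/2$ are controlled by the vanishing pattern of $F(s)$.

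\textbf{Step 1 (Explicit sections via decomposability).} Using the decomposable structure, we may assume $P = \p(\Ol_{\p^m} \oplus \Ol_{\p^m}(a) \oplus \Ol_{\p^m}(b))$ and that $X \subset P$ is cut out by a bihomogeneous equation $F$ of bidegree $(2, c)$ in fiber and base coordinates. A rational section $s\colon \p^m \rat P$ of ``degree $d$'' then corresponds to a triple of homogeneous polynomials $(\alpha_0, \alpha_1, \alpha_2)$ of degrees $(d, d-a, d-b)$, and the evaluation $F(s)$ is a homogeneous polynomial of degree $2d+c$ on $\p^m$.

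\textbf{Step 2 (Irreducibility and the involutions).} For $d$ large and a very general $s = s_d$, the image of $s$ is not contained in $X$, and the restriction $F(s_d)|_B$ cuts out an irreducible reduced hypersurface $\Gamma_d \subset B$ (by Bertini, since for $d$ large the linear system on $B$ inherited from $|\Ol_{\p^m}(2d+c)|$ is base-point free and of sufficient dimension). The associated involution $\iota(s_d) \in \Bir(X/B)$ has, by Lemma~\ref{lem:CBInvolution}, parity $1$ along $\Gamma_d$ and parity $0$ along every other irreducible hypersurface of $B$, because $F(s_d)|_B$ is irreducible and reduced.

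\textbf{Step 3 (Growth of covering gonality).} One shows that $\covgon(\Gamma_d) \to \infty$ as $d \to \infty$. By adjunction $K_{\Gamma_d} = (K_B + \Gamma_d)|_{\Gamma_d}$, and since $\Gamma_d$ is the restriction to $B$ of a hypersurface of degree $2d+c$, the class $\Gamma_d|_{\Gamma_d}$ grows linearly in $d$ relative to the fixed class $K_B|_{\Gamma_d}$. After absorbing $K_B$ into an effective divisor (for $d$ large), we write $K_{\Gamma_d} = \Ol_{\p^m}(p)|_{\Gamma_d} + E$ with $E$ effective and $p$ growing linearly in $d$. Lemma~\ref{lem:BVApEasy}\ref{BVAPn}\&\ref{BVAmor} gives that $\Ol_{\p^m}(p)|_{\Gamma_d}$ satisfies $\BVA_p$, and Lemma~\ref{lem:BVApEasy}\ref{BVAeff} gives that $K_{\Gamma_d}$ satisfies $\BVA_p$. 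Theorem~\ref{thm:BVAK} then yields $\covgon(\Gamma_d) \ge p+2$, proving the claim.

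\textbf{Step 4 (Distinct images).} For $d$ large enough, $\covgon(\Gamma_d) > \max\{d_n, 8\conngon(X)\}$. By Corollary~\ref{cor:Parity}, the image of $\iota(s_d)$ in $\bigoplus_{\MC(X/B)} \Z/2$ is the single generator $[(X/B, \Gamma_d)]$. Since covering gonality is a birational invariant, equivalent marked conic bundles have markings of equal covering gonality; passing to a subsequence $d_i$ such that the $\covgon(\Gamma_{d_i})$ are pairwise distinct (possible by Step~3) yields infinitely many involutions $\iota(s_{d_i})$ with pairwise distinct images, which is the desired conclusion.

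The main obstacle is Step~3: because $B$ is a general smooth subvariety of $\p^m$ rather than projective space itself, Theorem~\ref{thm:gonality} does not apply directly to $\Gamma_d$, and one is forced to work through the $\BVA_p$ criterion via Theorem~\ref{thm:BVAK}. This requires handling the unknown contribution of $K_B$ in the adjunction formula, which is ultimately made harmless by choosing $d$ so large that the ample part arising from $\Gamma_d$ dominates. The decomposability of $X/B$ enters critically in Step~1 by providing explicit parameters $d$ that can be pushed to infinity while keeping the construction of $F(s_d)$ concrete and uniform.
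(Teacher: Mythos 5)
Your overall architecture (involutions from rational sections via Lemma~\ref{lem:CBInvolution}, image computed by the parity along the divisor of $F(s)$, growth of covering gonality with the degree, distinctness via distinct gonalities) is the same as the paper's, but your Step~2 contains a genuine gap that then undermines Step~3. The divisors $\{F(s_d)|_B=0\}$, as $s_d$ ranges over sections of fixed degree, do \emph{not} form a linear system: $F$ is quadratic in the components of the section, so the family is the image of a quadratic (non-linear) map on the parameter space. The Bertini theorems you invoke (irreducibility and reducedness of a general member of a base-point-free linear system of large dimension) therefore do not apply as stated, and irreducibility of a general member of a non-linear family is not automatic -- indeed, if the quadratic form defining $X$ degenerated to rank $2$, every $F(s)$ would factor. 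Some genuine argument is needed here, and this is precisely the point where the paper's construction earns its keep: it takes the section of the special form $s=(u_1v_2-u_2v_1,\,u_2v_0-u_0v_2,\,u_0v_1-u_1v_0)$ for general $u_i,v_i$, so that the marking $\Gamma=\{G(s)=0\}\cap B$ is the birational image of $\hat\Gamma = X\cap\{\sum x_iu_i=0\}\cap\{\sum x_iv_i=0\}$, a complete intersection of two members of honest very ample linear systems on $X$, to which Bertini does apply (giving $\hat\Gamma$ smooth and irreducible).

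This also matters for your Step~3. You apply adjunction and the $\BVA_p$ criterion (Lemma~\ref{lem:BVApEasy}, Theorem~\ref{thm:BVAK}) directly to the hypersurface $\Gamma_d\subset B$, but Theorem~\ref{thm:BVAK} is a statement about the canonical bundle of a (smooth) variety; for a singular hypersurface the canonical class of a resolution acquires negative exceptional contributions, which is exactly why the related Theorem~\ref{thm:gonality} has to assume canonical singularities. You have not shown $\Gamma_d$ is smooth (your Bertini appeal, besides being inapplicable, was only claimed to give ``irreducible reduced''), and in fact even in the paper's construction the image $\Gamma\subset B$ may well be singular. The paper sidesteps this by computing the covering gonality on the smooth model $\hat\Gamma\subset P$, where $K_{\hat\Gamma}$ is controlled by adjunction for the explicit complete intersection and $\covgon(\Gamma)=\covgon(\hat\Gamma)$ by birational invariance. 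So the fix is not a refinement of your Bertini argument but the replacement of the ``very general section'' by the cross-product section, together with the upstairs complete-intersection model; with that change the rest of your steps (parity $1$ along $\Gamma_d$ and $0$ elsewhere, gonality tending to infinity, pairwise distinct images) go through as in the paper.
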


\begin{proof}
We can see $B$ as a closed subset $B\subseteq \p^m$, and obtain that $X\subset P$, where $\hat{\eta}\colon P\to \p^m$ is the projectivisation of a rank $3$ vector bundle. We can thus write $P=\p(\mathcal{O}_{\p^m}\oplus \mathcal{O}_{\p^m}(a)\oplus \mathcal{O}_{\p^m}(b))$ for some $a,b\ge 0$ (up to twisting and exchanging the factors). 
We view $P$ as the quotient of $(\A^3\setminus \{0\})\times (\A^{m+1}\setminus \{0\})$ by $(\mathbb{G}_m)^2$ via 
\[\left((\lambda,\mu),(x_0,x_1,x_2,y_0,\ldots,y_m)\right)\mapsto (\lambda x_0,\lambda\mu^{-a} x_1,\lambda\mu^{-b} x_2,\mu y_0,\ldots,\mu y_m)\]
and denote by $[x_0\colon x_1 \colon x_2 \semicolon y_0 \colon \ldots \colon y_m]$ the class of $(x_0,x_1,x_2,y_0,\ldots,y_m)$ (see \cite[Definition 2.3, Remark 2.4]{AhmOka} for more details).

Then $X$ is equal to the preimage of $B$ cut by the zero locus of an irreducible polynomial $G\in \C[x_0,x_1,x_2,y_0,\ldots,y_m]$, that has degree $2$ in $x_0,x_1,x_2$ (and suitable degree in $y_0,\ldots,y_m$ so that the polynomial is homogeneous for the above action). For each integer $d\ge 1$, and for general homogeneous polynomials 
\begin{align*}
u_0,v_0\in \C[y_0,\ldots,y_m]_d,&&
u_1,v_1\in \C[y_0,\ldots,y_m]_{d+a},&&
u_2,v_2\in \C[y_0,\ldots,y_m]_{d+b}, 
\end{align*} 
(the subscript corresponding to the degree), the closed subvariety $\hat\Gamma\subset X$ of codimension $2$ given by
\[
\hat\Gamma=\Bigl\{([x_0:x_1:x_2 \semicolon y_0:\ldots:y_m])\in X\subseteq P \Bigm| \sum_{i=0}^2 x_i u_i=\sum_{i=0}^2 x_i v_i=0\Bigr\}
\]
is smooth, by Bertini theorem.

We now prove that the projection $X\to B$ induces a birational morphism from $\hat\Gamma$ to its image $\Gamma\subset B$, an irreducible hypersurface of $B$. Solving the linear system $\sum_{i=0}^2 x_i u_i=\sum_{i=0}^2 x_i v_i=0$ in $x_0,x_1,x_2$, we obtain that the preimage of $[y_0:\ldots:y_m]$ is 
$[u_1v_2-u_2v_1:u_2v_0-u_0v_2:u_0v_1-u_1v_0 \semicolon y_0:\ldots:y_m],$
so the projection induces a birational morphism from $\hat\Gamma$ to the hypersurface $\Gamma\subset B$ given by the polynomial $G(P_0,P_1,P_2,y_0,\ldots,y_m)$, where $P_0,P_1,P_2\in \C[y_0,\dots,y_m]$ are the polynomials $P_0=u_1v_2-u_2v_1$, $P_1=u_2v_0-u_0v_2$ and $P_2=u_0v_1-u_1v_0$.

We now show that the covering gonality $\covgon(\hat\Gamma)=\covgon(\Gamma)$  is large if $d$ is large enough.
We denote by $H_i,F_j\subset P$ the hypersurfaces given respectively by $x_i=0$ and $y_j=0$, and obtain that $\Pic(P)=\Z H_i\bigoplus \Z F_j$ for all $i\in \{0,1,2\}, j\in \{0,\ldots,m\}$. The class of all $F_j$ is the same and denoted by $F$ and $H_0\sim H_1+aF\sim H_2+bF$.
Note that $\hat\Gamma$ is a complete intersection in $\hat{\eta}^{-1}(B)\subseteq P$ of 3 hypersurfaces equivalent to $H_0+dF, H_0+dF, 2H_0+d_0F$ for some $d_0\in \Z$ (depending on the equation of $X$). The canonical divisor of $P$ being equivalent to $-H_0-H_1-H_2-F_0-\ldots-F_m=-3H_0-(m+1-a-b)F$, we obtain by adjunction that the canonical divisor of $\hat\Gamma$ is the restriction to $\hat \Gamma$ of a divisor of $P$ equivalent to  $H_0+(2d+d_0-m-1+a+b)F$.
 The morphism associated to $F$ is simply the projection $\hat\Gamma\to \p^m$, which is birational onto its image. By Lemma~\ref{lem:BVApEasy}\ref{BVAmor}-\ref{BVAPn}, the divisor $pF$ satisfies $\BVA_p$, for each integer $p\ge 0$, and thus $K_{\hat\Gamma}$ satisfies $\BVA_p$ for  $p=2d+d_0-m-1+a+b\ge 0$ if $d$ is large enough, by Lemma~\ref{lem:BVApEasy}\ref{BVAeff}.  
This implies that $\covgon(\hat\Gamma)\ge p+2$ by Theorem~\ref{thm:BVAK}. 
By choosing $d$ large enough we obtain that $\covgon(\Gamma)=\covgon(\hat\Gamma)$ is large.

We now use the construction in Lemma~\ref{lem:CBInvolution} of the involution $\iota\in \Bir(X/B)$ associated with the $\p^2$-bundle $P/B$ and the rational section $s\colon B\rat P$ 
given by 
\[[y_0:\ldots:y_m] \mapsrat [u_1v_2-u_2v_1:u_2v_0-u_0v_2:u_0v_1-u_1v_0 \semicolon y_0:\ldots:y_m].\]

By Lemma~\ref{lem:CBInvolution}, the parity of $\iota$ along $\Gamma$ is one and the parity of $\iota$ along any other irreducible hypersurface of $B$ is zero (as $\Gamma$ is the zero locus of $G(s)$ by construction). For a large integer $d$, the image of $\iota$ under the group homomorphism 
\[ \Bir(X/B)\to \bigoplus_{\MC(X/B)} \Z/2\]
of Theorem~\ref{TheoremBirMori} is the equivalence class of $(X/B,\Gamma)$. 
Taking larger and larger $d$, we obtain infinitely many involutions in the image of the group homomorphisms, which are distinct and thus generate  a group isomorphic to an infinite direct sum of $\Z/2$, as the covering gonality of the hypersurfaces goes to infinity with $d$. 
\end{proof}

\begin{proof}[Proof of Theorem~\ref{TheoremBirXBnotsimple}]
We use the group homomorphism 
\[\Bir(X) \to \bigast_{C\in \CB(X)} \left(\bigoplus_{\MC(C)}\Z/2\right)\]
of Theorem~\ref{TheoremBirMori}. By assumption, $X/B$ is a decomposable conic bundle (in the sense of Definition~\ref{def:DecomposableCB}). By Proposition~\ref{pro:largeimageAnyCB}, the image of $\Bir(X/B)$ contains a group isomorphic to an infinite direct sum of $\Z/2$. 

To finish the proof of Theorem~\ref{TheoremBirXBnotsimple}, we take a subfield $\k\subseteq \C$ over which $X,B$ and $\eta$ are defined, and check that the involutions in $\Bir(X/B)$ that are used to provide the large image are defined over $\k$.  Firstly, the involutions provided by Lemma~\ref{lem:CBInvolution} are defined over $\k$ as soon as the rational section $s\colon B\rat P$ is. 
Secondly, the construction of Proposition~\ref{pro:largeimageAnyCB} works for general polynomials in $\C[y_0,\ldots,y_m]$ of some fixed degrees.

Since a dense open subset of an affine space $\A^n_\C$ contains infinitely many $\k$-points for each subfield $\k\subseteq \C$ (follows from the fact that the $\Q$-points of $\A^n$ are dense), we can assume that the polynomials, and thus the section, are defined over $\k$.
\end{proof}

\section{Non-equivalent conic bundles}
\label{sec:freeProduct}

In this section, we construct infinitely many non-equivalent conic bundles on $\p^n$, showing that the set $\CB(\p^n)$ is infinite for $n\ge 3$ (by contrast, observe that $\CB(\p^2)$ consists of one element). 
This allows us to prove Theorems~\ref{TheoremManyCB} and \ref{TheoremTame}.

\subsection{Studying the discriminant locus}

The main result of this section is Proposition~\ref{pro:Isogenies}. 
We prove in particular that if two standard conic bundles (defined in Definition~\ref{Defi:StandardCB}) $X_1/\p^2$ and $X_2/\p^2$ with discriminants $\Delta_1$ and $\Delta_2$ such that the conic bundles $(X_1\times \p^n)/(\p^2\times \p^n)$ and  $(X_1\times \p^n)/(\p^2\times \p^n)$ are equivalent, then there exist surjective morphisms $\Delta_1\tto \Delta_2$ and $\Delta_2\tto \Delta_1$.
The standard conic bundles are classical in the literature and are conic bundles having nice properties.
They can be in particular embedded in a $\p^2$-bundle, as it was the case for the decomposable conic bundles (Definition~\ref{def:DecomposableCB}). This notion is defined below as \emph{embedded conic fibration}. See remark~\ref{Rem:DifferentCBdef} for a comparison of the different notions.

The following notion is called an \emph{embedded conic} in \cite[page~358]{sarkisov}.

\begin{definition}\label{def:conicfibration}
Let $V$ be a smooth quasi-projective variety. An \emph{embedded conic fibration} is a projective morphism $\eta \colon X\to V$ that is the restriction of a locally trivial $\p^2$-bundle $\hat{\eta}\colon P\to V$, and such that $X\subset P$ is a closed subvariety, given locally by an equation of degree $2$.
Precisely, for each $p\in V$, there exists an affine open subset $U\subseteq V$ containing $p$ such that $\hat{\eta}^{-1}(U)$ is isomorphic to $U\times \p^2$, and the image of $\eta^{-1}(U) \subset U\times \p^2$ is a closed subvariety, irreducible over $\C(U)$, and defined by a  polynomial $F\in \C[U][x,y,z]$ homogeneous of degree $2$ in the coordinates $x,y,z$.
\end{definition}

\begin{remark}\label{rem:FlatisConicFib}
Let $\eta\colon X\to V$ be a flat projective morphism between smooth quasi-projective varieties, with generic fibre an irreducible conic. 
Then, $\eta$ is an embedded conic fibration in a natural way. This is done by taking the locally trivial $\p^2$-bundle $P=\p(\eta_*(\omega_X^{-1}))$ over $V$, where $\omega_X$ is the canonical line bundle of $X$ (see \cite[\S 1.5]{sarkisov}).
If $\eta$ is not flat, this is false, as some fibres can for instance have dimension $\ge 3$ even if $X/V$ is a Mori fibre space and thus a conic bundle (see \cite[Example~5]{AlzatiRusso}).
\end{remark}

The following definition is equivalent to the one of \cite[Definition 1.4]{sarkisov}. 

\begin{definition}\label{Defi:StandardCB}
A \emph{standard conic bundle} is a morphism $\eta\colon X\to B$ which is a conic bundle $($in the sense of Definition~$\ref{def:conicBundle})$, and which is moreover \emph{flat} with $X$ and $B$ smooth. 
This implies that $\eta$ is also an \emph{embedded conic fibration} in the $\p^2$-bundle $\p(\eta_*(\omega_X^{-1}))\to B$ (see Remark~\ref{rem:FlatisConicFib}).
\end{definition}

\begin{remark}\label{Rem:DifferentCBdef}Let us make some comparisons between the above definitions.

An \emph{embedded conic fibration} (Definition~\ref{def:conicfibration}) over a projective base is not necessarily a \emph{conic bundle} (Definition~\ref{def:conicBundle}), as the relative Picard rank can be $>1$.
Conversely, a conic bundle $X/B$ is not necessarily an embedded conic fibration (for instance when some fibres have dimension $\ge 3$), but it is one if the conic bundle is \emph{standard} (Definition~\ref{Defi:StandardCB}) (as explained just above) or \emph{decomposable} (Definition~\ref{def:DecomposableCB}).

Moreover, a decomposable conic bundle is not always standard, as some fibres can be equal to $\p^2$. It is not clear to us if there exist standard conic bundles which are not decomposable.
\end{remark}

\begin{definition}\label{def:MultDiscriminant}
Let $V$ be a smooth quasi-projective variety and $\eta\colon X\to V$ a flat embedded conic fibration.

For each irreducible closed subset  $\Gamma\subseteq V$, we define the \emph{multiplicity of the discriminant of $\eta$ along $\Gamma$} as follows. 
We take an open subset $U\subseteq V$ that intersects $\Gamma$ and such that $\eta^{-1}(U)$ is a closed subset of $U\times \p^2$, of degree $2$, and consider a symmetric matrix $M\in \Mat_{3\times 3}(\C(U))$ that defines the equation of $\eta^{-1}(U)$. 
We choose $M$ such that all coefficients of $M$ are contained in the local ring $\mathcal{O}_\Gamma(U)\subset \C(U)$ of rational functions defined on a general point of $\Gamma$, and such that the residue matrix $\widebar{M}\in \Mat_{3\times 3}(\C(\Gamma))$ is not zero. 
This is possible as the morphism is flat, and defines $M$ uniquely, up to multiplication by an invertible element of  $\mathcal{O}_\Gamma(U)$. 

Now we define the multiplicity of the discriminant of $\eta$ along $\Gamma$ to be the least integer $m\ge 0$ such that the determinant lies in $\mathfrak{m}_\Gamma(U)^m$, where $\mathfrak{m}_\Gamma(U)$ is the maximal ideal of $\mathcal{O}_\Gamma(U)$, kernel of the ring homomorphism $\mathcal{O}_\Gamma(U)\tto \C(\Gamma)$.

We define the \emph{discriminant divisor of $\eta$} to be $\sum m_D D$, where the sum runs over all irreducible hypersurfaces $D\subset V$ and where $m_D\in \N$ is the the multiplicity of the discriminant of $\eta$ along $D$ as defined above.
\end{definition}

\begin{remark}
If $\eta\colon X\to V$ is moreover a conic bundle, the definition of the discriminant given in Definition~\ref{def:MultDiscriminant} is compatible with the definition of discriminant locus given in Definition~\ref{def:conicBundle}: the discriminant locus is the reduced part of the discriminant divisor of $\eta$. 
Moreover, if $\eta$ is a standard conic bundle, the discriminant divisor is reduced \cite[Corollary 1.9]{sarkisov}. 
The multiplicity of the discriminant divisor along irreducible hypersurfaces of $V$ is always $0$ or $1$ in this case. 
We will however not only consider hypersurfaces but also closed subvarieties of lower dimension.
\end{remark}

Using the local description of the matrix that defines $\eta$ as a flat embedded conic fibration, one can prove the following:

\begin{proposition}[{\cite[Proposition 1.8]{sarkisov}}] \label{pro:CBDivisorSingFibres} 
Let $V$ be a smooth quasi-projective variety, let $\eta\colon X\to V$ be a flat embedded conic fibration, such that $X$ is smooth. 
The discriminant divisor $\Delta$ of $\eta$ has the following properties: for each point $p\in V$, the fibre $f_p=\eta^{-1}(p)$ is given as follows:
\[
f_p  \text{ is }
\begin{cases}
\text{a smooth conic}\\ \text{the union of two distinct lines}\\ \text{a double line}
\end{cases}
\iff 
p\text{ is }
\begin{cases}
\text{not on }\Delta\\ \text{a smooth point of }\Delta \\ \text{a singular point of }\Delta.
\end{cases}
\]
\end{proposition}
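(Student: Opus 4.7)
The plan is to work locally at the point $p\in V$ and classify the three cases according to the rank of the residue matrix of $M$ at $p$. Pick an affine open neighborhood $U\subseteq V$ of $p$ on which $\hat\eta^{-1}(U)\simeq U\times \p^2$ and $X\cap (U\times \p^2)$ is cut out by a homogeneous quadratic form $F(x,y,z)= \vphantom{M}^{t}\!x \, M\, x$ for a symmetric matrix $M\in\Mat_{3\times 3}(\mathcal{O}_{V,p})$ whose residue $\bar M\in\Mat_{3\times 3}(\C)$ at $p$ is nonzero (such a representative exists by flatness, as explained in Definition~\ref{def:MultDiscriminant}). Since the fibre $f_p$ is cut out by the conic $\bar F=\vphantom{M}^{t}\!x\, \bar M\, x$, the three possible types of $f_p$ (smooth conic; union of two distinct lines; double line) correspond precisely to $\bar M$ having rank $3$, $2$, or $1$, and these three cases are exhaustive.

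First I would handle the rank $3$ case directly: $\det(\bar M)\neq 0$, so $\det(M)$ does not vanish at $p$, hence $p\notin\Delta$. For the remaining cases, after a linear change of coordinates in the fibre $\p^2$ (performed over $\C$, then lifted to a change of basis of $\mathcal{O}_{V,p}^{\oplus 3}$) I can assume $\bar M=\diag(1,1,0)$ in the rank $2$ case and $\bar M=\diag(1,0,0)$ in the rank $1$ case. Writing $M=\bar M+N$ with $N\in\Mat_{3\times 3}(\mathfrak{m}_p)$, a direct cofactor expansion of $\det(M)$ gives respectively
\[
\det(M) \equiv m_{33} \pmod{\mathfrak{m}_p^2} \quad\text{(rank $2$)} \qquad\text{and}\qquad \det(M)\in \mathfrak{m}_p^2 \quad\text{(rank $1$)},
\]
so in the rank $1$ case the discriminant divisor already has multiplicity $\geq 2$ at $p$, hence $p$ is singular on $\Delta$. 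It remains to show that in the rank $2$ case, the entry $m_{33}$ has nonzero linear term at $p$, so that $\det(M)$ vanishes to order exactly $1$ and $\Delta$ is smooth at~$p$.

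The main technical step, and the main obstacle, is precisely this last verification: I use the smoothness of $X$ at the unique singular point $q=([0\!:\!0\!:\!1],p)$ of the fibre $f_p$ in the rank $2$ case. In the affine chart $z=1$ with local coordinates $(u_1,\dots,u_n,\xi,\eta)$ around $q$ (where the $u_i$ are local parameters of $V$ at $p$ and $\xi=x/z$, $\eta=y/z$), the equation of $X$ is
\[
\tilde F := m_{11}\xi^2 + 2m_{12}\xi\eta + m_{22}\eta^2 + 2m_{13}\xi + 2m_{23}\eta + m_{33}=0.
\]
Evaluating partial derivatives at $q$: $\partial_\xi \tilde F = 2m_{13}(p) = 0$, $\partial_\eta \tilde F = 2m_{23}(p) = 0$, while $\partial_{u_i}\tilde F(q) = \partial_{u_i} m_{33}(p)$. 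Smoothness of $X$ at $q$ therefore forces some $\partial_{u_i} m_{33}(p)\neq 0$, i.e.\ $m_{33}$ is a uniformizer along a smooth hypersurface through $p$. Combined with $\det(M)\equiv m_{33}\pmod{\mathfrak{m}_p^2}$, this gives that $\Delta$ is smooth at $p$ with $m_{33}$ as a local equation.

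Finally I conclude by an exhaustion argument: the three cases (rank $3$, rank $2$, rank $1$) for $\bar M$, the three fibre types, and the three possibilities for $p$ with respect to $\Delta$ ($p\notin\Delta$; $p$ a smooth point of $\Delta$; $p$ a singular point of $\Delta$) are each mutually exclusive and exhaustive, and the three forward implications established above force the three biconditionals claimed in the statement.
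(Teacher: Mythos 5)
Your proof is correct, and it is exactly the standard local-matrix argument that the paper alludes to ("using the local description of the matrix that defines $\eta$...") but does not write out, since it simply cites \cite[Proposition 1.8]{sarkisov}: the rank of the residue matrix $\bar M$ classifies the fibre, the congruences $\det(M)\equiv m_{33}$ (rank $2$) and $\det(M)\in\mathfrak{m}_p^2$ (rank $1$) classify the behaviour of the discriminant, and smoothness of $X$ at the vertex of the degenerate fibre supplies the missing nonvanishing of the linear part in the rank $2$ case, after which the trichotomy argument closes all the biconditionals. The only cosmetic points are that $m_{33}$ is not literally a local equation of $\Delta$ (only its linear part agrees with that of the actual equation $\det(M)$, which is all you use), and that your rank $2$ computation, applied at a general point of $V$, is also what guarantees $\det(M)\not\equiv 0$, so that the discriminant divisor is well defined in the first place.
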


We shall need the following folklore result:

\begin{lemma}\label{lem:FlatConicFibDisc}
Let $V$ be a smooth quasi-projective variety and let $\eta_1\colon X_1\to V$ and $\eta_2\colon X_2\to V$ be two flat embedded conic fibrations. Let $\psi\colon X_1\rat X_2$ be a birational map over $V$. Let $\Delta\subseteq V$ be a closed irreducible subvariety such that $\eta_1^{-1}(\Delta$ is not contained in the base-locus of $\psi$, that the preimage $\eta_2^{-1}(\Delta)$ is irreducible and that a general fibre of $\eta_2^{-1}(\Delta)\to \Delta$ is the union of two distinct lines.
We moreover assume that the multiplicity of the discriminant of $\eta_2$ along $\Delta$ is $1$.
Then, one of the following holds:
\begin{enumerate}
\item\label{GammaDiscNonRed}
Every fibre of $\eta_1^{-1}(\Delta)\to \Delta$ is a double line $($non-reduced fibre$)$.
\item\label{GammaDiscIrrpi}
The preimage $\eta_1^{-1}(\Delta)$ is irreducible and a general fibre of $\eta_1^{-1}(\Delta)\to \Delta$ is the union of two distinct lines.
\end{enumerate}
\end{lemma}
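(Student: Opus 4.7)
The plan is to reduce the lemma to a local matrix computation at the generic point $\xi$ of $\Delta$. Let $R = \Ol_{V,\xi}$, with maximal ideal $\mathfrak{m}$ and residue field $K = \C(\Delta)$. Shrinking $V$ so that both $\p^2$-bundles trivialise near $\xi$, I represent $\eta_1, \eta_2$ by symmetric matrices $M_1, M_2 \in \Sym_3(R)$, each normalised so that its reduction $\bar M_i \in \Sym_3(K)$ is nonzero. The hypotheses then read: $\bar M_2$ has rank $2$ and, after a change of basis, $\bar M_2 = \diag(a,b,0)$ with $-b/a \notin (K^*)^2$ (so the $\bar M_2$-conic is the pair of conjugate lines $ax^2 + by^2 = 0$), and $\det M_2 \in \mathfrak{m} \setminus \mathfrak{m}^2$. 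Since $\psi$ is defined at the generic point of $\eta_1^{-1}(\Delta)$ and is birational over $V$, it induces a $\C(V)$-isomorphism of generic fibres, so there exist $A \in \GL_3(\C(V))$ and $\lambda \in \C(V)^*$ with $\lambda M_1 = A^T M_2 A$.

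The key step is to put $A$ into a Smith normal form. After clearing denominators and rescaling $\lambda$ by squares in $\C(V)^*$, write $A = U D V$ with $U, V \in \GL_3(R)$ and $D = \diag(1, t^e, t^f)$, $0 \le e \le f$, where $t$ is a uniformiser of $R$ in the codimension-one case; in higher codimension I plan to reduce to this situation by restricting everything to a general smooth curve $C \subset V$ through a general point of $\Delta$, transverse to the unique irreducible hypersurface component of the discriminant of $\eta_2$ containing $\Delta$. Replacing $M_1, M_2$ by their conjugates by $V, U$ (which only changes the trivialisations and preserves normalisation), this yields $\lambda M_1 = D M_2 D$, and the entries of $DM_2D$ have computable orders: the $(1,1)$-entry is a unit, the $(2,2)$- and $(3,3)$-entries have $t$-orders $2e$ and $2f+1$ respectively (the latter using the multiplicity-one hypothesis on $\det M_2$), and every off-diagonal entry has order at least one.

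Reading off $\bar M_1 = \overline{\lambda^{-1}DM_2D}$ forces $v(\lambda) = 0$: if $v(\lambda) > 0$, the $(1,1)$-entry of $\lambda^{-1}DM_2D$ is not in $R$, while if $v(\lambda) < 0$, every entry vanishes modulo $\mathfrak{m}$, contradicting $\bar M_1 \ne 0$. Two cases then remain. If $e = 0$, then $\bar M_1 = \bar\lambda^{-1}\diag(a,b,0)$ has rank $2$ and defines the same $K$-irreducible conic as $\bar M_2$, so the generic fibre of $\eta_1^{-1}(\Delta) \to \Delta$ is $K$-irreducible, giving conclusion \ref{GammaDiscIrrpi}; if $e \ge 1$, then $\bar M_1 = \bar\lambda^{-1}\diag(a,0,0)$ has rank $1$ and the generic fibre is the double line $ax^2 = 0$, and lower-semicontinuity of the rank of a symmetric matrix then forces every fibre of $\eta_1^{-1}(\Delta) \to \Delta$ to be a double line, giving conclusion \ref{GammaDiscNonRed}. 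The rank-$3$ possibility is automatically excluded because $\bar M_1$ is congruent via $\bar D$ to $\bar M_2$, which has rank $2$. The main obstacle is the Smith-normal-form step in higher codimension: once the restriction-to-a-curve reduction is justified, preserving both the rank of $\bar M_2$ and the quadratic extension of $K$ witnessing the $K$-irreducibility of the $\bar M_2$-conic, the remainder is a direct valuation computation as above.
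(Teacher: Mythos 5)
There are two genuine gaps. First, your reduction of the higher-codimension case to a DVR by restricting to a general curve $C$ through a general closed point $p\in\Delta$ does not prove the statement: both the hypothesis and conclusion \ref{GammaDiscIrrpi} are assertions over the function field $K=\C(\Delta)$ (the fibre of $\eta_2$ over the generic point is a pair of \emph{conjugate} lines, i.e.\ $\eta_2^{-1}(\Delta)$ is irreducible, encoded by $-a/b\notin (K^*)^2$), and this information evaporates at a closed point, where the residue field is $\C$ and every conic with two distinct lines splits. In particular, after restricting to $C$ you can at best decide whether the single fibre over $p$ is a double line or two lines; you can no longer rule out the forbidden outcome ``$\eta_1^{-1}(\Delta)$ reducible with general fibre two $K$-rational lines'', nor prove the irreducibility claimed in \ref{GammaDiscIrrpi}, and the anisotropy of $\langle a,b\rangle$ that your own case analysis needs is vacuous over $\C$. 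The paper avoids this entirely by never requiring a DVR: it works in the local ring $\Ol_\Delta(V)$ at the generic point of $\Delta$ (an arbitrary local ring with residue field $\C(\Delta)$), using only the residue map and, at one point, the square $\mathfrak{m}_\Delta^2$ of its maximal ideal; this is exactly why the multiplicity of Definition~\ref{def:MultDiscriminant} is defined for subvarieties of any codimension.

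Second, even in the codimension-one case your valuation computation is not justified. Writing $A=UDV$ (Smith form for $\GL$-equivalence) and replacing $M_2$ by $N=\tr{U}M_2U$, the residue $\bar N$ is only \emph{congruent} to $\diag(a,b,0)$; nothing forces $\bar N_{11}\neq 0$ (take $\bar U e_1$ in the kernel direction) or $v(N_{33})=1$, so the asserted orders of the entries of $DND$ — ``$(1,1)$ a unit, $(3,3)$ of order $2f+1$'' — and hence the clean dichotomy $\bar M_1=\bar\lambda^{-1}\diag(a,b,0)$ versus $\bar\lambda^{-1}\diag(a,0,0)$, do not follow; you are implicitly assuming $\bar U$ preserves the diagonal normal form. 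Repairing this would require a normal form adapted to the quadratic form (a Jordan splitting of the quadratic lattice over the DVR), not the $\GL$-Smith form. The paper's proof instead normalises $A$ only on the source side, by right multiplication by $S\in\GL_3(\Ol_\Delta(V))$ (which changes $M_1$ but not $M_2$), reduces $\bar A$ to three shapes according to the rank of its first two rows, uses the anisotropy of $ax^2+by^2$ over $\C(\Delta)$ in the rank-one case, and only in the case $\tr{\bar A}\,\bar M_2\,\bar A=0$ invokes the multiplicity-one hypothesis via $\det M_2\equiv ab\,\nu_{3,3}\pmod{\mathfrak{m}^2}$; your argument would need an analogous mechanism to exclude the rank-$3$ and split rank-$2$ outcomes once the unjustified entry orders are removed.
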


\begin{proof}
Replacing $V$ by an open subset that intersects $\Delta$, we can assume that $X_1$ and $X_2$ are closed subvarieties of $V\times \p^2$ given by a polynomial of degree $2$ in the coordinates of $\p^2$.
We denote by $\mathcal{O}_\Delta(V)\subset \C(V)$ the subring of rational functions that are defined on a general point of $\Delta$ and consider the surjective residue homomorphism $\mathcal{O}_\Delta(V)\tto \C(\Delta)$.
The quadratic equations of $X_1$ and $X_2$ correspond to symmetric matrices $M_1,M_2\in \Mat_{3\times 3}(\C(V))$, defined up to scalar multiplication.  
Since both $\eta_1$ and $\eta_2$ are flat, we can choose $M_1,M_2\in \Mat_{3\times 3}(\mathcal{O}_\Delta(V))$ such that the residue matrices $\widebar{M}_1,\widebar{M}_2\in \Mat_{3\times 3}(\C(\Delta))$ are not zero.

The fact that $\eta_2^{-1}(\Delta)$ is irreducible  and that a general fibre of $\eta_2^{-1}(\Delta)\to \Delta$ is the union of two distinct lines is equivalent to asking that the quadratic form associated to $M_2$
corresponds to a singular irreducible conic over the field $\C(\Delta)$. It then corresponds to the union of two lines defined over an extension of degree $2$ of $\C(\Delta)$, which intersect into a point defined over $\C(\Delta)$.
After a change of coordinates on $X_2\subset V\times \p^2$, applying an element of $\PGL_3(\C(V))$ which restricts to an element of $\PGL_3(\C(\Delta))$, we can assume that the point is $[0:0:1]$ and completing the square we assume that the restriction is given by $F=ax^2+by^2$ for some $a,b\in \C(\Delta)^*$, where $-\frac{a}{b}\in \C(\Delta)^*$ is not a square.
This corresponds to saying that $\widebar M_2$ is equal to the diagonal matrix $\diag(a,b,0)$.

The birational map $\psi$ is given by 
\[\left(v, 
\left[\begin{smallmatrix} x\\ y \\ z \end{smallmatrix}\right]
\right) \mapsrat  \left(v, A(v)\cdot
\left[\begin{smallmatrix} x\\ y \\ z \end{smallmatrix}\right] \right)
\] for some $A\in \GL_3(\C(V))$. 
This implies that 
$M_1$ and $\tr{A}\cdot M_2\cdot A$ are collinear in $\Mat_{3\times 3}(\C(V))$.
 
As $\eta_1^{-1}(\Delta)$ is not contained in the base-locus of $\psi$, we can assume that $A\in \Mat_{3\times 3}(\mathcal{O}_\Delta(V))$ is such that its residue $\widebar{A}\in \Mat_{3\times 3}(\C(\Delta))$ is not zero.
We can moreover choose an element $S\in \GL_{3}(\mathcal{O}_\Delta(V))$, with residue $\widebar{S}\in  \GL_{3}(\C(\Delta))$, and replace $A$ with $A\cdot S$.
This corresponds to a coordinate change of $\p^2\times V$ at the source, which only affects $X_1$ and not $X_2$. We can then reduce to the following possibilities for $\widebar{A}$, according to the rank of the $2\times 3$ matrix obtained from the first two rows of $\widebar{A}$:
\begin{align*}
\begin{pmatrix} 1 & 0 & 0 \\ 0 & 1 & 0 \\  \mu_1 & \mu_2 & \mu_3 \end{pmatrix}, &&  
\begin{pmatrix} \alpha & 0 & 0 \\ \beta & 0 & 0 \\  \mu_1 & \mu_2 & \mu_3 \end{pmatrix}, &&
\begin{pmatrix} 0 & 0 & 0 \\ 0 & 0 & 0 \\  \mu_1 & \mu_2 & \mu_3 \end{pmatrix},
\end{align*}
 where $\alpha,\beta,\mu_1,\mu_2,\mu_3\in \C(\Delta)$ and $(\alpha,\beta)\not=(0,0)$.
 
In the first case, $\tr{\widebar{A}}\cdot \widebar{M_2}\cdot \widebar{A}=\widebar{M_2}$, so $\eta_2^{-1}(\Delta)$ has the same properties as $\eta_1^{-1}(\Delta)$, which gives \ref{GammaDiscIrrpi}.

The second case gives $\tr{\widebar{A}}\cdot \widebar{M_2}\cdot \widebar{A}= \diag(\alpha^2 a+\beta^2 b,0,0)$.
As $(\alpha,\beta)\not=(0,0)$ and $-\frac{a}{b}\in \C(\Delta)^*$ is not a square, we have $\alpha^2 a+\beta^2b\not=0$.
The quadratic form associated to this matrix is then a double line, and we obtain \ref{GammaDiscNonRed}. 

It remains to study the last case, which yields $\tr{\widebar{A}}\cdot \widebar{M_2}\cdot \widebar{A}=0$. This means that all coefficents of $\tr{A}\cdot M_2\cdot A$ belong to the maximal ideal $\mathfrak{m}=\mathfrak{m}_\Delta(V)$ of $\mathcal{O}_\Delta(V)$, kernel of the residue homomorphism $\mathcal{O}_\Delta(V)\tto \C(\Delta)$.
Applying $S$ as before, we can assume that $\mu_1=1$, $\mu_2=\mu_3=0$, since the rank of $\widebar{A}$ is $1$.
We write
$M_2 = \diag(a,b,0) + (\nu_{i,j})_{1 \le i,j \le 3}$
where $\nu_{i,j}\in \mathfrak{m}$ for all $i,j$, and obtain $\det(M_2)\equiv a\cdot b\cdot \nu_{3,3}\pmod{\mathfrak{m}^2}$.
As the multiplicity of the discriminant of $\eta_2$ along $\Delta$ is $1$, this implies that $\nu_{3,3}\in \mathfrak{m}\setminus \mathfrak{m}^2$.
We compute 
$\tr{A}\cdot M_2\cdot A\equiv \diag(\nu_{3,3},0,0) \pmod{\mathfrak{m}^2}$.
The quadratic form associated to this matrix is a double line, so again we obtain \ref{GammaDiscNonRed}.
\end{proof}

We give two examples to illustrate the need for all the assumptions in Lemma~\ref{lem:FlatConicFibDisc}:

\begin{example}
We work over the affine plane $V=\A^2$ and consider 
\begin{align*}
X_1&= \{([x:y:z],(u,v))\in \p^2\times \A^2 \mid x^2v+y^2-z^2=0\},\\
X_2&= \{([x:y:z],(u,v))\in \p^2\times \A^2 \mid x^2v+y^2-u^2z^2=0\},\\
X_2'&= \{([x:y:z],(u,v))\in \p^2\times \A^2 \mid x^2uv+y^2-z^2=0\}.
\end{align*} 
The projection onto the second factor gives three flat embedded conic fibrations $\eta_1\colon X_1\to \A^2$,  $\eta_2\colon X_2\to \A^2$,  $\eta_2'\colon X_2'\to \A^2$, with discriminant divisors being respectively given by $v=0$, $u^2v=0$ and $uv=0$.    
The birational maps of $\p^2\times \A^2$ given by
$([x:y:z],(u,v))\mapsto ([xu:yu:z],(u,v))$ and $([x:y:z],(u,v))\mapsto ([2x:(u+1)y+(u-1)z:(u-1)y+(u+1)z],(u,v))$ provide two birational maps 
 $\psi\colon X_1\rat X_2$ and $\psi'\colon X_1\rat X_2'$ over $\A^2$.

Choosing $\Delta\subset \A^2$ to be the line $\{u=0\}$, the result of Lemma~\ref{lem:FlatConicFibDisc} does not hold for $\psi$ and for $\psi'$, because a general fibre of  $\eta_1^{-1}(\Delta)\to \Delta$ is a smooth conic. In both cases, exactly one hypothesis is not satisfied.
Namely, the multiplicity of the discriminant of $\eta_2$ along $\Delta$ is $2$ instead of $1$, and the surface $\eta_2'^{-1}(\Delta)$ is not irreducible.
\end{example}

The idea of the proof of the following statement was given to us by C. B\"ohnig and H.-C. Graf von Bothmer. 

\begin{proposition}\label{pro:Isogenies}
Let $B$ be a smooth surface, and for $i=1,2$, let $\eta_i\colon X_i\to B$ be a standard conic bundle with discriminant a smooth irreducible curve $\Delta_i\subset B$. 
Assume that there exists a commutative diagram
\[\begin{tikzcd}[link]
X_1\times Y \ar[dd,"\eta_1\times \id",swap]\ar[r,"\psi",dashed]& X_2\times Y\ar[dd,"\eta_2\times \id"]  \\ \\
B\times Y \ar[r,"\theta",dashed]& B\times Y
\end{tikzcd}\]
where $Y$ is smooth and $\psi$, $\theta$ are birational. 

Then, for a general point $p\in Y$, the image of $\Delta_1\times \{p\}$ is contained in $\Delta_2\times Y$ and the morphism $\Delta_1\to \Delta_2$ obtained by composing
\[\Delta_1 \iso  \Delta_1 \times \{p\}  \stackrel{\theta}{\rat} \Delta_2 \times Y\stackrel{\pr_1}{\longto} \Delta_2\]
is surjective $($here $\pr_1\colon \Delta_2\times Y\to \Delta_2$ is the first projection$)$.
\end{proposition}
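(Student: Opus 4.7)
The strategy is to reduce the assertion to an application of Lemma~\ref{lem:FlatConicFibDisc} on a common smooth resolution of the birational self-map $\theta$ of $V=B\times Y$. I would take $\tilde V$ smooth together with birational morphisms $\sigma_1,\sigma_2\colon \tilde V\to V$ such that $\sigma_2=\theta\circ \sigma_1$, and pull back each flat embedded conic fibration $\eta_i\times\id\colon X_i\times Y\to V$ through $\sigma_i$ to obtain flat embedded conic fibrations $\tilde\eta_i\colon \tilde X_i\to \tilde V$. The commutativity of the hypothesised diagram then forces $\psi$ to lift to a birational map $\tilde\psi\colon \tilde X_1\rat \tilde X_2$ over $\tilde V$.

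Next, let $\tilde D_1\subset \tilde V$ be the strict transform of $\Delta_1\times Y$ under $\sigma_1$, so $\sigma_1|_{\tilde D_1}$ is birational onto $\Delta_1\times Y$. Since $\Delta_1$ is smooth and $\eta_1$ is a standard conic bundle, Proposition~\ref{pro:CBDivisorSingFibres} implies that the generic fibre of $\tilde \eta_1$ over $\tilde D_1$ is a union of two distinct lines and that the multiplicity of the discriminant of $\tilde\eta_1$ along $\tilde D_1$ is $1$. Applying Lemma~\ref{lem:FlatConicFibDisc} to $\tilde\psi^{-1}$ with $\Delta=\tilde D_1$ (swapping the roles of $\eta_1$ and $\eta_2$ in the statement), one concludes that the generic fibre of $\tilde\eta_2$ over $\tilde D_1$ is either a double line or a union of two distinct lines, hence in either case not a smooth conic; so $\tilde D_1$ lies in the discriminant locus of $\tilde\eta_2$, which is supported on $\sigma_2^{-1}(\Delta_2\times Y)$, giving $\sigma_2(\tilde D_1)\subseteq \Delta_2\times Y$ and therefore $\overline{\theta(\Delta_1\times Y)}\subseteq \Delta_2\times Y$. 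The case in which the degree-$2$ cover associated with $\eta_1$ over $\Delta_1$ splits (so $\tilde\eta_1^{-1}(\tilde D_1)$ is reducible and the irreducibility hypothesis of Lemma~\ref{lem:FlatConicFibDisc} fails) is handled either by applying the lemma to each component of the preimage separately, or by observing directly that the number of irreducible components of the generic fibre is preserved under the $\C(\tilde D_1)$-birational map induced by $\tilde\psi$, so the generic fibre of $\tilde\eta_2$ at $\tilde D_1$ still cannot be a smooth (irreducible) conic.

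Since $\theta$ is birational on $V$ and $\Delta_1\times Y$ is a divisor, the restriction $\theta|_{\Delta_1\times Y}$ is generically injective (assuming $\Delta_1\times Y$ is not $\theta$-exceptional), so its image in $\Delta_2\times Y$ has the same dimension $1+\dim Y$ as the ambient irreducible divisor, forcing $\overline{\theta(\Delta_1\times Y)}=\Delta_2\times Y$; applying the same argument to $\theta^{-1}$ yields that $\theta$ restricts to a birational equivalence $\Delta_1\times Y\rat \Delta_2\times Y$. The inclusion $\theta(\Delta_1\times\{p\})\subseteq \Delta_2\times Y$ for general $p$ then follows. For the surjectivity of the composition $\pr_1\circ \theta|_{\Delta_1\times\{p\}}\colon \Delta_1\to \Delta_2$ between smooth projective curves, it suffices to show non-constancy; if instead it were constant for all general $p$, then $\pr_1\circ \theta|_{\Delta_1\times Y}$ would factor through $\pr_Y$, and one could write $\theta|_{\Delta_1\times Y}(\delta,y)=(g_0(y),h(\delta,y))$ with $g_0\colon Y\rat \Delta_2$ and $h\colon \Delta_1\times Y\rat Y$; birationality of $\theta|_{\Delta_1\times Y}$ would then force $h(-,y)\colon \Delta_1\to Y$ to be dominant for generic $y$, which is impossible on dimensional grounds whenever $\dim Y\geq 2$. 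The main obstacle of the proof is precisely this last step: ensuring the non-constancy of $\pr_1\circ \theta|_{\Delta_1\times\{p\}}$, together with ruling out $\Delta_1\times Y$ being $\theta$-exceptional in the small-dimensional cases; there one must exploit more carefully the identification of the discriminant of the restricted conic fibration $\psi(X_1\times\{p\})\to \theta(B\times\{p\})$ with $\theta(\Delta_1\times\{p\})$, which provides the extra rigidity needed.
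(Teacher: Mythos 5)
Your first half (the containment $\overline{\theta(\Delta_1\times Y)}\subseteq\Delta_2\times Y$) is essentially sound and close in spirit to the paper: the discriminant of $\eta_1\times\id$ is the reduced smooth divisor $\Delta_1\times Y$, its multiplicity along the strict transform on a resolution of $\theta$ is $1$, and Lemma~\ref{lem:FlatConicFibDisc} then forces the fibre of the second fibration over the generic point of that divisor to be non-smooth. Two remarks: the hedge about a ``split'' double cover is unnecessary, since $\rho(X_1/B)=1$ makes $\eta_1^{-1}(\Delta_1)$ irreducible (the paper records this at the start of its proof); and your fallback argument ``the number of components of the generic fibre is preserved under the map induced by $\tilde\psi$'' is not legitimate, because $\tilde\psi$ is only birational over $\tilde V$ and need not restrict to a birational map between the fibres over $\tilde D_1$ -- controlling what happens over loci inside the indeterminacy/exceptional set is exactly what Lemma~\ref{lem:FlatConicFibDisc} is for.

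The genuine gap is the surjectivity step, which you yourself flag as the main obstacle and then attempt by a dimension count that does not work. From ``$\pr_1\circ\theta|_{\Delta_1\times\{p\}}$ constant for general $p$'' and birationality of $\theta|_{\Delta_1\times Y}$ onto $\Delta_2\times Y$ you conclude that $h(-,y)\colon\Delta_1\rat Y$ must be dominant for generic $y$; this is false. Writing $\theta|_{\Delta_1\times Y}=(g_0\circ\pr_Y,h)$, dominance only forces $g_0\colon Y\rat\Delta_2$ to be dominant and $h$ restricted to $\Delta_1\times g_0^{-1}(q)$ to dominate $Y$, which is perfectly consistent dimensionally; indeed, if $Y\simeq\Delta_2\times Y'$ the swap $(\delta,a,y')\mapsto(a,\delta,y')$ is an isomorphism $\Delta_1\times Y\iso\Delta_2\times Y$ on which $\pr_1$ is constant along every slice $\Delta_1\times\{p\}$. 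So non-constancy cannot be obtained from abstract dimension considerations (nor can contraction of $\Delta_1\times Y$ be ruled out this way); it needs the conic-bundle geometry. The paper gets it by applying Lemma~\ref{lem:FlatConicFibDisc} not to the divisor but to the curve $\Delta=\Delta_1\times\{p\}$ itself (the lemma and Definition~\ref{def:MultDiscriminant} allow subvarieties of any codimension): pulling back the local matrix $M_2$ by $\theta^*$ and restricting to $\Delta_1\times\{p\}$, the dichotomy says the conic over $\C(C)$, where $C=\overline{\theta(\Delta_1\times\{p\})}$, is either a double line or an irreducible singular conic. In particular $C\subseteq\Delta_2\times Y$; and if $C$ (a point or a curve) were contained in some $\{q\}\times Y$, the fibre there would be the \emph{split} pair of distinct lines $\eta_2^{-1}(q)\times Y$, i.e.\ reduced but reducible over $\C(C)$, contradicting the dichotomy -- whence $\pr_1(C)=\Delta_2$. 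This is the rigidity you gestured at with ``the discriminant of $\psi(X_1\times\{p\})\to\theta(B\times\{p\})$'', but that restricted fibration is not set up or analysed in your proposal, so as written the surjectivity claim is unproved.
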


\begin{proof}
For $i=1,2$, the discriminant divisor of $\eta_i$ is reduced \cite[Corollary 1.9]{sarkisov}, so consists of $\Delta_i$. As $\Delta_i$ is smooth, $\eta_i^{-1}(p)$ is the union of two distinct lines for each $p\in \Delta_i$ (Proposition~\ref{pro:CBDivisorSingFibres}). Since $\rho(X_i/B_i)=1$, the preimage $\eta_i^{-1}(\Delta_i)$ is irreducible.
The morphism $(X_i\times Y)/(B\times Y)$ is a standard conic bundle whose discriminant divisor is reduced, consisting of the smooth hypersurface $\Delta_i\times Y\subset B\times Y$. 

 We choose a dense open subset $U\subseteq B\times Y$ on which $\theta$ is defined and whose complement is of codimension $2$ (since $B\times Y$ is smooth). In particular, $U\cap (\Delta_1\times Y)$ is not empty, so $U\cap (\Delta_1\times \{p\})\not=\emptyset$ for a general point $p\in Y$. After restricting the open subset, we can moreover assume that $\eta_1^{-1}(U)$ is a closed subset of $U\times\p^2$, given by the quadratic form induced by a matrix $M_1\in \GL_3(\C(U))$. The coefficients of the matrix can moreover be chosen in $\C(B)\subseteq \C(B\times Y)=\C(U)$, as the equation of $X_1\times Y$ in $\p^2\times Y$ is locally the equation of $X_1$ in $\p^2$, independent of $Y$.

We define $C\subset B\times Y$ to be image of $\Delta_1\times \{p\}$ by $\theta$, which is a point or an irreducible curve, as $\Delta_1$ is an irreducible  curve. 
The aim is now to show that $C\subseteq \Delta_2\times Y$ and that $\pr_1(C)=\Delta_2$. 
We choose an open subset $V\subseteq B\times Y$ intersecting $C$ such that $\eta_2^{-1}(V)$ is contained in $ \p^2\times V$ and is given by the quadratic form given by a symmetric matrix $M_2\in \Mat_{3\times 3}(\C(V))$. The morphism $\eta_2$ being flat, we can choose the coefficients of $M_2$ to be defined on $C$ and such that the residue matrix in $\widebar{M_2}\in \Mat_{3\times 3}(\C(C))$ is not zero. The birational map $\psi$ is locally given by 
\[
\begin{tikzcd}[map]
U \times \p^2  & \rat & V \times \p^2\\
\left(u, \left[\begin{smallmatrix} x\\ y \\ z \end{smallmatrix}\right]\right)& \mapsrat & \left(\theta(u), A(u) \cdot \left[\begin{smallmatrix} x\\ y \\ z \end{smallmatrix}\right]\right)
\end{tikzcd}
\]
for some $A\in \GL_3(\C(U))$.  The explicit form of the map $\psi$ gives
\[\lambda\cdot M_1= \tr{A}\cdot \theta^{*}(M_2)\cdot A\]
where $\lambda\in \C(U)^*$ is a scalar and $\theta^{*}(M_2)$ is the matrix obtained from $M_2$ by applying to its coefficients the field isomorphism $\theta^{*}\colon \C(V)\to \C(U)$. As the rational map $\theta$ induces a dominant rational map $\Delta_1\times \{p\}\rat C$, we have a field homomorphism $\C(C)\to \C(\Delta_1\times \{p\})\simeq \C(\Delta_1)$, that we denote by $\bar\theta^*$. It induces a commutative diagram
\[\begin{tikzcd}[link]
\mathcal{O}_{C}(V) \ar[dd,swap]\ar[r,"\theta^*"]& \mathcal{O}_{\Delta_1\times \{p\}}(U) \ar[dd]  \\ \\
\C(C)  \ar[r,"\bar\theta^*"]& \C(\Delta_1\times \{p\})  \ar[r,phantom,"\simeq"]&[-4em] \C(\Delta_1).
\end{tikzcd}\]

We denote by $X'\subset U\times \p^2$ the subvariety given by the quadratic form associated to the matrix $\theta^{*}(M_2)$. We observe that the coefficients of $\theta^{*}(M_2)$ are defined over $\Delta_1\times \{p\}$ and that the residue gives a matrix $\widebar{\theta^{*}(M_2)}\in \Mat_{3\times 3}(\C(\Delta_1))$ which is obtained by applying the field homomorphism $\widebar{\theta^*}$ to the entries of
$\widebar{M_2}\in \Mat_{3\times 3}(C)$.
The morphism  
$\pr_1\colon X'\to U$ is then an embedded conic fibration, which is flat after maybe reducing the open subset $U$ (but still having $U\cap (\Delta_1\times \{p\})\not=\emptyset$). 

We can apply Lemma~\ref{lem:FlatConicFibDisc} to the birational map $X'\rat X$ given by \[
\left(u, \left[\begin{smallmatrix} x\\ y \\ z \end{smallmatrix}\right]\right) \mapsrat  \left(u, A(u)^{-1} \cdot \left[\begin{smallmatrix} x\\ y \\ z \end{smallmatrix}\right]\right)\]
and to $\Delta=\Delta_1\times \{p\}$.
Indeed, $(\eta_1 \times \id)^{-1}(\Delta_1\times \{p\})$ is irreducible as $\eta_1^{-1}(\Delta_1)$ is irreducible, and every fibre of $(\eta_1 \times \id)^{-1}(\Delta_1\times \{p\})\to \Delta_1\times \{p\}$  is the union of two distinct lines, as the same holds for $\eta_1^{-1}(\Delta_1)\to \Delta_1$ by  Proposition~\ref{pro:CBDivisorSingFibres}. Lemma~\ref{lem:FlatConicFibDisc}
gives two possibilities for the matrix $\bar\theta^*(M_2)\in \Mat_{3\times 3}(\C(\Delta_1))$: either it is of rank~$1$ (case \ref{GammaDiscNonRed}) or it is of rank~$2$, corresponding to a singular irreducible conic (case \ref{GammaDiscIrrpi}).
This gives the same two possibilities for $\widebar{M_2}\in \Mat_{3\times 3}(C)$ as $\bar{\theta}^*$ is a field homomorphism. As the rank of $M_2$ is smaller than~$3$, the variety $C$ is in the discriminant of $(X_2\times Y)/(B\times Y)$ and is thus contained in $\Delta_2\times Y$ as desired. It remains to see that $C$ is not contained in $\{q\}\times Y$ for some point $q$. Indeed, the preimage $(\eta_2\times \id)^{-1}(\{q\}\times Y)$ is isomorphic to $\eta_2^{-1}(\{q\})\times Y$, which is not irreducible, as $\eta_2^{-1}(\{q\})$ is the union of two lines (again by Proposition~\ref{pro:CBDivisorSingFibres}), but which is reduced.
\end{proof}

\subsection{Conic bundles associated to smooth cubic curves}
The principal result in this section is Proposition~\ref{pro:EnoughCB}, which provides a family of conic bundles that we shall use in the next section to prove Theorem~\ref{TheoremManyCB}.

\begin{lemma}\label{lem:ThreeConics}
For each $p=[\alpha:\beta]\in \p^1$, the set 
\[\Sl_p=\{[x_0:x_1:x_2]\in \p^2 \mid \alpha x_0^2+\beta x_1x_2=\alpha x_1^2+\beta x_0x_2=\alpha x_2^2+\beta x_0x_1=0\} \]
consists of three points if $\alpha(\alpha^3+\beta^3)=0$ and is empty otherwise.
\end{lemma}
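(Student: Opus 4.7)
The plan is to handle the degenerate case $\alpha=0$ first, and then normalise $\alpha=1$ and exploit the cyclic symmetry of the three equations to reduce the problem to the vanishing of a single polynomial identity.

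First, if $\alpha=0$, then $\beta\neq 0$ and the equations reduce to $x_0x_1=x_0x_2=x_1x_2=0$, which forces at least two coordinates to vanish; this gives exactly the three coordinate points $[1:0:0]$, $[0:1:0]$, $[0:0:1]$. So from now on I assume $\alpha\neq 0$ and, by rescaling, $\alpha=1$, so that the system becomes
\[
x_0^2+\beta x_1x_2 = x_1^2+\beta x_0x_2 = x_2^2+\beta x_0x_1 = 0.
\]
The key observation is that multiplying the $i$-th equation by $x_i$ gives $x_i^3=-\beta x_0x_1x_2$ for each $i$; hence any solution satisfies $x_0^3=x_1^3=x_2^3$. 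Taking the product of the three original equations yields $(x_0x_1x_2)^2=-\beta^3(x_0x_1x_2)^2$, i.e.\ $(x_0x_1x_2)^2(1+\beta^3)=0$.

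If $1+\beta^3\neq 0$, then $x_0x_1x_2=0$, say $x_0=0$; but then the first equation forces $x_1x_2=0$, and each of the remaining two equations immediately forces another coordinate to vanish, which is impossible in $\p^2$. So $\Sl_p$ is empty, as claimed.

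If on the contrary $1+\beta^3=0$, I look for solutions with $x_0x_1x_2\neq 0$ (the above argument still rules out any with $x_0x_1x_2=0$). Normalising $x_0=1$, the relations $x_i^3=x_0^3$ give $x_1,x_2\in\{1,\omega,\omega^2\}$ where $\omega$ is a primitive cube root of unity. The first equation imposes $x_1x_2=-\beta^{-1}$; using the second equation one can solve $x_2=-x_1^2/\beta$, and then $x_2^3=-x_1^6/\beta^3=1$ automatically, while the third equation reduces to $x_1(1+\beta^3)=0$, which holds by assumption. So for each of the three values $x_1\in\{1,\omega,\omega^2\}$ one obtains exactly one solution $[1:x_1:-x_1^2/\beta]$, and these are pairwise distinct because their middle coordinates are the three distinct cube roots of unity. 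This yields the three points claimed.

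The calculation is entirely elementary; the only mildly delicate point will be checking that the three solutions in the last case remain distinct for every value of $\beta$ with $\beta^3=-1$, which is handled uniformly by looking at the middle coordinate. No conceptual obstacle is expected.
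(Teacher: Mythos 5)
Your proof is correct and takes essentially the same route as the paper's: an elementary manipulation showing that every solution satisfies $x_0^3=x_1^3=x_2^3$, exclusion of solutions with a vanishing coordinate, and an explicit parametrisation of the three points when $\alpha^3+\beta^3=0$ (you normalise $\alpha=1$ where the paper normalises $\beta=1$, an immaterial difference). The only point worth adding is the one-line verification that the candidate points $[1:x_1:-x_1^2/\beta]$ also satisfy the first equation, which is immediate from $x_1^3=1$.
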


\begin{proof}
As $\Sl_{[0:1]}=\{[1:0:0],[0:1:0],[0:0:1]\}$ and $\Sl_{[1:0]}=\emptyset$, we may assume that $\alpha\in \C^*$ and $\beta=1$. 
If $[x_0:x_1:x_2]\in \Sl_p$, then $\alpha(x_0^3-x_1^3)=x_0( \alpha x_0^2+ x_1x_2)-x_1( \alpha x_1^2+ x_0x_2)=0$. The equations being symmetric, we get $x_0^3=x_1^3=x_2^3$. In particular $x_0x_1x_2\not=0$, so the three equations are equivalent to 
\[\alpha=-\frac{x_1x_2}{x_0^2}=-\frac{x_0x_1}{x_2^2}=-\frac{x_0x_2}{x_1^2},\] 
which implies that $\alpha^3=-1$. 
For the three possible values of $\alpha$, we observe that $\Sl_{[\alpha:1]}=\{[1:x_1:-\alpha/x_1]\mid x_1^3=1\}$ consists of three points.
\end{proof}

\begin{lemma}\label{lem:Xxi}
For each $\xi\in \C$ such that $\xi^3\not=-\frac{1}{8}$, the hypersurface $X_\xi\subset \p^2\times \p^2$  of bidegree $(2,1)$ given by 
\[
X_\xi=\Bigl\{([x_0:x_1:x_2],[y_0:y_1:y_2])\in \p^2 \times \p^2 \Bigm| \sum_{i=0}^2  (x_i^2+ 2\xi \frac{x_0x_1x_2}{x_i})y_i=0\Bigr\}
\] 
is smooth, irreducible, rational over $\Q(\xi)$, and satisfies $\rho(X_\xi)=2$. 
The second projection gives a standard conic bundle $X_\xi/\p^2$. 
The discriminant curve $\Delta_\xi\subset \p^2$ is given by
\[
-\xi^2(y_0^3+y_1^3+y_2^3)+(2\xi^3+1)y_0y_1y_2=0
\]
and is the union of three lines if $\xi=0$ or if $\xi^3=1$, and is a smooth cubic otherwise. 
\end{lemma}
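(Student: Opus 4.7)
The plan is to study the two projections $\pi_i \colon X_\xi \to \p^2$ separately. Clearing denominators, the defining equation reads
\[
A_0(x)\,y_0 + A_1(x)\,y_1 + A_2(x)\,y_2 = 0
\]
where $A_0 = x_0^2+2\xi x_1 x_2$, $A_1 = x_1^2+2\xi x_0 x_2$, $A_2 = x_2^2+2\xi x_0 x_1$. This is linear in $y$ and symmetric quadratic in $x$, so the first projection will govern the global geometry of $X_\xi$, while the second projection will exhibit the conic bundle structure.

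First I would use the first projection $\pi_1\colon X_\xi \to \p^2_x$. For fixed $x$, the equation defines a line in $\p^2_y$ provided $(A_0,A_1,A_2)\neq(0,0,0)$. By Lemma~\ref{lem:ThreeConics} applied with $[\alpha:\beta]=[1:2\xi]$, this triple vanishes somewhere iff $1+8\xi^3=0$. The hypothesis $\xi^3\neq -\tfrac18$ therefore forces $\pi_1$ to be a Zariski-locally trivial $\p^1$-bundle over $\p^2$, which is moreover defined over $\Q(\xi)$. This immediately yields that $X_\xi$ is smooth, irreducible, rational over $\Q(\xi)$ (a section is given on an open set by $(y_0,y_1,y_2)=(-A_1,A_0,0)$), and $\rho(X_\xi) = \rho(\p^2)+1 = 2$.

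Next I would turn to the second projection $\eta=\pi_2\colon X_\xi \to \p^2_y$. For fixed $y$, the equation is a quadratic form in $x$ with symmetric matrix
\[
M(y) = \begin{pmatrix} y_0 & \xi y_2 & \xi y_1 \\ \xi y_2 & y_1 & \xi y_0 \\ \xi y_1 & \xi y_0 & y_2 \end{pmatrix}.
\]
All fibres are conics (dimension $1$), so by miracle flatness (since $X_\xi$ is Cohen–Macaulay and $\p^2$ regular) the morphism $\eta$ is flat. Since $\rho(X_\xi/\p^2)=2-1=1$ and $-K_{X_\xi}$ restricts to a degree $2$ line bundle on each fibre (so is $\eta$-ample), $X_\xi/\p^2$ is a standard conic bundle in the sense of Definition~\ref{Defi:StandardCB}. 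A direct expansion of $\det M(y)$ yields
\[
\det M(y) = (2\xi^3+1)y_0y_1y_2 - \xi^2(y_0^3+y_1^3+y_2^3),
\]
giving the claimed equation of $\Delta_\xi$.

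Finally I would analyse singularities of $\Delta_\xi$. The case $\xi=0$ is immediate: $\Delta_\xi = \{y_0y_1y_2=0\}$. For $\xi\neq 0$, dividing by $-\xi^2$ puts $\Delta_\xi$ into Hesse form $y_0^3+y_1^3+y_2^3 - 3\lambda\, y_0y_1y_2$ with $3\lambda = (2\xi^3+1)/\xi^2$. A standard calculation shows the Hesse cubic is singular iff $\lambda^3=1$, equivalently $(2\xi^3+1)^3 = 27\xi^6$. The main (but routine) algebraic step is the factorisation
\[
(2\xi^3+1)^3 - 27\xi^6 = (\xi^3-1)^2\,(8\xi^3+1),
\]
which, combined with the hypothesis $\xi^3\neq -\tfrac18$, shows that the only singular parameter is $\xi^3=1$. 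In that case $2\xi^3+1=3$ and $\xi^{-2}=\xi$, so $\Delta_\xi$ becomes $-\xi^2(y_0^3+y_1^3+y_2^3 - 3\xi\, y_0y_1y_2)$, which factors over $\C$ as a product of three lines via the classical Hesse factorisation $\prod_{k=0}^{2}(y_0+\omega^k\xi^{-1}y_1+\omega^{2k}\xi^{-1}y_2)$ with $\omega$ a primitive cube root of unity. The main subtlety throughout is the bookkeeping of the excluded value $\xi^3=-\tfrac18$: it governs both smoothness of $X_\xi$ (via Lemma~\ref{lem:ThreeConics}) and the Hesse discriminant, and must be ruled out from the start.
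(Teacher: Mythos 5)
Your proposal is correct and follows essentially the same route as the paper: the first projection is shown to be a $\p^1$-bundle via Lemma~\ref{lem:ThreeConics} with $[\alpha:\beta]=[1:2\xi]$ (giving smoothness, irreducibility, rationality over $\Q(\xi)$ and $\rho=2$), the second projection is encoded by the same symmetric matrix $M(y)$ whose determinant gives $\Delta_\xi$, and your factorisation $(2\xi^3+1)^3-27\xi^6=(\xi^3-1)^2(8\xi^3+1)$ is exactly the paper's identity $27\lambda^3+\mu^3=(8\xi^3+1)(\xi^3-1)^2$. The only cosmetic difference is that you invoke the classical singularity criterion for the Hesse pencil (and the explicit triangle factorisation for $\xi^3=1$), whereas the paper re-derives it by applying Lemma~\ref{lem:ThreeConics} to the partial derivatives of $\det M$; both are fine.
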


\begin{proof}
To show that $X_\xi$ is smooth, irreducible, rational over $\Q(\xi)$ and that $\rho(X_\xi)=2$, it suffices to show that the first projection $X_\xi\to \p^2$ is a (Zariski locally trivial) $\p^1$-bundle. 
This amounts to showing that the coefficients of the linear polynomial in the variables $y_i$  defining $X_\xi$ are never zero, i.e.~that for each $[x_0:x_1:x_2]\in \p^2$ we cannot have
$x_0^2+ 2\xi x_1x_2=x_1^2+ 2\xi x_0x_2=x_2^2+ 2\xi x_0x_1=0.$
This follows from Lemma~\ref{lem:ThreeConics} and from the hypothesis $\xi^3\not=-\frac{1}{8}$.

The equation of $X_\xi$ is given by 
\[
( x_0 \, x_1 \, x_2 ) \cdot M \cdot \begin{pmatrix} x_0 \\ x_1 \\ x_2\end{pmatrix}=0
\text{ with }
M=\begin{pmatrix}
y_0& \xi y_2& \xi y_1\\
\xi y_2& y_1& \xi y_0\\
 \xi y_1& \xi y_0& y_2
\end{pmatrix} \in \Mat_{3\times 3}(\C[y_0,y_1,y_2]).
\]
The polynomial $\det(M)$ is equal to 
 \[\det(M)=\lambda (y_0^3+y_1^3+y_2^3)+\mu y_0y_1y_2,\text{ with }\lambda=-\xi^2\text{ and }\mu=2\xi^3+1.\]
In particular, the fibres of the second projection $X_\xi/ \p^2$ are all conics (the coefficients of $x_i^2$ is $y_i$ so not all coefficients can be zero) and a general one is irreducible. 
As the threefold $X_\xi$ is smooth, irreducible and satisfies $\rho(X_\xi)=2$,  the morphism $X_\xi/ \p^2$ is a standard conic bundle. Its discriminant is given by the zero locus of $\det(M)$, which is a polynomial of degree $3$ which has the classical Hesse Form. 
The discriminant corresponds to a smooth cubic if  $\lambda(27\lambda^3+\mu^3)\not=0$, and to the union of three lines in general position otherwise.  
To prove this classical fact, we compute the partial derivatives of $\det(M)$, which are $(3\lambda y_0^2+\mu y_1y_2,3\lambda y_1^2+\mu y_0y_2,3\lambda y_2^2+\mu y_0y_1).$
By Lemma~\ref{lem:ThreeConics}, this has no zeroes in $\p^2$ if $\lambda(27\lambda^3+\mu^3)\not=0$ and has three zeroes otherwise. It remains to observe that $27\lambda^3+\mu^3=(8\xi^3+1)(\xi^3-1)^2$.
\end{proof}

\begin{remark}
Let $\k$ be a subfield of $\C$ and $\xi\in\k$. Then the curve $\Delta_\xi$ of Lemma~\ref{lem:Xxi} is defined over $\k$ and has a $\k$-rational point, namely the inflexion point $[0:1:-1]$.
When $\k=\C$, one can prove that all elliptic curves are obtained in this way; for smaller fields this does not seem to be true. We will however show that there are enough such curves.
\end{remark}

We thank P. Habegger for helpful discussions concerning the next lemma.

\begin{lemma}\label{lem:Isogenies}
Let $\k\subseteq \C$ be a subfield. 
\begin{enumerate}
\item\label{jinvariant}
For each $\xi\in \k$,  with $\xi^3\notin \{0,-\frac{1}{8},1\}$, we denote $($as in Lemma~$\ref{lem:Xxi})$ by $\Delta_\xi$ the smooth cubic curve  defined over $\k$ given by
\[-\xi^2(y_0^3+y_1^3+y_2^3)+(2\xi^3+1)y_0y_1y_2=0.\]
The $j$-invariant of $\Delta_\xi$ is equal to
\[\left(\dfrac{16\xi^{12}+464\xi^9+240\xi^6+8\xi^3+1}{\xi^2(8\xi^9-15\xi^6+6\xi^3+1)}\right)^3.\]
\item
\label{isogenies}
There is a subset $J\subseteq \k$ having the same cardinality as $\k$ such that for all $\xi,\xi'\in J$, the following are equivalent:
\begin{enumerate}
\item
There exist surjective morphisms $\Delta_\xi\tto \Delta_{\xi'}$ and $\Delta_{\xi'}\tto \Delta_{\xi}$ defined over $\C$;
\item
$\xi=\xi'$.
\end{enumerate}
\end{enumerate}
\end{lemma}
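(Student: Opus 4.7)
The plan is as follows. For part (1), I would put $\Delta_\xi$ into the classical Hesse form $y_0^3+y_1^3+y_2^3+\lambda y_0 y_1 y_2 = 0$ (dividing by $-\xi^2$ gives $\lambda = -(2\xi^3+1)/\xi^2$), and then apply the well-known closed-form expression for the $j$-invariant of a Hesse cubic as a rational function of $\lambda^3$ (derivable from Aronhold's invariants $S,T$ of ternary cubics via $j=1728\,S^3/(S^3-T^2)$, or from the fact that the Hesse pencil realises the degree-$12$ map $X(3)\to X(1)$). The computation is then mechanical; the only non-trivial simplification is the factorisation $8u^3-15u^2+6u+1=(u-1)^2(8u+1)$ with $u=\xi^3$, which arises from clearing the factor $(\lambda^3+27)$ in the denominator (this factor corresponds to the three singular Hesse cubics at $\lambda^3=-27$) and recovers exactly the denominator of the stated formula.

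For part (2), the first step is to observe that any surjective morphism between smooth projective curves of genus $1$ is, after composition with a translation on the target sending a chosen point to the identity, an isogeny; hence the hypothesis of mutual surjections forces $\Delta_\xi$ and $\Delta_{\xi'}$ to be isogenous over $\C$. Over $\C$, each elliptic curve $E\simeq\C/(\Z+\tau\Z)$ has isogeny class parametrised by the $\GL_2(\Q)^+$-orbit of $\tau$ in the upper half-plane, which is countable; in particular its set of $j$-invariants is countable. Combined with part~(1), which presents $\xi\mapsto j(\Delta_\xi)$ as a non-constant rational function with finite fibres, this gives that for each fixed elliptic curve $E$ the set $\{\xi\in\C\mid \Delta_\xi\text{ isogenous to }E\}$ is countable.

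I would then construct $J\subseteq\k$ by well-ordering $\k$ and transfinite induction: starting with $J=\emptyset$, at each stage add the next $\xi\in\k$ in the well-ordering whose curve $\Delta_\xi$ is not isogenous to any previously chosen member. When $\k$ is uncountable, the forbidden set at any stage of cardinality $\mu<|\k|$ has cardinality at most $\mu\cdot\aleph_0<|\k|$, so the induction continues for $|\k|$ steps and yields $|J|=|\k|$ as required. The hard part will be the countably infinite case of $\k$, where a purely cardinality-based argument is not enough since the forbidden set at some stage could in principle exhaust $\k$. Here I would invoke the arithmetic input that the family $\{\Delta_\xi\}_{\xi\in\k}$ cannot meet only finitely many isogeny classes: either by Faltings' isogeny theorem applied over a finitely generated subfield of $\C$ containing the already-chosen $\xi$'s, or equivalently by noting that the classical modular equations $\Phi_N(j(\xi),j_0)=0$ cut out only finitely many $\xi$ for each fixed $N$ and $j_0$, so any infinite set of $\k$-rational $\xi$'s must realise infinitely many distinct isogeny classes, allowing the induction to continue indefinitely and produce a countably infinite $J$.
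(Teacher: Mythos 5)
Your part (1) and the uncountable case of your part (2) are essentially in line with the paper: the paper computes the $j$-invariant from a Weierstrass form but explicitly mentions the Hesse-pencil formulas as an alternative route, and for uncountable $\k$ it likewise uses that each $\C$-isogeny class meets the family $\{\Delta_\xi\}$ in a countable set (via the modular correspondences), so a pure cardinality argument yields $\lvert J\rvert=\lvert\k\rvert$.

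The gap is exactly where you located it, in the countable case, and neither of your two justifications closes it as stated. The modular-equation argument only shows that for each fixed $N$ and $j_0$ the equation $\Phi_N(j(\xi),j_0)=0$ has finitely many solutions $\xi$; but lying in the isogeny class of $j_0$ is the union of these conditions over all $N$, which is countable and a priori infinite. So an infinite set of $\xi\in\k$ could, as far as this argument goes, lie in a single isogeny class (with unbounded isogeny degrees), and the conclusion that infinitely many $\xi$ must realise infinitely many isogeny classes does not follow. The appeal to Faltings' isogeny theorem is also not the statement you need: what is required is a finiteness result, e.g.\ that a fixed $\C$-isogeny class contains only finitely many curves whose $j$-invariant lies in a fixed number field (a Shafarevich/Faltings-type finiteness of isogeny classes, after descending the $\C$-isogeny to a number field), and you neither state nor establish it. The paper sidesteps all of this with an elementary construction: by part (1), for a prime $p$ the invariant $j(\Delta_p)\in\Q$ has $p$ in its denominator; choosing an increasing sequence of primes $p_1<p_2<\cdots$ such that $p_i$ does not occur in the denominators of $j(\Delta_{p_{i'}})$ for $i'<i$, the curves $\Delta_{p_i}$ behave differently with respect to potential good reduction at $p_i$, which is an isogeny invariant over number fields (Silverman, VII.5 and VII.7), while a $\C$-isogeny between curves defined over number fields descends to a number field (Masser--W\"ustholz). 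This directly produces a countably infinite $J\subseteq\Q\subseteq\k$ of pairwise non-isogenous curves, with no transfinite induction needed. To repair your write-up, either import this denominator/potential-good-reduction trick or supply and prove the Shafarevich-type finiteness statement you are implicitly relying on.
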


\begin{proof}
\ref{jinvariant}. 
By Lemma~\ref{lem:Xxi}, $\Delta_\xi$ is a smooth cubic curve if $\xi^3\notin \{0,-\frac{1}{8},1\}$.
We choose the inflexion point $[0:1:-1]\in \Delta_\xi$ to be the origin, make a coordinate change so that the inflexion line is the line at infinity, and thusly obtain a Weierstrass form. Then we compute the $j$-invariant as in \cite[\III.1 page 42]{silverman}; this is tedious but straightforward. 
This can also be done using the formulas from \cite[page 240]{ArtebaniDolgachev}.

\ref{isogenies}.
Let $\xi,\xi'\in \k$ be such that $\xi^3,(\xi')^3\notin \{0,-\frac{1}{8},1\}$. We see the curves $\Delta_\xi$ and $\Delta_{\xi'}$ as elliptic curves defined over $\k$ with origin $O=[0:1:-1]$. Suppose that there is a surjective morphism $\phi\colon \Delta_\xi\tto \Delta_{\xi'}$ defined over $\C$. It sends the origin of $\Delta_\xi$ onto a $\C$-rational point of $\Delta_{\xi'}$. Applying a translation at the target, we can assume that $\phi(O)=O$, which means that $\phi$ is an isogeny, and that $\Delta_{\xi}$ and $\Delta_{\xi'}$ are isogenous over $\C$ (see \cite[Definition, \S III.4 page 66]{silverman}).  

We now choose a sequence $p_1,p_2,\ldots$ of increasing prime numbers such that for each $i\ge 2$, the prime number $p_i$ does not appear in the denominator of the $j$-invariant of $\Delta_{p_{i'}}$ for each $i'<i$. For each $i\ge 1$, the  $j$-invariant of $\Delta_{p_i}$ is an element of $\Q$ having a denominator divisible by $p_i$ (follows from~\ref{jinvariant}), so $\Delta_{p_i}$ does not have potential good reduction modulo $p_i$ but this does not hold for $\Delta_{p_{i'}}$ for $i'>i$, which then has potential good reduction modulo $p_{i}$ \cite[Proposition 5.5, \S VII.5, page 197]{silverman}. This implies that there is no isogeny $\Delta_{p_i}\to \Delta_{p_{i'}}$ defined over any number field $K$ and where one curve has good reduction and the other has bad reduction \cite[Corollary 7.2, \S VII.7, page 202]{silverman}, and thus no isogeny defined over $\C$ \cite[Lemma 6.1]{MasserWustholz}.
If $\k$ is countable, this achieves the proof of \ref{isogenies}.

It remains to consider the case where $\k$ is an uncountable subfield of $\C$. 
The set of $j$-invariants of curves $\Delta_\xi$, where $\xi\in \k$ is such that $\xi^3\notin \{0,-\frac{1}{8},1\}$, is then uncountable too. 

We denote by $\Omega\subseteq \C^2$ the set consisting of pairs $(j_1,j_2)\in \widebar{\Q}^2$ such that the curves of $j$-invariants $j_1$ and $j_2$ are isogenous. The set $\Omega$ is a countable union of algebraic curves defined over $\Q$, given by the zero set of  the so-called modular transformation polynomials (see \cite[5\S3]{lang} and in particular \cite[Theorem 5, Chapter~5\S3, page 59]{lang}). Moreover, these curves are irreducible and invariant under the exchanges of variables $(x,y)\mapsto (y,x)$ \cite[Theorem 3, Chapter~5\S3, page 55]{lang}, so are not vertical or horizontal lines in $\C^2$.

We write  $S=\{\xi\in \k\mid \xi^3\notin \{0,-\frac{1}{8},1\}\}$. 
Then, by the previous paragraph, for each element $\xi\in S$ the curve $\Delta_\xi$ is isogeneous (over $\C$) to only countably many isomorphism classes of $\Delta_{\xi'}$ with $\xi'\in\k$. 
Putting an equivalence relation on $S$ saying that two elements are equivalent if the curves are isogeneous over $\C$ (see \cite[III.6, Theorem 6.1\parent{a}]{silverman}), we obtain that each equivalence class is countable, so the set of equivalence classes has the cardinality of $S$, or equivalently of $\k$. This achieves the proof.
\end{proof}

\begin{proposition}\label{pro:EnoughCB}
Let $\k$ be a subfield of $\C$. For each $n\ge 3$, there is a set $J$ having the cardinality of $\k$ indexing decomposable conic bundles $X_i/B_i$ defined over $\k$, where $X_i,B_i$ are smooth varieties rational over $\k$, and such that two conic bundles $X_i/B_i$ and $X_j/B_j$ are equivalent $($over $\C)$ if and only if $i=j$.
\end{proposition}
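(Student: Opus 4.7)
\medskip

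The plan is to lift the standard conic bundles $X_\xi/\p^2$ constructed in Lemma~\ref{lem:Xxi} to dimension $n$ by multiplying with $\p^{n-3}$, and then distinguish non-equivalent lifts by applying Proposition~\ref{pro:Isogenies} to the discriminant cubics $\Delta_\xi$.

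First I would fix $J$ as follows. Start with the subset $J_0\subseteq \k$ provided by Lemma~\ref{lem:Isogenies}\ref{isogenies}, so $|J_0|=|\k|$, and remove the finitely many $\xi$ with $\xi^3 \in \{0,-1/8,1\}$, to obtain a subset $J$ still of cardinality $|\k|$. For each $\xi\in J$, Lemma~\ref{lem:Xxi} provides a smooth rational threefold $X_\xi\subset \p^2\times \p^2$ defined over $\Q(\xi)\subseteq\k$, with $X_\xi/\p^2$ a standard conic bundle with smooth cubic discriminant $\Delta_\xi\subset\p^2$. I then set
\[
X'_\xi := X_\xi\times \p^{n-3},\qquad B'_\xi := \p^2\times \p^{n-3},
\]
with the product conic bundle structure $\eta_\xi\colon X'_\xi\to B'_\xi$ (interpreting $\p^{n-3}$ as a point when $n=3$). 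Both $X'_\xi$ and $B'_\xi$ are smooth and rational over $\k$, of dimensions $n$ and $n-1$ respectively.

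Next I would verify that $X'_\xi/B'_\xi$ is a decomposable conic bundle in the sense of Definition~\ref{def:DecomposableCB}. Embed $B'_\xi = \p^2\times \p^{n-3}$ in some $\p^m$ via the Segre embedding, and take the trivial $\p^2$-bundle $P=\p^2\times \p^m \to \p^m$, which is the projectivisation of $\Ol_{\p^m}^{\oplus 3}$ and thus decomposable. Since $X_\xi\subset \p^2\times \p^2$ is cut out by a single equation of bidegree $(2,1)$, the variety $X'_\xi$ sits inside $\p^2\times B'_\xi\subset P$ as a closed subvariety defined locally by an equation of degree $2$ in the $\p^2$-fibre coordinates, and the conic bundle structure $X'_\xi/B'_\xi$ is precisely the restriction of $P/\p^m$. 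This matches Definition~\ref{def:DecomposableCB}.

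Finally I would show that distinct elements of $J$ index non-equivalent conic bundles. Suppose $\xi,\xi'\in J$ are such that $X'_\xi/B'_\xi$ and $X'_{\xi'}/B'_{\xi'}$ are equivalent over $\C$, i.e.\ there is a commutative diagram of birational maps
\[
\begin{tikzcd}[link]
X_\xi\times \p^{n-3}\ar[dd]\ar[r,"\psi",dashed]& X_{\xi'}\times \p^{n-3}\ar[dd] \\ \\
\p^2\times \p^{n-3} \ar[r,"\theta",dashed]& \p^2\times \p^{n-3}.
\end{tikzcd}
\]
Apply Proposition~\ref{pro:Isogenies} with $B=\p^2$, $Y=\p^{n-3}$, and the two standard conic bundles $X_\xi/\p^2$ and $X_{\xi'}/\p^2$ (whose discriminants $\Delta_\xi,\Delta_{\xi'}$ are smooth irreducible cubics by our choice of $J$). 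It yields a surjective morphism $\Delta_\xi\tto \Delta_{\xi'}$ obtained by choosing a general $p\in \p^{n-3}$ and composing $\theta|_{\Delta_\xi\times\{p\}}$ with the first projection. Running the same argument with $(\psi^{-1},\theta^{-1})$ in place of $(\psi,\theta)$ produces a surjective morphism $\Delta_{\xi'}\tto \Delta_\xi$. Lemma~\ref{lem:Isogenies}\ref{isogenies} then forces $\xi=\xi'$.

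The only mildly delicate point is the verification of the decomposability condition in the precise form of Definition~\ref{def:DecomposableCB}: one has to see that the ambient $\p^2$-bundle $P\to \p^m$ is really decomposable over $\p^m$ (not merely over the Segre image of $B'_\xi$), which is why I take $P$ to be the \emph{trivial} $\p^2$-bundle over $\p^m$. The non-equivalence step is then a direct application of the material already set up in Proposition~\ref{pro:Isogenies} and Lemma~\ref{lem:Isogenies}, so no further work is needed.
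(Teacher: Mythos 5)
Your proposal is correct and follows essentially the same route as the paper's own proof: take the set $J$ from Lemma~\ref{lem:Isogenies}\ref{isogenies}, use the threefolds $X_\xi$ of Lemma~\ref{lem:Xxi} multiplied by $\p^{n-3}$, embed them in a trivial $\p^2$-bundle to verify Definition~\ref{def:DecomposableCB}, and rule out equivalences via Proposition~\ref{pro:Isogenies} applied in both directions together with Lemma~\ref{lem:Isogenies}\ref{isogenies}. Your extra care with the Segre embedding of $\p^2\times\p^{n-3}$ into $\p^m$ (so that the ambient bundle is literally a decomposable bundle over a projective space, as the definition requires) is a slightly more scrupulous version of what the paper asserts ``by construction''.
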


\begin{proof}
We choose the set $J\subseteq \k$ of Lemma~\ref{lem:Isogenies}\ref{isogenies}, and consider, for each $\xi\in J$, the hypersurface $X_\xi\subset \p^2\times \p^2$ of Lemma~\ref{lem:Xxi}, which is given by 
\[
X_\xi=\Bigl\{([x_0:x_1:x_2],[y_0:y_1:y_2])\in \p^2 \times \p^2 \Bigm| \sum_{i=0}^2  (x_i^2+ 2\xi \frac{x_0x_1x_2}{x_i})y_i=0\Bigr\}
\]
By Lemma~\ref{lem:Xxi}, the second projection gives a standard conic bundle $X_\xi\to \p^2$ whose discriminant curve $\Delta_\xi\subset \p^2$ is given by $-\xi^2(y_0^3+y_1^3+y_2^3)+(2\xi^3+1)y_0y_1y_2$. 
Note that $(X_\xi \times \p^{n-3})/(\p^2\times \p^{n-3})$ (or simply $X_\xi/\p^2$ if $n=3$) is a decomposable conic bundle defined over $\k$, as it is embedded in the trivial $\p^2$-bundle $(\p^2\times \p^2\times \p^{n-3})/(\p^2\times \p^{n-3})$ by construction. Moreover, $X_\xi \times \p^{n-3}$ is birational to $\p^n$ over $\k$ (Lemma~\ref{lem:Xxi}).
By Proposition~\ref{pro:Isogenies}, two conic bundles $(X_\xi \times \p^{n-3})/(\p^2\times \p^{n-3})$ and $(X_{\xi'} \times \p^{n-3})/(\p^2\times \p^{n-3})$ are equivalent only if there exist surjective morphisms $\Delta_\xi\tto \Delta_{\xi'}$ and $\Delta_{\xi'}\tto \Delta_{\xi}$. This is only possible if $\xi=\xi'$, by Lemma~\ref{lem:Isogenies}\ref{isogenies}. 
\end{proof}

\subsection{Proofs of Theorems~\ref{TheoremManyCB} and \ref{TheoremTame}}

\begin{proof}[Proof of Theorem~\ref{TheoremManyCB}]
By Theorem~\ref{TheoremBirMori}, we have respectively a group homomorphism and a groupoid homomorphism:
\[\begin{tikzcd}[link]
\Bir(\p^n) \ar[rr]&&\bigast_{C\in \CB(\p^n)} \left(\bigoplus_{\MC(C)}\Z/2\right)  \\ \\
\BirMori(\p^n)  \ar[uu,phantom,"\vertsubseteq"]\ar[uurr]&
\end{tikzcd}\]
For each subfield $\k\subseteq \C$, we can embed $\Bir_\k(\p^n)$ into $\Bir_\C(\p^n)$ and look at the image in $\bigast_{C\in \CB(\p^n)} \left(\bigoplus_{\MC(C)}\Z/2\right)$.
We  consider the set of decomposable conic bundles $X_i/B_i$ defined over $\k$ indexed by $J$  of Proposition~\ref{pro:EnoughCB}, which give pairwise distinct elements of $C_i\in \CB(\p^n)$, and associate to these birational maps $\psi_i\colon X_i\rat \p^n$ defined over $\k$. 
For each $i\in J$, there is an involution $\iota_i\in\psi_i\Bir_\k(X_i/B_i)\psi_i^{-1}\subseteq \Bir_\k(\p^n)$ whose image in $\bigoplus_{\MC(C_i)}\Z/2$ is not trivial by Proposition~\ref{pro:largeimageAnyCB}. One can thus take a projection $\bigoplus_{\MC(C_i)}\Z/2\to \Z/2$ such that the image of $\iota_i$ is  non-trivial.
We obtain a surjective group homomorphism from $\Bir_\k(\p^n)$ to $\smash{\bigast_{i\in J} \Z/2}$ where $J$ has the cardinality of $\k$ and such that each involution $\iota_i\in \Bir_\k(\p^n)$ is sent onto the generator indexed by $i$. There is thus a section of this surjective group homomorphism.
\end{proof}

\begin{remark}\label{rem:product_and_sum}
As Proposition~\ref{pro:EnoughCB} gives an infinite image, the above proof naturally gives a surjective homomorphism to the group $\bigast_{J} (\bigoplus_\Z \Z/2)$, but since there is an abstract surjective homomorphism from $\bigast_{J}  \Z/2$ to this group, we chose not to mention the direct sum in the statement of the theorem. 

Moreover, with the alternative form the existence of a section would be far less clear. 
Indeed, $(\Z/2)^3$ does not embed in $\Bir(X/B)$ and $(\Z/2)^7$ does not embed in $\Bir(X)$, for $X$ rationally connected of dimension $3$ \cite{Pro2011,Pro2014}, so it seems probable that $\bigoplus_\Z \Z/2$ does not embed in $\Bir(X)$ for any variety $X$.
\end{remark}

\begin{proof}[Proof of Theorem~\ref{TheoremTame}]
We consider a subfield $\k$ of $\C$, an integer $n\ge 3$, and a subset $S\subset  \Bir_\k(\p^n)$
of cardinality smaller than the one of $\k$. 
We want to construct a surjective homomorphism $\Bir_\k(\p^n) \tto \Z/2$ such that the group $G$  generated by $\Aut_\k(\p^n)$, by all Jonqui\`eres elements and by $S$ is contained in the kernel.
We use the group homomorphism 
\[\tau\colon \Bir_\k(\p^n) \to \bigast_J \Z/2 \] 
given by  Theorem~\ref{TheoremManyCB}. 
Each $j \in J$ corresponds to a conic bundle $X_j/B_j$.
The group $\Aut_\k(\p^n)$ is in the kernel of $\tau$. 
The group of Jonqui\`eres elements is conjugated to the subgroup $J\subset \Bir(\p^1 \times \p^{n-1})$ consisting of elements sending a general fibre of $\p^1\times \p^{n-1}/\p^{n-1}$ onto another one. 
The action on the base yields an exact sequence
\[ 1\to \Bir(\p^1 \times \p^{n-1}/\p^{n-1})\to J\to \Bir(\p^{n-1})\to 1.\]
This gives $J=\Bir(\p^1 \times \p^{n-1}/\p^{n-1}) \rtimes J'$, where $J'\subset J$ is the group isomorphic to $\Bir(\p^{n-1})$ that acts on $\p^1\times \p^{n-1}$ with trivial action on the first factor.
We can assume that $\p^1 \times \p^{n-1}/\p^{n-1} = X_{j_0}/B_{j_0}$ for some $j_0 \in J$. The image of $\Bir(\p^1 \times \p^{n-1}/\p^{n-1})$ by $\tau$  is contained in the group $\Z/2$ indexed by $j_0$. 
Now observe that $J'\subset \Ker\tau$. 
Indeed, we first decompose an element of $J'\simeq \Bir(\p^{n-1})$ as a product of Sarkisov links between terminal Mori fibre spaces $Y_i\to S_i$, where $Y_i$ has dimension $n-1$, and observe that taking the product with $\p^1$ gives Sarkisov links between the Mori fibre spaces $Y_i\times \p^1 \to S_i\times \p^1$ of dimension $n$.
Each of the Sarkisov links of type $\II$ arising in such decomposition has covering gonality $1$, as $\covgon(\Gamma\times \p^1)=1$ for each variety $\Gamma$.

We consider the group homomorphism 
$\hat\tau\colon \Bir_\k(\p^n)\to \bigast_{J \setminus \{j_0\}}\Z/2$ obtained by composing $\tau$ with the projection $\bigast_{J}\Z/2\to \bigast_{J \setminus \{j_0\}}\Z/2$ obtained by forgetting the factor indexed by $j_0$.

The image by $\hat\tau$ of all Jonqui\`eres elements is trivial, hence the group $\hat\tau(G)$ has at most the cardinality of $S$, which by assumption is strictly smaller than the cardinality of $J$.
We construct the expected morphism by projecting from $\hat\tau(\Bir_\k(\p^n))$ onto a factor $\Z/2$ which is not in the image of $G$.
\end{proof}

\section{Complements}\label{sec:Complements}

\subsection{Quotients and SQ-universality} 
\label{sec:SQ}

A direct consequence of Theorem~\ref{TheoremManyCB} is that we have a lot of quotients of $\Bir_\k(\p^n)$ for $n\ge 3$.

Firstly, we can have quite small quotients (which is not the case for $\Bir_\C(\p^2)$ which has no non-trivial countable quotient, as mentioned before):

\begin{corollary}
For each $n\ge 3$, each subfield $\k\subseteq \C$, and each integer $m\ge 1$, there are $($abstract$)$ surjective group homomorphisms from $\Bir_\k(\p^n)$ to the dihedral group $D_{2m}$ of order $2m$ and the symmetric group $\Sym_m$. 
In particular, there is a normal subgroup of $\Bir_\k(\p^n)$ of index $r$ for each even integer $r> 1$.
\end{corollary}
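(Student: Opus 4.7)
The plan is to use Theorem~\ref{TheoremManyCB} as a black box: it provides a surjective group homomorphism
\[
\tau\colon \Bir_\k(\p^n) \tto \bigast_{J} \Z/2
\]
where $J$ is an infinite set (having the cardinality of $\k$). It therefore suffices to construct, for each $m \ge 1$, surjective group homomorphisms from the free product $\bigast_{J} \Z/2$ onto $D_{2m}$ and onto $\Sym_m$, and compose with $\tau$.

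For the dihedral case, recall that $D_{2m}$ is generated by two involutions $s, t$ whose product has order $m$ (a reflection and its composition with a rotation by $2\pi/m$). Pick two distinct indices $j_1, j_2 \in J$, denote by $e_j$ the generator of the factor indexed by $j$, and define a map on generators by sending $e_{j_1} \mapsto s$, $e_{j_2} \mapsto t$, and $e_j \mapsto 1$ for every $j \in J \setminus\{j_1,j_2\}$. By the universal property of the free product (each factor $\Z/2$ being the free group on one involution), this extends to a group homomorphism $\bigast_{J} \Z/2 \to D_{2m}$, which is surjective by the choice of generators.

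For the symmetric case, use the classical presentation of $\Sym_m$ by the $m-1$ adjacent transpositions $\tau_i = (i\ i{+}1)$, each of which is an involution. Pick $m-1$ distinct indices $j_1,\dots,j_{m-1}$ in $J$, send $e_{j_i}\mapsto \tau_i$ for $1 \le i \le m-1$, and send all remaining generators to $1$; the universal property again produces a surjective group homomorphism $\bigast_{J}\Z/2 \tto \Sym_m$. Composing with $\tau$ yields the desired surjections from $\Bir_\k(\p^n)$.

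For the last assertion, given any even integer $r > 1$, write $r = 2m$ and take the kernel of the composition $\Bir_\k(\p^n) \tto D_{2m}$: this is a normal subgroup of index $\lvert D_{2m}\rvert = r$. No step here is an obstacle: everything reduces to the universal property of the free product, and the only non-trivial input is Theorem~\ref{TheoremManyCB} itself, which has already been established.
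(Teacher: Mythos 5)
Your proposal is correct and is essentially the paper's own argument: the paper deduces the corollary directly from Theorem~\ref{TheoremManyCB} together with the fact that $D_{2m}$ and $\Sym_m$ are generated by involutions, which is exactly the universal-property argument for $\bigast_J \Z/2$ that you spell out (and the cardinality of $J$, being that of $\k \supseteq \Q$, is indeed infinite, so picking finitely many indices is unproblematic). The only difference is that you make the map on generators and the index computation for the last assertion explicit, which the paper leaves implicit.
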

\begin{proof}
Follows from Theorem~\ref{TheoremManyCB} and the fact that $D_{2m}$ and $\Sym_m$ are generated by involutions.
\end{proof}
 
 Secondly, we get much larger quotients:

\begin{corollary}
For any $n\ge 3$, any subfield $\k\subseteq \C$ and any integer $m\ge 1$, there are $($abstract$)$ surjective group homomorphisms
\begin{align*}
\Bir_\k(\p^n)\tto \SL_m(\k),&&\Bir_\k(\p^n)\tto \Bir_{\overline{\Q}}(\p^2).
\end{align*}
\end{corollary}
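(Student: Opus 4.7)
The plan is to derive both surjections directly from Theorem~\ref{TheoremManyCB}. That theorem produces a surjective group homomorphism
\[
\Bir_\k(\p^n)\tto \bigast_{J}\Z/2
\]
with $|J|=|\k|$, together with the observation that any group generated by a set of involutions of cardinality at most $|\k|$ is a quotient of $\bigast_J\Z/2$, and hence of $\Bir_\k(\p^n)$, by sending each free generator to the appropriate involution. So the task in each case reduces to exhibiting a generating family of at most $|\k|$ involutions in the target group.

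For the first surjection I would argue as follows. Since $\Q\subseteq\k\subseteq\C$ the field $\k$ is infinite, so $|\SL_m(\k)|\le |\k|^{m^2}=|\k|$. The case $m=1$ is trivial. For $m\ge 3$ I would invoke the classical Cartan--Dieudonn\'e--Wonenburger theorem: every element of $\SL_m(\k)$ is a product of involutions. Concretely, the conjugates of $\diag(-1,-1,1,\ldots,1)$ already form a generating family of involutions, and there are at most $|\k|$ of them. Thus $\SL_m(\k)$ satisfies the hypothesis of Theorem~\ref{TheoremManyCB}.

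For the second surjection, $|\Bir_{\overline{\Q}}(\p^2)|=\aleph_0\le|\k|$. By the Noether--Castelnuovo theorem over the algebraically closed field $\overline{\Q}$, the group $\Bir_{\overline{\Q}}(\p^2)$ is generated by $\Aut_{\overline{\Q}}(\p^2)=\PGL_3(\overline{\Q})$ together with a single standard quadratic involution, for instance $[x:y:z]\mapsrat[yz:xz:xy]$. It remains to observe that $\PGL_3(\overline{\Q})$ is itself generated by involutions: $\GL_3(\overline{\Q})$ is generated by reflections (Cartan--Dieudonn\'e), and the image in $\PGL_3$ of a reflection is an involution. Combining these two observations, $\Bir_{\overline{\Q}}(\p^2)$ is generated by a countable family of involutions, and Theorem~\ref{TheoremManyCB} applies.

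In short, the only point of substance is recognising that each target admits a generating set of involutions of controlled cardinality; everything else is packaged into Theorem~\ref{TheoremManyCB}. The main conceptual obstacle (and the reason the statement is nontrivial) is that without the free product structure provided by Theorem~\ref{TheoremManyCB}, one would have no hope of producing surjections onto such large non-abelian targets as $\SL_m(\k)$ or $\Bir_{\overline{\Q}}(\p^2)$; it is the abundance of pairwise non-equivalent conic bundle structures on $\p^n$ that makes the argument go through.
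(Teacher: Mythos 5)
Your overall strategy is exactly the paper's: reduce everything to the observation that a group generated by at most $\lvert\k\rvert$ elements of order $\le 2$ is a quotient of $\bigast_J\Z/2$, and then invoke Theorem~\ref{TheoremManyCB}. For $\Bir_{\overline{\Q}}(\p^2)$ your argument is fine and essentially the paper's (Noether--Castelnuovo plus generation of $\PGL_3(\overline{\Q})$ by involutions); the paper gets the latter from $\PGL_3(\overline{\Q})=\PSL_3(\overline{\Q})$ being simple, whereas you get it from reflections -- note that ``$\GL_3(\overline{\Q})$ is generated by reflections'' is not literally Cartan--Dieudonn\'e and is false as stated (a product of reflections has determinant $\pm1$), but since every class in $\PGL_3(\overline{\Q})$ has a representative in $\SL_3(\overline{\Q})$ and every transvection is a product of two reflections, your conclusion stands. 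For $m\ge 3$ you are in fact more careful than the paper, which merely asserts that $\SL_m(\k)$ is generated by involutions: your family of conjugates of $\diag(-1,-1,1,\dots,1)$ does generate, because it generates a non-central normal subgroup and every normal subgroup of $\SL_m(\k)$, $m\ge3$, is central or the whole group (this normal-subgroup step is worth saying explicitly).

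The genuine gap is the case $m=2$, which the statement includes (``any integer $m\ge1$'') and which you silently skip. It cannot be repaired within this method: in characteristic zero the only elements of $\SL_2(\k)$ of order dividing $2$ are $\pm I$, so the subgroup of $\SL_2(\k)$ generated by involutions is $\{\pm I\}$, and no surjection onto $\SL_2(\k)$ can factor through $\bigast_J\Z/2$. You should be aware that the paper's own proof has exactly the same defect -- its one-line claim that ``both groups are generated by involutions'' is false for $\SL_2(\k)$ -- so by isolating $m\in\{1\}\cup\{m\ge3\}$ you have in effect located a gap in the published argument rather than introduced a new one; but as a proof of the statement as written, your proposal (like the paper's) establishes the first surjection only for $m\ne 2$, and the case $m=2$ would require either excluding it from the statement or a different construction not passing through a free product of groups of order $2$.
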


\begin{proof}
We observe that $\SL_m(\k)$ has the cardinality of $\k$ and that $\Bir_{\overline{\Q}}(\p^2)$ is countable. Hence, both groups have at most the cardinality of $\k$. 
Both groups are generated by involutions: for $\Bir_{\overline{\Q}}(\p^2)$ this is by the Noether-Castelnuovo Theorem which says that $\Bir_{\overline{\Q}}(\p^2)$ is generated by the standard quadric involution and by $\Aut_{\overline{\Q}}(\p^3)\simeq \PGL_3(\overline{\Q})=\PSL_3(\overline{\Q})$, and thus is generated by involutions. 
Hence, the two groups are quotients of $\bigast_{J} \Z/2$. 
The result then follows from Theorem~\ref{TheoremManyCB}.
\end{proof}

Similarly, over $\C$ we get:
 
\begin{corollary}
For any $n \ge 3$, there exists a surjective group homomorphism 
\[
\Bir_\C(\p^n)\tto  \Bir_\C(\p^2).
\]
\end{corollary}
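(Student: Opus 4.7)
The plan is to mimic the argument used in the preceding corollary for $\Bir_{\widebar{\Q}}(\p^2)$, with $\C$ in place of $\widebar{\Q}$; the only point that genuinely differs is that $\Bir_\C(\p^2)$ is no longer countable but has cardinality exactly $\lvert\C\rvert$, which is still compatible with Theorem~\ref{TheoremManyCB} applied over $\k=\C$.

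First I would invoke Theorem~\ref{TheoremManyCB} with $\k=\C$ to obtain a surjective group homomorphism
\[
\Bir_\C(\p^n)\tto \bigast_{J}\Z/2
\]
where the indexing set $J$ has the cardinality of $\C$. By the last sentence of Theorem~\ref{TheoremManyCB}, any group generated by a set of involutions of cardinality at most $\lvert\C\rvert$ is a quotient of $\bigast_{J}\Z/2$. So it suffices to verify that $\Bir_\C(\p^2)$ has cardinality at most $\lvert\C\rvert$ and is generated by involutions; composing the resulting surjection $\bigast_{J}\Z/2 \tto \Bir_\C(\p^2)$ with the one above would give the statement.

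The bound on the cardinality is immediate: a birational self-map of $\p^2$ is determined by three homogeneous polynomials in three variables with coefficients in $\C$, so $\lvert\Bir_\C(\p^2)\rvert = \lvert\C\rvert$. For the generation by involutions, I would use the Noether-Castelnuovo theorem, which asserts that $\Bir_\C(\p^2)$ is generated by $\Aut_\C(\p^2)=\PGL_3(\C)$ together with the standard quadratic involution. The quadratic transformation is itself an involution, so the only nontrivial point is to show that $\PGL_3(\C)$ is generated by involutions. This follows from the classical fact (due to Wonenburger and others) that every element of $\GL_n(\C)$, $n\ge 2$, is a product of involutions; passing to the quotient by the center yields the same statement in $\PGL_3(\C)$.

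The only potential obstacle is the last step about writing every matrix in $\GL_3(\C)$ as a product of involutions, but this is standard and can be done concretely via Jordan normal form: every Jordan block over $\C$ is similar to a product of two involutions, and conjugation of a product of involutions is again a product of involutions. Once this is in place, $\Bir_\C(\p^2)$ is generated by involutions, has cardinality $\lvert\C\rvert$, and therefore appears as a quotient of $\bigast_{J}\Z/2$, completing the proof.
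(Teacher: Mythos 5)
Your overall strategy is exactly the paper's: the corollary is proved (implicitly, via ``Similarly, over $\C$ we get'') by applying Theorem~\ref{TheoremManyCB} with $\k=\C$ and checking that $\Bir_\C(\p^2)$ has cardinality $\lvert\C\rvert$ and is generated by involutions thanks to the Noether--Castelnuovo theorem, which is precisely what you do. The one flaw is in your justification that $\PGL_3(\C)$ is generated by involutions: it is \emph{not} true that every element of $\GL_n(\C)$ is a product of involutions (such a product has determinant $\pm 1$), and likewise a Jordan block with eigenvalue $\lambda\neq\pm 1$ is not a product of two involutions (a product of two involutions is conjugate to its inverse, which forces $\lambda=\lambda^{-1}$); in particular your Jordan-block argument breaks down as stated.

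The conclusion you need is nevertheless true and easily repaired. Either observe that every class in $\PGL_3(\C)$ is represented by a matrix of determinant $1$ (extract a cube root of the determinant) and invoke the Gustafson--Halmos--Radjavi/Wonenburger result that a complex matrix of determinant $\pm 1$ is a product of involutions, so that its image in $\PGL_3(\C)$ is a product of involutions; or, more in the spirit of the paper's proof of the preceding corollary, use that $\PGL_3(\C)=\PSL_3(\C)$ is a simple group: the subgroup generated by all involutions is normal (the set of involutions is stable under conjugation) and nontrivial, hence is the whole group. With either fix your argument coincides with the paper's.
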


Recall that a group $G$ is \emph{SQ-universal} if any countable group embeds in a quotient of $G$.
The free group $\Z * \Z$ was an early example of SQ-universal group.
More generally any nontrivial free product $G_1 * G_2$ distinct from $\Z/2 * \Z/2$ is SQ-universal, see \cite[Theorem 3]{Schupp}. 
From a modern point of view, this also follows from \cite{MinasyanOsin}, by looking at the action of any loxodromic isometry on the associated Bass-Serre tree.
In particular, taking $G_1 = \Z/2 * \Z/2$ and $G_2 = \Z/2$, we get that $\Z/2 * \Z/2 * \Z/2$ is SQ-universal.

\begin{corollary}
For any field $\k\subseteq \C$ and any $n \ge 3$, the Cremona group $\Bir_\k(\p^n)$ admits a surjective morphism to the SQ-universal group $\Z/2 * \Z/2 * \Z/2$.
In particular, $\Bir_\k(\p^n)$ also is SQ-universal.
\end{corollary}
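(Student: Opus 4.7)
The plan is to deduce this directly from Theorem~\ref{TheoremManyCB} combined with the universal property of the free product. By Theorem~\ref{TheoremManyCB}, there is a surjective group homomorphism
\[
\tau \colon \Bir_\k(\p^n) \tto \bigast_{J} \Z/2
\]
where $J$ has the same cardinality as $\k$. Since $\Q\subseteq\k\subseteq\C$, the index set $J$ is infinite and in particular $\lvert J\rvert\geq 3$.

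First I would pick any three distinct elements $j_1,j_2,j_3\in J$ and define a surjective homomorphism
\[
\pi\colon \bigast_{J} \Z/2 \tto \Z/2 * \Z/2 * \Z/2
\]
by sending, on each factor, the generator indexed by $j_k$ (for $k=1,2,3$) to the generator of the $k$-th factor of the target, and the generator indexed by any other $j\in J$ to the identity. The universal property of the free product guarantees that $\pi$ is well-defined, and it is surjective by construction. Composing, $\pi\circ\tau\colon \Bir_\k(\p^n)\tto \Z/2*\Z/2*\Z/2$ is the desired surjection to the SQ-universal group (as recalled just before the statement, following \cite{Schupp} or \cite{MinasyanOsin}).

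For the SQ-universality of $\Bir_\k(\p^n)$, I would argue as follows. Let $C$ be an arbitrary countable group. Since $\Z/2*\Z/2*\Z/2$ is SQ-universal, there exists a normal subgroup $N\trianglelefteq \Z/2*\Z/2*\Z/2$ such that $C$ embeds into $(\Z/2*\Z/2*\Z/2)/N$. Set $M:=(\pi\circ\tau)^{-1}(N)$, which is a normal subgroup of $\Bir_\k(\p^n)$. The surjectivity of $\pi\circ\tau$ yields an isomorphism $\Bir_\k(\p^n)/M \iso (\Z/2*\Z/2*\Z/2)/N$, so $C$ embeds in the quotient $\Bir_\k(\p^n)/M$, establishing SQ-universality.

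There is essentially no real obstacle here: the whole statement is a formal consequence of Theorem~\ref{TheoremManyCB} together with the well-known fact that a group surjecting onto an SQ-universal group is itself SQ-universal. The only mild point to verify is that $\lvert J\rvert\geq 3$, which holds trivially since $\k$ is an infinite field.
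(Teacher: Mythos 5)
Your proposal is correct and follows essentially the same route as the paper, which deduces the corollary from Theorem~\ref{TheoremManyCB} (projecting the free product $\bigast_J \Z/2$ onto three of its factors) together with the SQ-universality of $\Z/2 * \Z/2 * \Z/2$. Your explicit verification of $\lvert J\rvert \ge 3$ and of the transfer of SQ-universality along a surjection simply spells out details the paper leaves implicit.
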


\begin{proof}
Follows from Theorem~\ref{TheoremManyCB} and from the fact that $\Z/2 * \Z/2 * \Z/2$ is SQ-universal.
\end{proof}

\subsection{Hopfian property}\label{sec:Hopfian}
Recall that a group $G$ is \emph{hopfian} is every surjective group homomorphism $G\tto G$ is an isomorphism.
It was proven in \cite{Deserti} that the group $\Bir_\C(\p^2)$ is hopfian. An open question, asked by I. Dolgachev (see \cite{Des17}), is whether the Cremona group $\Bir_\C(\p^n)$ is generated by involutions for each $n$, the answer being yes in dimension $2$ and open in dimension $\ge 3$. Theorem~\ref{TheoremManyCB} relates these two notions and shows that we cannot generalise both results at the same time (being hopfian and generated by involutions) to higher dimension.

\begin{corollary}\label{Cor:Hopfian}
For each $n\ge 3$ and each subfield $\k\subseteq \C$, the group $\Bir_\k(\p^n)$ is not hopfian if it is generated by involutions.
\end{corollary}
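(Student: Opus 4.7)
The plan is to exhibit $G:=\Bir_\k(\p^n)$ as a quotient of a group $Q$ that, conversely, appears as a quotient of $G$ via Theorem~\ref{TheoremManyCB}; composing these two surjections in the wrong order will give a surjective endomorphism of $G$ with non-trivial kernel.

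First I would apply Theorem~\ref{TheoremManyCB} to obtain a surjective homomorphism
\[
\pi\colon G \tto Q := \bigast_{J} \Z/2, \qquad |J|=|\k|.
\]
The key observation is that $\pi$ has non-trivial kernel. As noted in the discussion following Theorem~\ref{TheoremBirPnnotsimple} and used again in the proof of Theorem~\ref{TheoremTame}, the subgroup $\Aut_\k(\p^n)=\PGL_{n+1}(\k)$ lies in the kernel of every group homomorphism arising from Theorem~\ref{TheoremBirMori}; since $\k$ has characteristic zero (hence is infinite) and $n\ge 3$, this group is non-trivial, so $\ker\pi\neq\{1\}$.

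Next, using the hypothesis that $G$ is generated by a set $I$ of involutions, I would build a surjection $\phi\colon Q \tto G$ going in the opposite direction. Every subfield $\k\subseteq\C$ is infinite, and $G$ embeds into the set of $(n+1)$-tuples of homogeneous polynomials in $\k[x_0,\dots,x_n]$ modulo a scalar, so $|G|=|\k|$; in particular $|I|\le|\k|=|J|$. Choose any surjection of sets $\alpha\colon J\tto I$. Since each $\alpha(j)$ satisfies $\alpha(j)^2=1$, sending the generator of the $j$-th $\Z/2$-factor of $Q$ to $\alpha(j)$ extends to a well-defined group homomorphism $\phi\colon Q\to G$, and $\phi$ is surjective because its image contains the generating set $I$.

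Finally, the composition
\[
\phi\circ\pi\colon G \longto G
\]
is surjective as a composition of surjections, but $\ker(\phi\circ\pi)\supseteq\ker\pi\neq\{1\}$, so it is not injective. This is the sought non-injective surjective endomorphism, proving that $G$ is not hopfian. There is no real obstacle in this argument: the two substantive inputs are Theorem~\ref{TheoremManyCB} and the fact that $\PGL_{n+1}(\k)\subseteq\ker\pi$ (both already established), and the only routine check is the cardinality inequality $|I|\le|\k|$, which is immediate from $|G|=|\k|$.
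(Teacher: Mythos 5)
Your argument is correct and is essentially the paper's own proof, just written out in full: compose the non-injective surjection onto $\bigast_J \Z/2$ from Theorem~\ref{TheoremManyCB} with a surjection back onto $\Bir_\k(\p^n)$ obtained from the hypothesis that the group is generated by involutions together with the cardinality count $\lvert\Bir_\k(\p^n)\rvert=\lvert\k\rvert=\lvert J\rvert$. The extra details you supply (e.g.\ $\PGL_{n+1}(\k)\subseteq\ker\pi$ and the universal property of the free product) are exactly the checks the paper leaves implicit.
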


\begin{proof}
Follows from Theorem~\ref{TheoremManyCB}, as the group homomorphisms provided by  Theorem~\ref{TheoremManyCB} is not injective, and because $\Bir_\k(\p^n)$ has the same cardinality as $\k$ (the set of all polynomials of degree $n$ with coefficients in $\k$ has the same cardinality as $\k$).
\end{proof}

\subsection{More general fields}
\label{sec:field}

Every field isomorphism $\k\iso\k'$ naturally induces an isomorphism $\Bir_\k(\p^n)\iso \Bir_{\k'}(\p^n)$. More generally, it associates to each variety and each rational map defined over $\k$, a variety and a rational map defined over $\k'$. It then induces an isomorphism between the group of birational maps defined over $\k$ and $\k'$ of the varieties obtained. This implies that the five
Theorems~\ref{TheoremBirPnnotsimple}-\ref{TheoremManyCB} also hold for each ground field which is abstractly isomorphic to a subfield of $\C$.
This includes any field of rational functions of any algebraic variety defined over a subfield of $\C$ as these fields have characteristic zero and cardinality smaller or equal than the one of $ \C$. 
 
\subsection{Amalgamated product structure}

 We work over the field $\C$. 
 In the next result, an element of $\CB(X)$ is said to be decomposable if it is the class of a decomposable conic bundle (in the sense of Definition~\ref{def:DecomposableCB}).

\begin{theorem}\label{thm:AmalgamatedProduct}
For each integer $n\ge 3$, and let $X/B$ be a conic bundle, where $X$ is a terminal variety of dimension $n$. We denote by $\rho$ the group homomorphism
\[\rho\colon \Bir(X) \to \bigast_{C\in \CB(X)} \left(\bigoplus_{\MC(C)}\Z/2\right)\]
given by Theorem~\ref{TheoremBirMori}. For each $C\in \CB(X)$ we fix a choice of representative $X_C/B_C$, and we denote $G_C = \rho^{-1} (\rho (\Bir(X_C/B_C)))\subseteq \Bir(X)$.
Then, the following hold:
\begin{enumerate}
\item\label{IntersectA} For all $C \neq C'$ in $\CB(X)$, the group $A = G_C \cap G_{C'}$ contains $\ker \rho$ and does not depend on the choice of $C,C'$;
\item \label{BirXFreeproduct}
The group $\Bir(X)$ is the free product of the groups $G_C$, $C\in \CB(X)$,  amalgamated over their common intersection $A$:
\[
\Bir(X) = \bigast_{A} G_C.
\]
\item\label{NonTrivial}
For each decomposable $C\in \CB(X)$ we have $A\subsetneq G_C$. 
Moreover, the free product of \ref{BirXFreeproduct} is non-trivial $($i.e.~$A\subsetneq G_C\subsetneq \Bir(X)$ for each $C)$ as soon as $\CB(X)$ contains two distinct decomposable elements. 
This is for instance the case when $X$ is rational, as $\CB(X)$ then contains uncountably many decomposable elements.
\end{enumerate}
\end{theorem}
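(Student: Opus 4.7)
My plan is to reduce all three parts to properties of the target free product $F = \bigast_{C \in \CB(X)} H_C$, where $H_C := \bigoplus_{\MC(C)}\Z/2$. By Theorem~\ref{TheoremBirMori}, the image $\rho(\Bir(X_C/B_C))$ lies in the single factor $H_C$, and distinct factors of a free product intersect trivially.

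This immediately settles part (1): $\ker\rho \subseteq G_C$ for every $C$ since $0 \in \rho(\Bir(X_C/B_C))$, and conversely for $g \in G_C \cap G_{C'}$ with $C \neq C'$ we have $\rho(g) \in H_C \cap H_{C'} = \{0\}$ in $F$, so $A = \ker\rho$. Similarly, part (3) follows from results already in hand: Proposition~\ref{pro:largeimageAnyCB} produces an element of $\Bir(X_C/B_C)$ with non-trivial image in $H_C$ whenever $C$ is a decomposable class, giving $A \subsetneq G_C$; as soon as two such classes exist the amalgam has two factors strictly larger than $A$ and is non-trivial, and for rational $X$ Proposition~\ref{pro:EnoughCB} supplies uncountably many decomposable classes.

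For part (2), I would consider the canonical map $\Phi\colon \bigast_A G_C \to \Bir(X)$ arising from the inclusions. Injectivity is a normal form argument: a non-trivial reduced word $g_1 \cdots g_k$ with $g_i \in G_{C_i} \setminus A$ and $C_i \neq C_{i+1}$ is sent under $\rho \circ \Phi$ to $\rho(g_1) \cdots \rho(g_k)$, which is reduced in $F$ (each $\rho(g_i) \in H_{C_i} \setminus \{0\}$ with alternating indices) and hence non-zero.

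For surjectivity I would apply Theorem~\ref{thm:sarkisov}\ref{sarkisov1} to factor a given $g \in \Bir(X)$ in the groupoid $\BirMori(X)$ as a sequence of Sarkisov links $\chi_t \circ \cdots \circ \chi_1$. Only type~\II links of sufficiently large covering gonality contribute non-trivially under $\rho$ (Theorem~\ref{TheoremBirMori}), and by Lemma~\ref{lem:SarkiIIConic} such a link takes place between conic bundles of the same class $C$. The plan is to group the contributing links into maximal runs of common class $C_j$, conjugate each run by birational equivalences of conic bundles to obtain an element $g_j \in \Bir(X_{C_j}/B_{C_j}) \subseteq G_{C_j}$, and observe that the remaining ``trivial'' links (those killed by $\rho$) together with the chosen conjugating equivalences assemble into an error term $g \cdot (g_r \cdots g_1)^{-1} \in \ker\rho = A \subseteq G_{C_1}$, so that $g$ belongs to the subgroup generated by the $G_C$. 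The hard part is precisely this rewriting: the intermediate Mori fibre spaces in the Sarkisov factorisation vary widely, and one must exploit the groupoid structure of $\BirMori(X)$ together with the invariance of the image in $H_C$ under birational equivalences of conic bundles of class $C$, in order to show that the non-contributing interludes can genuinely be absorbed into $A$ at the level of $\Bir(X)$.
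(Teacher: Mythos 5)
Your parts (1) and (3), and the injectivity half of (2), run essentially parallel to the paper's own proof. For (1) you use exactly the key fact the paper uses, namely that $\rho(\Bir(X_C/B_C))\subseteq H_C:=\bigoplus_{\MC(C)}\Z/2$ and that distinct factors of a free product intersect trivially; note however that the paper is deliberately more cautious here (it proves the statement via an auxiliary homomorphism $\rho'$ and, in the remark following the theorem, explicitly refrains from asserting $A=\ker\rho$), whereas you assert the equality -- under the literal definition $G_C=\rho^{-1}(\rho(\Bir(X_C/B_C)))$ your computation does give it, so this is a sharper but defensible reading. Part (3) is word for word the paper's argument (Propositions~\ref{pro:largeimageAnyCB} and~\ref{pro:EnoughCB}), and your reduced-word argument for injectivity of $\Phi\colon\bigast_A G_C\to\Bir(X)$ is the paper's observation that $\rho$ maps to a free product in which no relation between the $H_C$ exists.

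The genuine gap is the surjectivity of $\Phi$, i.e.\ that the $G_C$ generate $\Bir(X)$, which you yourself flag as ``the hard part'' and do not carry out; the paper disposes of it in one sentence (``by construction the groups $G_C$ generate the group $\Bir(X)$''), so your proposal stops short of establishing statement (2) in any form. Moreover the plan you sketch would fail as described, for two concrete reasons. First, if you conjugate a run of class-$C_j$ links by a birational equivalence $\psi$ in order to produce $g_j\in\Bir(X_{C_j}/B_{C_j})$, the element of $\Bir(X)$ you actually insert into the product has $\rho$-image $\rho(\psi)^{-1}\rho(g_j)\rho(\psi)$, a conjugate of an element of $H_{C_j}$; such a conjugate lies in $H_{C_j}$ only when $\rho(\psi)\in H_{C_j}$, so the ``error term'' $g\cdot(g_r\cdots g_1)^{-1}$ has no reason to lie in $\ker\rho=A$. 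Second, even granting the bookkeeping, your argument tacitly requires each syllable of $\rho(g)$ (the sum of the markings of the contributing type~\II links of a fixed class $C_j$) to be realized by an element of $\Bir(X_{C_j}/B_{C_j})$, i.e.\ to lie in the subgroup $\rho(\Bir(X_{C_j}/B_{C_j}))$ and not merely in $H_{C_j}$; nothing in the paper provides this (Proposition~\ref{pro:largeimageAnyCB} only shows that this image is infinite for decomposable bundles), and without it the product you build need not lie in the subgroup generated by the $G_C$. So part (2) remains unproved in your proposal: you would need either to control these conjugations so that every contribution stays in a fixed factor, or to give an argument of the kind the paper implicitly invokes when it declares the generation immediate.
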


\begin{proof}
\ref{IntersectA}.
For each $C\in \CB(X)$, we denote by $H_C=\left(\bigoplus_{\MC(C)}\Z/2\right)$ the factor indexed by $C$ in the free product $\bigast_{C\in \CB(X)} \left(\bigoplus_{\MC(C)}\Z/2\right)=\bigast_{C\in \CB(X)} H_C$. 
By definition of the group homomorphism, for each $C\in \CB(X)$ we have $\rho(\Bir(X_C/B_C)) \subseteq H_C$. As $H_C$ is a $\F_2$-vector space with basis $\MC(C)$ and $\rho(\Bir(X_C/B_C))$ is a linear subspace, there exists a projection $H_C\to \rho(\Bir(X_C/B_C))$. We then denote by
\[
\rho'\colon \Bir(X) \to \bigast_{C\in \CB(X)} \rho(\Bir(X_C/B_C))
\]
the group homomorphism induced for each $C$ by the projection 
\[H_C\to\rho(\Bir(X_C/B_C)).\] 
By definition of the free product, we obtain $H_C\cap H_{C'}=\id$ for all $C\not=C'$. 
This implies that $G_C \cap G_{C'} = \ker \rho' \supseteq \ker \rho$. 

\ref{BirXFreeproduct}.
We first observe that by construction the groups $G_C$ generate the group $\Bir(X)$. 
The fact that $\Bir(X) = \bigast_{A} G_C$ corresponds to saying that all relations in $\Bir(X)$ lie in the groups $G_C$. This follows from the group homomorphism $\rho$ to a free product, where no relation between the groups $H_C$ exists.

\ref{NonTrivial}.
The fact that $A\subsetneq G_C$ for each decomposable $C$ follows from Proposition~\ref{pro:largeimageAnyCB}. 
Hence, the free product of \ref{BirXFreeproduct} is non-trivial if there are least two $C$ corresponding to decomposable conic bundles. 
If $X$ is rational, then we moreover have uncountably many such elements by Proposition~\ref{pro:EnoughCB}.
\end{proof}

In Theorem~\ref{thm:AmalgamatedProduct}, one could be tempted to say that $A=\ker\rho$, but this is not clear. 
Indeed, it could be that some elements of $\bigoplus_{\MC(C)}\Z/2$ are in the image of $\Bir(X)$ but not in the image of $\Bir(X/B)$.

\subsection{Cubic varieties}\label{Sec:CubicVar}

Here again we work over $\C$.
We recall the following result, which allows to apply Theorem~\ref{TheoremBirXBnotsimple} to any smooth cubic hypersurface of dimension $\ge 3$:

\begin{lemma}\label{Lem:SmoothCubicHypersurfaceLine}
Let $n\ge 4$ and let $\ell\subset X\subset \p^n$ be a line on a smooth cubic hypersurface. 
We denote by $\hat{X}$ and $P$ the respective blow-ups of $X$ and $\p^n$ along $\ell$. 
Then, the projection $pr_{\ell}$ away from $\ell$ gives rise to a decomposable conic bundle and a decomposable $\p^2$-bundle
\[\hat{X}\subset P=\p(\mathcal{O}_{\p^{n-2}}\oplus \mathcal{O}_{\p^{n-2}}\oplus \mathcal{O}_{\p^2}(1))\stackrel{pr_{\ell}}\to \p^{n-2}.\]
Moreover, the discriminant of the conic bundle is a hypersurface of degree $5$.
\end{lemma}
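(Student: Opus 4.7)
The plan is to realise $P \to \p^{n-2}$ as a $\p^2$-bundle via the projection from $\ell$, identify it explicitly with $\p(\Ol\oplus\Ol\oplus\Ol(1))$, compute the class of $\hat X$ in $\Pic(P)$, and deduce the degree of the discriminant from a determinantal formula for the symmetric matrix defining the conic. The key geometric content is that a point of $\p^{n-2}$ corresponds to a $2$-plane $\Pi\supset\ell$, and the fibre of $\hat X$ over this point is the residual conic in the decomposition $X\cap\Pi=\ell\cup C$, which is indeed a conic since $\ell\subset X$ and $X$ is a cubic.

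To identify the bundle, I would write $\p^n=\p(V)$ and $\ell=\p(W)$ with $\dim V=n+1$, $\dim W=2$, so that $\p^{n-2}=\p(V/W)$ parametrises $2$-planes $\Pi_v:=\p(W+\C v)$ containing $\ell$. The universal such $3$-plane gives a rank-$3$ subbundle $\mathcal{F}\subset V\otimes\Ol_{\p^{n-2}}$ sitting in a short exact sequence $0\to W\otimes\Ol\to\mathcal{F}\to\Ol(-1)\to 0$, and this sequence splits because $\mathrm{Ext}^{1}(\Ol(-1),\Ol)=H^{1}(\Ol(1))=0$ on $\p^{n-2}$ (using $n-2\ge 2$). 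Hence $P=\p(\mathcal{F})\cong\p(\Ol^{\oplus 2}\oplus\Ol(1))$ after the harmless twist by $\Ol(1)$. Writing $\xi=\pi^{*}\Ol_{\p^n}(1)$ and $h$ for the pullback to $P$ of $\Ol_{\p^{n-2}}(1)$, I would check $E=\xi-h$ by using that the proper transform of a hyperplane of $\p^n$ containing $\ell$ has class $h$; since $X$ is smooth along $\ell$, the proper transform satisfies $\hat X=\pi^{*}X-E=3\xi-E=2\xi+h$. So $\hat X$ is cut out in $P$ by a section of $\Ol_P(2)\otimes\mathrm{pr}_\ell^{*}\Ol(1)$, equivalently, via push-forward along $\mathrm{pr}_\ell$, by a section of $\Sym^{2}\mathcal{E}\otimes\Ol_{\p^{n-2}}(1)$ with $\mathcal{E}=\Ol\oplus\Ol\oplus\Ol(1)$. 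Smoothness of $\hat X$ is automatic (blow-up of a smooth variety along a smooth subvariety); $\rho(\hat X/\p^{n-2})=1$ follows from $\rho(X)=1$ (Lefschetz applied to the smooth cubic hypersurface of dimension $\ge 3$); and the relative anticanonical has positive degree on the generic fibre, so $\hat X/\p^{n-2}$ is a decomposable conic bundle in the sense of Definitions~\ref{def:conicBundle} and~\ref{def:DecomposableCB}.

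For the discriminant, viewing the equation of $\hat X$ as a symmetric map $\mathcal{E}^{\vee}\to\mathcal{E}\otimes\Ol(1)$, the determinant lies in $H^{0}((\det\mathcal{E})^{\otimes 2}\otimes\Ol(1)^{\otimes 3})$; since $\det\mathcal{E}=\Ol_{\p^{n-2}}(1)$, this simplifies to $H^{0}(\Ol_{\p^{n-2}}(5))$, and the discriminant is a hypersurface of degree~$5$. The only step requiring real care is the bundle identification together with the Picard class computation $\hat X=2\xi+h$; afterwards the discriminant degree is a formal corollary, and one may sanity-check it by writing out the symmetric matrix in coordinates: its six independent entries have degrees $1,1,1,2,2,3$ on $\p^{n-2}$ (diagonal $1,1,3$ and off-diagonal $1,2,2$), so every triple product appearing in the expansion of the $3\times 3$ determinant has total degree~$5$.
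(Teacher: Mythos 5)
Your proposal is correct, but it follows a genuinely different route from the paper. The paper's proof is a direct coordinate computation: it chooses coordinates so that $\ell=\{y_0=\dots=y_{n-2}=0\}$, expands the cubic as $Au^2+2Buv+Cv^2+2Du+2Ev+F$ with $A,\dots,F$ of degrees $1,1,1,2,2,3$ in the $y_i$, realises $P$ explicitly as a $(\mathbb{G}_m)^2$-quotient together with the explicit blow-down map to $\p^n$, and then reads off the equation of $\hat X$ and the symmetric matrix $\left(\begin{smallmatrix} A & B & D\\ B & C & E\\ D & E & F\end{smallmatrix}\right)$ whose determinant has degree $5$. You instead identify $P$ intrinsically as the projectivisation of the universal $2$-plane bundle $\mathcal{F}$, split it using $\mathrm{Ext}^1(\Ol(-1),\Ol)=H^1(\p^{n-2},\Ol(1))=0$, compute $[\hat X]=3\xi-E=2\xi+h$ in $\Pic(P)$, and obtain the discriminant degree from the determinant line bundle $(\det\mathcal{E})^{\otimes 2}\otimes\Ol(3)=\Ol(5)$. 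What each approach buys: the paper's computation is self-contained and produces the explicit equation of $\hat X$ in the same quotient format used in Proposition~\ref{pro:largeimageAnyCB}, which is what is needed downstream; yours is coordinate-free, makes the source of the degree $5$ conceptually transparent, and in addition explicitly verifies the Mori fibre space conditions ($\hat X$ smooth, $\rho(\hat X/\p^{n-2})=1$ via Lefschetz, relative anticanonical positivity) that the paper leaves implicit when asserting that $\hat X/\p^{n-2}$ is a conic bundle. Your closing sanity check (matrix of entry degrees with diagonal $1,1,3$ and off-diagonal $1,2,2$) is precisely the paper's argument.

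One small imprecision worth fixing: from the split sequence you get $\mathcal{F}\cong\Ol^{\oplus 2}\oplus\Ol(-1)$, and the passage to $\Ol^{\oplus 2}\oplus\Ol(1)$ is not a twist (twisting by $\Ol(1)$ gives splitting type $(1,1,0)$, not $(0,0,1)$) but a dualisation, i.e.\ the switch between the sub-line-bundle and quotient conventions for $\p(\cdot)$; relatedly, whether the quadric is a section of $\Sym^2\mathcal{E}\otimes\Ol(1)$ or of $\Sym^2\mathcal{E}^\vee$ twisted depends on the same convention. This does not affect decomposability of the bundle, the class computation, or the degree of the discriminant, since your final matrix of degrees agrees with the paper's, but the phrase ``harmless twist'' should be replaced by the correct dual/convention statement.
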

\begin{proof}
We take coordinates $[y_0:y_1:\cdots:y_{n-2}:u:v]$ on $\p^n$ and assume that $\ell\subset \p^n$ is the line given by $y_0=y_1=\cdots=y_{n-2}=0$.
The equation of $X$ is then given by 
\[Au^2+2Buv+Cv^2+2Du+2Ev+F=0\]
where $A,B,C,D,E,F\in \C[y_0,\ldots,y_{n-2}]$ are homogeneous polynomials of degree $1,1,1,2,2,3$ respectively.

As in the proof of Proposition~\ref{pro:largeimageAnyCB}, we view $P=\p(\mathcal{O}_{\p^{n-2}}\oplus \mathcal{O}_{\p^{n-2}}\oplus \mathcal{O}_{\p^{n-2}}(1))$ as the quotient of $(\A^{2}\setminus \{0\})\times (\A^{n-1}\setminus \{0\})$ by $(\mathbb{G}_m)^2$ via 
\[\left((\lambda,\mu),(x_0,x_1,x_2,y_0,y_1,\cdots,y_{n-2})\right)\mapsto (\lambda x_0,\lambda x_1,\lambda\mu^{-1} x_2,\mu y_0,\cdots,\mu y_{n-2})\]
and denote by $[x_0\colon x_1 \colon x_2 \semicolon y_0 \colon \cdots \colon y_{n-2}]\in P$ the class of $(x_0,x_1,x_2,y_0,\cdots,y_{n-2})$. The birational morphism 
\[
\begin{tikzcd}[map]
P & \to & \p^n\\
{[x_0\colon x_1 \colon x_2 \semicolon y_0 \colon y_1 \colon y_2\colon \cdots \colon y_{n-2}]} & \mapsto & {[x_2y_0 \colon \cdots \colon x_2y_{n-2}:x_0:x_1]}
\end{tikzcd}\]
is the blow-up of $\ell$, so $\hat{X}$ is given by 
\[Ax_0^2+2Bx_0x_1+Cx_1^2+2Dx_2x_0+2Ex_2x_1+Fx_2^2=0,\]
which is then a conic bundle over $\p^2$. The discriminant of the curve gives a hypersurface $\Delta\subset \p^2$ of degree $5$, given by the determinant of 
$\left(\begin{smallmatrix} A & B & D \\
 B & C & E \\ D & E & F\end{smallmatrix}\right)$. 
\end{proof}

\begin{corollary}\label{cor:SmoothCubicBirXZ2}
For each $n\ge 4$ and each smooth cubic hypersurface $X\subset \p^n$, there exists a surjective group homomorphism $\Bir(X)\tto \bigoplus_\Z \Z/2$
\end{corollary}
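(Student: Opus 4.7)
The plan is to reduce the statement to a direct application of Theorem \ref{TheoremBirXBnotsimple}, via the decomposable conic bundle structure on $X$ produced in Lemma \ref{Lem:SmoothCubicHypersurfaceLine}.

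First I would choose any line $\ell \subset X$; such a line exists on every smooth cubic hypersurface of dimension $\ge 2$ (a classical fact about Fano schemes of lines on cubic hypersurfaces). Let $\hat X \to X$ denote the blow-up along $\ell$. Since $X$ and $\ell$ are smooth, $\hat X$ is again smooth, and the projection away from $\ell$ realises $\hat X$ as a closed subvariety of the decomposable $\p^2$-bundle
\[P = \p(\Ol_{\p^{n-2}} \oplus \Ol_{\p^{n-2}} \oplus \Ol_{\p^{n-2}}(1)) \longto \p^{n-2},\]
with $\hat X/\p^{n-2}$ a decomposable conic bundle in the sense of Definition \ref{def:DecomposableCB}, by Lemma \ref{Lem:SmoothCubicHypersurfaceLine}.

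Next I would verify that the hypotheses of Theorem \ref{TheoremBirXBnotsimple} are met with $B = \p^{n-2}$ and $X$ replaced by $\hat X$. The base $B = \p^{n-2}$ is smooth projective of dimension $n-2 \ge 2$ since $n \ge 4$, and trivially embeds into $\p^{n-2}$. The bundle $P \to \p^{n-2}$ is the projectivisation of a decomposable rank $3$ vector bundle, and $\hat X \subset P$ is a smooth closed subvariety. It remains to check that $\hat\eta\colon \hat X \to \p^{n-2}$ is a conic bundle in the sense of Definition \ref{def:conicBundle}: $\hat X$ is smooth hence $\Q$-factorial and terminal, dimensions satisfy $\dim \p^{n-2} = \dim \hat X - 1$, and the relative Picard rank is $\rho(\hat X/\p^{n-2}) = \rho(\hat X) - \rho(\p^{n-2}) = 2 - 1 = 1$ (using $\rho(X) = 1$ for the smooth cubic and $\rho(\hat X) = \rho(X)+1$ from the blow-up).

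Applying Theorem \ref{TheoremBirXBnotsimple} to $\hat X/\p^{n-2}$ yields a surjective group homomorphism $\Bir(\hat X) \tto \bigoplus_\Z \Z/2$. Since $\hat X$ is birational to $X$, we have a canonical isomorphism $\Bir(\hat X) \simeq \Bir(X)$, and pulling back gives the desired surjective homomorphism $\Bir(X) \tto \bigoplus_\Z \Z/2$. There is no real obstacle here beyond assembling the pieces; the entire content of the corollary is already encoded in Lemma \ref{Lem:SmoothCubicHypersurfaceLine} (producing the decomposable conic bundle model from a line) together with Theorem \ref{TheoremBirXBnotsimple} (producing the quotient from any such model).
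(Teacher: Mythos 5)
Your proposal is correct and follows exactly the paper's own route: blow up a line $\ell\subset X$ to obtain the decomposable conic bundle $\hat X/\p^{n-2}$ of Lemma~\ref{Lem:SmoothCubicHypersurfaceLine}, apply Theorem~\ref{TheoremBirXBnotsimple} to it, and transport the homomorphism through $\Bir(X)\simeq\Bir(\hat X)$. The extra verifications you include (existence of a line, smoothness of $\hat X$, $\rho(\hat X/\p^{n-2})=1$) are exactly the details the paper leaves implicit.
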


\begin{proof}
Follows from the application of Theorem~\ref{TheoremBirXBnotsimple} to the conic bundle associated to blow-up of a line of $X$ (Lemma~\ref{Lem:SmoothCubicHypersurfaceLine}).
\end{proof}

Every smooth cubic threefold $X\subset \p^4$ is not rational, and moreover two such cubics are birational if and only if they are projectively equivalent, i.e.~equal up to an element of $\Aut(\p^4)=\PGL_5(\C)$ \cite{ClemensGriffiths}. 
We moreover get the following:

\begin{proposition}\label{Prop:Cubicfreeproduct}
Let $X\subset \p^4$ be a general smooth cubic hypersurface. We have a surjective group homomorphism 
$\Bir(X)\tto \bigast_J \Z/2$, where $J$ has the cardinality of~$\C$.
\end{proposition}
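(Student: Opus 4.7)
The plan is to apply Theorem \ref{TheoremBirMori} to $X$ and to produce $|\C|$ pairwise non-equivalent decomposable conic bundle classes in $\CB(X)$, each arising from a line on $X$. For every line $\ell \subset X$, Lemma \ref{Lem:SmoothCubicHypersurfaceLine} yields a decomposable conic bundle $\hat X_\ell / \p^2$ with discriminant a plane quintic $\Delta_\ell$; for $X$ and $\ell$ general, the normalisation $\widetilde\Delta_\ell$ is a smooth curve of genus at least $5$, so in particular $\covgon(\Delta_\ell) \ge 2$. Proposition \ref{pro:largeimageAnyCB} applied to $\hat X_\ell/\p^2$ then shows that the subgroup $\Bir(\hat X_\ell/\p^2) \subset \Bir(X)$ has infinite image in the factor $\bigoplus_{\MC(C_\ell)} \Z/2$ of the target of Theorem \ref{TheoremBirMori}, where $C_\ell := [\hat X_\ell/\p^2] \in \CB(X)$; in particular, that factor admits a projection to $\Z/2$ sending some involution of $\Bir(X)$ to the generator.

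The main step will be to show that $\ell \mapsto C_\ell$ has image of cardinality $|\C|$ in $\CB(X)$. For general $X$ the Fano surface $F(X)$ of lines is a smooth irreducible projective surface of general type, so $|F(X)| = |\C|$. By Proposition \ref{pro:Isogenies} applied with $Y$ a point (and, where needed, after blowing up the nodes of $\Delta_\ell$ to reduce to a standard conic bundle), an equality $C_{\ell_1} = C_{\ell_2}$ provides surjective morphisms between $\widetilde\Delta_{\ell_1}$ and $\widetilde\Delta_{\ell_2}$ in both directions; since these are smooth curves of the same genus they must be isomorphic. It therefore suffices to prove that the algebraic moduli map
\[
F(X) \longrightarrow \mathcal{M}_g, \qquad \ell \longmapsto [\widetilde\Delta_\ell],
\]
has image of dimension at least $1$ for general $X$. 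Granted this, pick $J \subseteq F(X)$ of cardinality $|\C|$ on which $\ell \mapsto C_\ell$ is injective, and compose the homomorphism of Theorem \ref{TheoremBirMori} with the projection onto the factors indexed by $\{C_\ell\}_{\ell \in J}$ and, within each such factor, onto the $\Z/2$-quotient produced in the previous paragraph. This will yield the desired surjection $\Bir(X) \tto \bigast_J \Z/2$.

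The hard part will be the non-constancy of the moduli map $\ell \mapsto [\widetilde\Delta_\ell]$. A conceptual route is via Clemens--Griffiths: the intermediate Jacobian $J(X)$ is the Prym variety of the natural \'etale double cover $\widetilde\Delta_\ell \to \Delta_\ell$ canonically attached to $\hat X_\ell / \p^2$. Since a fixed smooth curve admits only finitely many \'etale double covers, if $\widetilde\Delta_\ell$ were isomorphic to a fixed curve for every $\ell$ in an open subset of $F(X)$, then the associated double covers would form a constant family, forcing the conic bundles $\hat X_\ell/\p^2$ to behave with a rigidity across the $2$-dimensional family $F(X)$ that fails for a Zariski-generic cubic. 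A more hands-on alternative is to verify non-constancy of the moduli map on a single explicit cubic (say the Fermat cubic) by a direct computation of the discriminants along a one-parameter family of lines, and then invoke upper semi-continuity to conclude that the image has dimension at least $1$, hence cardinality $|\C|$, for $X$ in a Zariski open subset of the moduli of smooth cubic threefolds.
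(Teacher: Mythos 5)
Your overall strategy is the same as the paper's: blow up lines $\ell\subset X$ to get the decomposable conic bundles of Lemma~\ref{Lem:SmoothCubicHypersurfaceLine} with quintic discriminants $\Delta_\ell\subset\p^2$, feed them into Theorem~\ref{TheoremBirMori}, use Proposition~\ref{pro:largeimageAnyCB} to get a nontrivial $\Z/2$ in each factor, and use Proposition~\ref{pro:Isogenies} (with $Y$ a point) to reduce non-equivalence of the conic bundles to non-isomorphism of the smooth quintics, which have genus $6$ so that surjections in both directions force an isomorphism. All of this is correct and parallels the paper's proof.

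The gap is exactly the step you yourself flag as ``the hard part'': that the quintics $\Delta_\ell$ sweep out a positive-dimensional (hence cardinality $\lvert\C\rvert$) set of isomorphism classes as $\ell$ varies in the Fano surface. The paper settles this by invoking the classical result \cite[Theorem 4.1 and Proposition 4.2]{CasRob} that $(\ell,X)\mapsto(\theta,\Delta)$ is a \emph{birational} correspondence between lines on smooth cubic threefolds modulo $\PGL_5(\C)$ and smooth plane quintics with a theta-characteristic modulo $\PGL_3(\C)$; since a fixed quintic carries only finitely many theta-characteristics, a general $X$ produces a $2$-dimensional family of pairwise non-isomorphic quintics. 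Your ``conceptual route'' via Pryms does not close this gap as written: the Prym of the double cover attached to $\hat X_\ell/\p^2$ is the intermediate Jacobian $J(X)$ for \emph{every} $\ell$, so if the curves $\Delta_\ell$ and their double covers were constant in moduli the resulting Prym would be constant --- which is no contradiction, because it is constant anyway. Turning this into an actual argument requires the finiteness/injectivity of the plane-quintic Prym representations of $J(X)$, i.e.\ essentially the theorem the paper cites, so the reasoning is circular. Your second route (an explicit computation of the discriminants along a pencil of lines on a specific cubic, plus semicontinuity) would suffice in principle, but it is not carried out and constitutes the entire content of the missing step. A minor point: you use $\widetilde\Delta_\ell$ both for the normalisation of the quintic and for the \'etale double cover defined by the conic bundle; these are different curves and should not be conflated.
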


\begin{proof}
The map of Lemma~\ref{Lem:SmoothCubicHypersurfaceLine} associates to each smooth cubic threefold $X$ and each line $\ell\subset X$ a quintic curve $\Delta\subset \p^2$ and also a theta-characteristic; this induces a birational map between the pairs $(\ell,X)$ of lines on smooth cubic threefolds, up to $\PGL_5(\C)$, and the pairs $(\theta,\Delta)$, where $\Delta\subset \p^2$ is a smooth quintic and $\theta$ is a theta-characteristic, again up to $\PGL_3(\C)$ \cite[Theorem 4.1 and Proposition 4.2]{CasRob}. 
 
In particular, taking a general smooth cubic hypersurface $X\subset \p^4$ and varying the lines $\ell\subset X$ (which form a $2$-dimensional family), we obtain a family $J$ of dimension $2$ of smooth quintics $\Delta\subset \p^2$, not pairwise equivalent modulo $\PGL_3(\C)$. This yields conic bundles that are not pairwise equivalent, parametrised by a complex algebraic variety of dimension $2$. 
Applying the group homomorphism of Theorem~\ref{TheoremBirMori} and projecting on the corresponding factors provides a surjective group homomorphism 
$\Bir(X)\tto \bigast_J \Z/2$, similarly as in the proof of Theorem~\ref{TheoremManyCB}.
\end{proof}

\subsection{Fibrations graph} \label{sec:graph}

We explain how to get a natural graph structure from the set of rank~$r$ fibrations, similarly as in \cite{LZ17}. 

Let $Z$ be a variety birational to a Mori fibre space.
We construct a sequence of nested graphs $\Gl_n$, $n \ge 1$, as follows.
The set of vertices of $\Gl_n$ are rank~$r$ fibrations $X/B$, for any $r \le n$, with a choice of a birational map $\phi\colon Z \rat X$, and modulo $Z$-equivalence (Definition \ref{def:Tequivalent}).
We denote $(X/B, \phi)$ such an equivalence class.
We put an oriented edge from $(X/B, \phi)$ to $(X'/B', \phi')$ if $\rho(X'/B') = \rho(X/B) - 1$ and the birational maps from $Z$ induce a factorisation of $X/B$ through $X'/B'$, that is, if there is a morphism $B' \to B$ and a birational contraction $X \rat X'$ such that the following diagram commutes
\[
\begin{tikzcd}[link]
& Z  \ar[dr, dashed,"\phi'"] \ar[dl, dashed, "\phi",swap] \\
X \ar[rr,dashed] \ar[dd] && X'  \ar[dd] \\ \\
B  && B' \ar[ll]
\end{tikzcd}
\]
We call the graph $\Gl := \bigcup_n \Gl_n$ the \emph{fibrations graph} associated with $Z$.
The group $\Bir(Z)$ naturally acts on each graph $\Gl_n$, and so also on $\Gl$, by precomposition :
\[
g \cdot (X/B, \phi) := (X/B, \phi \circ g^{-1}).
\]

The fact that Sarkisov links generate $\BirMori(Z)$ is equivalent to the fact that $\Gl_2$ is a connected graph.
Lemma \ref{lem:2 rank 2} implies that $\Gl_3$ is the $1$-skeleton of a square complex, where each square has one vertex of rank 3, one vertex of rank 1 and two vertices of rank 2.
The fact that elementary relations generate all relations in $\BirMori(Z)$ is equivalent to the fact that this square complex is simply connected.

It is not clear to us if for $n \ge 4$ the graph $\Gl_n$ is still the $1$-skeleton of a cube complex.  
  
\begingroup \vbadness 10000\relax % pour eviter les underfull vbox badboxes
\bibliographystyle{myalpha}
\bibliography{biblio}

\newcommand{\etalchar}[1]{$^{#1}$}
\begin{thebibliography}{BCTSSD85}

\bibitem[AO18]{AhmOka}
H.~Ahmadinezhad \& T.~Okada.
\newblock Stable rationality of higher dimensional conic bundles.
\newblock {\em \'{E}pijournal Geom. Alg\'{e}brique}, 2:Art. 5, 13, 2018.

\bibitem[AZ16]{AZ}
H.~{Ahmadinezhad} \& F.~{Zucconi}.
\newblock {Mori dream spaces and birational rigidity of Fano 3-folds.}
\newblock {\em {Adv. Math.}}, 292:410--445, 2016.

\bibitem[AZ17]{AZ2017}
H.~Ahmadinezhad \& F.~Zucconi.
\newblock Circle of {S}arkisov links on a {F}ano 3-fold.
\newblock {\em Proc. Edinb. Math. Soc. (2)}, 60(1):1--16, 2017.

\bibitem[{Alb}02]{Carraminana}
M.~{Alberich-Carrami\~nana}.
\newblock {\em Geometry of the plane {C}remona maps}, volume 1769 of {\em
  Lecture Notes in Mathematics}.
\newblock Springer-Verlag, Berlin, 2002.

\bibitem[AR04]{AlzatiRusso}
A.~Alzati \& F.~Russo.
\newblock Some extremal contractions between smooth varieties arising from
  projective geometry.
\newblock {\em Proc. London Math. Soc. (3)}, 89(1):25--53, 2004.

\bibitem[AD09]{ArtebaniDolgachev}
M.~Artebani \& I.~Dolgachev.
\newblock The {H}esse pencil of plane cubic curves.
\newblock {\em Enseign. Math. (2)}, 55(3-4):235--273, 2009.

\bibitem[ADHL15]{ADHL_2015}
I.~Arzhantsev, U.~Derenthal, J.~Hausen \& A.~Laface.
\newblock {\em Cox rings}, volume 144 of {\em Cambridge Studies in Advanced
  Mathematics}.
\newblock Cambridge University Press, Cambridge, 2015.

\bibitem[BDE{\etalchar{+}}17]{BDELU}
F.~Bastianelli, P.~{De Poi}, L.~Ein, R.~Lazarsfeld \& B.~Ullery.
\newblock Measures of irrationality for hypersurfaces of large degree.
\newblock {\em Compos. Math.}, 153(11):2368--2393, 2017.

\bibitem[BCTSSD85]{BCSS}
A.~Beauville, J.-L. Colliot-Th\'{e}l\`ene, J.-J. Sansuc \& P.~Swinnerton-Dyer.
\newblock Vari\'{e}t\'{e}s stablement rationnelles non rationnelles.
\newblock {\em Ann. of Math. (2)}, 121(2):283--318, 1985.

\bibitem[Bir16]{BirkarS}
C.~Birkar.
\newblock Singularities of linear systems and boundedness of {F}ano varieties.
\newblock {\em \href{https://arxiv.org/pdf/1609.05543}{Preprint
  arXiv:1609.05543}}, 2016.

\bibitem[Bir19]{BirkarA}
C.~Birkar.
\newblock Anti-pluricanonical systems on {F}ano varieties.
\newblock {\em Ann. of Math. (2)}, 190(2):345--463, 2019.

\bibitem[BCHM10]{BCHM}
C.~{Birkar}, P.~{Cascini}, C.~D. {Hacon} \& J.~{McKernan}.
\newblock {Existence of minimal models for varieties of log general type.}
\newblock {\em {J. Am. Math. Soc.}}, 23(2):405--468, 2010.

\bibitem[Bla10]{Blanc2010}
J.~Blanc.
\newblock Groupes de {C}remona, connexit\'e et simplicit\'e.
\newblock {\em Ann. Sci. \'Ec. Norm. Sup\'er. (4)}, 43(2):357--364, 2010.

\bibitem[BFT17]{BFT}
J.~Blanc, A.~Fanelli \& R.~Terpereau.
\newblock Automorphisms of $\mathbb{P}^1$-bundles over rational surfaces.
\newblock {\em \href{https://arxiv.org/pdf/1707.01462}{Preprint
  arXiv:1707.01462}}, 2017.

\bibitem[BFT19]{BFT2}
J.~Blanc, A.~Fanelli \& R.~Terpereau.
\newblock Connected algebraic groups acting on 3-dimensional {M}ori fibrations.
\newblock {\em manuscript in preparation}, 2019.

\bibitem[BF13]{BF13}
J.~Blanc \& J.-P. Furter.
\newblock Topologies and structures of the {C}remona groups.
\newblock {\em Ann. of Math. (2)}, 178(3):1173--1198, 2013.

\bibitem[BH15]{BlancHeden}
J.~Blanc \& I.~Hed\'en.
\newblock The group of {C}remona transformations generated by linear maps and
  the standard involution.
\newblock {\em Ann. Inst. Fourier (Grenoble)}, 65(6):2641--2680, 2015.

\bibitem[BZ18]{BZ}
J.~Blanc \& S.~Zimmermann.
\newblock Topological simplicity of the {C}remona groups.
\newblock {\em Amer. J. Math.}, 140(5):1297--1309, 2018.

\bibitem[Bro06]{Brown}
R.~Brown.
\newblock {\em Topology and groupoids}.
\newblock BookSurge, LLC, Charleston, SC, 2006.
\newblock Third edition of \textit{Elements of modern topology} [McGraw-Hill,
  New York, 1968],.

\bibitem[Can14]{CantatMorphisms}
S.~Cantat.
\newblock Morphisms between {C}remona groups, and characterization of rational
  varieties.
\newblock {\em Compos. Math.}, 150(7):1107--1124, 2014.

\bibitem[CL13]{CantatLamy}
S.~Cantat \& S.~Lamy.
\newblock Normal subgroups in the {C}remona group.
\newblock {\em Acta Math.}, 210(1):31--94, 2013.
\newblock With an appendix by Yves de Cornulier.

\bibitem[CX18]{CantatXie}
S.~Cantat \& J.~Xie.
\newblock Algebraic actions of discrete groups: The $p$-adic method.
\newblock {\em Acta Math.}, 220(2):239--295, 2018.

\bibitem[CMF05]{CasRob}
S.~Casalaina-Martin \& R.~Friedman.
\newblock Cubic threefolds and abelian varieties of dimension five.
\newblock {\em J. Algebraic Geom.}, 14(2):295--326, 2005.

\bibitem[Cas01]{Cas}
G.~Castelnuovo.
\newblock Le trasformazioni generatrici del gruppo cremoniano nel piano.
\newblock {\em Atti della R. Accad. delle Scienze di Torino}, (36):861--874,
  1901.

\bibitem[CG72]{ClemensGriffiths}
C.~H. Clemens \& P.~A. Griffiths.
\newblock The intermediate {J}acobian of the cubic threefold.
\newblock {\em Ann. of Math. (2)}, 95:281--356, 1972.

\bibitem[Cor95]{Corti1995}
A.~Corti.
\newblock Factoring birational maps of threefolds after {S}arkisov.
\newblock {\em J. Algebraic Geom.}, 4(2):223--254, 1995.

\bibitem[Cor00]{Corti_explicit}
A.~Corti.
\newblock Singularities of linear systems and {$3$}-fold birational geometry.
\newblock In {\em Explicit birational geometry of 3-folds}, volume 281 of {\em
  London Math. Soc. Lecture Note Ser.}, pages 259--312. Cambridge Univ. Press,
  Cambridge, 2000.

\bibitem[CPR00]{CPR}
A.~{Corti}, A.~{Pukhlikov} \& M.~{Reid}.
\newblock {Fano 3-fold hypersurfaces.}
\newblock In {\em {Explicit birational geometry of 3-folds}}, pages 175--258.
  Cambridge: Cambridge University Press, 2000.

\bibitem[DGO17]{DGO}
F.~Dahmani, V.~Guirardel \& D.~Osin.
\newblock Hyperbolically embedded subgroups and rotating families in groups
  acting on hyperbolic spaces.
\newblock {\em Mem. Amer. Math. Soc.}, 245(1156), 2017.

\bibitem[Dan74]{Danilov}
V.~I. Danilov.
\newblock Non-simplicity of the group of unimodular automorphisms of an affine
  plane.
\newblock {\em Mat. Zametki}, 15:289--293, 1974.

\bibitem[Dem70]{Demazure}
M.~Demazure.
\newblock Sous-groupes alg\'ebriques de rang maximum du groupe de {C}remona.
\newblock {\em Ann. Sci. \'Ecole Norm. Sup. (4)}, 3:507--588, 1970.

\bibitem[D{\'e}s07]{Deserti}
J.~D{\'e}serti.
\newblock Le groupe de {C}remona est hopfien.
\newblock {\em C. R. Math. Acad. Sci. Paris}, 344(3):153--156, 2007.

\bibitem[D{\'e}s17]{Des17}
J.~D{\'e}serti.
\newblock Cremona maps and involutions.
\newblock {\em \href{https://arxiv.org/pdf/1708.01569}{Preprint
  arXiv:1708.01569}}, 2017.

\bibitem[ELM{\etalchar{+}}06]{Ein_al}
L.~Ein, R.~Lazarsfeld, M.~Musta{\c t}{\u a}, M.~Nakamaye \& M.~Popa.
\newblock Asymptotic invariants of base loci.
\newblock {\em Ann. Inst. Fourier (Grenoble)}, 56(6):1701--1734, 2006.

\bibitem[Enr95]{Enriques}
F.~Enriques.
\newblock {\em Conferenze di Geometria: fundamenti di una geometria
  iperspaziale}.
\newblock Bologna, 1895.

\bibitem[Fuj99]{Fujino1999}
O.~Fujino.
\newblock Applications of {K}awamata's positivity theorem.
\newblock {\em Proc. Japan Acad. Ser. A Math. Sci.}, 75(6):75--79, 1999.

\bibitem[Fuj15]{Fujino2015}
O.~Fujino.
\newblock Some remarks on the minimal model program for log canonical pairs.
\newblock {\em J. Math. Sci. Univ. Tokyo}, 22(1):149--192, 2015.

\bibitem[FL10]{FurterLamy}
J.-P. Furter \& S.~Lamy.
\newblock Normal subgroup generated by a plane polynomial automorphism.
\newblock {\em Transform. Groups}, 15(3):577--610, 2010.

\bibitem[GK19]{GouKou}
F.~Gounelas \& A.~Kouvidakis.
\newblock Measures of irrationality of the {F}ano surface of a cubic threefold.
\newblock {\em Trans. Amer. Math. Soc.}, 371(10):7111--7133, 2019.

\bibitem[HM07]{HMcK2007}
C.~D. Hacon \& J.~Mckernan.
\newblock On {S}hokurov's rational connectedness conjecture.
\newblock {\em Duke Math. J.}, 138(1):119--136, 2007.

\bibitem[HM13]{HMcK}
C.~D. Hacon \& J.~McKernan.
\newblock The {S}arkisov program.
\newblock {\em J. Algebraic Geom.}, 22(2):389--405, 2013.

\bibitem[Har77]{Hartshorne}
R.~Hartshorne.
\newblock {\em Algebraic geometry}.
\newblock Springer-Verlag, New York-Heidelberg, 1977.
\newblock Graduate Texts in Mathematics, No. 52.

\bibitem[Hat02]{Hatcher}
A.~Hatcher.
\newblock {\em Algebraic topology}.
\newblock Cambridge University Press, Cambridge, 2002.

\bibitem[HK00]{HK}
Y.~Hu \& S.~Keel.
\newblock Mori dream spaces and {GIT}.
\newblock {\em Michigan Math. J.}, 48:331--348, 2000.

\bibitem[Hud27]{Hudson}
H.~P. Hudson.
\newblock {\em Cremona transformations in plane and space}.
\newblock Cambridge, University Press, 1927.

\bibitem[Isk91]{Isk1991}
V.~A. Iskovskikh.
\newblock Generators of the two-dimensional {C}remona group over a nonclosed
  field.
\newblock {\em Trudy Mat. Inst. Steklov.}, 200:157--170, 1991.

\bibitem[Isk96]{Isk1996}
V.~A. Iskovskikh.
\newblock Factorization of birational mappings of rational surfaces from the
  point of view of {M}ori theory.
\newblock {\em Uspekhi Mat. Nauk}, 51(4(310)):3--72, 1996.

\bibitem[IKT93]{IKT}
V.~A. Iskovskikh, F.~K. Kabdykairov \& S.~L. Tregub.
\newblock Relations in a two-dimensional {C}remona group over a perfect field.
\newblock {\em Izv. Ross. Akad. Nauk Ser. Mat.}, 57(3):3--69, 1993.
\newblock Translation in Russian Acad. Sci. Izv. Math. 42 (1994), no. 3,
  427--478.

\bibitem[Isk87]{Encyclo}
V.~A. Iskovskikh.
\newblock Cremona group.
\newblock In {\em
  {\href{https://www.encyclopediaofmath.org/index.php/Cremona_group}{Encyclopedia
  of Mathematics}}}. 1987.

\bibitem[Kal13]{Kaloghiros}
A.-S. Kaloghiros.
\newblock Relations in the {S}arkisov program.
\newblock {\em Compos. Math.}, 149(10):1685--1709, 2013.

\bibitem[KKL16]{KKL}
A.-S. Kaloghiros, A.~K\"uronya \& V.~Lazi\'c.
\newblock Finite generation and geography of models.
\newblock In {\em Minimal models and extremal rays ({K}yoto, 2011)}, volume~70
  of {\em Adv. Stud. Pure Math.}, pages 215--245. 2016.

\bibitem[Kat87]{Katz}
S.~Katz.
\newblock The cubo-cubic transformation of {${\bf P}^3$} is very special.
\newblock {\em Math. Z.}, 195(2):255--257, 1987.

\bibitem[Kol86]{Kollar_1986}
J.~Koll\'{a}r.
\newblock Higher direct images of dualizing sheaves. {I}.
\newblock {\em Ann. of Math. (2)}, 123(1):11--42, 1986.

\bibitem[Kol93]{Kollar93}
J.~Koll\'ar.
\newblock Effective base point freeness.
\newblock {\em Math. Ann.}, 296(4):595--605, 1993.

\bibitem[Kol96]{Kollar_rational}
J.~Koll\'ar.
\newblock {\em Rational curves on algebraic varieties}, volume~32.
\newblock Springer-Verlag, Berlin, 1996.

\bibitem[Kol97]{Kollar_SantaCruz}
J.~Koll\'ar.
\newblock Singularities of pairs.
\newblock In {\em Algebraic geometry---{S}anta {C}ruz 1995}, volume~62 of {\em
  Proc. Sympos. Pure Math.}, pages 221--287. Amer. Math. Soc., Providence, RI,
  1997.

\bibitem[Kol17]{KollarCB}
J.~Koll\'{a}r.
\newblock Conic bundles that are not birational to numerical {C}alabi-{Y}au
  pairs.
\newblock {\em \'{E}pijournal Geom. Alg\'{e}brique}, 1:Art. 1, 14, 2017.

\bibitem[KM92]{KollarMori1992}
J.~Koll\'ar \& S.~Mori.
\newblock Classification of three-dimensional flips.
\newblock {\em J. Amer. Math. Soc.}, 5(3):533--703, 1992.

\bibitem[KM98]{KM}
J.~Koll\'ar \& S.~Mori.
\newblock {\em Birational geometry of algebraic varieties}, volume 134 of {\em
  Cambridge Tracts in Mathematics}.
\newblock Cambridge University Press, Cambridge, 1998.
\newblock With the collaboration of C. H. Clemens and A. Corti, Translated from
  the 1998 Japanese original.

\bibitem[Kra96]{KraftBourbaki}
H.~Kraft.
\newblock Challenging problems on affine {$n$}-space.
\newblock {\em Ast\'erisque}, (237):Exp.\ No.\ 802, 5, 295--317, 1996.
\newblock S\'eminaire Bourbaki, Vol. 1994/95.

\bibitem[LZ20]{LZ17}
S.~Lamy \& S.~Zimmermann.
\newblock Signature morphisms from the {C}remona group over a non-closed field.
\newblock {\em JEMS}, 22(10):3133--3173, 2020.

\bibitem[Lan87]{lang}
S.~Lang.
\newblock {\em Elliptic functions}, volume 112 of {\em Graduate Texts in
  Mathematics}.
\newblock Springer-Verlag, New York, second edition, 1987.
\newblock With an appendix by J. Tate.

\bibitem[Laz04]{LazarsfeldI}
R.~Lazarsfeld.
\newblock {\em Positivity in algebraic geometry. {I}}.
\newblock Springer-Verlag, Berlin, 2004.
\newblock Classical setting: line bundles and linear series.

\bibitem[Lon16]{Lonjou}
A.~Lonjou.
\newblock Non simplicit\'e du groupe de {C}remona sur tout corps.
\newblock {\em Ann. Inst. Fourier (Grenoble)}, 66(5):2021--2046, 2016.

\bibitem[MW90]{MasserWustholz}
D.~W. Masser \& G.~W\"{u}stholz.
\newblock Estimating isogenies on elliptic curves.
\newblock {\em Invent. Math.}, 100(1):1--24, 1990.

\bibitem[MO15]{MinasyanOsin}
A.~Minasyan \& D.~Osin.
\newblock Acylindrical hyperbolicity of groups acting on trees.
\newblock {\em Math. Ann.}, 362(3-4):1055--1105, 2015.

\bibitem[MM85]{MoriMukai}
S.~Mori \& S.~Mukai.
\newblock Classification of {F}ano {$3$}-folds with {$B_2\geq 2$}. {I}.
\newblock In {\em Algebraic and topological theories ({K}inosaki, 1984)}, pages
  496--545. Kinokuniya, Tokyo, 1985.

\bibitem[Pan99]{Pan}
I.~Pan.
\newblock Une remarque sur la g\'en\'eration du groupe de {C}remona.
\newblock {\em Bol. Soc. Brasil. Mat. (N.S.)}, 30(1):95--98, 1999.

\bibitem[PS15]{PanSimis}
I.~Pan \& A.~Simis.
\newblock Cremona maps of de {J}onqui\`eres type.
\newblock {\em Canad. J. Math.}, 67(4):923--941, 2015.

\bibitem[Poo07]{Poonen}
B.~Poonen.
\newblock Gonality of modular curves in characteristic {$p$}.
\newblock {\em Math. Res. Lett.}, 14(4):691--701, 2007.

\bibitem[Pro11]{Pro2011}
Y.~Prokhorov.
\newblock {$p$}-elementary subgroups of the {C}remona group of rank 3.
\newblock In {\em Classification of algebraic varieties}, EMS Ser. Congr. Rep.,
  pages 327--338. Eur. Math. Soc., Z\"urich, 2011.

\bibitem[Pro12]{Pro2012}
Y.~Prokhorov.
\newblock Simple finite subgroups of the {C}remona group of rank 3.
\newblock {\em J. Algebraic Geom.}, 21(3):563--600, 2012.

\bibitem[Pro14]{Pro2014}
Y.~Prokhorov.
\newblock 2-elementary subgroups of the space {C}remona group.
\newblock In {\em Automorphisms in birational and affine geometry}, volume~79
  of {\em Springer Proc. Math. Stat.}, pages 215--229. 2014.

\bibitem[PS16]{ProShra}
Y.~Prokhorov \& C.~Shramov.
\newblock Jordan property for {C}remona groups.
\newblock {\em Amer. J. Math.}, 138(2):403--418, 2016.

\bibitem[Sar82]{sarkisov}
V.~G. Sarkisov.
\newblock On conic bundle structures.
\newblock {\em Izv. Akad. Nauk SSSR Ser. Mat.}, 46(2):371--408, 432, 1982.

\bibitem[Sch19]{Schneider}
J.~Schneider.
\newblock Relations in the {C}remona group over perfect fields.
\newblock {\em \href{https://arxiv.org/pdf/1906.05265}{Preprint
  arXiv:1906.05265}}, 2019.

\bibitem[Sch73]{Schupp}
P.~E. Schupp.
\newblock A survey of {SQ}-universality.
\newblock pages 183--188. Lecture Notes in Math., Vol. 319, 1973.

\bibitem[SU04]{ShestakovUmirbaev}
I.~P. Shestakov \& U.~U. Umirbaev.
\newblock The tame and the wild automorphisms of polynomial rings in three
  variables.
\newblock {\em J. Amer. Math. Soc.}, 17(1):197--227, 2004.

\bibitem[Sil09]{silverman}
J.~H. Silverman.
\newblock {\em The arithmetic of elliptic curves}, volume 106 of {\em Graduate
  Texts in Mathematics}.
\newblock Springer, Dordrecht, second edition, 2009.

\bibitem[Ume85]{Ume}
H.~Umemura.
\newblock On the maximal connected algebraic subgroups of the {C}remona group.
  {II}.
\newblock In {\em Algebraic groups and related topics ({K}yoto/{N}agoya,
  1983)}, volume~6 of {\em Adv. Stud. Pure Math.}, pages 349--436.
  North-Holland, Amsterdam, 1985.

\bibitem[Zim18]{Zimmermann}
S.~Zimmermann.
\newblock The {A}belianization of the real {C}remona group.
\newblock {\em Duke Math. J.}, 167(2):211--267, 2018.

\end{thebibliography}
\endgroup

\end{document}